\newcounter{ictr}
\newcounter{actr}
\newtheorem{thm}{Theorem}[section]
\newtheorem{lem}[thm]{Lemma}
\newtheorem{prop}[thm]{Proposition}
\newtheorem*{corr}{Corollary}
\theoremstyle{definition}
\newtheorem{defn}[thm]{Definition}
\newtheorem{conj}[thm]{Conjecture}
\newtheorem{defns}[thm]{Definition}
\theoremstyle{remark}
\newtheorem{open}[thm]{Open Question}
\numberwithin{equation}{section}
\newcommand{\hhs}{{Hilbert-Hadamard space}}
\newcommand{\fin}{\text{fin}}
\newcommand{\GL}{\text{GL}}
\newcommand{\dist}{\mathrm{dist}}
\newcommand{\tr}{\mathrm{tr}}
\title{Higher invariants in noncommutative geometry 
}
\dedicatory{Dedicated to Alain Connes with great admiration}
\author{Zhizhang Xie}
\address[Zhizhang Xie]{Department of Mathematics,  Texas A\&M University, College Station, TX 77843, USA}
\email{xie@math.tamu.edu}
\thanks{The first author is partially supported by NSF 1500823, NSF 1800737.}
\author{Guoliang Yu}
\address[Guoliang Yu]{Department of Mathematics, Texas A\&M University, College Station, TX 77843, USA
\\    Shanghai Center for Mathematical Sciences, Shanghai, China}
\email{guoliangyu@math.tamu.edu}
\thanks{The second author is partially supported by NSF 1700021, NSF 1564398}
\thanks{}
\begin{document}

\begin{abstract}
 We give a survey on  higher invariants in noncommutative geometry and their applications to differential geometry and topology. 
  \end{abstract}

\maketitle


\section{Introduction}

 Geometry and topology of a smooth manifold is often governed by natural differential operators on the manifold. 
 When a smooth manifold $M$ is closed (compact without boundary), 
 a basic invariant of these differential operators is  their Fredholm index.
Roughly speaking the Fredholm index measures the size of the solution space 
for an infinite dimensional linear system associated to the operator $D$. More precisely, 
the Fredholm index  of $D$ by the formula:
$\text{index}(D)=\text{dim} (\text{kernel}(D)) -\text{dim} (\text{kernel} (D^\ast)). $  
 The beauty of the Fredholm index is its invariance under small perturbations and homotopy equivalence. The Fredholm index is an obstruction to invertibility of the operator. The Fredholm index of such an operator $D$
 is computed by the well-known Atiyah-Singer index theorem   
  \cite{AS}. 
 The Atiyah-Singer index theorem has important applications to geometry,  topology, and mathematical physics.

Alain Connes' powerful noncommutative geometry provides the framework for a much more refined  index theory, called higher index theory \cite{BC, BCH, C, CM, K}. Higher index theory is a far-reaching generalization of classic Fredholm index theory by taking into consideration of the symmetries given by the fundamental group.
Let $D$ be an elliptic differential operator on a closed manifold $M$ of dimension $n$. If $\widetilde M$ is the universal cover of $M$, and  $\widetilde D$ is the lift of $D$ onto $\widetilde M$, then we can define a higher index of $\widetilde D$  in $K_n(C^\ast_r(\pi_1 M))$, where $\pi_1 M$ is the fundamental group of $M$ and $K_n(C^\ast_r(\pi_1 M))$ is the $K$-theory of the reduced group $C^\ast$-algebra $C^\ast_r(\pi_1 M)$. This higher index  is an obstruction to the invertibility of $\widetilde D$ and is invariant under homotopy. Higher index theory plays a fundamental role in the studies of  problems in geometry and topology such as the Novikov conjecture on homotopy invariance of higher signatures and the Gromov-Lawson conjecture on nonexistence of Riemmanian metrics with positive scalar curvature. 
Higher indices are often referred to as primary invariants due to its homotopy invariance property.  The Baum-Connes conjecture provides an algorithm for computing the higher index \cite{BC, BCH} while the strong Novikov conjecture predicts when the higher index vanishes \cite{K}.
When a closed manifold $M$ carries a Riemannian metric with positive scalar curvature, by the Lichnerowicz formula, the Dirac operator  $\widetilde D$ on $\widetilde M$ is invertible and hence its higher index vanishes. If $M$ is aspherical, i.e. its universal cover is contractible, then the strong Novikov conjecture predicts that the higher index of the Dirac operator is non-zero and hence implies the Gromov-Lawson conjecture stating that any closed aspherical manifold cannot carry a Riemannian metric with positive scalar curvature \cite{R}. 
Another important application of higher index theory is
the Novikov conjecture \cite{N}, a central problem in topology.
Roughly speaking,
the Novikov conjecture claims that compact smooth manifolds are rigid at an infinitesimal level. More precisely, 
the Novikov conjecture states that the higher signatures of compact oriented smooth manifolds are invariant under orientation preserving homotopy equivalences.  Recall that a compact manifold is called aspherical if its universal cover is contractible.
In the case of aspherical manifolds, the Novikov conjecture is an infinitesimal version of the Borel conjecture, which states that all compact aspherical manifolds
are topologically rigid, i.e. if another compact manifold $N$ is homotopy equivalent to the given compact aspherical manifold $M$, then $N$ is homeomorphic to $M$.  A theorem of Novikov states that the rational Pontryagin classes are invariant under orientation preserving homeomorphisms  
 \cite{N1}. Thus the Novikov conjecture for compact aspherical manifolds follows from the Borel conjecture and Novikov's theorem,  since for aspherical manfolds, the information about higher signatures is equivalent to that of rational Pontryagin classes. In general, the Novikov conjecture follows from the strong Novikov conjecture when applied to the signature operator.  With the help of noncommutative geometry, 
  spectacular progress has been made on the Novikov conjecture.

When the higher index  of an elliptic operator is trivial with a given trivialization, a secondary index theoretic invariant naturally arises \cite{HR2, Roe1}.   This secondary invariant is  called the higher rho invariant.  It serves as an obstruction to  locality of the inverse of an invertible elliptic operator. For example,  when  a closed manifold $M$ carries a Riemannian metric with positive scalar curvature,  the Dirac operator  $\widetilde D$  on its universal cover $\widetilde M$ is invertible, hence its higher index is trivial. In this case,  the positive scalar curvature metric gives a specific trivialization of the higher index, thus naturally defines a higher rho invariant. Such a secondary index theoretic invariant is of fundamental importance for studying the space of positive scalar curvature metrics of a given closed spin manifold. For instance,  this secondary invariant is an essential ingredient for measuring the size of the moduli space (under diffeomorphism group) of positive scalar curvature metrics on a given closed spin manifold \cite{XY1}.	
The following is another typical situation where higher rho invariants naturally arise.  Given   an orientation-preserving homotopy equivalence between two oriented closed manifolds,  the higher index of the signature operator on the disjoint union of the two manifolds (one of them equipped with the opposite orientation) is trivial with a trivialization given by the homotopy equivalence. Hence such a homotopy equivalence naturally defines a higher rho invariant for the signature operator \cite{HR2, Roe1}. More generally, the notion of higher rho invariants can be defined for homotopy equivalences between topological manifolds \cite{Z}, and these invariants serve as a powerful tool for  detecting whether a homotopy equivalence can be ``deformed" into a homeomorphism.  Furthermore, the authors proved  in \cite{WXY} that the higher rho invariant defines a group homomorphism on the structure group of a \emph{topological} manifold. As an application, one can use the higher rho invariant to measure the degree of nonrigidity of a topological manifold.

Connes' cyclic cohomology theory provides a powerful method to compute  higher rho invariants. It turns out that   the pairing of cyclic cohomology with
higher rho invariants can be computed in terms of John Lott's higher eta invariants. This relation can be used to give an elegant approach to the higher Atiyah-Patodi-Singer index theory for manifolds with boundary and provide a potential way to construct counter examples to the Baum-Connes conjecture.

The purpose of this article is to give a friendly survey on these recent developments of higher invariants in noncommutative geometry and their applications to geometry and topology.

\subsection*{Acknowledgment}
The authors  wish to thank  Alain Connes for numerous inspiring discussions. 

\section{Geometric $C^\ast$-algebras}\label{sec:prem}

In this section, we give an overview of several $C^\ast$-algebras  naturally arising from geometry and topology. The K-theory groups of these  $C^\ast$-algebras serve as receptacles of our higher invariants. 

Let $X$ be a proper metric space. That is, every closed ball in $X$ is compact. An $X$-module is a separable Hilbert space equipped with a	$\ast$-representation of $C_0(X)$, the algebra of all continuous functions on $X$ which vanish at infinity. An	$X$-module is called nondegenerate if the $\ast$-representation of $C_0(X)$ is nondegenerate. An $X$-module is said to be standard if no nonzero function in $C_0(X)$ acts as a compact operator. 

We shall first recall the concepts of propagation, local compactness, and pseudo-locality.

\begin{defn}
	Let $H_X$ be a $X$-module and $T$ a bounded linear operator acting on $H_X$. 
	\begin{enumerate}[(i)]
	
		\item The propagation of $T$ is defined to be 
		$sup\{ d(x, y)\mid (x, y)\in supp(T)\}$, where $supp(T)$ is  the complement (in $X\times X$) of the set of points $(x, y)\in X\times X$ for which there exist $f, g\in C_0(X)$ such that $gTf= 0$ and $f(x)\neq 0$, $g(y) \neq 0$;
		\item $T$ is said to be locally compact if $fT$ and $Tf$ are compact for all $f\in C_0(X)$; 
		\item $T$ is said to be pseudo-local if $[T, f]$ is compact for all $f\in C_0(X)$.  
	\end{enumerate}
\end{defn}

Pseudo-locality is the essential property for the concept of an abstract ``differential operator" in K-homology theory \cite{A, K}.

The following concept  was introduced by Roe in his work on higher index theory for noncompact spaces \cite{Roe}. 

\begin{defn}\label{def:localg}
	Let $H_X$ be a standard nondegenerate $X$-module and $\mathcal B(H_X)$ the set of all bounded linear operators on $H_X$. 
	The Roe algebra of $X$, denoted by $C^\ast(X)$, is the $C^\ast$-algebra generated by all locally compact operators with finite propagations in $\mathcal B(H_X)$.
	
	\end{defn}
	
	The following localization algebra was introduced by \cite{Y}. 
	
	\begin{defn}\label{def:localg}
	
	The localization algebra $C_L^\ast(X)$   is the $C^\ast$-algebra generated by all bounded and uniformly norm-continuous functions $f: [0, \infty) \to C^\ast(X)$   such that 
		\[ \textup{propagation of $f(t) \to 0 $, as $t\to \infty$. }\]  
	We define $C_{L, 0}^\ast(X)$ to be  the kernel of the evaluation map 
		\[  e \colon C_L^\ast(X) \to C^\ast(X),  \quad   e(f) = f(0).\]
		In particular, $C_{L, 0}^\ast(X)$ is an  ideal of $C_L^\ast(X)$.

\end{defn}

The localization algebra was motivated by local index theory.

Now we take symmetries into consideration. Let's  assume that a discrete group $\Gamma$ acts properly on  $X$  by isometries. Let $H_X$ be a $X$-module equipped with a covariant unitary representation of $\Gamma$. If we denote the representation of $C_0(X)$ by $\varphi$ and the representation of $\Gamma$ by $\pi$, this means 
\[  \pi(\gamma) (\varphi(f) v )  =  \varphi(f^\gamma) (\pi(\gamma) v),\] 
where $f\in C_0(X)$, $\gamma\in \Gamma$, $v\in H_X$ and $f^\gamma (x) = f (\gamma^{-1} x)$. In this case, we call $(H_X, \Gamma, \varphi)$ a covariant system.  

\begin{defns}[\cite{Y3}]
	A covariant system $(H_X, \Gamma, \varphi)$ is called admissible if 
	\begin{enumerate}[(1)]
		\item the $\Gamma$-action on $X$ is proper and cocompact;
		\item $H_X$ is a nondegenerate standard $X$-module;
		\item for each $x\in X$, the stabilizer group $\Gamma_x$ acts on $H_X$ regularly in the sense that the action is isomorphic to the action of $\Gamma_x$ on $l^2(\Gamma_x)\otimes H$ for some infinite dimensional Hilbert space $H$. Here $\Gamma_x$ acts on $l^2(\Gamma_x)$ by translations and acts on $H$ trivially. 
	\end{enumerate}
\end{defns}
We remark that for each locally compact metric space $X$ with a proper and cocompact isometric action of $\Gamma$, there exists an admissible covariant system $(H_X, \Gamma, \varphi)$. Also, we point out that the condition $(3)$ above is automatically satisfied if $\Gamma$ acts freely on $X$. If no confusion arises, we will denote an admissible covariant system $(H_X, \Gamma, \varphi)$ by $H_X$ and call it an admissible $(X, \Gamma)$-module.

\begin{defn}
	Let $X$ be a locally compact metric space $X$ with a proper and cocompact isometric action of $\Gamma$. If $H_X$ is an admissible $(X, \Gamma)$-module, we denote by $\mathbb C[X]^\Gamma$ the \mbox{$\ast$-algebra} of all $\Gamma$-invariant locally compact operators with finite propagations in $\mathcal B(H_{X})$.  We define the equivariant Roe algebra $C^\ast(X)^\Gamma$ to be the completion of $\mathbb C[X]^\Gamma$ in $\mathcal B(H_{ X})$.
\end{defn}

We remark that if the $\Gamma$-action on $X$ is cocompact, then 
the equivariant Roe algebra $C^\ast(X)^\Gamma$ is $\ast$-isomorphic to
$C^\ast_r(\Gamma)\otimes K$, where $C^\ast_r(\Gamma)$ is the reduced group $C^\ast$-algebra of $\Gamma$ and $K$ is the $C^\ast$-algebra of all compact operators. We also point out that, up to isomorphism, $C^\ast(X) = C^\ast(X, H_X)$ does not depend on the choice of the standard nondegenerate $X$-module $H_X$. The same statement holds for  $C_L^\ast(X)$,  $C_{L,0}^\ast( X)$ and  their $\Gamma$-equivariant versions. 

We can also define the maximal versions of the geometric $C^\ast$-algebras in this section by taking the norm completions over all $\ast$-representations of their algebraic counterparts.

\section{Higher index theory and localization }\label{sec:index}

In this section, we construct the higher index of an elliptic operator.
We also introduce  a local index map from the $K$-homology group to the $K$-group of  the localization algebra and explain that this local index map is an isomorphism. 

\subsection{K-homology}
	We first  discuss  the $K$-homology theory of  Kasparov.
	Let $X$ be a locally compact metric space with a proper and cocompact
	isometric action of $\Gamma$. The $K$-homology groups $K_{j}^{\Gamma
	}(X)$, $j=0, 1$, are generated by the following cycles modulo certain
	equivalence relations (cf. \cite{K}):
	\begin{enumerate}[(ii)]
		\item[(i)] an even cycle for $K_{0}^{\Gamma}(X)$ is a pair $(H_{X},
		F)$, where $H_{X}$ is an admissible $(X, \Gamma)$-module and $F\in
		\mathcal{B}(H_{X})$ such that $F$ is $\Gamma$-equivariant, $F^{\ast} F
		- I$ and $F F^{\ast} - I$ are locally compact and $[F, f] = F f - fF $
		is compact for all $f\in C_{0}(X)$.
		\item[(ii)] an odd cycle for $K_{1}^{\Gamma}(X)$ is a pair $(H_{X},
		F)$, where $H_{X}$ is an admissible $(X, \Gamma)$-module and $F$ is a
		$\Gamma$-equivariant self-adjoint operator in $\mathcal{B}(H_{X})$
		such that $F^{2} - I$ is locally compact and $[F, f]$ is compact for
		all $f\in C_{0}(X)$.
	\end{enumerate}
	
Roughly speaking, the $K$-homology group of $X$ is generated by abstract
elliptic operators over $X$ \cite{A, K}.
	
		In the general case where the action of $\Gamma$ on $X$ is not
		necessarily cocompact, we define
		\begin{eqnarray*} K_{i}^{\Gamma}(X) = \varinjlim_{Y\subseteq X}
			K_{i}^{\Gamma}(Y),
		\end{eqnarray*}
		where $Y$ runs through all closed $\Gamma$-invariant subsets of $X$
		such that $Y/\Gamma$ is compact.

	\subsection{K-theory and boundary maps}\label{sec:kt}
	In this subsection, we recall the standard construction of the index
	maps in $K$-theory of $C^{\ast}$-algebras.
	For a short exact sequence of $C^{\ast}$-algebras $ 0 \to\mathcal{J}
	\to\mathcal{A} \to\mathcal{A}/\mathcal{J} \to0$,
	we have a six-term exact sequence in $K$-theory:
	\begin{eqnarray*}
		\xymatrix{ K_0( \mathcal J ) \ar[r] & K_0(\mathcal A) \ar[r] &
			K_0(\mathcal A/\mathcal J ) \ar[d]^{\partial_1} \\
			K_1(\mathcal A/\mathcal J) \ar[u]^{\partial_0}
			& K_1(\mathcal A) \ar[l] & K_1(\mathcal J) \ar[l]
		}
	\end{eqnarray*}
	Let us recall the definition of the boundary maps $\partial_{i}$.
	\begin{enumerate}[(2)]
		\item[(1)] Even case. Let $u$ be an invertible element in $\mathcal
		{A}/\mathcal{J}$. Let $v$ be the inverse of $u$ in $\mathcal{A}/\mathcal
		{J}$. Now suppose $U, V\in\mathcal{A}$ are lifts of $u$ and $v$. We define
		\begin{eqnarray*} W =
			\begin{pmatrix} 1 & U\\ 0 & 1
			\end{pmatrix}
			\begin{pmatrix} 1 & 0 \\ -V & 1
			\end{pmatrix}
			\begin{pmatrix} 1 & U \\ 0 & 1
			\end{pmatrix}
			\begin{pmatrix} 0 & -1\\ 1 & 0
			\end{pmatrix}
			.
		\end{eqnarray*}
		Notice that $W$ is invertible and a direct computation shows that
		\begin{eqnarray*} W -
			\begin{pmatrix} U & 0 \\ 0 & V
			\end{pmatrix}
			\in\mathcal{J}.
		\end{eqnarray*}
		Consider the idempotent
		%
		\begin{equation}\label{eq:keven}
		P = W
		\begin{pmatrix} 1 & 0 \\ 0 & 0
		\end{pmatrix}
		W^{-1} =
		\begin{pmatrix} UV + UV(1-UV) & (2 - UV)(1-UV) U \\ V(1-UV) & (1-VU)^{2}
		\end{pmatrix}
		.
		\end{equation}
		We have
		\begin{eqnarray*} P -
			\begin{pmatrix} 1 & 0 \\0 & 0
			\end{pmatrix}
			\in\mathcal{J}.
		\end{eqnarray*}
		By definition,
		\begin{eqnarray*} \partial([u]) := [P] - \left[
			\begin{pmatrix} 1 & 0 \\0 & 0
			\end{pmatrix}
			\right] \in K_{0}(\mathcal{J}).
		\end{eqnarray*}
		\item[(2)] Odd case. Let $q $ be an idempotent in $\mathcal{A}/\mathcal
		{J}$ and $Q$ a lift of $q$ in $\mathcal{A}$. Then
		\begin{eqnarray*} \partial([q]) := [e^{2\pi iQ}] \in K_{1}(\mathcal
			{J}).
		\end{eqnarray*}
	\end{enumerate}
	
	\subsection{Higher index map and local index map}\label{sec:localind}
	In this subsection, we  describe the constructions of the higher index map
	 \cite{BC, BCH, K, FM}
	and the local index map \cite{Y, Y1}. 
	
	Let $(H_{X}, F)$ be an even cycle for $K_{0}^{\Gamma}(X)$. Choose a
	$\Gamma$-invariant locally finite open cover $\{U_{i}\}$ of $X$ with
	diameter $(U_{i}) < c$ for some fixed $c > 0$. Let $\{\phi_{i}\} $ be a
	$\Gamma$-invariant continuous partition of unity subordinate to $\{
	U_{i}\}$. We define
	\begin{eqnarray*} \mathcal{F} = \sum_{i} \phi^{1/2}_{i} F \phi^{1/2}_{i},
	\end{eqnarray*}
	where the sum converges in strong operator norm topology. It is not
	difficult to see that $(H_{X}, \mathcal{F})$ is equivalent to $(H_{X},
	F)$ in $K^{\Gamma}_{0}(X)$. By using the fact that $\mathcal{F}$ has
	finite propagation, we see that $\mathcal{F}$ is a multiplier of
	$C^{\ast}(X)^{\Gamma}$ and,  is a unitary modulo $C^{\ast
	}(X)^{\Gamma}$. Consider the short exact sequence of $C^{\ast}$-algebras
	\begin{eqnarray*} 0 \to C^{\ast}(X)^{\Gamma} \to\mathcal{M}(C^{\ast
		}(X)^{\Gamma}) \to\mathcal{M}(C^{\ast}(X)^{\Gamma}) /C^{\ast
	}(X)^{\Gamma} \to0
\end{eqnarray*}
where $\mathcal{M}(C^{\ast}(X)^{\Gamma})$ is the multiplier algebra
of $C^{\ast}(X)^{\Gamma}$.
By the construction in Section~\ref{sec:kt} above, $\mathcal{F}$
produces a class $\partial ([\mathcal{F}] )\in K_{0}(C^{\ast}(X)^{\Gamma})$. We
define the higher index of $(H_{X}, F)$ to be $\partial ([\mathcal{F}])$. From now
on, we denote $[\mathcal{F}]$ by $\text{\textup{Ind}}(H_{X}, F)$ or simply
$\text{\textup{Ind}}(F)$, if no confusion arises.

To define the local index  of $(H_{X}, F)$, we need to use a
family of partitions of unity. More precisely, for each $n\in\mathbb
{N}$, let $\{U_{n, j}\}$ be a $\Gamma$-invariant locally finite open
cover of $X$ with diameter $(U_{n,j}) < 1/n$ and $\{\phi_{n, j}\}$ be
a $\Gamma$-invariant continuous partition of unity subordinate to $\{
U_{n, j}\}$. We define
%
\begin{equation}\label{eq:path}
\mathcal{F}(t) = \sum_{j} (1 - (t-n)) \phi_{n, j}^{1/2} \mathcal{F}
\phi_{n,j}^{1/2} + (t-n) \phi_{n+1, j}^{1/2} \mathcal{F} \phi_{n+1, j}^{1/2}
\end{equation}
for $t\in[n, n+1]$.

Then $\mathcal{F}(t), 0\leq t <\infty$, is a multiplier of $C^{\ast
}_{L}(X)^{\Gamma}$ and a unitary modulo $C^{\ast}_{L}(X)^{\Gamma}$.
 By the construction in Section~\ref{sec:kt} above, we  define  $\partial ([\mathcal{F}(t)])\in
K_{0}(C^{\ast}_{L}(X)^{\Gamma})$ to be the local index of $(H_{X}, F)$. If
no confusion arises, we denote this local index class by $\text{\textup
	{Ind}}_{L}(H_{X}, F)$ or simply $\text{\textup{Ind}}_{L}(F)$.

Now let $(H_{X}, F)$ be an odd cycle in $K_{1}^{\Gamma}(X)$. With the
same notation from above, we set $q = \frac{\mathcal{F} + 1}{2}$. Then
the index class of $(H_{X}, F)$ is defined to be $ [e^{2\pi i q}]\in
K_{1}(C^{\ast}(X)^{\Gamma})$. For the local index class of $(H_{X},
F)$, we use $q(t) = \frac{\mathcal{F}(t) + 1}{2}$ in place of $q$.

We have the following commutative diagram:

\[
\begin{tikzcd}[row sep=2.5em]
  &  K_{\ast}^{\Gamma}(X) \arrow{dr}{ Ind} \arrow[dl, "Ind_L"']  \\
K_{\ast}(C^{\ast}_{L}(X)^{\Gamma})  \arrow{rr}{e_\ast} && K_{\ast}(C^{\ast}(X)^{\Gamma}),
\end{tikzcd}
\]
where $e_\ast$ is the homomorphism induced by the evaluation map $e$ at $0$.

The following result was proved in the case of simplicial complexes 
in \cite{Y} and the general case  in \cite{QR}.

\begin{thm} If a discrete group $\Gamma$ acts properly on a locally compact space $X$, then
the local index map is an isomorphism from the $K$-homology group
$K_{\ast}^{\Gamma}(X)$ to the K-group of the localization algebra
$K_{\ast}(C^{\ast}_{L}(X)^{\Gamma})$.
\end{thm}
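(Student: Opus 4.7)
The plan is to compare $K_\ast^\Gamma(-)$ and $K_\ast(C_L^\ast(-)^\Gamma)$ as equivariant generalized homology theories on proper $\Gamma$-spaces with $\textup{Ind}_L$ as a natural transformation between them, verify the isomorphism on simple building blocks, and bootstrap. After reducing to the cocompact case---which on the $K$-homology side is built into the colimit definition and on the other side follows from continuity of $C_L^\ast(-)^\Gamma$ under cocompact exhaustion together with continuity of $K$-theory---I would use an equivariant simplicial approximation, together with the homotopy invariance of both sides, to assume $X$ is a finite-dimensional $\Gamma$-simplicial complex with $X/\Gamma$ compact.

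The heart of the argument is to set up parallel Mayer-Vietoris six-term exact sequences for both functors with respect to a closed $\Gamma$-invariant decomposition $X = Y \cup Z$ into subcomplexes, and to check that $\textup{Ind}_L$ intertwines the boundary maps. The $K$-homology side is classical Kasparov theory. For the localization algebra, the crucial input is that the natural homomorphism
\[
C_L^\ast(Y)^\Gamma + C_L^\ast(Z)^\Gamma \longrightarrow C_L^\ast(Y \cup Z)^\Gamma
\]
is a $K$-theory isomorphism with intersection $K$-equivalent to $C_L^\ast(Y \cap Z)^\Gamma$. One proves this using an equivariant partition of unity $\{\phi, \psi\}$ subordinate to $\{Y, Z\}$ and decomposing $f(t) = \phi^{1/2} f(t) \phi^{1/2} + \psi^{1/2} f(t) \psi^{1/2} + (\textup{cross terms})$; since the propagation of $f(t)$ tends to $0$ as $t \to \infty$, the cross terms are forced into an arbitrarily small neighborhood of $Y \cap Z$ and become $K$-theoretically negligible. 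Compatibility of $\textup{Ind}_L$ with the Mayer-Vietoris boundary maps follows from the naturality of the boundary-map construction recalled in Section~\ref{sec:kt}.

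An induction on the number of orbits of simplices, combined with the five lemma, then reduces the theorem to the case $X = \Gamma/H$ for a finite subgroup $H \leq \Gamma$. On a single orbit the propagation condition is vacuous, so $C_L^\ast(\Gamma/H)^\Gamma$ is the algebra of bounded uniformly norm-continuous paths into $C^\ast(\Gamma/H)^\Gamma \cong \mathbb{C}[H] \otimes \mathcal{K}$, and evaluation at $0$ is a $K$-equivalence via a standard reparametrization homotopy $f \mapsto f_s$, $f_s(t) = f(st)$. Under this identification $\textup{Ind}_L$ coincides with the classical equivariant index map $K_\ast^H(\textup{pt}) \to K_\ast(\mathbb{C}[H]) \cong R(H)$, which is an isomorphism.

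The main obstacle is the Mayer-Vietoris step for $K_\ast(C_L^\ast(-)^\Gamma)$: one has to balance the Lipschitz constants of the partition of unity against the rate at which the propagation of $f(t)$ shrinks, in order to make the cross-term estimate produce a genuine $K$-theoretic decomposition. This forces a partition of unity whose Lipschitz scale depends on the propagation decay of $f(t)$, together with an asymptotic-morphism-style argument showing that the cross terms vanish in the limit $t \to \infty$. Once this delicate cut-off is in place, the five-lemma induction and the base-case computation are essentially formal.
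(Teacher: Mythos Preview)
The paper itself does not give a proof here: it simply cites \cite{Y} for the simplicial case and \cite{QR} for general locally compact spaces. Your Mayer--Vietoris and five-lemma strategy is precisely the argument of \cite{Y}, so for proper $\Gamma$-simplicial complexes your outline is correct and matches the cited proof. The delicate cut-off issue you flag in the Mayer--Vietoris step is real and is exactly what the asymptotically vanishing propagation of $C_L^\ast$ is designed to handle; Yu's original paper carries this out.

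The gap is in your reduction step. ``Equivariant simplicial approximation together with homotopy invariance'' does not reduce an arbitrary locally compact proper $\Gamma$-space to a $\Gamma$-simplicial complex: a compact totally disconnected space such as a Cantor set (with trivial action, say) is not homotopy equivalent to any CW complex, yet the theorem is asserted for it. Kasparov $K$-homology and localization-algebra $K$-theory are both defined and well-behaved on such spaces, but neither side is a CW-homotopy functor in a way that lets you replace $X$ by a simplicial model. This is why the general case in \cite{QR} is not proved by your method; Qiao and Roe instead identify $K_\ast(C_L^\ast(X))$ with $K$-homology via a Paschke-duality argument that works directly for arbitrary proper metric spaces, bypassing any cellular decomposition.

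A smaller point: in your base case, the propagation condition on $\Gamma/H$ is not literally vacuous, since $\Gamma/H$ is typically an infinite discrete metric space. What is true is that propagation $\to 0$ forces $f(t)$ to be diagonal for large $t$, and an induction-restriction argument identifies $C_L^\ast(\Gamma/H)^\Gamma$ with $C_L^\ast(\mathrm{pt})^H$, on which the condition \emph{is} vacuous. Your conclusion there is correct, but the justification needs this extra step.
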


\section{The Baum-Connes assembly and a local-global principle }
In this section, we formulate the Baum-Connes conjecture as a local-global principle and discuss its connection to the Novikov conjecture.

We first recall the concept of Rips complexes.

\begin{defn} Let $\Gamma$ be a discrete group, let $F\subseteq \Gamma$ be a finite symmetric subset containing the identity (symmetric in the sense if $g\in F$, then $g^{-1}\in F$). The Rips complex $P_F (\Gamma)$ is a simplicial complex such that

\begin{enumerate}[(i)]
		\item the set of vertices is $\Gamma$;
		
		\item a finite subset $\{\gamma_0, \cdots, \gamma_n\}$ span a simplex if and only if $\gamma_i^{-1}\gamma_j \in F$ for all $0\leq i, j\leq n$.
\end{enumerate}

\end{defn}

We endow the Rips complex with the simplicial metric, i.e. the maximal metric whose restriction to a maximal simplex is the standard Euclidean metric on  the simplex.

The Baum-Connes conjecture \cite{BC, BCH} can be stated as follows. 

\begin{conj}[Baum-Connes Conjecture]The evaluation map $e$ induces an isomorphism $e_\ast$ from
the $K$-group of the equivariant localization algebra 
$\displaystyle \varinjlim_{F} K_{\ast}(C^{\ast}_{L}(P_F(\Gamma))^{\Gamma})$
to the \mbox{$K$-group} 
of the equivariant Roe algebra $\displaystyle  \varinjlim_{F} K_{\ast}(C^{\ast}(P_F(\Gamma))^{\Gamma})$, where the limit is taken over the directed set
of all finite symmetric subset $F$ of $\Gamma$ containing the identity. 

\end{conj}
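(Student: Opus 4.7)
The plan is to exploit the local-global structure already set up in the excerpt. Since the local index map $\mathrm{Ind}_L \colon K_\ast^\Gamma(P_F(\Gamma)) \to K_\ast(C^\ast_L(P_F(\Gamma))^\Gamma)$ is an isomorphism by the preceding theorem, and $e_\ast \circ \mathrm{Ind}_L = \mathrm{Ind}$, the stated conjecture is equivalent to the classical Baum--Connes assembly map $\mathrm{Ind} \colon \varinjlim_F K_\ast^\Gamma(P_F(\Gamma)) \to K_\ast(C^\ast_r(\Gamma))$ being an isomorphism, where the left side computes the equivariant $K$-homology of the classifying space for proper actions. So the first move is to reduce to this familiar reformulation, after which the task splits into proving injectivity and surjectivity of $e_\ast$.

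The natural strategy for both is Kasparov's Dirac--dual Dirac method. I would seek a proper $\Gamma$-$C^\ast$-algebra $\mathcal{A}$ (often of continuous functions on a suitable nonpositively curved space on which $\Gamma$ acts) and construct a Dirac element $d \in KK^\Gamma(\mathcal{A}, \mathbb{C})$ together with a dual Dirac element $\eta \in KK^\Gamma(\mathbb{C}, \mathcal{A})$ whose Kasparov product $\gamma = \eta \otimes_{\mathcal{A}} d$ lies in $KK^\Gamma(\mathbb{C}, \mathbb{C})$. A descent principle would then transport this factorization down to a splitting of $e_\ast$ after applying the full/reduced crossed product, yielding injectivity of the assembly. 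If one can moreover show that $\gamma = 1$ in the relevant representation of $KK^\Gamma(\mathbb{C}, \mathbb{C})$, surjectivity follows and the isomorphism is complete. This is exactly the template that succeeds for groups with the Haagerup property (Higson--Kasparov), for hyperbolic groups (Mineyev--Yu, Lafforgue), and for many CAT(0)-cubical groups.

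A more hands-on alternative, adapted to the localization-algebra language of the excerpt, is a controlled $K$-theory approach: represent a class in $K_\ast(C^\ast(P_F(\Gamma))^\Gamma)$ by an operator of some finite propagation $R$, then use partitions of unity refined along the Rips filtration (in the spirit of \eqref{eq:path}) to produce a homotopy to an operator of propagation $R/2$, $R/4$, and so on, assembling a path in $C^\ast_L(P_F(\Gamma))^\Gamma$ that witnesses the desired lift. Injectivity runs the same Mayer--Vietoris induction one dimension higher. This is the method by which Yu established the coarse Baum--Connes conjecture for spaces of finite asymptotic dimension and, via finite-dimensional embeddability, for groups coarsely embeddable into Hilbert space.

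The hardest step, and the reason this remains a conjecture rather than a theorem, is property (T). Kazhdan projections in $C^\ast_r(\Gamma)$ produce $K$-theory classes that obstruct any naive ``small propagation'' lifting, and for the maximal version of the conjecture, expanders and Gromov-monster groups yield genuine counterexamples on the surjectivity side. Thus the main obstacle in any serious proof attempt is constructing a $\gamma$-element that equals $1$ (or a controlled lifting that survives spectral gaps) for groups outside the Haagerup/hyperbolic range; this is precisely where a new analytic or geometric idea, beyond the Dirac--dual Dirac template and beyond coarse embedding into Hilbert space, would be required.
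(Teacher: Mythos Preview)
The statement you are addressing is a \emph{conjecture}, not a theorem, and the paper does not attempt to prove it. Immediately after stating it, the paper explains that the conjecture is a local-global principle, records the Higson--Kasparov theorem for groups acting properly on Hilbert space, Lafforgue's theorem for hyperbolic groups, and notes the Higson--Lafforgue--Skandalis counterexamples to the version with coefficients, while remarking that the conjecture without coefficients remains open. There is therefore no proof in the paper to compare your proposal against.

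Your write-up is not a proof but a survey of proof strategies, and you correctly acknowledge this in your final paragraph. The reduction you give via $e_\ast \circ \mathrm{Ind}_L = \mathrm{Ind}$ to the classical assembly map is accurate and matches the paper's framing. Your outline of the Dirac--dual Dirac method and the controlled $K$-theory approach is a fair summary of the known techniques, and your identification of property (T) and Kazhdan projections as the central obstruction is on point. But none of this constitutes a proof of the conjecture for general $\Gamma$, and it cannot, since the conjecture is open. If the intent was to propose a proof, the genuine gap is that no argument is offered for the existence of a $\gamma$-element equal to $1$ (or an equivalent controlled lifting) beyond the classes of groups where this is already known; you have described the template but not supplied the missing analytic input.
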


Note that $\displaystyle  \varinjlim_F K_{\ast}(C^{\ast}(P_F(\Gamma))^{\Gamma})$
is isomorphic to K-group of $C_r^\ast(\Gamma)$,
 the reduced group $C^\ast$-algebra of $\Gamma$
since the $\Gamma$ action on the Rips complex is cocompact.

While the K-theory of  the equivariant Roe algebra is global and hard to compute, the K-theory of the localization algebra is local and completely computable. Thus the Baum-Connes conjecture is a local-global principle. If true, the conjecture provides an algorithm for computing $K$-groups of equivariant Roe algebras and higher indices of elliptic operators. 
In particular, in this case, we see that every element in the  $K$-group of  the equivariant Roe algebra
can be localized.

More generally, if $A$ is a $C^\ast$-algebra with an action of $\Gamma$, then we can define the equivariant Roe algebra with coefficients in $A$, denoted by $C^{\ast}(P_F(\Gamma), A)^{\Gamma}$. The  equivariant Roe algebra with coefficients in $A$ is $\ast$-isomorphic to   $(A\rtimes \Gamma)\otimes \mathcal K $, where $\mathcal K$ is the algebra of compact operators on a Hilbert space. We can similarly introduce an equivariant localization algebra with coefficients to formulate the Baum-Connes conjecture with coefficients.


Higson and Kasparov developed an index theory of certain differential operators on an infinite-dimensional Hilbert space and proved the following spectacular result \cite{HK}.

\begin{thm}\label{HKforBC} If a discrete group $\Gamma$ acts on Hilbert space properly and isomentrically,
then the Baum-Connes conjecture with coefficients holds for $\Gamma.$
\end{thm}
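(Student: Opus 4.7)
The plan is to carry out the Dirac/dual-Dirac method of Kasparov in an infinite-dimensional setting, using the affine isometric action of $\Gamma$ on Hilbert space as a substitute for a proper action on a finite-dimensional manifold of nonpositive curvature. The central idea is to build a $\Gamma$-equivariant coefficient algebra $\mathcal{A}(H)$, associated to the Hilbert space $H$, which captures the geometry of $H$ and, thanks to the properness of the action, behaves like a proper $\Gamma$-$C^\ast$-algebra at the level of $K$-theory.

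First I would construct $\mathcal{A}(H)$ as a graded inductive limit. Starting from the $\mathbb{Z}/2$-graded $C^\ast$-algebra $\mathcal{S} = C_0(\mathbb{R})$ (graded by even/odd functions) and, for every finite-dimensional affine subspace $V \subseteq H$, the graded $C^\ast$-algebra $\mathcal{A}(V) = \mathcal{S}\,\widehat{\otimes}\,C_0(V, \mathrm{Cliff}(V))$, where $\mathrm{Cliff}(V)$ is the complexified Clifford algebra of the linear part of $V$, I would produce structure maps $\mathcal{A}(V_1) \to \mathcal{A}(V_2)$ for $V_1 \subseteq V_2$ by tensoring with the finite-dimensional Bott element associated to the orthogonal complement. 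The inductive limit over all finite-dimensional affine subspaces yields $\mathcal{A}(H)$, and the affine isometric $\Gamma$-action on $H$ promotes canonically to a $\Gamma$-action on $\mathcal{A}(H)$. The inclusion of the zero-dimensional subspace produces an equivariant Bott morphism $\beta \colon \mathcal{S} \to \mathcal{A}(H)$.

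Second, I would construct a Dirac morphism $\alpha \colon \mathcal{A}(H) \to \mathcal{S}\,\widehat{\otimes}\,\mathcal{K}$ as a $\Gamma$-equivariant asymptotic morphism assembled from the functional calculus of the Bott-Dirac operators on finite-dimensional affine subspaces of $H$. The main step, and the principal obstacle, is to prove that $\alpha \circ \beta$ is homotopic to the identity on $\mathcal{S}$ as an asymptotic morphism; this is the infinite-dimensional Bott periodicity statement at the heart of the theorem. The argument goes through a rotation homotopy identifying $\beta\,\widehat{\otimes}\,\beta$ with $\beta\,\widehat{\otimes}\,1$ at each finite stage, coupled with careful spectral estimates for the Bott-Dirac operators that are uniform enough to survive the passage to the inductive limit and the incorporation of the $\Gamma$-action.

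Finally, the properness of the $\Gamma$-action on $H$ is used to show that the Baum-Connes assembly map with coefficients in $\mathcal{A}(H)\otimes A$ is an isomorphism for every $\Gamma$-$C^\ast$-algebra $A$. Concretely, the proper action on $H$ endows $\mathcal{A}(H)$ with an approximation by induced algebras from compact subgroups, so that one is reduced to a Green-Julg-style statement at the level of Rips complexes, where the local index map of the previous section is already known to be an isomorphism. A naturality-of-assembly diagram chase, using the split provided by $\beta$ and $\alpha$ together with the infinite-dimensional Bott periodicity, then transfers this isomorphism from coefficients $\mathcal{A}(H)\otimes A$ back to coefficients $A$ and yields the Baum-Connes conjecture with coefficients for $\Gamma$.
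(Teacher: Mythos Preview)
The paper does not give a proof of this theorem; it is stated as a result of Higson and Kasparov and attributed to \cite{HK}, with only the one-line description that they ``developed an index theory of certain differential operators on an infinite-dimensional Hilbert space.'' So there is no proof in the paper to compare against.

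That said, your outline is a faithful high-level sketch of the actual Higson--Kasparov argument in \cite{HK}: the graded $C^\ast$-algebra $\mathcal{A}(H)$ built as an inductive limit over finite-dimensional affine subspaces, the Bott asymptotic morphism $\beta$ and the Dirac asymptotic morphism $\alpha$, infinite-dimensional Bott periodicity via the rotation trick, and the reduction to proper coefficients. One point you underplay: the argument needs not only $\alpha\circ\beta\sim\mathrm{id}$ but that $\beta$ and $\alpha$ implement a full $\Gamma$-equivariant $E$-theory equivalence between $\mathcal{S}$ and $\mathcal{A}(H)$ (both compositions), so that the assembly map with arbitrary coefficients $A$ is identified, via naturality in the coefficient variable, with the assembly map with coefficients $\mathcal{A}(H)\otimes A$. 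Also, the statement that $\mathcal{A}(H)$ ``behaves like a proper $\Gamma$-$C^\ast$-algebra'' is where the properness hypothesis on the action is genuinely used, and the justification is more delicate than a Green--Julg reduction: Higson and Kasparov show that the center of (the multiplier algebra of) $\mathcal{A}(H)$ contains a copy of $C_0(H)$ on which $\Gamma$ acts properly, which is what makes the assembly map with these coefficients an isomorphism. Your sketch gestures at this but the mechanism deserves to be stated more precisely.
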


Recall that an isometric action $\alpha$ of a group $\Gamma$ on a Hilbert space $H$ is said to be proper if $ \|  \alpha(\gamma)h \| \rightarrow \infty$ when $\gamma \rightarrow \infty$ for any $h\in H$, i.e. for  any $h\in H$ and  any positive number $R>0$, there exists a finite subset $F$ of $\Gamma$ such that $\| \alpha(\gamma) h \| > R$ if $\gamma \in \Gamma - F$.
A theorem of Bekka-Cherix-Valette states that an amenable group acts properly and isometrically on a Hilbert space \cite{BCV}. Roughly speaking, a group is amenable if there exist large finite subsets of the group with small boundary.  The concept of amenability is a large scale geometric property and  was introduced by von Neumann. We refer the readers to the book \cite{NY} as a general reference for geometric group theory related to the Novikov conjecture.

The following deep theorem is due to Lafforgue \cite{L1}.

\begin{thm} The Baum-Connes conjecture with coefficients holds for hyperbolic groups.
\end{thm}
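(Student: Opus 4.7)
The plan is to follow Lafforgue's Banach $KK$-theoretic implementation of the Kasparov Dirac/dual-Dirac method. The global strategy is to construct a ``gamma element'' $\gamma \in KK^{\Gamma}(\mathbb{C},\mathbb{C})$ that factors through a proper $\Gamma$-algebra, to prove that $\gamma = 1$, and then to deduce from this that the descent of $\gamma$ in $K_*((A \rtimes_r \Gamma) \otimes \mathcal{K})$ acts as the identity for every $\Gamma$-$C^*$-algebra $A$. Combined with the already-established injectivity side, this forces the assembly map $e_*$ in the Baum-Connes conjecture with coefficients to be an isomorphism.

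The geometric input I would exploit is that a hyperbolic group $\Gamma$ acts properly, isometrically and cocompactly on its Rips complex $P_F(\Gamma)$ for $F$ sufficiently large, and that this complex carries a natural bolic structure in the sense of Mineyev-Yu. This structure supplies a proper cocycle and equivariant combings with sub-exponential distortion, which I would use to build a Dirac class $d \in KK^{\Gamma,\mathrm{ban}}(C_0(P_F),\mathbb{C})$ and a dual-Dirac class $\eta \in KK^{\Gamma,\mathrm{ban}}(\mathbb{C},C_0(P_F))$ in Lafforgue's Banach $KK$-theory, whose Kasparov product is $\gamma$. The passage from $C^*$-algebra data to Banach $KK$-theory is essential because hyperbolic groups can satisfy Kazhdan's property (T), an obstruction that blocks the corresponding construction in ordinary equivariant $KK$-theory.

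The main step, and by far the hardest one, is to homotope $\gamma$ to $1$ inside $KK^{\Gamma,\mathrm{ban}}(\mathbb{C},\mathbb{C})$. I would construct the homotopy using a rotation between the dual-Dirac class and a ``trivial'' class supported at a basepoint of $P_F(\Gamma)$, parametrised by a family of representations of $\Gamma$ on weighted $\ell^p$-spaces associated with the Rips cocycle. The key analytic estimates are bounds on the growth of matrix coefficients of these representations, which are controllable precisely because of the bolic/hyperbolic geometry: exponential growth on $\Gamma$ is absorbed by the combing weights, producing representations of sub-exponential growth that furnish valid Banach $KK$-cycles. Delivering these estimates uniformly along the homotopy is the technical heart of the argument.

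Finally, I would pull the identity $\gamma = 1$ back to ordinary $C^*$-algebra $K$-theory through an unconditional completion $\mathcal{A}(\Gamma,A)$ of $C_c(\Gamma,A)$, exploiting the rapid decay property of hyperbolic groups (after Haagerup and de la Harpe-Jolissaint) to identify $K_*(\mathcal{A}(\Gamma,A))$ with $K_*(A \rtimes_r \Gamma)$. Combined with the Banach descent morphism and the equality $\gamma = 1$ in $KK^{\Gamma,\mathrm{ban}}$, this yields the surjectivity of the assembly map with arbitrary $C^*$-algebra coefficients, completing the proof.
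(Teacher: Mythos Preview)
The paper is a survey and does not itself prove this theorem; it simply attributes the result to Lafforgue \cite{L1}, noting separately that the earlier Banach $KK$-approach \cite{L,MY} yielded the Baum--Connes conjecture (without coefficients) for hyperbolic groups. So there is no in-paper argument to compare against directly, but your outline contains a genuine gap precisely at the point where ``with coefficients'' enters.

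Your strategy --- Banach $KK$-theory, a $\gamma$-element factoring through a proper $\Gamma$-algebra, the homotopy $\gamma=1$, and descent through an unconditional completion --- is essentially the content of \cite{L} together with the Mineyev--Yu bicombing \cite{MY}. That argument proves the Baum--Connes conjecture for hyperbolic groups \emph{without} coefficients. The obstruction to arbitrary coefficients sits in your last paragraph: the rapid decay property of $\Gamma$ furnishes an unconditional completion of $\mathbb C\Gamma$ that is spectral in $C^*_r(\Gamma)$, but it does \emph{not} give, for an arbitrary $\Gamma$-$C^*$-algebra $A$, an unconditional completion $\mathcal A(\Gamma,A)$ of $C_c(\Gamma,A)$ whose $K$-theory coincides with $K_*(A\rtimes_r\Gamma)$. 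For hyperbolic groups with property~(T) and noncommutative $A$, no such completion is available, and this is exactly why \cite{L} stopped short of the conjecture with coefficients.

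Lafforgue's proof in \cite{L1} --- nearly two hundred pages --- circumvents unconditional completions. The new ingredient is a construction, for every $\varepsilon>0$, of $\gamma$-type homotopies realised by families of operators whose norms grow at most like $e^{\varepsilon\ell(g)}$; the hyperbolic geometry is used to force this \emph{arbitrarily slow} exponential growth. He then shows that the associated Schur-type multipliers extend boundedly to $A\rtimes_r\Gamma$ for \emph{every} $A$, which is what makes the argument uniform in the coefficient algebra. Controlling these multipliers uniformly in $A$ is a substantially different, and much harder, analytic problem than the one you describe; your sub-exponential estimates on a single weighted $\ell^p$-space do not supply it.
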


Earlier Lafforgue developed a Banach KK-theory to attack the Baum-Connes conjecture \cite{L}.  This approach yielded the Baum-Connes conjecture  for hyperbolic groups \cite{L, MY}.

The Baum-Connes conjecture with coefficients actually fail for general groups. 
  Higson-Lafforgue-Skandalis gave a counter-example in \cite{HLS}. On the other hand, the Baum-Connes conjecture (without coefficients) is still open.

\section{The Novikov conjecture}

A central problem in topology is the Novikov conjecture.
Roughly speaking,
the Novikov conjecture claims that compact smooth manifolds are rigid at an infinitesimal level. More precisely, 
the Novikov conjecture states that the higher signatures of compact oriented smooth manifolds are invariant under orientation preserving homotopy equivalences.  Recall that a compact manifold is called aspherical if its universal cover is contractible.
In the case of aspherical manifolds, the Novikov conjecture is an infinitesimal version of the Borel conjecture, which states that all compact aspherical manifolds
are topologically rigid, i.e. if another compact manifold $N$ is homotopy equivalent to the given compact aspherical manifold $M$, then $N$ is homeomorphic to $M$.  A theorem of Novikov says that the rational Pontryagin classes are invariant under orientation preserving homeomorphisms  
 \cite{N1}. Thus the Novikov conjecture for compact aspherical manifolds follows from the Borel conjecture and Novikov's theorem,  since for aspherical manfolds, the information about higher signatures is equivalent to that of rational Pontryagin classes. In general, the Novikov conjecture follows from the (rational) strong  Novikov conjecture. 

The (rational) strong Novikov conjecture can be stated as follows.

\begin{conj}[Strong Novikov Conjecture] The evaluation map $e$ induces an injection $e_\ast$ from
the $K$-group of the equivariant localization algebra 
$\displaystyle  \varinjlim_F K_{\ast}(C^{\ast}_{L}(P_F(\Gamma))^{\Gamma})$
to the $K$-group 
of the equivariant Roe algebra $\displaystyle  \varinjlim_F K_{\ast}(C^{\ast}(P_F(\Gamma))^{\Gamma})$, where the limit is taken over the directed set
of all finite symmetric subset $F$ of $\Gamma$ containing the identity. The rational strong Novikov conjecture states that $e_\ast$ is an injection after tensoring with $\mathbb Q$.
\end{conj}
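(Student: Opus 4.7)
Since the statement is a conjecture, what follows is a strategic roadmap rather than a complete proof. The plan is to approach the injectivity of $e_\ast$ rationally by the Dirac--dual-Dirac method originating with Kasparov, constructing a one-sided inverse in $KK$-theory rather than attempting to invert the assembly map on the nose (which can fail by Higson--Lafforgue--Skandalis). The key heuristic is that, by the local-global picture of the preceding section, the domain of $e_\ast$ is a ``geometric'' object computable from equivariant $K$-homology of $P_F(\Gamma)$; to detect classes there from the right-hand side, one wants a natural dual transformation that recovers a $K$-homology class from its index.

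First, I would look for an auxiliary proper $\Gamma$-space $Y$ carrying favorable geometric structure: a Hilbert space when $\Gamma$ is a-T-menable (Higson--Kasparov, already quoted as Theorem~\ref{HKforBC}), a complete CAT(0) space for groups acting properly on nonpositively curved manifolds, or a bolic space in the sense of Kasparov--Skandalis for hyperbolic groups. On $Y$ one constructs an equivariant Dirac-type operator producing a class $\alpha \in KK^\Gamma(C_0(Y),\mathbb{C})$. Next, one constructs a dual-Dirac element $\beta \in KK^\Gamma(\mathbb{C}, C_0(Y))$, typically from an infinite-dimensional Bott element or a boundary-type construction exploiting the geometry of $Y$, and shows that the Kasparov product $\beta \otimes_{C_0(Y)} \alpha$ acts as the identity on the image of $K^\Gamma_\ast(P_F(\Gamma))$ under the localization map, modulo torsion. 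Translated through the local-global diagram
\[
\begin{tikzcd}[row sep=2.5em]
  &  K_{\ast}^{\Gamma}(P_F(\Gamma)) \arrow{dr}{\Ind} \arrow[dl, "\Ind_L"']  \\
K_{\ast}(C^{\ast}_{L}(P_F(\Gamma))^{\Gamma})  \arrow{rr}{e_\ast} && K_{\ast}(C^{\ast}(P_F(\Gamma))^{\Gamma}),
\end{tikzcd}
\]
this yields a map from $\varinjlim_F K_\ast(C^\ast(P_F(\Gamma))^\Gamma)$ back to $\varinjlim_F K_\ast(C^\ast_L(P_F(\Gamma))^\Gamma)$ whose composition with $e_\ast$ is the identity after tensoring with $\mathbb{Q}$. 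When $\Gamma$ merely admits a coarse embedding into Hilbert space (rather than a proper affine action), I would substitute Yu's twisted Roe algebra framework: the auxiliary space $Y$ is replaced by an infinite-dimensional Hilbert space equipped with the coarse structure inherited from the embedding, and the Bott class is assembled from finite-dimensional approximations.

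The principal obstacle is the construction of $\beta$ in the presence of Kazhdan's property (T). Property (T) blocks any naive Bott-type element built from Hilbert space actions, and this is precisely where the conjecture resists the Higson--Kasparov template; Lafforgue's resolution for hyperbolic groups required developing Banach $KK$-theory to bypass this obstruction, and for general high-rank or exotic groups no uniform method is known. A second, more subtle difficulty is passing from the rational statement to the integral one: even when $\beta \otimes \alpha$ is nontrivial, showing that it equals $1$ rather than a nonzero multiple requires controlling torsion phenomena, which is why the conjecture as stated is formulated rationally. Consequently, any complete proof for arbitrary $\Gamma$ would require either a new geometric model for $Y$ beyond Hilbert space, nonpositive curvature, and bolicity, or a genuinely new analytic technique for producing dual-Dirac classes in the presence of strong rigidity phenomena.
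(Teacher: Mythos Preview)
The statement is a conjecture, and the paper does not attempt to prove it; it is stated as an open problem and followed by a survey of the classes of groups for which it is known (non-positively curved groups, hyperbolic groups, amenable groups, groups with finite asymptotic dimension, coarsely embeddable groups, etc.). Your proposal correctly recognizes this and offers a strategic roadmap rather than a proof, which is the appropriate response.

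Your roadmap is well-aligned with the paper's own survey: the Dirac--dual-Dirac method, the Higson--Kasparov result for a-T-menable groups, Kasparov--Skandalis for bolic groups, Yu's coarse embedding framework, and Lafforgue's Banach $KK$-theory for hyperbolic groups are precisely the techniques the paper highlights in Section~5. Your identification of property~(T) as the principal obstruction to a uniform dual-Dirac construction is also consistent with the paper's discussion. One point the paper emphasizes that you might add to your roadmap is the role of Banach space geometry beyond Hilbert space---specifically Property~(H) and the open question of whether $c_0$ has Property~(H), which would imply the conjecture in full generality since every countable group coarsely embeds into $c_0$. The paper also mentions the recent Hilbert--Hadamard space approach of Gong--Wu--Yu, which extends the non-positive curvature paradigm to certain infinite-dimensional targets and handles geometrically discrete subgroups of volume-preserving diffeomorphism groups; this is another direction your ``new geometric model for $Y$'' remark could point toward.
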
 

The strong Novikov conjecture predicts when the higher index of an elliptic operator is non-zero.
The strong Novikov conjecture implies the following analytic Novikov conjecture.

\begin{conj}[Analytic Novikov Conjecture] The evaluation map $e$ induces an injection $e_\ast$ from
the $K$-group of the equivariant localization algebra 
$\displaystyle  K_{\ast}(C^{\ast}_{L}(E\Gamma)^{\Gamma})$
to the $K$-group 
of the equivariant Roe algebra $\displaystyle   K_{\ast}(C^{\ast}(E\Gamma)^{\Gamma})$, where $K_{\ast}(C^{\ast}_{L}(E\Gamma)^{\Gamma})$
is defined to be 
  $\displaystyle \varinjlim_X  K_{\ast}(C^{\ast}_{L}(X)^{\Gamma})$ with the limit to be
taken over the directed set
of locally compact,  $\Gamma$-equivariant, $\Gamma$-cocompact subset $X$ of the universal space  $E\Gamma$ for free $\Gamma$-action, and similarly 
$\displaystyle   K_{\ast}(C^{\ast}(E\Gamma)^{\Gamma})$ is defined to be the limit
$\displaystyle \varinjlim_X  K_{\ast}(C^{\ast}(X)^{\Gamma})$. The rational analytic Novikov conjecture states that $e_\ast$ is an injection after tensoring with $\mathbb Q$, that is, \[ e_\ast \colon K_{\ast}(C^{\ast}_{L}(E\Gamma)^{\Gamma})\otimes \mathbb Q\to K_{\ast}(C^{\ast}(E\Gamma)^{\Gamma})\otimes \mathbb Q \]
is an injection. 
\end{conj}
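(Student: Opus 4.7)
The plan is to establish this statement by reducing it to the Strong Novikov Conjecture formulated just above, via a functorial comparison between the assembly map over $E\Gamma$ and the assembly map over the direct limit of Rips complexes $\varinjlim_F P_F(\Gamma)$, which is a standard model for the classifying space $\underline{E}\Gamma$ for proper $\Gamma$-actions.

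First I would invoke the universal property of $\underline{E}\Gamma$: since $\Gamma$ acts freely, hence properly, on $E\Gamma$, there exists a $\Gamma$-equivariant continuous map $f \colon E\Gamma \to \underline{E}\Gamma$, unique up to $\Gamma$-equivariant homotopy. Restricted to any $\Gamma$-cocompact subset $X \subseteq E\Gamma$, the image $f(X)$ is $\Gamma$-cocompact, and after an equivariant cellular approximation it factors through a Rips complex $P_F(\Gamma)$ for some sufficiently large finite symmetric $F \subseteq \Gamma$. Passing to the direct limit over $X$ and $F$, $f$ induces natural transformations between the two assembly diagrams:
\begin{align*}
f_L &\colon K_*(C^*_L(E\Gamma)^\Gamma) \to \varinjlim_F K_*(C^*_L(P_F(\Gamma))^\Gamma), \\
f_R &\colon K_*(C^*(E\Gamma)^\Gamma) \to \varinjlim_F K_*(C^*(P_F(\Gamma))^\Gamma),
\end{align*}
which intertwine the two evaluation maps $e_*$, giving a commutative square.

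Next I would establish the rational injectivity of $f_L$. Using the local index isomorphism of the preceding theorem, the domain is identified with $K^\Gamma_*(E\Gamma) \cong K_*(B\Gamma)$ (the last isomorphism coming from freeness of the action) and the codomain with $K^\Gamma_*(\underline{E}\Gamma)$. The Baum--Connes delocalized Chern character rationally decomposes $K^\Gamma_*(\underline{E}\Gamma) \otimes \mathbb{Q}$ as a direct sum of contributions indexed by conjugacy classes of finite (cyclic) subgroups of $\Gamma$, with the summand corresponding to the trivial subgroup equal to $H_*(B\Gamma; \mathbb{Q}) \cong K_*(B\Gamma) \otimes \mathbb{Q}$. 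Under these identifications, $f_L \otimes \mathbb{Q}$ is the canonical inclusion of this ``free part'' summand, and is therefore injective.

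Granting these two ingredients, the implication is a diagram chase: if $\alpha \in K_*(C^*_L(E\Gamma)^\Gamma)$ satisfies $e_*(\alpha) \otimes 1 = 0$ in $K_*(C^*(E\Gamma)^\Gamma) \otimes \mathbb{Q}$, then by commutativity $e_*(f_L(\alpha)) \otimes 1 = 0$, so the Strong Novikov Conjecture forces $f_L(\alpha) \otimes 1 = 0$ in the Rips limit, and rational injectivity of $f_L$ then yields $\alpha \otimes 1 = 0$. The main obstacle is precisely the rational injectivity of $f_L$: identifying the free summand $K_*(B\Gamma) \otimes \mathbb{Q}$ inside $K^\Gamma_*(\underline{E}\Gamma) \otimes \mathbb{Q}$ through the delocalized Chern character is nontrivial when $\Gamma$ has torsion, since then $E\Gamma \neq \underline{E}\Gamma$ and additional summands from nontrivial finite subgroups appear. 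An alternative route, which bypasses the explicit Baum--Connes decomposition, is to construct an equivariant Chern character directly at the level of localization algebras and verify naturality under $f$; this requires comparable technology but keeps the argument internal to the framework of Section~\ref{sec:localind}.
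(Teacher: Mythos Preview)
The statement you are addressing is a \emph{conjecture}, not a theorem: the paper does not prove it, and indeed it remains open. What the paper does assert, in the sentence immediately preceding the conjecture, is the implication ``the strong Novikov conjecture implies the following analytic Novikov conjecture,'' and this implication is stated without any argument. Your proposal is therefore not a proof of the conjecture itself but rather a proof of this implication, and on that reading it should be compared to nothing, since the paper supplies no argument for the implication either.

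That said, as a proof of the rational implication your outline is sound. The commutative square you set up is correct, and the map $f_R$ on the Roe-algebra side is in fact an isomorphism (both sides are canonically $K_\ast(C^\ast_r(\Gamma))$, the transition maps in the direct limits being isomorphisms), which you could have noted to simplify the chase. The substantive step is the rational injectivity of $f_L \colon K_\ast(B\Gamma)\otimes\mathbb{Q} \to K^\Gamma_\ast(\underline{E}\Gamma)\otimes\mathbb{Q}$, and your appeal to the Baum--Connes delocalized Chern character (identifying the image with the summand at the trivial conjugacy class) is the standard and correct justification. One caution: your argument establishes only the rational implication. The integral implication ``Strong Novikov $\Rightarrow$ Analytic Novikov'' would require integral injectivity of $f_L$, and this can fail---already for $\Gamma$ finite the map $K_\ast(B\Gamma)\to R(\Gamma)$ has torsion kernel---so the paper's unqualified assertion is most naturally read as a statement about the rational versions, which is exactly what you have supplied.
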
 



 The classical Novikov conjecture follows from the rational analytic Novikov conjecture. 
 With the help of noncommutative geometry, 
  spectacular progress has been made on the Novikov conjecture. It
has been proven that The Novikov conjecture holds  when the fundamental group of the manifold lies in one of the following classes of groups:

\begin{enumerate}
	\item groups acting properly and isometrically on simply connected and non-positively curved manifolds \cite{K}, 
	\item hyperbolic groups \cite{CM}, 
	\item groups acting properly and isometrically on Hilbert spaces \cite{HK},
	\item groups acting properly and isometrically on bolic spaces \cite{KS}, 
	\item groups with finite asymptotic dimension \cite{Y1},
	\item groups coarsely embeddable into Hilbert spaces \cite{Y2}\cite{H}\cite{STY}, 
	\item groups coarsely embeddable into Banach spaces with property (H) \cite{KY}, 
	\item all linear groups and subgroups of all almost connected Lie groups \cite{GHW}, 
	\item  subgroups of the mapping class groups \cite{Ha}\cite{Ki}, 
	\item subgroups of $\operatorname{Out}(F_n)$, the outer automorphism groups of the free groups \cite{BGH},
	\item groups acting properly and isometrically on (possibly infinite dimensional) admissible  Hilbert-Hadamard spaces,
	in particular geometrically discrete subgroups of the group of volume preserving diffeomorphisms of any smooth compact manifold \cite{GWY}. 
\end{enumerate}
In the first three cases, an isometric action of a discrete group $\Gamma$ on a metric space $X$ is said to be \emph{proper} if for some $x\in X$,  $d(x, gx)\rightarrow \infty$ as $g \rightarrow \infty$,  i.e. for  any $x\in X$ and  any positive number $R>0$, there exists a finite subset $F$ of $\Gamma$ such that $ d(x, gx)> R$ if $g \in \Gamma - F$.

In a tour de force, Connes proved a striking theorem that the Novikov conjecture holds for higher signatures associated to Gelfand-Fuchs classes \cite{C1}. Connes, Gromov, and Moscovici proved the Novikov conjecture for higher signatures associated to Lipschitz group cohomology classes \cite{CGM}. Hanke-Schick and Mathai proved the Novikov conjecture  for higher signatures associated to group cohomology classes with degrees one and two \cite{HS}\cite{Ma}.

J. Rosenberg discovered an important application of the (rational) strong Novikov conjecture to the existence problem of Riemannian metrics with  positive scalar curvature \cite{R}. We refer to Rosenberg's survey \cite{R1} for recent developments on this topic.

\subsection{Non-positively curved groups and hyperbolic groups}
\label{ }

In this subsection, we give a survey on the work of A. Mishchenko, G. Kasparov,
A. Connes and H. Moscovici, G. Kasparov and G. Skandalis on the Novikov conjecture for non-positively curved groups and Gromov's hyperbolic groups.

In  \cite{M}, A. Mishchenko introduced a theory of infinite dimensional Fredholm representations of discrete groups to prove the following theorem.

\begin{thm} The Novikov conjecture holds if the fundamental group of a manifold 
acts properly,  isometrically and cocompactly on a simply connected manifold with non-positive sectional curvature.
\end{thm}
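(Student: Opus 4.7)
By the Cartan--Hadamard theorem, any simply connected complete Riemannian manifold $\widetilde M$ of non-positive sectional curvature is diffeomorphic to $\mathbb R^n$ via the exponential map at an arbitrary base point, hence $\widetilde M$ is contractible. Since $\Gamma$ acts properly, cocompactly, and isometrically on $\widetilde M$, the space $\widetilde M$ serves as a finite-dimensional model for the universal proper $\Gamma$-space; in the torsion-free case $\widetilde M$ is a model for $E\Gamma$ and $\widetilde M/\Gamma$ is a closed aspherical manifold. According to the discussion preceding the theorem, the Novikov conjecture follows from the rational injectivity of the analytic assembly map
\[
e_\ast \colon K_\ast\bigl(C^\ast_L(\widetilde M)^\Gamma\bigr)\otimes \mathbb Q \longrightarrow K_\ast\bigl(C^\ast(\widetilde M)^\Gamma\bigr)\otimes \mathbb Q,
\]
so I would reduce the problem to proving this injectivity.

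The method is the Dirac/dual-Dirac technique, whose goal is to construct a Kasparov $\gamma$-element $\gamma\in KK^\Gamma(\mathbb C,\mathbb C)$ whose descent acts as the identity on the image of the local index map and thereby furnishes a left inverse of $e_\ast$. I would write $\gamma = \beta \otimes_{C_0(\widetilde M)} \alpha$, where $\alpha \in KK^\Gamma(C_0(\widetilde M),\mathbb C)$ is the class of the de Rham operator (or a Dirac-type operator) on $\widetilde M$, and $\beta \in KK^\Gamma(\mathbb C, C_0(\widetilde M))$ is the dual-Dirac class. The Dirac class $\alpha$ is automatic once $\widetilde M$ carries a $\Gamma$-invariant Riemannian structure. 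The class $\beta$ is where non-positive curvature is essential: using that any two points of $\widetilde M$ are joined by a unique minimizing geodesic, I would define a $\Gamma$-equivariant fibrewise Bott symbol over $T\widetilde M$, whose fibre at $x$ is the Clifford-algebra Bott element on $T_x\widetilde M$ transported to $\widetilde M$ via $\exp_x$. A Kasparov product computation combined with a rotation homotopy then identifies $\gamma$ with the unit after assembly and yields the desired injectivity.

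The principal obstacle is the equivariant construction of $\beta$ and the verification that its Kasparov product with $\alpha$ equals $1$. A single base point cannot be used without breaking equivariance, so the pointwise Bott elements must be assembled into a single unbounded operator on the $L^2$-sections of an appropriate $\mathbb Z/2$-graded Clifford module over $\widetilde M$, and this operator must be shown to define a Kasparov cycle. The analytic content, namely self-adjointness, ellipticity modulo compacts, and the correct symbol at infinity, relies on the convexity of the squared distance function $d(x,\cdot)^2$ on $\widetilde M$ and on the fact that $d\exp_x$ is radially expanding, both consequences of the Rauch comparison estimates made available by non-positive curvature. Once $\beta$ is in place, the identity $\gamma = 1$ in the image of the assembly map reduces, via an equivariant homotopy contracting each tangent space to the origin, to the classical Bott--Dirac duality on Euclidean space.
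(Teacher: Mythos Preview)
The paper does not supply a proof of this theorem; it merely records it as Mishchenko's result, noting that his original argument proceeded via ``a theory of infinite dimensional Fredholm representations of discrete groups.'' Your proposal is therefore not being compared against an in-paper proof but against that one-line attribution.

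Your outline is a correct sketch, but it is not Mishchenko's method: what you have written is precisely Kasparov's Dirac/dual-Dirac argument in $KK^\Gamma$-theory, which the paper cites as the proof of the \emph{next} theorem (Kasparov's strengthening that drops the cocompactness hypothesis). Mishchenko's original proof predates $KK$-theory and instead constructs, from the non-positively curved model, an almost-flat Hilbert $C^\ast_r(\Gamma)$-bundle over the classifying space and pairs its Fredholm index with the signature class; the signature is then shown to be a homotopy invariant because it arises from this Fredholm representation. Your approach has the advantage that, once the $\gamma$-element machinery is set up, the cocompactness assumption is irrelevant and one obtains the stronger Kasparov theorem for free; Mishchenko's approach, by contrast, is more concrete and does not require the full bivariant apparatus, but genuinely uses that $B\Gamma$ has a closed-manifold model.

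One minor point: in your reduction you invoke the assembly map for $\widetilde M$ directly. Since $\widetilde M$ is contractible and $\Gamma$-cocompact, this is fine when $\Gamma$ is torsion-free (so that $\widetilde M = E\Gamma$), but in the presence of torsion $\widetilde M$ is a model for $\underline{E}\Gamma$ rather than $E\Gamma$, and the passage from the strong Novikov statement to the classical Novikov conjecture goes through $E\Gamma$. This is harmless here because the classical Novikov conjecture only concerns cohomology classes pulled back from $B\Gamma$, and the relevant injectivity still follows from $\gamma = 1$ on the image of assembly; but it is worth being explicit about which universal space you are using.
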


In \cite{K}, G. Kasparov developed  a bivariant K-theory, called KK-theory, to prove the following theorem.

\begin{thm} The Novikov conjecture holds if the fundamental group of a manifold 
acts properly and isometrically  on a simply connected manifold with non-positive sectional curvature.
\end{thm}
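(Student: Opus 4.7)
The plan is to derive the Novikov conjecture from the rational strong Novikov conjecture for $\Gamma = \pi_1(X)$ and to establish the latter via Kasparov's dual Dirac method. Let $M$ denote the simply connected non-positively curved Riemannian manifold on which $\Gamma$ acts properly and isometrically, and let $C_\tau(M)$ be the $\mathbb{Z}/2$-graded $C^*$-algebra of $C_0$-sections of the complexified Clifford bundle of $TM$, equipped with its natural $\Gamma$-action. The overall strategy is to construct in $\Gamma$-equivariant $KK$-theory a Dirac element $\alpha \in KK^\Gamma(C_\tau(M), \mathbb{C})$ and a dual Dirac element $\beta \in KK^\Gamma(\mathbb{C}, C_\tau(M))$, and to show that the Kasparov product satisfies $\beta \otimes_{C_\tau(M)} \alpha = 1$ in $KK^\Gamma(\mathbb{C}, \mathbb{C})$. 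Applying Kasparov's descent functor, this identity will give a right inverse to the assembly map and thereby imply injectivity.

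First I would write down $\alpha$, represented by the de Rham type Dirac cycle $(L^2(\Lambda^*_{\mathbb{C}} T^*M),\, d + d^*)$, viewed as a $\Gamma$-equivariant unbounded Kasparov $(C_\tau(M), \mathbb{C})$-bimodule with $C_\tau(M)$ acting by Clifford multiplication. Ellipticity, essential self-adjointness and finite propagation of the resolvent, together with properness of the $\Gamma$-action, make this a legitimate cycle regardless of whether $\Gamma \backslash M$ is compact; this is already the main technical advantage over Mishchenko's earlier cocompact argument.

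Next I would construct $\beta$, which is where non-positive sectional curvature enters essentially. By the Cartan-Hadamard theorem, for every $x \in M$ the exponential map $\exp_x \colon T_x M \to M$ is a diffeomorphism, so its inverse $\log_x \colon M \to T_x M$ is a smooth vector field along $M$ with $|\log_x(y)| = d(x,y)$. Fixing a basepoint $x_0$, one can represent $\beta$ by a Hilbert module of $L^2$-sections of the Clifford bundle together with the unbounded operator given by Clifford multiplication by $y \mapsto \log_{x_0}(y)$, a fibrewise Bott-type operator. Linear growth of $|\log_{x_0}|$ and smoothness (guaranteed by the absence of conjugate points in non-positive curvature) show this is a well defined unbounded $(\mathbb{C}, C_\tau(M))$-cycle. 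To make the construction $\Gamma$-equivariant in the absence of a cocompact quotient, I would replace the single base point by an integral over $x \in M$ against a $\Gamma$-invariant cutoff, so that only the $\Gamma$-invariant geodesic structure of $M$ is used.

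The crux of the argument is the product computation $\beta \otimes_{C_\tau(M)} \alpha = 1_{\mathbb{C}} \in KK^\Gamma(\mathbb{C}, \mathbb{C})$. After expressing both cycles as unbounded Kasparov modules, this is handled by the rotation homotopy $t \mapsto (\cos t)(d + d^*) + (\sin t)\, c(\log_{x_0})$ for $t \in [0, \pi/2]$, which interpolates between the Dirac symbol and the Clifford multiplication symbol; non-positive curvature ensures that $\log_{x_0}$ has no critical points away from $x_0$, so the spectral behaviour of the homotopy is controlled. At $t = \pi/2$ the problem reduces to the classical Bott-Dirac identity on a tangent space, whose class is the trivial representation. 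Once $\beta \otimes \alpha = 1$ is in hand, Kasparov's descent produces a splitting of the assembly map $e_*$, so the strong Novikov conjecture holds for $\Gamma$, and applying injectivity of $e_*$ to the equivariant signature operator on the universal cover yields the Novikov conjecture for any manifold with fundamental group $\Gamma$. The main obstacle I anticipate is making the dual Dirac construction genuinely $\Gamma$-equivariant and bounded in operator norm at infinity without a cocompactness assumption on $\Gamma \curvearrowright M$; this is precisely where the global Cartan-Hadamard diffeomorphism, rather than merely local geodesic convexity, is indispensable.
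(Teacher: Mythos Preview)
The paper does not supply its own proof of this theorem; it is stated as a result due to Kasparov, with the sentence ``In \cite{K}, G.~Kasparov developed a bivariant $K$-theory, called $KK$-theory, to prove the following theorem'' and nothing further. So there is no in-paper argument to compare against beyond the attribution.

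Your sketch is a faithful outline of Kasparov's original dual Dirac method from \cite{K}: the Dirac class $\alpha \in KK^\Gamma(C_\tau(M),\mathbb C)$ built from $d+d^*$, the dual Dirac class $\beta \in KK^\Gamma(\mathbb C, C_\tau(M))$ built from Clifford multiplication by the inverse exponential map (which exists globally precisely by Cartan--Hadamard), the rotation homotopy giving $\beta \otimes_{C_\tau(M)} \alpha = 1$ in $KK^\Gamma(\mathbb C,\mathbb C)$, and descent to split the assembly map. That is exactly the mechanism the paper is invoking by citing \cite{K}. One small correction: your device of ``integrating over basepoints against a $\Gamma$-invariant cutoff'' to achieve equivariance is not how Kasparov handles the non-cocompact case; rather, he observes that $\gamma \cdot F_{x_0} = F_{\gamma x_0}$ and that the assignment $x \mapsto F_x$ is norm-continuous (indeed operator-homotopic), so the cycle is $\Gamma$-equivariant up to the homotopy relation already built into $KK^\Gamma$. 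Your averaging idea can be made to work, but it is an unnecessary detour and the phrase ``$\Gamma$-invariant cutoff'' is slightly misleading in the non-cocompact setting. Otherwise the proposal matches the cited approach.
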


As a consequence, G. Kasparov proved the following striking theorem. 

\begin{thm} The Novikov conjecture holds if the fundamental group of a manifold 
is a discrete subgroup of a Lie group with finitely many connected components.
\end{thm}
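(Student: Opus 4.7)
The plan is to reduce the statement to the preceding theorem by exhibiting a simply connected Riemannian manifold of non-positive sectional curvature on which the discrete subgroup $\Gamma \subseteq G$ acts properly and isometrically; once such an action is produced, the preceding theorem applies verbatim to give the Novikov conjecture for $\Gamma$.

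First I would invoke the Cartan--Iwasawa--Malcev theorem to choose a maximal compact subgroup $K \leq G$. Because $G$ has only finitely many connected components, such a $K$ exists, is unique up to conjugacy, and the homogeneous space $X = G/K$ is diffeomorphic to Euclidean space $\mathbb{R}^{\dim G - \dim K}$; in particular $X$ is simply connected.

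Next I would equip $X$ with a $G$-invariant Riemannian metric of non-positive sectional curvature. For semisimple $G$ this is the classical symmetric-space-of-non-compact-type construction: writing the Cartan decomposition $\mathfrak{g} = \mathfrak{k} \oplus \mathfrak{p}$, the restriction of the Killing form to $\mathfrak{p}$ is positive definite and $\mathrm{Ad}(K)$-invariant, hence descends to a $G$-invariant Riemannian metric on $X$ whose curvature tensor at the basepoint takes the form $R(U,V)W = -[[U,V],W]$ on $\mathfrak{p}$, from which non-positivity of the sectional curvature is immediate. The general almost-connected case is handled via the Levi decomposition $G = R \rtimes S$, with $R$ the solvable radical and $S$ semisimple: the radical contributes a flat Euclidean factor, while the Levi part contributes the symmetric space above.

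With the metric in hand, the left-translation action of $\Gamma$ on $X$ is isometric by $G$-invariance, and it is proper because $K$ is compact and $\Gamma$ is discrete in $G$: for any compact $C \subseteq X$ the return set $\{\gamma \in \Gamma : \gamma C \cap C \neq \emptyset\}$ equals $\Gamma$ intersected with a compact subset of $G$, and hence is finite. Invoking the preceding theorem then completes the argument. The main obstacle I would expect is the second step for arbitrary almost-connected $G$, since generic left-invariant metrics on solvable Lie groups (e.g.\ the Heisenberg group) are not non-positively curved; the resolution is to handle the semisimple Levi quotient by the symmetric-space construction and to treat the solvable radical's contribution as a flat Euclidean factor on which $G$ acts affinely, a splitting that is compatible with the discreteness of $\Gamma$ and preserves properness and isometry of the resulting action.
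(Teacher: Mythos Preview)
Your overall plan --- exhibit a proper isometric $\Gamma$-action on a Hadamard manifold and invoke the preceding theorem --- is exactly the route the paper has in mind, and your treatment of the semisimple case via the symmetric space $G/K$ is correct. The gap is in the passage to general almost-connected $G$.

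The claim that $G/K$ carries a $G$-invariant Riemannian metric of non-positive sectional curvature is false in general. When $K$ is trivial, $G$-invariant metrics on $G/K=G$ are precisely left-invariant metrics, and Milnor's analysis of curvatures of left-invariant metrics shows that a non-abelian nilpotent Lie group admits \emph{no} left-invariant metric with all sectional curvatures $\leq 0$. The Heisenberg group is the basic counterexample, and you correctly flag it --- but your proposed resolution (``treat the solvable radical's contribution as a flat Euclidean factor on which $G$ acts affinely'') does not go through as stated: for the action to be isometric you need orthogonal linear parts, and for it to be proper the solvable radical must act with compact stabilizers on that flat factor, neither of which is automatic. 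In particular, the Levi decomposition does not furnish a product splitting of $G/K$ into a symmetric space and a Euclidean space with a proper isometric $G$-action.

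What actually works is different. For \emph{linear} $G$ one embeds $G$ as a closed subgroup of $\GL(n,\mathbb{R})$ and lets $\Gamma$ act on the Hadamard manifold $\GL(n,\mathbb{R})/O(n)$; this covers the Heisenberg example. For non-linear almost-connected $G$ (e.g.\ the universal cover of $\SL(2,\mathbb{R})$), no faithful finite-dimensional representation exists, and Kasparov's original argument does not simply quote the Hadamard-manifold theorem: it uses the Dirac/dual-Dirac machinery in $KK$-theory together with the structure theory of $G$ (handling the solvable radical and the semisimple quotient separately and assembling via restriction/induction). The survey's phrase ``as a consequence'' refers to this circle of $KK$-theoretic ideas rather than to a one-line reduction; your write-up should either restrict to linear $G$ (where your argument can be completed via the $\GL(n,\mathbb{R})/O(n)$ embedding) or acknowledge that the general case requires Kasparov's more refined argument.
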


The theory of hyperbolic groups was developed by Gromov \cite{G}. Gromov's hyperbolic groups are generic among all finitely presented groups. A. Connes and H. Moscovici proved the following spectacular theorem using powerful techniques from noncommutative geometry \cite{CM}.

\begin{thm}\label{thm:cm} The Novikov conjecture holds if the fundamental group of a manifold 
is a hyperbolic group in the sense of Gromov.
\end{thm}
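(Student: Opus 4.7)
The plan is to deduce the theorem from the rational strong Novikov conjecture by producing, for every group cohomology class $[c]\in H^*(\Gamma;\mathbb{C})$ of the fundamental group $\Gamma = \pi_1(M)$, a pairing on $K_*(C_r^*(\Gamma))$ that recovers the higher signature $\langle L(M)\cup u^*[c],[M]\rangle$, where $u\colon M\to B\Gamma$ is the classifying map. The receptacle for this pairing is cyclic cohomology: from a group cocycle $c$ one builds, by antisymmetrization over $\Gamma$, a cyclic cocycle $\tau_c$ on the group algebra $\mathbb{C}\Gamma$, and a local Atiyah--Singer computation à la Connes identifies $\langle \tau_c, \mathrm{Ind}(D_{\mathrm{sgn}})\rangle$ with the higher signature, where $\mathrm{Ind}(D_{\mathrm{sgn}})\in K_*(C_r^*(\Gamma))$ is the higher index of the signature operator on the universal cover. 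Because $\mathrm{Ind}(D_{\mathrm{sgn}})$ is manifestly invariant under orientation-preserving homotopy equivalences (the homotopy equivalence provides a trivialization linking the signature indices of the two manifolds in $K$-theory), it suffices to make sense of the above pairing as a map $K_*(C_r^*(\Gamma))\to \mathbb{C}$.

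The next step is to promote $\tau_c$, initially only defined on the algebraic group algebra $\mathbb{C}\Gamma$, to a continuous cyclic cocycle on a dense subalgebra $\mathcal{B}\subset C_r^*(\Gamma)$ that is closed under holomorphic functional calculus, so that the inclusion $\mathcal{B}\hookrightarrow C_r^*(\Gamma)$ induces an isomorphism $K_*(\mathcal{B})\xrightarrow{\cong} K_*(C_r^*(\Gamma))$. For a hyperbolic $\Gamma$, the natural candidate for $\mathcal{B}$ is Jolissaint's rapid-decay algebra $H_L^\infty(\Gamma)$, consisting of functions $f\colon \Gamma\to\mathbb{C}$ with $\sum_{\gamma\in\Gamma}|f(\gamma)|^2(1+|\gamma|)^{2s}<\infty$ for every $s>0$, where $|\cdot|$ is the word length. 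The closure under holomorphic calculus rests on the Haagerup--Jolissaint inequality $\|f\|_{C_r^*(\Gamma)}\leq C(1+L)^s\|f\|_{\ell^2}$ for functions supported in a ball of radius $L$, which Haagerup established for free groups and Jolissaint extended to hyperbolic groups by exploiting that, up to bounded error, geodesic triangles are tree-like.

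The hard part, and the technical heart of the Connes--Moscovici argument, is to verify that the cyclic cocycle $\tau_c$ actually extends continuously to $H_L^\infty(\Gamma)$. This requires representing $[c]\in H^*(\Gamma;\mathbb{C})$ by a group cocycle of polynomial growth, i.e.\ $|c(\gamma_0,\ldots,\gamma_k)|\leq P(|\gamma_0|+\cdots+|\gamma_k|)$ for some polynomial $P$, since then $\tau_c$ is bounded against the rapid-decay seminorms. The existence of such representatives is the deep geometric input supplied by the theory of hyperbolic groups: using the Gromov boundary, one can geometrically smooth any cocycle against quasi-geodesic configurations and obtain a polynomial-growth (indeed essentially bounded) representative in each cohomology class, ultimately reflecting the fact that bounded cohomology surjects onto ordinary cohomology in positive degrees for hyperbolic groups. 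Once this polynomial-growth representative is in hand, $\tau_c$ extends to $\mathcal{B}$, the pairing $\langle \tau_c, \mathrm{Ind}(D_{\mathrm{sgn}})\rangle$ descends to $K_*(C_r^*(\Gamma))$, and the homotopy invariance of $\mathrm{Ind}(D_{\mathrm{sgn}})$ delivers the homotopy invariance of every higher signature, proving the Novikov conjecture for $\Gamma$.
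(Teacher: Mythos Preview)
Your outline is essentially correct and faithfully reproduces the Connes--Moscovici argument that the paper cites; note, however, that the paper is a survey and does not itself give a proof of this theorem beyond the one-sentence remark that cyclic cohomology is used in a crucial way, so there is no detailed proof in the paper to compare against. Your sketch---group cocycle $\Rightarrow$ cyclic cocycle $\tau_c$, local index formula identifying the pairing with the higher signature, homotopy invariance of the $C^*$-algebraic signature index, extension of $\tau_c$ to the rapid-decay algebra via polynomial-growth representatives furnished by hyperbolic geometry---is exactly the architecture of \cite{CM}, and is consistent with the paper's emphasis on cyclic cohomology as the key tool.

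Two minor historical corrections, neither affecting the mathematics: the rapid-decay property for hyperbolic groups is due to de~la~Harpe (building on Jolissaint's framework), and the polynomial-growth cocycle representatives in \cite{CM} are obtained directly via averaging over geodesic simplices in the Rips complex rather than by invoking the surjectivity of bounded cohomology (the latter is a related but distinct theorem of Gromov and Mineyev).
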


The proof of Theorem $\ref{thm:cm}$ uses Connes' theory of cyclic cohomology in a crucial way. Cyclic homology theory plays the role of de Rham theory in noncommutative geometry, and is the natural receptacle for the Connes-Chern character \cite{C}.

The following theorem of G. Kasparov and G. Skandalis unified the above results
\cite{KS}.

\begin{thm} The Novikov conjecture holds if the fundamental group of a manifold 
is bolic.
\end{thm}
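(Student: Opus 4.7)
The plan is to establish the rational strong Novikov conjecture for any bolic group $\Gamma$, from which the classical Novikov conjecture follows by the reduction described earlier in this section. The natural tool is Kasparov's equivariant $KK$-theory in a Dirac/dual-Dirac framework, as already used for CAT(0) groups and Gromov-hyperbolic groups, but adapted to the weaker geometry of bolic spaces.

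First I recall that a bolic space is a metric space satisfying a short list of axioms (existence of approximate midpoints together with a quadrilateral-type inequality) which generalize simultaneously the CAT(0) condition and Gromov's $\delta$-hyperbolicity; $\Gamma$ is bolic if it acts properly, cocompactly, and isometrically on a bolic space $X$. The strategy is to build a \emph{Dirac element} $\beta \in KK^\Gamma(A, \mathbb{C})$ and a \emph{dual-Dirac element} $\alpha \in KK^\Gamma(\mathbb{C}, A)$, where $A$ is a suitable $\Gamma$-$C^\ast$-algebra associated to $X$ (morally, a Clifford-type algebra of sections over $X$), and to show that the Kasparov product $\gamma := \alpha \otimes_A \beta \in KK^\Gamma(\mathbb{C}, \mathbb{C})$ acts as the identity on the $K$-theory of $C^\ast_L(P_F(\Gamma))^\Gamma$. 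The Dirac element $\beta$ is modeled on the distance function $d(\cdot, x_0)$ as a substitute for position coordinates and a Clifford-type multiplication built from the approximate midpoint maps; the dual-Dirac element $\alpha$ is assembled equivariantly from pointwise Bott elements attached at the basepoint. The composition is then evaluated via a rescaling homotopy $d \mapsto \lambda d$ with $\lambda \to \infty$, concentrating the computation near the basepoint where bolicity makes the local model arbitrarily close to a Euclidean Bott/Dirac pair, for which the product equals $1$ by classical Bott periodicity.

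The main obstacle is precisely this rescaling homotopy. In a CAT(0) or $\delta$-hyperbolic space one has true (respectively, $\delta$-approximate) geodesics with good convexity, whereas bolicity gives only approximate midpoints with controlled error. One must therefore use the quantitative bolicity axioms to verify that the homotopy remains within the class of equivariant Fredholm modules, with uniform bounds on operator norms and on the relevant graded commutators, so that it defines a well-defined path in $KK^\Gamma$. Once $\gamma$ is shown to act as the identity on $K^\Gamma_\ast(P_F(\Gamma))$, a standard equivariant descent argument split-injects the Baum-Connes assembly map, yielding the rational strong Novikov conjecture for $\Gamma$, and hence the Novikov conjecture for any closed oriented manifold with fundamental group $\Gamma$.
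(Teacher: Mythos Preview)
The paper is a survey and does not give its own proof of this theorem; it simply attributes the result to Kasparov--Skandalis \cite{KS} and moves on. So there is no ``paper's proof'' to compare against beyond the bare citation.

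Your proposal correctly names the framework that \cite{KS} actually uses: equivariant $KK$-theory, a Dirac/dual-Dirac pair $\alpha,\beta$ with an intermediate proper $\Gamma$-$C^\ast$-algebra, and a $\gamma$-element argument yielding split injectivity of the assembly map. In that sense you are pointed in the right direction.

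That said, several details of your sketch do not match the actual Kasparov--Skandalis argument and would not go through as written. First, the hypothesis: in \cite{KS} one assumes $\Gamma$ acts properly by isometries on a weakly bolic, weakly geodesic metric space of bounded coarse geometry; cocompactness is not assumed, and ``$\Gamma$ is bolic'' typically means $\Gamma$ itself with a word metric is such a space. Second, a bolic space is generally not a manifold and has no tangent bundle, so the phrase ``Clifford-type algebra of sections over $X$'' and a ``rescaling homotopy $d\mapsto\lambda d$ concentrating near the basepoint'' do not describe what is done. The construction in \cite{KS} is rather more combinatorial: the proper algebra and the elements $\alpha,\beta$ are built from the discrete geometry of the space (approximate midpoints, the quadrilateral inequality), and the verification that $\gamma$ acts as the identity proceeds through explicit estimates derived from the bolicity axioms, not through a Bott-periodicity-at-a-point limit. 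What you flag as the ``main obstacle'' is therefore not the obstacle that actually arises, and the paragraph you devote to it would not translate into the real proof.

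In short: right strategy at the level of slogans, but the concrete mechanism you describe is borrowed from the smooth nonpositively-curved case and does not survive the passage to bolic spaces. A genuine proof would require you to read \cite{KS} and reproduce their specific constructions rather than the heuristic you have given.
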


Bolicity is a notion of non-positive curvature. Examples of bolic groups include groups acting properly and isometrically on simply connected manifolds with non-positive sectional curvature and Gromov's hyperbolic groups.

\subsection{Amenable groups, groups with finite asymptotic dimension and coarsely embeddable groups} 
\label{}

In this subsection, we give a survey on the work of Higson-Kasparov 
on the Novikov conjecture for amenable groups, the work of G. Yu on the Novikov conjecture for groups with finite asymptotic dimension, and the work of G. Yu,
N. Higson, Skandalus-Tu-Yu on the Novikov conjecture for groups coarsely embeddable into Hilbert spaces. Finally we discuss the work of Kasparov-Yu on the connection of the Novikov conjecture with Banach space geometry.

As mentioned above (Theorem $\ref{HKforBC}$), Higson and Kasparov proved that the Baum-Connes conjecture holds for groups that act properly and isometrically on a Hilbert space \cite{HK}. As a consequence, the Novikov conjecture holds for these groups. 

\begin{thm} The Novikov conjecture holds if the fundamental group of a manifold 
acts properly  and isometrically on a Hilbert space.
\end{thm}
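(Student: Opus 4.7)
The plan is to deduce this from the Higson--Kasparov theorem (Theorem~\ref{HKforBC}) via the standard chain of implications sketched earlier in the paper: the Novikov conjecture follows from the rational strong Novikov conjecture, and the latter is precisely the injectivity part of the Baum--Connes assembly map. Since Higson--Kasparov establish the full Baum--Connes conjecture (with coefficients, hence in particular with trivial coefficients) for groups acting properly and isometrically on a Hilbert space, injectivity is automatic, and the conclusion will drop out.

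More concretely, I would first recall that for a closed oriented smooth manifold $M^n$ with fundamental group $\Gamma$, the higher signatures of $M$ can be recovered, after tensoring with $\mathbb{Q}$, by pairing the image of the $K$-homology class $[D_{\mathrm{sign}}]\in K_{n}^{\Gamma}(E\Gamma)$ under the assembly map $e_\ast$ with group cohomology classes (via the Connes--Chern character paired with the $K$-theory of $C^\ast_r(\Gamma)$). Thus the homotopy invariance of higher signatures under orientation-preserving homotopy equivalences $f\colon N\to M$ follows provided $e_\ast$ is rationally injective, because the difference between the two $K$-homology signature classes of $M$ and $N$ is sent to zero by $e_\ast$ once one knows that $\operatorname{Ind}(D_{\mathrm{sign}})\in K_\ast(C^\ast_r(\Gamma))$ is a homotopy invariant (a theorem of Kasparov--Mishchenko).

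Next, I would invoke the hypothesis: let $\Gamma = \pi_1 M$ act properly and isometrically on a Hilbert space $H$. By Theorem~\ref{HKforBC}, the Baum--Connes conjecture with coefficients holds for $\Gamma$, so in particular the evaluation map
\[
e_\ast\colon \varinjlim_F K_\ast(C^\ast_L(P_F(\Gamma))^\Gamma)\longrightarrow \varinjlim_F K_\ast(C^\ast(P_F(\Gamma))^\Gamma)
\]
is an isomorphism. By a standard direct limit argument over $\Gamma$-cocompact subsets of $E\Gamma$, this implies the analytic Novikov conjecture: $e_\ast\colon K_\ast(C^\ast_L(E\Gamma)^\Gamma)\to K_\ast(C^\ast(E\Gamma)^\Gamma)$ is injective (even an isomorphism). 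Since the local index map $\operatorname{Ind}_L$ identifies $K_\ast^\Gamma(E\Gamma)$ with $K_\ast(C^\ast_L(E\Gamma)^\Gamma)$ and the composition $e_\ast\circ \operatorname{Ind}_L = \operatorname{Ind}$ is the usual higher index, we obtain rational injectivity of the higher index map, i.e.\ the rational strong Novikov conjecture, which in turn yields the Novikov conjecture.

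There is essentially no analytic obstacle left for me to overcome; the deep work is entirely packaged inside Theorem~\ref{HKforBC}. The only step requiring care is the passage from the Baum--Connes statement formulated on the Rips complex to the analytic Novikov statement on $E\Gamma$ and then to the cohomological formulation of higher signatures. The hard part is really to make sure the colimit over $F$ of $P_F(\Gamma)$'s models $E\Gamma$ in the relevant $\Gamma$-equivariant sense so that the assembly maps agree, and to invoke the Kasparov--Mishchenko descent that turns homotopy invariance of the higher signature class into homotopy invariance of $\operatorname{Ind}(D_{\mathrm{sign}})$; once these bookkeeping steps are in place, the theorem follows immediately from Higson--Kasparov.
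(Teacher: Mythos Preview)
Your proposal is correct and follows exactly the approach the paper takes: the paper simply states that Higson and Kasparov proved the Baum--Connes conjecture for such groups (Theorem~\ref{HKforBC}) and then remarks that ``as a consequence, the Novikov conjecture holds for these groups,'' leaving the standard chain of implications (Baum--Connes $\Rightarrow$ strong Novikov $\Rightarrow$ Novikov) implicit. You have spelled out this chain in more detail than the paper does, but the route is the same.
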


Since amenable groups act properly and isometrically on a Hilbert space \cite{BCV}, the above theorem has the following immediate corollary. 
\begin{corr}
The Novikov conjecture holds if the fundamental group of a manifold is amenable.
\end{corr}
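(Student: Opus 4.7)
The plan is to derive this as a direct consequence of the preceding Higson--Kasparov theorem together with the Bekka--Cherix--Valette theorem (invoked earlier in the paper), which asserts that every amenable group admits a proper isometric action on a Hilbert space. There is essentially nothing new to prove: given a closed manifold $M$ whose fundamental group $\Gamma = \pi_1(M)$ is amenable, I would first apply Bekka--Cherix--Valette to produce an affine isometric action $\alpha$ of $\Gamma$ on some (separable) Hilbert space $H$ such that $\|\alpha(\gamma)h\| \to \infty$ as $\gamma \to \infty$ for every $h \in H$. This is exactly the hypothesis of the Higson--Kasparov theorem stated immediately above, so the Novikov conjecture for $M$ follows.

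Because the deduction is a single implication, there is no real obstacle inside the argument itself; the entire content is imported from two nontrivial inputs. The only thing worth flagging for the reader is where the two notions of ``proper isometric action'' meet: the Higson--Kasparov hypothesis is formulated for an isometric (affine) action on a Hilbert space with the properness condition $\|\alpha(\gamma)h\|\to\infty$, which is precisely the output of Bekka--Cherix--Valette's characterization (the Haagerup/a-T-menability property of amenable groups). I would therefore present the proof in one or two lines, emphasizing that the corollary is a formal consequence of the chain
\[
\text{$\Gamma$ amenable} \ \Longrightarrow\ \text{$\Gamma$ acts properly isometrically on a Hilbert space} \ \Longrightarrow\ \text{Novikov for $\Gamma$},
\]
with the first arrow due to Bekka--Cherix--Valette and the second being the Higson--Kasparov theorem via the strong Novikov conjecture with coefficients. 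No estimate, construction, or $K$-theoretic computation beyond what is already in the cited theorems is required.
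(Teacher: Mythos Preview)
Your proposal is correct and matches the paper's approach exactly: the corollary is deduced immediately from the Bekka--Cherix--Valette theorem (amenable groups act properly and isometrically on Hilbert space) combined with the preceding Higson--Kasparov theorem. There is nothing to add.
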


This corollary is quite striking since the geometry of amenable groups can be very complicated (for example, the Grigorchuk's groups \cite{Gr}).

Next we recall a few basic concepts from geometric group theory.
A non-negative function $l$ on a countable group $G$ is called a length function if 
(1) $l(g^{-1})=l(g)$ for all $g\in G$; (2) $l(gh)\leq l(g) +l(h)$ for all $g$ and $h$ in $G$; (3) $l(g)=0$ if and only if $g=e$, the identity element of $G$.
We can associate a left-invariant length metric $d_l$ to $l$: $d_l(g,h)=l(g^{-1}h)$ for all $g,h\in G$. A length metric is called proper if the length function is a proper map (i.e. the inverse image of every compact set is finite in this case). 
It is not difficult to show that every countable group $G$ has a proper length metric. If $l$ and $l'$ are two proper length functions on $G$, then their associated length metrics are coarsely equivalent.  If $G$ is a  finitely generated group and $S$ is a finite symmetric generating set (symmetric in the sense that if an element is in $S$, then its inverse is also in $S$), then we can define the  word length $l_S$ on $G$ by
$$l_S(g)=\min \{n: g=s_1\cdots s_n, s_i\in S\}.$$
If $S$ and $S'$ are two finite symmetric generating sets of $G$, then their associated proper length metrics are quasi-isometric.

The following concept is due to Gromov \cite{G1}.

\begin{defn} The asymptotic dimension of a proper metric space $X$ is the smallest integer $n$ such that for every $r>0$,
there exists a uniformly bounded cover $\{ U_i\}$ for which the number of $U_i$ intersecting each $r$ ball $B(x, r)$ is at most $n+1$.
\end{defn}

For example,  the asymptotic dimension of ${\mathbb Z}^n$ is $n$ and the asymptotic dimension of the free group ${\mathbb F}_n$ with $n$ generators is $1$. The asymptotic dimension is invariant under coarse equivalence. The Lie group $GL(n, {\mathbb R})$ with a left invariant Riemannian metric is quasi-isometric to $T(n, {\mathbb R})$, the subgroup of invertible upper triangular matrices.
By permanence properties of asymptotic dimension [BD1], we know that the solvable group $T(n, {\mathbb R})$ has finite asymptotic dimension. As a consequence, every countable discrete subgroup of $GL(n, {\mathbb R})$ has finite asymptotic dimension
(as a metric space with a proper length metric).
More generally one can prove that every discrete subgroup of an almost connected Lie group has finite asymptotic dimension 
(a Lie group is said to be almost connected if the number of its connected components is finite). 
Gromov's hyperbolic groups  have finite asymptotic dimension \cite{Roe2}.
Mapping class groups also have finite asymptotic dimension \cite{BBF}.

In \cite{Y1}, G. Yu developed a quantitative  operator K-theory to prove the following theorem.

\begin{thm} The Novikov conjecture holds if the fundamental group of a manifold 
has finite asymptotic dimension.
\end{thm}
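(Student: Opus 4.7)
The plan is to reduce the classical Novikov conjecture to the (rational) strong Novikov conjecture already discussed in the excerpt, and then to prove injectivity of the evaluation map $e_\ast$ at the level of $K$-theory of equivariant Roe and localization algebras by exploiting the geometry coming from finite asymptotic dimension. More precisely, it suffices to show that for every finite symmetric $F\subseteq \Gamma$ and every class $x\in K_\ast(C_L^\ast(P_F(\Gamma))^\Gamma)$ with $e_\ast(x)=0$ in $K_\ast(C^\ast(P_F(\Gamma))^\Gamma)$, the image of $x$ in $\varinjlim_{F'}K_\ast(C_L^\ast(P_{F'}(\Gamma))^\Gamma)$ vanishes (after passing to a larger $F'$). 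Since $\Gamma$ is discrete, the $\Gamma$-action on $P_F(\Gamma)$ is free and cocompact, so the quantitative geometric techniques apply without the complications introduced by torsion.

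The central technical device I would use is \emph{quantitative} (or controlled) operator $K$-theory $K_\ast^{\varepsilon,r}(C^\ast(X)^\Gamma)$, whose generators are $\varepsilon$-approximate projections and $\varepsilon$-approximate unitaries of propagation at most $r$. One first shows that the ordinary $K$-theory is the inductive limit of these quantitative groups as $\varepsilon\to 0$ and $r\to \infty$, and that $K_\ast(C_L^\ast(X)^\Gamma)$ is computed by systems of quantitative classes with propagations tending to $0$. Translating the injectivity problem into this quantitative framework, the question becomes: if a quantitative class has propagation $r$ and represents zero in $K_\ast(C^\ast(X)^\Gamma)$, can one kill it using a \emph{controlled} homotopy whose propagation is bounded by a function of $r$ alone?

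The heart of the argument is a controlled Mayer--Vietoris principle, proved by induction on the asymptotic dimension. By the Gromov definition, if $\mathrm{asdim}\,\Gamma\le n$, then for every scale $r$ we can cover $P_F(\Gamma)$ by a $\Gamma$-invariant family of subsets of uniformly bounded diameter whose $r$-neighborhoods have multiplicity at most $n+1$. Splitting this cover into $n+1$ mutually $r$-separated families $X^{(0)},\ldots,X^{(n)}$, each $X^{(i)}$ is a disjoint union (on scale $r$) of uniformly bounded pieces. For such a ``uniformly bounded'' geometry the equivariant Roe and localization algebras are quasi-isomorphic at the controlled level — this is the base case. An $(n+1)$-fold controlled Mayer--Vietoris then lets us lift a quantitative vanishing on $X$ to quantitative vanishings on each $X^{(i)}\cup X^{(i+1)}\cup\cdots$, and iterating produces a controlled trivialization whose propagation is bounded in terms of $r$, $\varepsilon$, and the cover, not in terms of $X$ itself. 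Combining all scales $r\to\infty$ yields the desired injectivity in the inductive limit.

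The main obstacle, and the real content of \cite{Y1}, is the careful setup of quantitative $K$-theory and the controlled Mayer--Vietoris sequence: one must track propagation and $\varepsilon$-bounds simultaneously through boundary maps, verify that standard $K$-theoretic constructions (index maps, Bott periodicity, excision) survive in quantitative form with explicit constants depending only on the propagation and not on the ambient metric space, and ensure that the inductive cover provided by finite asymptotic dimension interacts correctly with the $\Gamma$-equivariance and the Rips complex structure as $F$ is enlarged. Once these quantitative tools are in place, the reduction from strong Novikov to the classical Novikov conjecture is standard, completing the proof.
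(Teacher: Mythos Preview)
Your outline is essentially the approach the paper points to: the survey gives no proof here, only the sentence ``the finiteness of asymptotic dimension allows us to develop an algorithm to compute $K$-theory in a quantitative way'' together with the reference \cite{Y1}, and what you have sketched---quantitative (controlled) operator $K$-theory, a controlled Mayer--Vietoris argument, and induction on the asymptotic dimension using covers by uniformly bounded, mutually $r$-separated families---is precisely the content of that reference.

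One small correction worth flagging: your claim that ``since $\Gamma$ is discrete, the $\Gamma$-action on $P_F(\Gamma)$ is free'' is not right. The action is proper and cocompact, but if $\gamma\in\Gamma$ has finite order and $F$ contains all its powers, then $\gamma$ fixes the barycenter of the simplex they span; so torsion is present and must be handled through the equivariant (admissible covariant system) framework the paper sets up in Section~\ref{sec:prem}. This does not change your overall strategy, but the phrase ``without the complications introduced by torsion'' should be dropped.
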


The basic idea of the proof is that the finiteness of asymptotic dimension allows us to develop an algorithm to compute K-theory in a quantitative way. This strategy has found applications to topological rigidity of manifolds \cite{GTY}.

The following concept  of Gromov makes precise of the idea of drawing a good picture of a metric space in a Hilbert space.

\begin{defn}(Gromov): Let $X$ be a metric space and  $H$ be a Hilbert space.  A map $f : X \rightarrow
H$ is said to be a coarse embedding if there exist non-decreasing
functions $\rho_1$ and $\rho_2$ on $[0,  \infty )$ such that
\item {(1)} $\rho_1 (d(x,y)) \leq d_H (f(x), f(y)) \leq \rho_2 (d(x,y))$
for all $x,y \in X$;
\item {(2)} $\lim_{r \rightarrow +\infty} \rho_1 (r) = + \infty$.

\end{defn}

Coarse embeddability of a countable group is independent of the choice of proper length metrics. 
Examples of groups coarsely embeddable into Hilbert space include groups acting properly and isometrically on a Hilbert space (in particular amenable groups \cite{BCV}), groups with Property A \cite{Y2}, 
 countable subgroups of connected Lie groups \cite{GHW}, 
 hyperbolic groups \cite{S},  groups with finite asymptotic dimension, Coxeter groups \cite{DJ}, mapping class groups \cite{Ki, Ha}, 
 and semi-direct products of groups of the above types.

The following theorem unifies the above theorems.

\begin{thm}\label{thm:coarsenov} The Novikov conjecture holds if the fundamental group of a manifold 
is coarsely embeddable into Hilbert space.
\end{thm}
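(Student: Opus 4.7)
The plan is to derive the Novikov conjecture from the rational strong Novikov conjecture for $\Gamma$, so my aim is to show that the evaluation map
\[ e_\ast \colon \varinjlim_F K_\ast(C^\ast_L(P_F(\Gamma))^\Gamma) \otimes \mathbb{Q} \to \varinjlim_F K_\ast(C^\ast(P_F(\Gamma))^\Gamma) \otimes \mathbb{Q} \]
is injective. A Roe-style descent principle reduces this equivariant statement to the non-equivariant injectivity of the coarse assembly map
\[ \mu \colon \varinjlim_d K_\ast(C^\ast_L(P_d(|\Gamma|))) \to \varinjlim_d K_\ast(C^\ast(P_d(|\Gamma|))), \]
where $|\Gamma|$ denotes $\Gamma$ equipped with a proper left-invariant length metric. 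After this reduction the only remaining geometric input is the coarse embedding $f\colon |\Gamma|\to H$ into a Hilbert space $H$, with control functions $\rho_1$ and $\rho_2$ as in the definition.

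The core of the argument is a Dirac--dual-Dirac construction adapted from Higson--Kasparov, performed over the coarse embedding $f$ rather than over a proper isometric action. I would introduce a twisted Roe algebra $C^\ast(|\Gamma|,\mathcal{A})$ and its localization counterpart $C^\ast_L(|\Gamma|,\mathcal{A})$, where the coefficient algebra $\mathcal{A}$ is a $C^\ast$-algebra of Clifford-valued functions on $H$, built as a direct limit over finite-dimensional affine subspaces exhausting $H$ and carrying the Bott element of Higson--Kasparov--Trout. The coarse embedding $f$ then yields a Bott-twist map
\[ \beta_L \colon K_\ast(C^\ast_L(|\Gamma|)) \to K_\ast(C^\ast_L(|\Gamma|,\mathcal{A})), \]
obtained by tensoring a finite-propagation operator localized near $x\in |\Gamma|$ with the Bott element based at $f(x)\in H$; the same recipe yields a parallel map $\beta$ at the Roe-algebra level. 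A suitable infinite-dimensional Bott periodicity combined with a Mayer--Vietoris argument along the filtration of $H$ shows that the twisted assembly map $\mu_{\mathcal{A}}$ is an isomorphism.

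An infinite-dimensional Dirac operator on $H$ then supplies a Dirac map $\alpha_L$ that is a left inverse to $\beta_L$ at the localization level, so that $\alpha_L\circ\beta_L$ is the identity on $K_\ast(C^\ast_L(|\Gamma|))$. Combined with the commutative square relating $\beta_L$, $\beta$, $\mu$, and $\mu_{\mathcal{A}}$, this forces injectivity of $\mu$: if $\mu(x)=0$ then $\mu_{\mathcal{A}}(\beta_L(x)) = \beta(\mu(x)) = 0$, hence $\beta_L(x) = 0$, and finally $x = \alpha_L\beta_L(x) = 0$. Descending back from $|\Gamma|$ to $\Gamma$ and invoking the implication that the rational strong Novikov conjecture implies the classical Novikov conjecture finishes the proof.

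The main obstacle, and the heart of the argument, is that the coarse embedding $f$ respects neither any algebraic structure on $\Gamma$ nor any linear structure on $H$. Consequently $\beta_L$ cannot be realized as a literal $\ast$-homomorphism and must instead be constructed as an asymptotic morphism, whose approximate multiplicativity requires controlling how Bott elements based at $f(x)$ and $f(y)$ overlap when $d(x,y)$ is bounded. The upper bound $|f(x)-f(y)|\leq \rho_2(d(x,y))$ is precisely what controls this overlap, while the lower bound $\rho_1(d(x,y))\leq |f(x)-f(y)|$ forces propagation in $|\Gamma|$ to transport to finite propagation in the Clifford bundle over $H$. Pinning down this asymptotic morphism, and proving the twisted assembly map is an isomorphism in the complete absence of a group action on $H$, is where essentially all the technical work resides.
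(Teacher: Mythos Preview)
The paper is a survey and does not give a proof of this theorem; it simply attributes the result to Yu \cite{Y2} (under the hypothesis that $B\Gamma$ has the homotopy type of a finite CW complex), with the finiteness hypothesis subsequently removed by Higson \cite{H} and Skandalis--Tu--Yu \cite{STY}.

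Your outline is a faithful sketch of Yu's original argument in \cite{Y2}: twisted Roe algebras with coefficients in the Higson--Kasparov--Trout Clifford algebra $\mathcal{A}$ over $H$, an asymptotic Bott morphism $\beta$ built from the coarse embedding, a Dirac left inverse $\alpha_L$ on the localization side, and a Mayer--Vietoris/cutting-and-pasting argument showing the twisted coarse assembly map is an isomorphism. The diagram chase you describe is exactly the mechanism by which injectivity of the untwisted coarse assembly map is deduced.

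One point deserves a flag. Your first reduction step---``a Roe-style descent principle reduces this equivariant statement to the non-equivariant injectivity of the coarse assembly map''---is precisely where the finiteness hypothesis on $B\Gamma$ enters in \cite{Y2}. Descent from the coarse Baum--Connes conjecture for $|\Gamma|$ to the strong Novikov conjecture for $\Gamma$ is clean only when $B\Gamma$ is finite (or $\Gamma$ torsion-free with a good model for $\underline{E}\Gamma$). Your sketch does not indicate how to remove this assumption; that is exactly the content of \cite{H} and \cite{STY}, which reformulate the argument in a groupoid/bivariant framework so that descent is no longer needed in its naive form. So as written, your proposal proves the theorem under Yu's original finiteness hypothesis, matching \cite{Y2}, but not the full statement as quoted.
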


Roughly speaking, this theorem says if we can draw a good picture of the fundamental group in a Hilbert space, then we can recognize the manifold at an infinitesimal level. This theorem was proved by G. Yu when the classifying space of the fundamental group has the homotopy type of a finite CW complex \cite{Y2} and this finiteness condition was removed by N. Higson \cite{H}, Skandalis-Tu-Yu \cite{STY}. The original proof of the above result makes heavy use of infinite diimensional analysis.  More recently, R. Willett and G. Yu found a relatively elementary proof within the framework of basic operator K-theory
\cite{WiY}.

E. Guentner, N. Higson and S. Weinberger proved the beautiful theorem that
linear groups are coarsely embeddable into Hilbert space \cite{GHW}.  Recall that a group is called linear if it is a subgroup of $GL(n, k)$ for some field $k$. The following theorem follows as a consequence \cite{GHW}.

\begin{thm} The Novikov conjecture holds  if the fundamental group of a manifold is a linear group.
\end{thm}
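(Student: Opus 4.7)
The plan is to derive this theorem as an immediate corollary of two results already stated in the excerpt, so the entire argument reduces to a one-line composition. First I would invoke the theorem of Guentner-Higson-Weinberger \cite{GHW}: any countable linear group, that is, any countable subgroup of $GL(n,k)$ for some field $k$, admits a coarse embedding into a Hilbert space when it is equipped with any proper left-invariant length metric. As noted earlier in the excerpt, coarse embeddability of a countable group is independent of the choice of proper length metric, so this hypothesis is well-posed. Second, I would apply Theorem \ref{thm:coarsenov} (the coarse Novikov theorem of Yu, Higson, and Skandalis-Tu-Yu), which asserts that the Novikov conjecture holds for any closed manifold whose fundamental group admits a coarse embedding into Hilbert space. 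Composing these two inputs with the hypothesis that $\pi_1 M$ is linear yields the conclusion directly.

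Since the deduction itself is formal, essentially all of the substantive content sits on the side of \cite{GHW}, which I would treat as a black box. For orientation, that theorem is proved by first reducing to finitely generated subgroups of $GL(n,k)$, then case-splitting on the characteristic of $k$: in characteristic zero one realises such subgroups inside almost-connected Lie groups and invokes coarse embeddability of discrete subgroups of such groups, while positive characteristic is handled via valuation-theoretic techniques and permanence of coarse embeddability under group extensions. The main obstacle of any self-contained treatment lies entirely at that input; modulo \cite{GHW} and Theorem \ref{thm:coarsenov} the proof of the stated theorem is immediate.
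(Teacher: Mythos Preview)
Your proposal is correct and follows essentially the same approach as the paper: the theorem is stated there as an immediate consequence of the Guentner--Higson--Weinberger coarse embeddability result for linear groups \cite{GHW} combined with Theorem~\ref{thm:coarsenov}. Your additional remarks sketching the internal structure of the \cite{GHW} argument go beyond what the paper records, but the logical deduction is identical.
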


More recently, Bestvina-Guirardel-Horbez proved that 
$\operatorname{Out}(F_n)$, the outer automorphism groups of the free groups,
is coarsely embeddable into Hilbert space. This implies the following theorem \cite{BGH}.

\begin{thm} The Novikov conjecture holds   if the fundamental group of a manifold is a subgroup of  $\operatorname{Out}(F_n)$.
\end{thm}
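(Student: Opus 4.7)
The plan is to deduce this statement as an immediate corollary of the coarse embedding theorem (Theorem \ref{thm:coarsenov}) together with the recent theorem of Bestvina--Guirardel--Horbez that $\operatorname{Out}(F_n)$ is coarsely embeddable into Hilbert space. So the entire strategy reduces to verifying a permanence property: if $\Gamma$ is a countable subgroup of a countable group $\Lambda$ that admits a coarse embedding into Hilbert space, then $\Gamma$ itself admits such an embedding.

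First I would fix a proper length function $l$ on $\Lambda$, and observe that its restriction $l|_{\Gamma}$ is a proper length function on $\Gamma$; since coarse embeddability of a countable group is independent of the choice of proper length metric (as noted in the excerpt after the definition of coarse embedding), we may use this restricted metric to test coarse embeddability of $\Gamma$. The inclusion $(\Gamma, d_{l|_\Gamma}) \hookrightarrow (\Lambda, d_l)$ is then by construction an isometric embedding, hence in particular a coarse embedding. Composing with the Bestvina--Guirardel--Horbez coarse embedding $\Lambda = \operatorname{Out}(F_n) \hookrightarrow H$, we obtain a coarse embedding $\Gamma \hookrightarrow H$, since the composition of coarse embeddings (using the control functions $\rho_1, \rho_2$ for $\Lambda$ and the identity bounds from the isometric inclusion) is again a coarse embedding with control functions $\rho_1$ and $\rho_2$ of $\Lambda$.

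Next I would let $M$ be a closed manifold with $\pi_1(M) = \Gamma$ a subgroup of $\operatorname{Out}(F_n)$. By the previous paragraph $\Gamma$ is coarsely embeddable into Hilbert space, so Theorem \ref{thm:coarsenov} applies directly and yields the Novikov conjecture for $M$. This is the whole argument.

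There is really no main obstacle at this level of exposition: the entire content has been offloaded to the two external inputs (Theorem \ref{thm:coarsenov} and the Bestvina--Guirardel--Horbez coarse embedding theorem \cite{BGH}), while the subgroup permanence of coarse embeddability into Hilbert space is a standard and essentially formal observation. If one wanted to give a more self-contained treatment, the genuinely hard step would be the Bestvina--Guirardel--Horbez result itself, which requires constructing an explicit equivariant coarse embedding of $\operatorname{Out}(F_n)$ using the geometry of outer space and free splitting/free factor complexes; but for the purpose of this survey it suffices to quote it.
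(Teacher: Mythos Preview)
Your proposal is correct and matches the paper's approach exactly: the paper simply states that Bestvina--Guirardel--Horbez proved $\operatorname{Out}(F_n)$ is coarsely embeddable into Hilbert space and that this, via Theorem~\ref{thm:coarsenov}, implies the result. You have spelled out the one small additional step the paper leaves implicit, namely that coarse embeddability passes to countable subgroups. One minor remark on your closing aside: what \cite{BGH} actually prove is boundary amenability of $\operatorname{Out}(F_n)$, which yields Property~A and hence coarse embeddability into Hilbert space; they do not construct a coarse embedding directly from outer space geometry as you suggest, but this does not affect the validity of your argument.
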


We have the following open question.

\begin{open} Is  every countable subgroup of the diffeomorphism group of the circle coarsely embeddable into Hilbert space?
\end{open}

Let $\mathfrak{E}$ be the smallest class of groups which include all groups coarsely embeddable into Hilbert space and is closed under direct limit.  Recall that if $I$ is a directed set and $\{G_i\}_{i\in I}$ is a direct system of groups
over $I$, then we can define the direct limit $\lim G_i$. We emphasize that here
the homomorphism $\phi_{ij}: G_i\rightarrow G_j$ for $i\leq j$ is not necessarily injective. 

The following result is a consequence of Theorem $\ref{thm:coarsenov}$.

\begin{thm} The Novikov conjecture holds   if the fundamental group of a manifold is in the class $\mathfrak{E}$.
\end{thm}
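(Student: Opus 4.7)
The plan is to show that the class of groups for which the rational analytic Novikov conjecture holds is closed under direct limits, and then to combine this with Theorem~\ref{thm:coarsenov} for coarsely embeddable groups together with the minimality clause in the definition of $\mathfrak{E}$. A transfinite induction on the construction of $\mathfrak{E}$ then forces every group in $\mathfrak{E}$ to satisfy the rational strong Novikov conjecture, from which the classical Novikov conjecture follows as explained earlier in the section.

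The key assertion to establish is therefore: if $\Gamma = \varinjlim_{i \in I} G_i$ and each $G_i$ satisfies the rational strong Novikov conjecture, then so does $\Gamma$. Since the connecting homomorphisms $\phi_{ij}\colon G_i \to G_j$ are not assumed injective, the reduced group $C^\ast$-algebra is not functorial in the required direction, so I would work throughout with the maximal group $C^\ast$-algebra $C^\ast_{\max}(\Gamma)$, which is functorial for arbitrary group homomorphisms and satisfies $C^\ast_{\max}(\Gamma) \cong \varinjlim_i C^\ast_{\max}(G_i)$ as $C^\ast$-algebras. Rational injectivity of the maximal assembly map still suffices for the classical Novikov conjecture, because the pairing with higher signature cocycles factors through both the maximal and reduced sides.

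The central step is to exhibit a commutative square
\[
\begin{tikzcd}[column sep=huge]
\varinjlim_i K_\ast(BG_i) \otimes \mathbb{Q} \arrow[r, "\varinjlim \mu_i"] \arrow[d] & \varinjlim_i K_\ast(C^\ast_{\max}(G_i)) \otimes \mathbb{Q} \arrow[d, "\cong"] \\
K_\ast(B\Gamma) \otimes \mathbb{Q} \arrow[r, "\mu_\Gamma"] & K_\ast(C^\ast_{\max}(\Gamma)) \otimes \mathbb{Q}
\end{tikzcd}
\]
and to verify that the right vertical arrow is an isomorphism (continuity of topological $K$-theory for inductive limits of $C^\ast$-algebras, combined with the identification above) and that the left vertical arrow is surjective (any $K$-homology class on $B\Gamma$ is supported on a finite subcomplex, whose finitely generated fundamental group sits inside $\phi_i(G_i)$ for some $i$, permitting a lift through $BG_i$ up to homotopy). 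Commutativity is naturality of the assembly map. By hypothesis each $\mu_i$ is rationally injective, so the top arrow is a direct limit of injections of $\mathbb{Q}$-vector spaces and hence injective; a diagram chase then delivers rational injectivity of $\mu_\Gamma$.

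The step I expect to be the main obstacle is controlling the left vertical arrow when the structure maps $\phi_{ij}$ have nontrivial kernels: one must check that different homotopy lifts of a finite subcomplex $K \to B\Gamma$ through some $BG_i$ give rise to the same rational class after passing sufficiently far in the direct system, and that refining from $G_i$ to $G_j$ (for $j \geq i$) is compatible with the maps to $B\Gamma$. This is a compactness-and-cofinality argument at the level of classifying spaces, and it is the essential ingredient that compensates for the failure of direct functoriality of $B(-)$ under noninjective group homomorphisms.
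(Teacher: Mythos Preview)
The paper offers no proof of this statement; it simply records the theorem as a consequence of Theorem~\ref{thm:coarsenov} and leaves the direct-limit step implicit, as is common in a survey. Your proposal supplies exactly that missing step, and the outline is correct: pass to the maximal group $C^\ast$-algebra for functoriality under arbitrary group homomorphisms, use continuity of $K$-theory under $C^\ast$-algebraic inductive limits on the right-hand side, and chase the square.

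Two remarks. First, the left vertical arrow is in fact a rational \emph{isomorphism}, not merely a surjection: via the Chern character one has $K_\ast(B\Gamma)\otimes\mathbb{Q}\cong \bigoplus_k H_{\ast+2k}(\Gamma;\mathbb{Q})$, and group homology commutes with directed colimits of groups (with arbitrary structure maps). This dissolves the obstacle you flagged in your final paragraph; the finite-presentation lifting argument you sketch is exactly how one proves this fact at the chain level. Second, your switch between ``rational analytic Novikov'' and ``rational strong Novikov'' is harmless here, since for the classical higher-signature statement only rational injectivity of the assembly map from $K_\ast^\Gamma(E\Gamma)$ is needed, and that is what your diagram delivers. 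The results behind Theorem~\ref{thm:coarsenov} (Yu, Higson, Skandalis--Tu--Yu) do give rational injectivity of the maximal assembly map for coarsely embeddable groups, so the hypothesis on each $G_i$ is available in the form you need.
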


The following open question is a challenge to geometric group theorists.

\begin{open} Is there any countable group not in the class $\mathfrak{E}$?
\end{open}

We mention that the Gromov monster groups are in the class $\mathfrak{E}$ \cite{G2, G3, AD, O}.

Next we shall discuss the connection of the Novikov conjecture with geometry of Banach spaces.

\begin{defn}  A  Banach space $X$ is said to have Property (H) if there exist  an increasing sequence of finite dimensional subspaces
$\{ V_n\} $ of $X$ and an increasing  sequence of finite dimensional subspaces $\{ W_n\} $ of a Hilbert space such that
\begin{enumerate}
\item[(1)]
 $V=\cup_n V_n$ is dense in $X$,

\item[(2)]  if  $W=\cup_n W_n$,  $S(V)$ and $S(W)$ are respectively the unit spheres of $V$ and $W$, then there exists a uniformly continuous map $\psi: S(V)\rightarrow S(W)$ such that
the restriction of $\psi$ to $S(V_n)$ is a homeomorphism (or more generally a degree one map) onto $S(W_n)$ for each $n.$

\end{enumerate}

\end{defn}

As an example, let $X$ be the  Banach space $l^p(\mathbb{N})$ for some  $p\geq 1$.
Let $V_n$  and $W_n$  be respectively the subspaces of $l^p(\mathbb{N})$  and $l^2(\mathbb{N})$ consisting of all sequences whose coordinates are zero after the  $n$-th terms.
We define a map $\psi$ from $S(V)$ to $S(W)$ by
 $$\psi(c_1, \cdots, c_k, \cdots)= (c_1 |c_1|^{p/2-1}, \cdots, c_k |c_k|^{p/2-1},\cdots). $$
 $\psi$ is called the Mazur map.
It is not difficult to verify that $\psi$ satisfies the conditions in the definition of Property (H).
For each $ p\geq 1$, we can similarly prove that $C_p$, the Banach space of all Schatten $p$-class operators on a Hilbert space, has Property (H).  

Kasparov and Yu proved the following.

\begin{thm} The Novikov conjecture holds if the fundamental group of a manifold 
is coarsely embeddable into a Banach space with Property (H).
\end{thm}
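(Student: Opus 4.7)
The plan is to reduce the Novikov conjecture to the (rational) strong Novikov conjecture and then to prove injectivity of the Baum-Connes assembly map in the presence of a coarse embedding into a Banach space $X$ with Property (H). The overall architecture mirrors the Hilbert space case (Theorem \ref{thm:coarsenov}), with the Mazur-type map $\psi : S(V) \to S(W)$ supplied by Property (H) serving as the bridge that transports Bott-Dirac data from the Hilbert space $W$ to the actual embedding space $X$.

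First, I would set up the coarse-geometric framework. Given a proper metric on $\Gamma$ coming from a proper length function, the hypothesized coarse embedding $f : \Gamma \to X$ restricts (after approximation) to a controlled map into the finite-dimensional subspaces $V_n \subset V$. Composing with $\psi$ gives maps into $S(W_n) \subset W$, which sit inside a genuine Hilbert space. The first technical point is that $\psi$ is only uniformly continuous, not Lipschitz, but uniform continuity is exactly what is needed to preserve coarseness of embeddings and to ensure that commutators of bounded-propagation operators with the transported symbols remain small in norm.

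Next I would build the relevant $C^\ast$-algebras. On the Hilbert space side, I would assemble an infinite-dimensional Bott-Dirac element out of the inductive system $\{W_n\}$, working with a twisted Roe-type algebra over $P_F(\Gamma)$ whose coefficient algebra is modeled on Clifford algebras and function algebras on the $W_n$. On the Banach space side, the subspaces $V_n$ and the map $\psi$ are used to pull this local data back to a $C^\ast$-algebra over $X$ (living on the image of the embedding). Following the Higson-Kasparov/Yu strategy, the assembly map factors through this twisted algebra, and the key diagram is
\[
\begin{tikzcd}
\varinjlim_F K_\ast(C^\ast_L(P_F(\Gamma))^\Gamma) \arrow{r} \arrow{d} & \varinjlim_F K_\ast(C^\ast(P_F(\Gamma))^\Gamma) \arrow{d} \\
K_\ast(\text{twisted localization algebra}) \arrow{r} & K_\ast(\text{twisted Roe algebra}),
\end{tikzcd}
\]
where the right-hand vertical arrow is shown to be split injective by constructing a Bott-type class out of the Mazur map data.

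The main step, and the hardest part, is proving that the Bott map from the untwisted side into the twisted side is an isomorphism, or at least that composition with a Dirac-type map yields the identity on K-theory. In the Hilbert space setting this is the infinite-dimensional Bott periodicity theorem of Higson-Kasparov; here one must instead work with $\psi$, which is only uniformly continuous and nonlinear. I expect the decisive estimate to be a controlled-propagation argument: cutting by $V_n$ and pushing forward via $\psi$ to $W_n$ produces an asymptotic morphism whose asymptotic behavior is governed by finite-dimensional Bott periodicity on each $W_n$, while the uniform continuity of $\psi$ controls errors in the inductive limit. The coarse embedding of $\Gamma$ into $X$ guarantees that, after cutting by any compact piece of $P_F(\Gamma)$, only a locally finite portion of the twisted algebra is relevant, so the commutator estimates close up.

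Finally, having established injectivity of $e_\ast$ after tensoring with $\mathbb{Q}$ (the rational strong Novikov conjecture for such $\Gamma$), the classical Novikov conjecture for manifolds with $\pi_1 M \cong \Gamma$ follows by applying the assembly to the class of the signature operator on the universal cover and pairing with higher signatures, exactly as in the reduction described in the previous section. The only genuinely new analytic input beyond the Hilbert case is the handling of $\psi$; everything else is a formal descent.
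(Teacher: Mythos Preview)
The paper does not actually prove this theorem; it is a survey and simply records the result with a citation to Kasparov--Yu \cite{KY}. So there is no proof in the paper to compare against, and your outline is being measured against the original source rather than anything in this text.

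That said, your sketch is broadly in the right spirit but is too vague at the one place that matters. The architecture you describe---reduce to the rational strong Novikov conjecture, build a twisted Roe/localization algebra with coefficients modeled on the inductive system $\{V_n\}$, and use the map $\psi$ to import Hilbert-space Bott--Dirac machinery---is indeed the Kasparov--Yu strategy. However, you underplay the role of the \emph{degree-one} (or homeomorphism) condition on $\psi|_{S(V_n)} \to S(W_n)$. Uniform continuity alone does not give you Bott periodicity; it is the topological degree that guarantees the pulled-back Bott class on each $V_n$ is actually a $K$-theory generator, so that the finite-dimensional Bott--Dirac cancellation goes through level by level. Your paragraph on ``the hardest part'' acknowledges this is where the work lies but then retreats to ``I expect the decisive estimate to be a controlled-propagation argument,'' which is not a proof. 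The genuine content of \cite{KY} is precisely the construction of a Bott-type asymptotic morphism out of the nonlinear, merely uniformly continuous $\psi$, and the verification that its composition with a Dirac element is homotopic to the identity; you have named this step but not supplied it.
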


Let $c_0$ be the Banach space consisting of  all sequences of real numbers that are convergent to $0$ with the supremum norm.

\begin{open} Does  the Banach space $c_0$ have Property (H)?
\end{open}

A positive answer to this question would imply the Novikov conjecture since every countable group admits a coarse embedding into $c_0$ \cite{BG}.

A less ambitious question is the following.

\begin{open}
Is every countable subgroup of the diffeomorphism group of a compact smooth manifold  coarsely embeddable into $C_p$ for some $p\geq 1$?
\end{open}

For each $p>q\geq 2$, it is also an open question to construct a bounded geometry space
which is coarsely embeddable into $l^p(\mathbb{N})$ but not $l^q(\mathbb{N})$.  Beautiful  results in \cite{JR} and \cite{MN}
indicate that  such a construction should be possible. Once such a metric space is constructed, the next natural question is to construct countable groups which coarsely contain such a metric space. These groups would be from another universe and would be different from any group we currently know.

\subsection{Gelfand-Fuchs classes, the group of volume preserving diffeomorphisms, Hilbert-Hadamard spaces}
\label{}

In this subsection, we give an overview on the work of A. Connes, Connes-Gromov-Moscovici on the Novikov conjecture for Gelfand-Fuchs classes and the recent work of  Gong-Wu-Yu on the Novikov conjecture for groups acting properly and isometrically on a Hilbert-Hadamard spaces and for any geometrically discrete subgroup of the group of volume preserving diffeomorphisms of a compact smooth manifold.

A. Connes proved the following deep theorem on the Novikov conjecture \cite{C1}.

\begin{thm} 
The Novikov conjecture holds for higher signatures associated to the Gelfand-Fuchs cohomology classes of a subgroup of the group of diffeomorphisms of a compact smooth manifold.
\end{thm}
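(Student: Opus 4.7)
The plan is to establish the strong Novikov conjecture for the relevant higher signature classes by constructing, for each Gelfand-Fuchs cohomology class, a cyclic cocycle on a ``smooth'' crossed product algebra that pairs nontrivially with the higher index of the signature operator and recovers the corresponding higher signature. Throughout, let $\Gamma \subset \mathrm{Diff}(M)$ for a compact smooth manifold $M$, and recall that the Gelfand-Fuchs cohomology $H^{*}_{GF}$ of formal vector fields on $\mathbb{R}^{\dim M}$ admits a characteristic map $H^{*}_{GF} \to H^{*}(B\mathrm{Diff}(M)) \to H^{*}(B\Gamma)$ obtained by evaluating formal vector fields on the frame bundle of $M$.

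First, I would construct the relevant cyclic cocycles on the convolution algebra $\mathcal{A} = C_{c}^{\infty}(P) \rtimes \Gamma$, where $P$ is the bundle of formal frames (or, after a Chern-Weil reduction, a suitable principal $O(n)$-bundle) on $M$. Each Gelfand-Fuchs class, represented by a $\Gamma$-invariant differential form $\omega$ on $P$ built from connection and curvature of formal vector fields, produces a cyclic cocycle $\tau_{\omega}$ on $\mathcal{A}$ via the standard ``transverse fundamental class'' recipe: one integrates $\omega$ against the Alexander-Spanier-type expression $\tau_{\omega}(a_{0},\dots,a_{k}) = \int_{P} a_{0}\, da_{1}\cdots da_{k}\wedge \omega$ evaluated on group elements supported near the diagonal. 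The $\Gamma$-invariance of $\omega$ ensures the formula descends to the crossed product and satisfies the $(b,B)$-cocycle identities.

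Second, I would pair this with index theory. There is a natural Morita-type map $C^{*}_{r}(\Gamma) \to C^{*}_{r}(\mathcal{A})$ (via the inclusion of the group algebra into the crossed product), and the higher index of the signature operator on $\widetilde{M}$ lies in $K_{*}(C^{*}_{r}(\Gamma))$. Pushing this index into $K_{*}$ of the crossed product and pairing with $\tau_{\omega}$, one shows via an index-theoretic calculation, carried out locally on $P$ using the symbol of the signature operator and the Chern-Weil interpretation of $\omega$, that the pairing equals, up to a universal constant, the higher signature $\langle L(M)\cup f^{*}\omega, [M]\rangle$, where $f\colon M \to B\Gamma$ is the classifying map. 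This is the geometric heart of the argument and rests on a local index formula for the lifted signature operator.

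Third, and this is the main obstacle, one must show that $\tau_{\omega}$ extends continuously to a dense, holomorphically closed subalgebra $\mathcal{B} \subset C^{*}_{r}(\Gamma)\otimes \mathcal{K}$, so that the pairing factors through $K_{*}(C^{*}_{r}(\Gamma))$ and hence depends only on the homotopy invariant higher index class. The difficulty is analytic: the cocycles $\tau_{\omega}$ have polynomial growth in the word length of $\Gamma$, while the $C^{*}$-norm gives no such control. The way forward is to introduce a Fr\'echet algebra of rapidly decaying functions on $\Gamma$ (or the Connes-Moscovici style algebra of ``smooth'' elements of $C^{*}_{r}(\Gamma)$), verify holomorphic closure in $C^{*}_{r}(\Gamma)$, and establish the continuity estimate $|\tau_{\omega}(a_{0},\dots,a_{k})| \leq C \prod \|a_{i}\|_{\mathcal{B}}$ using derivations coming from the action of $\Gamma$ on $P$ together with the compactness of $M$. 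Once this is in place, homotopy invariance of $K$-theory gives that the higher signature is a homotopy invariant, which is precisely the Novikov conjecture for Gelfand-Fuchs classes.
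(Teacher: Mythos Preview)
The paper does not actually prove this theorem; it is a survey, and the result is simply attributed to Connes \cite{C1} with the comment that ``the proof of this theorem uses the full power of noncommutative geometry.'' So there is no proof in the paper to compare against, only the original source.

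Your outline is in the spirit of Connes' argument, and the three-step architecture (build cyclic cocycles from Gelfand--Fuchs data, compute their pairing with the signature index, extend to a holomorphically closed subalgebra) is correct in broad strokes. But your handling of the third step, which you rightly identify as the main obstacle, does not work as written. You propose to use a rapid-decay Fr\'echet algebra on $\Gamma$ and to bound $\tau_\omega$ using ``derivations coming from the action of $\Gamma$ on $P$ together with the compactness of $M$.'' For an arbitrary subgroup of $\mathrm{Diff}(M)$ there is no reason whatsoever for rapid decay to hold, and compactness of $M$ alone gives no control on the growth of derivatives of diffeomorphisms under iteration. This is exactly the difficulty that makes the theorem deep rather than routine.

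Connes' actual mechanism is more geometric and is what singles out Gelfand--Fuchs classes. One passes from $M$ to the bundle of frames (or higher-order jets), on which $\Gamma$ acts by a canonical lift. On this enlarged space the action acquires a triangular structure: the derivative cocycle is absorbed into the base coordinates of the jet bundle, so that the transverse geometry becomes controllable in a way it is not on $M$ itself. The Gelfand--Fuchs cocycles are precisely those that live naturally on this jet bundle, and it is this structural fact, not a generic continuity estimate, that permits the extension of $\tau_\omega$ to a suitable smooth subalgebra and hence the pairing with $K$-theory. Your sketch omits this passage to the jet bundle as the source of the analytic control; without it, step three is a genuine gap.

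A smaller point: in step two, the phrase ``Morita-type map $C^*_r(\Gamma)\to C^*_r(\mathcal A)$'' is not correct as stated. The relevant relationship is that the index class can be pushed into the $K$-theory of the crossed product via the Mishchenko bundle or an induction construction, not via a Morita equivalence of the algebras themselves.
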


The proof of this theorem uses the full power of noncommutative geometry \cite{C}.

\begin{open} Does the Novikov conjecture hold for any subgroup of the group of diffeomorphisms of a compact smooth manifold?
\end{open}

Motivated in part by this open question, S. Gong, J. Wu and G. Yu proved the following theorem \cite{GWY}.

\begin{thm}\label{thm:HHnov} 
The Novikov conjecture holds for groups acting properly and isometrically on an admissible Hilbert-Hadamard space. 
\end{thm}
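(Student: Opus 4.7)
The plan is to establish the rational strong Novikov conjecture for $\Gamma$, from which the classical Novikov conjecture follows by applying the resulting injectivity to the higher signature of the given compact manifold. The overall strategy extends the Higson--Kasparov Dirac/dual-Dirac method (Theorem \ref{HKforBC}) from Hilbert spaces to the nonlinear Hilbert--Hadamard setting, using the nonpositive curvature of $M$ as a substitute for the linear structure that makes the classical argument possible.

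First, I would exploit the admissibility hypothesis to express the Hilbert--Hadamard space $M$ as an increasing union of finite-dimensional complete Riemannian submanifolds $M_n$ of nonpositive sectional curvature, fitting together compatibly with the $\Gamma$-action. For each $M_n$, the Cartan--Hadamard theorem supplies a diffeomorphism with the tangent space at any chosen basepoint via the exponential map, and nonpositive curvature ensures that the inverse logarithm is nonexpanding; this is the geometric replacement for the global linear coordinates one has automatically on a Hilbert space. I would then assemble a $\Gamma$-equivariant C$^\ast$-algebra $\mathcal{A}(M)$ from the graded Clifford algebras of the tangent spaces of the $M_n$'s, glued via these exponential maps and taken as a direct limit. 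This is the analogue of the Higson--Kasparov C$^\ast$-algebra of a Hilbert space. A Bott element $\beta \in KK^{\Gamma}(\mathbb{C}, \mathcal{A}(M))$ is constructed by transporting the finite-dimensional Bott generators along the logarithms, and a Dirac-type element $\alpha \in KK^{\Gamma}(\mathcal{A}(M), \mathbb{C})$ is constructed from the assembly of honest Dirac operators on the $M_n$'s.

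The final step is the factorization $\alpha \otimes_{\mathcal{A}(M)} \beta = 1$ in $KK^{\Gamma}(\mathbb{C}, \mathbb{C})$, or at least the weaker splitting sufficient for the strong Novikov conjecture. Combined with the proper isometric $\Gamma$-action on $M$, which provides a $\Gamma$-equivariant coarse map from the Rips complexes $P_F(\Gamma)$ into $M$ and hence a compatible $KK$-class, one obtains a section, after tensoring with $\mathbb{Q}$, of the evaluation map $e_\ast \colon \varinjlim_F K_\ast(C_L^\ast(P_F(\Gamma))^\Gamma) \to \varinjlim_F K_\ast(C^\ast(P_F(\Gamma))^\Gamma)$. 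This is precisely the rational strong Novikov conjecture, which implies the Novikov conjecture as recalled in the introduction. The main obstacle is this factorization: unlike in the Hilbert space case, there is no global linear Bott map, and the tangent bundles of the $M_n$'s do not organize into a single Hilbert bundle over $M$, so the Bott and Dirac elements must be built by a delicate inductive assembly whose norm estimates survive passage to the direct limit. The admissibility hypothesis and the nonpositive curvature enter essentially at this point, through uniform Lipschitz control on the logarithm maps and through Toponogov-type comparison estimates that propagate through the finite-dimensional approximations.
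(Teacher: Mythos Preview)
Your proposal correctly identifies one of the three ingredients the paper singles out: the construction of a $\Gamma$-$C^\ast$-algebra $\mathcal{A}(M)$ modeled on the Hilbert--Hadamard space via the finite-dimensional approximations supplied by admissibility, together with Bott and Dirac type elements. This part is in line with what the paper describes.

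The gap is in your final step. You propose to establish $\alpha\otimes_{\mathcal A(M)}\beta = 1$ in $KK^\Gamma(\mathbb C,\mathbb C)$, or a weaker splitting, by ``a delicate inductive assembly whose norm estimates survive passage to the direct limit,'' relying on Lipschitz control of the logarithm and comparison geometry. But this is precisely where the nonlinear situation diverges from Higson--Kasparov: in the Hilbert case the $\gamma$-element is shown to be $1$ via a linear rotation homotopy, and there is no analogue of that rotation available on a general Hilbert--Hadamard space. Nonpositive curvature and Toponogov estimates do not by themselves produce the required equivariant homotopy; the isometry group of $M$ need not act linearly on any model, and the $\Gamma$-action on $\mathcal A(M)$ has no a priori reason to be homotopic to the trivial one.

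The paper's outline supplies the two ingredients your proposal is missing. First, a \emph{deformation technique for the isometry group} of the Hilbert--Hadamard space: one exploits the CAT(0) barycenter (center of mass) to contract the isometric action in a controlled way, and tracks the effect of this deformation on $K$-theory. This replaces the rotation homotopy of the linear case. Second, the argument is carried out in the \emph{$KK$-theory with real coefficients} of Antonini--Azzali--Skandalis, which is what allows the deformation argument to yield the rational injectivity statement (the rational strong Novikov conjecture) rather than an integral $\gamma=1$. Without these two pieces your sketch does not close; the ``main obstacle'' you flag is not overcome by inductive norm estimates alone.
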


Roughly speaking,  Hilbert-Hadamard spaces are (possibly infinite dimensional) simply connected spaces with non-positive curvature. We will give a precise definition a little later.
We say that a  Hilbert-Hadamard  space $M$ is \textit{admissible} if it has a sequence of subspaces $M_n$, whose union is dense in $M$, such that each $M_n$, seen with its inherited metric from $M$, is isometric to a finite-dimensional Riemannian manifold. Examples of admissible Hilbert-Hadamard spaces
include all simply connected and non-positively curved Riemannian manifold, the Hilbert space, and certain infinite dimensional symmetric spaces. Theorem 4.3
can be viewed as a generalization of both Theorem 2.1 and Theorem 3.1.

Infinite dimensional symmetric spaces are often naturally admissible Hilbert-Hadamard spaces. One such an example is 
\[L^2(N,\omega, \operatorname{SL}(n, \mathbb{R})/\operatorname{SO}(n)),\]
where $N$ is a compact smooth manifold with a given volume form $\omega$.
This infinite-dimensional symmetric space
 is defined to be the completion of  the space of all smooth maps 
from $N$ to $X=\operatorname{SL}(n, \mathbb{R})/\operatorname{SO}(n)$
with respect to the following distance:
		$$d(\xi, \eta) = \left( \int_N (d_X(\xi(y),\eta(y)))^2 \, d\omega(y) \right)^{\frac{1}{2}},$$ where $d_X$ is the standard Riemannian metric on the symmetric space $X$ and $\xi$ and $\eta$ are two smooth maps from $N$ to $X$. This space can be considered as  the space of $L^2$-metrics on $N$ with the given volume form 
$\omega$  and is a Hilbert-Hadamard space.
With the help of this infinite dimensional symmetric space, the above theorem can be applied to study the Novikov conjecture for geometrically discrete subgroups of the group of volume preserving diffeomorphisms on such a manifold.

The key ingredients of the proof for Theorem $\ref{thm:HHnov}$ include  a construction of a $C^\ast$-algebra modeled after the Hilbert-Hadamard  space, a deformation technique for the isometry group of the Hilbert-Hadamard  space and its corresponding actions on K-theory, and a KK-theory with real coefficient developed by Antonini, Azzali, and Skandalis \cite{AAS}. 

Let $\operatorname{Diff}(N,\omega)$ denote the group of volume preserving diffeomorphisms on a compact orientable smooth manifold $N$ with a given volume form $\omega$. In order to define the concept of geometrically discrete subgroups of $\operatorname{Diff}(N,\omega)$, let us fix a Riemannian metric on $N$  with the given volume  $\omega$ and define a length function $\lambda$ on  $\operatorname{Diff}(N,\omega)$ by:
\[ \lambda_+(\varphi) = \left(\int_N (\log(\|D\varphi\|))^2 d\omega\right)^{1/2}\]
and
\[ \lambda(\varphi) =  \max\left\{\lambda_+(\varphi),\lambda_+(\varphi^{-1})\right\} \; \]
 for all $\varphi \in \operatorname{Diff}(N,\omega)$, 
where $D\varphi$ is the Jacobian of $\varphi$, and the norm $\|\cdot \|$ denotes the operator norm, computed using the chosen Riemannian metric on $N$.

\begin{defn}
A subgroup $\Gamma$ of $\operatorname{Diff}(N,\omega)$ is said to be a geometrically discrete subgroup 
if $\lambda(\gamma) \rightarrow \infty$ when $\gamma \to \infty$ in $\Gamma$, i.e. for any $R>0$, there exists a finite subset $F\subset \Gamma$ such that $\lambda(\gamma)\geq R$ if $\gamma \in \Gamma \setminus F$. 
\end{defn}

Observe that although the length function $\lambda$ depends on our choice of the Riemannian metric, the above notion of geometric discreteness does not. Also notice that if $\gamma$ preserves the Riemannian metric we chose, then $\lambda(\gamma)=0$. This suggests that the class of geometrically discrete subgroups of $\operatorname{Diff}(N,\omega)$  does not intersect with the class of groups of isometries. Of course,  we already know the Novikov conjecture for any group of isometries on a compact Riemannian  manifold. This, together with the following result, gives an optimistic perspective on the open question on the Novikov conjecture for groups of volume preserving diffeomorphisms.

\begin{thm} 
	Let $N$ be a compact smooth manifold with a given volume form $\omega$, and let $\operatorname{Diff}(N,\omega)$ be the group of all volume preserving diffeomorphisms of $N$. The  Novikov conjecture holds for any geometrically discrete subgroup of $\operatorname{Diff}(N,\omega)$. 

\end{thm}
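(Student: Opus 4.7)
The plan is to derive this theorem from Theorem \ref{thm:HHnov} by exhibiting a proper isometric action of any geometrically discrete subgroup $\Gamma \subseteq \operatorname{Diff}(N,\omega)$ on an admissible Hilbert-Hadamard space. The natural target, already highlighted in the preceding discussion, is
$$ M := L^2(N,\omega,X), \qquad X := \operatorname{SL}(n,\mathbb{R})/\operatorname{SO}(n), \quad n = \dim N, $$
which the excerpt asserts is a Hilbert-Hadamard space. Admissibility will follow by approximating $L^2$-maps by step functions subordinate to finer and finer measurable partitions of $N$, so that $M$ is the completion of an increasing dense union of finite products $X^{\times k}$, each of which is a finite-dimensional Riemannian manifold sitting isometrically inside $M$.

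To construct the action, fix an auxiliary Riemannian metric $g_0$ on $N$ with induced volume form $\omega$, and use $g_0$-orthonormal frames to read the Jacobian $D\varphi$ of $\varphi \in \operatorname{Diff}(N,\omega)$ at each point $y \in N$ as a class $[D\varphi(y)] \in X$ (the volume-preserving hypothesis forces determinant one, and the $\operatorname{SO}(n)$-ambiguity in the frame is killed in the quotient). Define
$$ (\varphi \cdot \xi)(y) := [D\varphi(\varphi^{-1}(y))] \cdot \xi(\varphi^{-1}(y)), $$
which is an isometric action of $\operatorname{Diff}(N,\omega)$ on $M$: the cocycle identity follows from the chain rule, and isometricity combines the fact that $\varphi$-pullback is $L^2$-unitary because $\omega$ is invariant with the fact that $\operatorname{SL}(n,\mathbb{R})$ acts by isometries on $X$. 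Taking $\xi_0 \in M$ to be the constant section at $e\operatorname{SO}(n)$, the change of variables $z = \varphi^{-1}(y)$ gives
$$ d_M(\xi_0, \varphi \cdot \xi_0)^2 = \int_N d_X\bigl(e\operatorname{SO}(n), [D\varphi(z)]\bigr)^2 \, d\omega(z). $$
The standard Cartan-norm estimate $d_X(e\operatorname{SO}(n),[A]) \asymp \max\{\log\|A\|, \log\|A^{-1}\|\}$ on $\operatorname{SL}(n,\mathbb{R})$, together with the identity $(D\varphi)^{-1}(z) = D(\varphi^{-1})(\varphi(z))$ and the preservation of $\omega$, identifies the integral above as comparable to $\lambda(\varphi)^2$. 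Hence geometric discreteness of $\Gamma$ is exactly the statement that $d_M(\xi_0, \gamma \cdot \xi_0) \to \infty$ as $\gamma \to \infty$ in $\Gamma$, which is the properness criterion used in the excerpt. Theorem \ref{thm:HHnov} then yields the Novikov conjecture for $\Gamma$.

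The main obstacle is making the twisted cocycle $\varphi \mapsto (y \mapsto [D\varphi(y)])$ genuinely isometric and well defined on all of $M$: one must confirm that the chosen trivialization of the $X$-bundle associated to the frame bundle of $(N,g_0)$ produces a bona fide action of the full group $\operatorname{Diff}(N,\omega)$ and not merely something ambiguous up to $\operatorname{SO}(n)$-gauge, and one must handle the measurability point that $D\varphi$ is only a measurable section when $\varphi$ has limited regularity. Once this is pinned down, the remaining admissibility verification for $M$ is a routine discretization argument, since $X$ is itself a finite-dimensional Riemannian symmetric space.
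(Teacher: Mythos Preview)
Your proposal is correct and follows the same route the paper indicates (and that \cite{GWY} carries out in detail): one applies Theorem~\ref{thm:HHnov} to the admissible Hilbert-Hadamard space $L^2(N,\omega,\operatorname{SL}(n,\mathbb{R})/\operatorname{SO}(n))$, viewed as the space of $L^2$-metrics on $N$ with volume $\omega$, on which $\operatorname{Diff}(N,\omega)$ acts isometrically by pushforward of metrics; the length function $\lambda$ is precisely designed so that geometric discreteness is equivalent to properness of this action at the constant section $\xi_0$. Your Jacobian-cocycle formula is just the coordinate expression of this pushforward action once a background metric $g_0$ trivializes the $X$-bundle (which is possible since $X$ is contractible), so the ``obstacle'' you flag is a genuine but routine bookkeeping point rather than a gap; your step-function argument for admissibility and your comparison $d_M(\xi_0,\varphi\cdot\xi_0)\asymp\lambda(\varphi)$ via the singular-value description of $d_X$ are both correct.
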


Now let us give a precise definition of Hilbert-Hadamard space. We will first recall the concept of CAT(0) spaces. 
Let $X$ be a geodesic metric space.
Let $\Delta $ be a triangle in $X$ with geodesic segments as its sides. $\Delta$
  is said to satisfy the CAT(0) inequality if there is a comparison triangle 
  $\Delta '$ in Euclidean space, with sides of the same length as the sides of 
  $ \Delta$ , such that distances between points on $\Delta $ are less than or equal to the distances between corresponding points on $ \Delta '.$
  The geodesic metric space $X$ is said to be a CAT(0) space if every geodesic triangle satisfies the CAT(0) inequality. 

Let $X$ be a geodesic metric space. For three distinct points $x,y,z \in X$, we define the comparison angle $\widetilde{\angle} xyz$ to be
\[\widetilde{\angle} xyz = \arccos\left(\frac{d(x,y)^2 + d(y,z)^2 - d(x,z)^2}{2d(x,y)d(y,z)}\right).\]
In other words, $\widetilde{\angle} xyz$ can be thought of as the angle at $y$ of the comparison triangle $\Delta xyz$ in the Euclidean plane. 


Given two nontrivial geodesic paths $\alpha$ and $\beta$ emanating from a point $p$ in $X$, meaning that $\alpha(0) = \beta(0) = p$, we define the angle between them, $\angle(\alpha,\beta)$, to be 
\[\angle(\alpha,\beta) = \lim_{s,t \to 0} \widetilde{\angle} (\alpha(s),p,\beta(t)) \; ,\]
provided that the limit exists. For CAT(0) spaces, since the $\widetilde{\angle} (\alpha(s),p,\beta(t))$ decreases with $s$ and $t$, the angle between any two geodesic paths emanating from a point is well defined. These  angles satisfy the triangle inequality.

For a point $p \in X$, let $\Sigma_p'$ denote the metric space induced from the space of all geodesics emanating from $p$ equipped with the pseudometric of angles, that is, for geodesics $\alpha$ and $\beta$, we define $d(\alpha,\beta) =\angle(\alpha,\beta)$. Note, in particular, from our definition of angles, that $d(\alpha,\beta) \leq \pi$ for any geodesics $\alpha$ and $\beta$.

We define  $\Sigma_p$ to be the completion of $\Sigma_p'$ with respect to the distance $d$. 
The \textit{tangent cone} $K_p$ at a point $p$ in $X$ is then defined to be a metric space which is, as a topological space, the cone of $\Sigma_p$. That is, topologically
\[K_p \simeq \Sigma_p \times [0,\infty)/\Sigma_p \times \{0\}.\]

The metric on it is given as follows. For two points $p,q \in K_p$ we can express them as $p=[(x,t)]$ and $q = [(y,s)]$. Then the metric is given by
\[d(p,q) = \sqrt{t^2+s^2-2st\cos(d(x,y))}.\]
The distance is what the distance would be if we went along geodesics in a Euclidean plane with the same angle between them as the angle between the corresponding directions in $X$.

The following definition is inspired by \cite{FS}.

\begin{defn}\label{defn:hhs}
	A \emph{\hhs} is a complete geodesic CAT(0) metric space (i.e., an Hadamard space) all of whose tangent cones are isometrically embedded in Hilbert spaces. 
\end{defn}

	Every connected and simply connected \emph{Riemannian-Hilbertian manifold with non-positive sectional curvature} is a separable {\hhs}. In fact, a Riemannian manifold without boundary is a {\hhs} if and only if it is complete, connected, and simply connected, and has nonpositive sectional 	curvature. 
We remark that a CAT(0) space $X$ is always uniquely geodesic. 

Recall that a subset of a geodesic metric space is called \emph{convex} if it is again a geodesic metric space when equipped with the restricted metric. We observe that a closed convex subset of a {\hhs} is itself a {\hhs}.

\begin{defn}
	A separable {\hhs} $M$ is called \emph{admissible} if there is a sequence of convex subsets isometric to finite-dimensional Riemannian manifolds, whose union is dense in $M$. 
\end{defn}

The notion of {\hhs}s is more general than simply connected  Riemannian-Hilbertian space with non-positive sectional curvature. For example, the infinite dimensional symmetric space $L^2(N,\omega, \operatorname{SL}(n, \mathbb{R})/\operatorname{SO}(n))$
is a {\hhs} but not a Riemannian-Hilbertian space with non-positive sectional curvature.

\section{Secondary invariants for Dirac operators and applications }\label{sec:secondary}

We have been mainly concerned with the primary invariants, i.e. the higher index invariants, till now. Starting this section, we shall shift our focus to secondary invariants. We try to keep the discussion relatively self-contained, which hopefully will  give a better sense of some of the more recent development on secondary invariants.

   In this section, we introduce a secondary invariant 
for Dirac operators on manifolds with positive scalar curvature and apply the invariant to measure the size of the moduli space of Riemmanian metrics with positive scalar curvature on a given spin manifold. 

We carry out the construction in the odd dimensional
case. The even dimensional case is similar.
Suppose  that $X$ is an odd dimensional complete spin manifold without
boundary and we fix a spin structure on $X$. Assume that there is a
discrete group $\Gamma$ acting on $X$ properly and cocompactly by
isometries. In addition, we assume the action of $\Gamma$ preserves
the spin structure on $X$. A typical such example comes from a Galois
cover $\widetilde{M}$ of a closed spin manifold $M$ with $\Gamma$
being the group of deck transformations.

Let $S$ be the spinor bundle over $M$ and $D$ be the associated
Dirac operator on~$X$. Let $H_{X} = L^{2}(X, S) $ and
\begin{eqnarray*} F = D( D^{2} + 1)^{-1/2}.
\end{eqnarray*}
 $(H_{X}, F)$ defines a class in $K_{1}^{\Gamma}(X)$. Note that
$F$ lies in the multiplier algebra of $C^{\ast}(X)^{\Gamma}$, since
$F$ can be approximated by elements of finite propagation in the
multiplier algebra of $C^{\ast}(X)^{\Gamma}$. As a result, we can
directly work with\footnote{In other words, there is no need to pass to
	the operator $\mathcal{F}$ or $\mathcal{F}(t)$ as in the general case.}
%
\begin{equation}\label{eq:path2}
F(t) = \sum_{j} ((1 - (t-n)) \phi_{n, j}^{1/2} F \phi_{n,j}^{1/2} +
(t-n) \phi_{n+1, j}^{1/2} F \phi_{n+1, j}^{1/2})
\end{equation}
for $t\in[n, n+1]$. The same index construction as before defines the index
class and the local index class of $(H_{X}, F)$. We shall denote them
by $\text{\textup{Ind}}(D)\in K_{1}(C^{\ast}(X)^{\Gamma})$ and $\text{\textup
	{Ind}}_{L}(D)\in K_{1}(C_{L}^{\ast}(X)^{\Gamma})$ respectively.

Now suppose in addition $X$ is endowed with a complete Riemannian
metric $g$ whose scalar curvature $\kappa$ is positive everywhere,
then the associated Dirac operator in fact naturally defines a class in
$K_{1}(C_{L, 0}^{\ast}(X)^{\Gamma})$. Indeed, recall that
\begin{eqnarray*} D^{2} = \nabla^{\ast} \nabla+ \frac{\kappa}{4},
\end{eqnarray*}
where $\nabla: C^{\infty}(X, S) \to C^{\infty}(X, T^{\ast} X\otimes
S)$ is the associated connection and $\nabla^{\ast}$ is the adjoint
of $\nabla$. If $\kappa>0$, then it follows immediately that $ D$ is
invertible. So, instead of $D(D^{2}+1)^{-1/2}$, we can use
\begin{eqnarray*} F := D|D|^{-1}.
\end{eqnarray*}
Note that $\frac{F+1 }{2}$ is a genuine projection. Define $F(t)$ as in
formula $\eqref{eq:path2}$, and define $q(t) := \frac{F(t) +
	1}{2}$. We form the path
of unitaries $u(t) = e^{2\pi i q(t)}, 0\leq t < \infty$, which defines an element in $
(C_{L}^{\ast}(X)^{\Gamma})^{+}$. Notice that $u(0) = 1$. So this path
$u(t), 0\leq t < \infty$, in fact lies in $(C_{L,0}^{\ast}(X)^{\Gamma
})^{+}$, therefore defines a class in $K_{1}(C_{L, 0}^{\ast
}(X)^{\Gamma})$.

Let us now define the higher rho invariant. It was first introduced by
Higson and Roe \cite{Roe1, HR3}. Our formulation is
slightly different from that of Higson and Roe. The equivalence of the
two definitions was shown in \cite{XY}.
%
\begin{defns}
	The higher rho invariant $\rho(D, g)$ of the pair $(D, g)$ is defined
	to be the $K$-theory class $[u(t)]\in K_{1}(C_{L, 0}^{\ast}(X)^{\Gamma})$.
\end{defns}

The definition of higher rho invariant in the even dimensional case is similar, where one needs to work with the
natural $\mathbb{Z}/2\mathbb{Z}$-grading on the spinor bundle.

Next we shall apply the higher rho invariant to estimate the size of the moduli space of Riemannian metrics with positive scalar curvature on a given spin manifold.
Let $M$ be a closed smooth manifold. Suppose that $M$ carries a metric of
	positive scalar curvature. It is well known that the space of all
	Rimennian metrics on $M$ is contractible, hence topologically trivial.
	To the contrary, the space of all positive scalar curvature metrics on
	$M$, denoted by $\mathcal{R}^{+}(M)$, often has very nontrivial
	topology. In particular, $\mathcal{R}^{+}(M)$ is often not connected and
	in fact has infinitely many connected components \cite{BoG, LP, PS, RS}.  For example, by using the
	Cheeger--Gromov $L^{2}$-rho invariant  and Lott's
	delocalized eta invariant,  Piazza and Schick
	showed that $\mathcal{R}^{+}(M)$ has infinitely many connected
	components, if $M$ is a closed spin manifold with $\dim M = 4k+3 \geq
	5$ and $\pi_{1}(M)$ contains torsion \cite{PS}.

Following Stolz \cite{St}, Weinberger and Yu
	introduced an abelian group $P(M)$ to measure the size of the space of
	positive scalar curvature metrics on a manifold $M$ \cite{WY}. In
	addition, they used the finite part of $K$-theory of the maximal group
	$C^{\ast}$-algebra $C^{\ast}_{\max}(\pi_{1}(M))$ to give a lower
	bound of the rank of $P(M)$. A special case of their theorem states
	that the rank of $P(M)$ is $\geq1$, if $M$ is a closed spin manifold
	with $\dim M = 2k+1\geq5$ and $\pi_{1}(M)$ contains torsion. In particular, this implies the above
	theorem of Piazza and Schick.
	
For convenience of the reader, we recall the definition of the abelian group $P(M)$.
Let $M$ be an oriented smooth closed manifold with $\dim M\geq 5$ and its fundamental group $\pi_1(M) = \Gamma$. Assume that $M$ carries a metric of positive scalar curvature. We denote it by $g_M$.  Let $I$ be the closed interval $[0, 1]$. Consider the connected sum $(M\times I)\sharp (M\times I)$, where the connected sum is performed away from the boundary of $M\times I$. Note that $\pi_1\big((M\times I)\sharp(M\times I)\big) = \Gamma\ast \Gamma$ the free product of two copies of $\Gamma$. 

\begin{defns}
We define the generalized connected sum $(M\times I)\natural(M\times I)$ to be the manifold obtained from $(M\times I)\sharp(M\times I)$ by removing the kernel of the homomorphism $\Gamma\ast \Gamma\to \Gamma$ through surgeries away from the boundary.
\end{defns}

Note that $(M\times I)\natural(M\times I)$ has four boundary components, two of them being $M$ and the other two being $-M$, where $-M$ is the manifold $M$ with its reversed orientation. 
Now suppose $g_1$ and $g_2$ are two positive scalar curvature metrics on $M$. We endow one boundary component $M$ with $g_M$, and endow the two $-M$ components with $g_1$ and $g_2$. Then by the Gromov-Lawson and Schoen-Yau surgery theorem for positive scalar curvature metrics \cite{GL, SY}, there exists a positive scalar curvature metric on $(M\times I)\natural (M\times I)$ which is a product metric near all boundary components. In particular, the restriction of this metric on the other boundary component $M$ has positive scalar curvature. We denote this metric on $M$ by $g$.     

\begin{defns}
Two positive scalar curvature metrics $g$ and $h$ on $M$ are concordant if there exists a positive scalar curvature metric  on $M\times I$ which is a product metric near the boundary and restricts to $g$ and $h$ on the two boundary components respectively. 
\end{defns}

One can in fact show that if $g$ and $g'$ are two positive scalar curvature metrics on $M$ obtained from the same pair of positive scalar curvature metrics $g_1$ and $g_2$ by the above procedure, then $g$ and $g'$ are concordant \cite{WY}. 

\begin{defns}
Fix a positive scalar curvature metric $g_M$ on $M$.  Let $P^+(M)$ be the set of all concordance classes of positive scalar metrics on $M$. Given $[g_1]$ and $[g_2]$ in $P^+(M)$, we define the sum of $[g_1]$ and $[g_2]$ (with respect to $[g_M]$) to be $[g]$ constructed from the procedure above. Then it is not difficult to verify that $P^+(M)$ becomes an abelian semigroup under this addition. We define the abelian group $P(M)$ to be the Grothendieck group of $P^+(M)$.  
\end{defns}

	
	Recall that the
	group of diffeomorphisms on $M$, denoted by $\text{\textup{Diff}}(M)$, acts on
	$\mathcal{R}^{+}(M)$ by pulling back the metrics. The moduli space of
	positive scalar curvature metrics is defined to be the quotient space
	$\mathcal{R}^{+}(M)/\text{\textup{Diff}}(M)$. Similarly, $\text{\textup{Diff}}(M)$
	acts on the group $P(M)$ and we denote the coinvariant of the action by
	$\widetilde{P}(M)$. That is, $\widetilde{P}(M) = P(M)/P_{0}(M)$ where
	$P_{0}(M)$ is the subgroup of $P(M)$ generated by elements of the form
	$[x] - \psi^{\ast}[x]$ for all $ [x]\in P(M)$ and all $\psi\in
	\text{\textup{Diff}}(M)$. We call $\widetilde{P}(M)$ the moduli group of
	positive scalar curvature metrics on $M$. It measures the size of the
	moduli space of positive scalar curvature metrics on $M$. The following
	conjecture gives a lower bound for the rank of the abelian group
	$\widetilde{P}(M)$.
	\begin{conj}\label{con:rank}
		Let $M$ be a closed spin manifold with $\pi_{1}(M) = \Gamma$ and
		$\dim M = 4k-1 \geq5$, which carries a positive scalar curvature
		metric. Then the rank of the abelian group $\widetilde{P}(M)$ is $\geq
		N_{\text{\textup{fin}}}(\Gamma)$, where $N_{\text{\textup{fin}}}(\Gamma)$ is the
		cardinality of the following collection of positive integers:
		\begin{eqnarray*} \big\{ d \in\mathbb{N}_{+} \mid\exists\gamma\in
			\Gamma\ \text{\textup{such that}}\ \text{\textup{order}}(\gamma) = d \ \text{\textup{and}}
			\ \gamma\neq e\big\}.
		\end{eqnarray*}
	\end{conj}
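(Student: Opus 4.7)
The strategy is to extend the Weinberger--Yu approach from \cite{WY}, which established the $\geq 1$ lower bound, by producing, for each positive integer $d$ appearing as the order of a torsion element of $\Gamma$, a psc metric on $M$ whose higher rho invariant detects a distinct ``$d$-th finite part'' of a receptacle $K$-theory group. Linear independence of the idempotents $p_\gamma=\frac{1}{d}\sum_{i=0}^{d-1}\gamma^i$ in $K_0(C^*_{\max}(\Gamma))\otimes\mathbb{Q}$ for $\gamma$ ranging over torsion elements of distinct orders is then expected to yield $N_{\text{fin}}(\Gamma)$ linearly independent classes in $\widetilde{P}(M)\otimes\mathbb{Q}$, bounding its rank from below.

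First I would promote the construction of Section~\ref{sec:secondary} to a group homomorphism
\[
\rho_*\colon P(M)\longrightarrow K_1\bigl(C^*_{L,0,\max}(\widetilde{M})^\Gamma\bigr),
\]
sending $[g]$ to the higher rho invariant $\rho(\widetilde{D}_g,\widetilde{g})$ of the lifted Dirac operator on the universal cover, working with the maximal completions of the geometric $C^*$-algebras to accommodate traces coming from torsion idempotents. Well-definedness on concordance classes follows because a psc metric on $M\times I$ that is a product near the boundary produces a path of invertible Dirac operators whose localized index class vanishes, equating the two boundary rho invariants. Additivity with respect to the $\natural$-operation reduces to a relative higher index calculation on $(M\times I)\natural(M\times I)$, where the summand endowed with the reference metric $g_M$ cancels the base contribution. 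Next I would verify $\text{\textup{Diff}}(M)$-invariance of $\rho_*$ via functoriality of the higher rho class under spin-preserving lifts of diffeomorphisms, so that $\rho_*$ descends to a homomorphism on $\widetilde{P}(M)$.

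To detect the image, for each torsion element $\gamma\in\Gamma$ of order $d$ I would construct, via Connes' cyclic cohomology applied to the conjugacy class of $\gamma$, a delocalized trace
\[
\tau_\gamma\colon K_1\bigl(C^*_{L,0,\max}(\widetilde{M})^\Gamma\bigr)\longrightarrow\mathbb{C},
\]
and show that $\tau_\gamma(\rho_*[g])$ computes Lott's delocalized eta invariant $\eta_\gamma(D_g)$. In parallel, for each distinct torsion order $d_j$ appearing in $\Gamma$, I would produce a psc metric $g_{d_j}$ on $M$ by applying Gromov--Lawson--Schoen--Yau surgery to a connected sum of $M$ with a lens-space model $L^{4k-1}(d_j)=S^{4k-1}/\mathbb{Z}_{d_j}$ carrying its round psc metric, arranging that $g_{d_j}$ inherits a nontrivial contribution concentrated at the $d_j$-th finite part.

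The hard step will be showing that the resulting pairing matrix $\bigl[\tau_{\gamma_i}(\rho_*[g_{d_j}])\bigr]_{i,j}$ has full rank $N_{\text{fin}}(\Gamma)$. This requires explicit delocalized eta-invariant computations on the lens-space models, together with the linear independence of the idempotents $p_{\gamma_i}$ across distinct orders in the finite part of $K_0(C^*_{\max}(\Gamma))$ drawn from \cite{WY}. A secondary subtlety is compatibility with the $\text{\textup{Diff}}(M)$-action on $P(M)$: although each $\tau_\gamma$ depends only on the conjugacy class of $\gamma$, one must confirm that the pairing $\tau_\gamma\circ\rho_*$ is well-defined on the coinvariants $\widetilde{P}(M)$, so that the rank estimate transfers from $P(M)$ to the moduli group.
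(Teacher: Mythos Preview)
The statement you are attempting to prove is labeled a \emph{conjecture} in the paper and is not proved there in full generality; the paper only establishes it under the additional hypothesis that $\Gamma$ is strongly finitely embeddable into Hilbert space (Theorem~\ref{modulipsc}). So a complete proof along the lines you sketch would resolve an open problem, and indeed your plan contains a genuine gap at exactly the point where the paper's extra hypothesis enters.

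The gap is in your treatment of $\mathrm{Diff}(M)$-invariance. The higher rho invariant is \emph{not} invariant under diffeomorphisms: one has $\rho(D,\psi^\ast h)=\psi_\ast\rho(D,h)$, where $\psi_\ast$ is the induced automorphism of $K_1(C^\ast_{L,0}(\widetilde M)^\Gamma)$, and this is generally nontrivial. Consequently $\rho_\ast$ does not descend to $\widetilde P(M)$; it only descends to the quotient of $K_1(C^\ast_{L,0}(\widetilde M)^\Gamma)$ by the subgroup $\mathcal I_1$ generated by $[x]-\psi_\ast[x]$. Your proposed detection tools, the delocalized traces $\tau_\gamma$, do not factor through this quotient either: a diffeomorphism $\psi$ induces an (outer) automorphism of $\Gamma$ which can move the conjugacy class $\langle\gamma\rangle$ to a different one, so $\tau_\gamma\circ\psi_\ast=\tau_{\psi_\ast\gamma}\neq\tau_\gamma$ in general. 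What you call a ``secondary subtlety'' is in fact the central obstruction, and it is not overcome by restricting to conjugacy classes.

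The paper's proof of the partial result circumvents this by an entirely different mechanism: given diffeomorphisms $\psi_1,\dots,\psi_m$, it forms the semi-direct product $\Gamma\rtimes F_m$ and pushes everything into $K_1(C^\ast_{L,0}(E(\Gamma\rtimes F_m))^{\Gamma\rtimes F_m})$, where the $\psi_j$ become \emph{inner} automorphisms and hence act trivially on $K$-theory. The linear independence of the $[p_{\gamma_i}]$ is then established not via delocalized traces on $\Gamma$ but via Proposition~\ref{prop:rank}, which requires $\Gamma\rtimes F_m$ to be finitely embeddable into Hilbert space for \emph{all} such $F_m$-actions --- precisely the definition of strong finite embeddability. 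Your delocalized-trace approach, even if the analytic issues of extending $\tau_\gamma$ to the $C^\ast$-level were resolved, would still need a replacement for this step.
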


	In \cite{XY1}, we apply the higher rho invariants of the Dirac operator to prove the following result.

	\begin{thm}\label{modulipsc}
		Let $M$ be a closed spin manifold which carries a positive scalar
		curvature metric with $\dim M = 4k-1 \geq5$. If the fundamental group
		$\Gamma= \pi_{1}(M)$ of $M$ is strongly finitely embeddable into
		Hilbert space, then the rank of the abelian group $\widetilde{P}(M)$ is
		$\geq N_{\text{\textup{fin}}}(\Gamma)$.
	\end{thm}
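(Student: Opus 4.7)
The plan is to construct, using the higher rho invariant developed in the preceding section, a group homomorphism $\widetilde{\rho}_M \colon \widetilde{P}(M) \to K_1(C_{L,0}^*(E\Gamma)^\Gamma)$ whose image contains $N_{\textup{fin}}(\Gamma)$ rationally independent elements. Fix the base PSC metric $g_M$ and let $\widetilde{M}$ be the universal cover, on which $\Gamma$ acts freely, properly, cocompactly. For any PSC metric $g$ on $M$, the lifted Dirac operator $\widetilde{D}_g$ is invertible, so the construction in Section~\ref{sec:secondary} produces $\rho(\widetilde{D}_g, \widetilde{g}) \in K_1(C_{L,0}^*(\widetilde{M})^\Gamma)$. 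First I would check that this class depends only on the concordance class of $g$: if $g_0, g_1$ are concordant, the resulting PSC metric on $M \times I$ produces a path of invertible Dirac operators whose higher rho invariants join $\rho(\widetilde{D}_{g_0}, \widetilde{g}_0)$ and $\rho(\widetilde{D}_{g_1}, \widetilde{g}_1)$ in $K_1(C_{L,0}^*(\widetilde{M})^\Gamma)$ by a standard homotopy-invariance argument.

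Next, I would establish additivity with respect to the semigroup structure on $P^+(M)$. The metric $g$ produced from the pair $(g_1, g_2)$ via the generalized connected sum $(M\times I) \natural (M\times I)$ with base $g_M$ admits a PSC cobordism to the pair together with $g_M$, and a relative higher index computation on $C_{L,0}^*$ of this cobordism shows
\begin{equation*}
\rho(\widetilde{D}_g, \widetilde{g}) - \rho(\widetilde{D}_{g_M}, \widetilde{g}_M) = \bigl(\rho(\widetilde{D}_{g_1}, \widetilde{g}_1) - \rho(\widetilde{D}_{g_M}, \widetilde{g}_M)\bigr) + \bigl(\rho(\widetilde{D}_{g_2}, \widetilde{g}_2) - \rho(\widetilde{D}_{g_M}, \widetilde{g}_M)\bigr).
\end{equation*}
Passing to the Grothendieck group, this defines a homomorphism $\rho_M \colon P(M) \to K_1(C_{L,0}^*(\widetilde{M})^\Gamma)$. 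Composing with the canonical map to $K_1(C_{L,0}^*(E\Gamma)^\Gamma)$, a diffeomorphism $\psi \in \textup{Diff}(M)$ acts by pulling back along a lift $\widetilde{\psi}$, but since both $\psi^*g$ and $g$ classify the same map $M \to B\Gamma$ up to homotopy, the induced automorphism of $K_1(C_{L,0}^*(E\Gamma)^\Gamma)$ is trivial. Hence $\rho_M$ descends to the moduli group $\widetilde{\rho}_M \colon \widetilde{P}(M) \to K_1(C_{L,0}^*(E\Gamma)^\Gamma)$.

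The main obstacle, and the geometric heart of the argument, is the detection step. For each positive integer $d$ with $d = \textup{order}(\gamma)$ for some nontrivial $\gamma \in \Gamma$, I would build a specific PSC metric $h_d$ on $M$, obtained by performing surgery on a connected sum of $M$ with an auxiliary closed spin manifold modelled on $B\langle\gamma\rangle$ that carries a canonical PSC metric, and arrange that the class $\widetilde{\rho}_M([h_d] - [g_M]) \in K_1(C_{L,0}^*(E\Gamma)^\Gamma) \otimes \mathbb{Q}$ pairs nontrivially with a trace-type functional determined by the idempotent $p_\gamma = \tfrac{1}{d} \sum_{j=0}^{d-1} \gamma^j \in \mathbb{Q}[\Gamma]$, while being annihilated by the analogous functional for any torsion element of different order. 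The hypothesis that $\Gamma$ is \emph{strongly finitely embeddable} into Hilbert space is precisely what guarantees the existence of these rational trace maps on $K_1(C_{L,0}^*(E\Gamma)^\Gamma)$: it is the secondary-invariant analogue, on the $C_{L,0}^*$ level, of the finite-part $K$-theory machinery of Weinberger--Yu that controls the primary index of $C^*_{\max}(\Gamma)$. Establishing this detection rigorously — in particular, lifting the strong embeddability hypothesis into a statement about pairings with the obstruction algebra rather than the Roe algebra — is the technically demanding step and is where the bulk of the work in \cite{XY1} lies. Once these $N_{\textup{fin}}(\Gamma)$ trace pairings are in place and shown to distinguish the classes $\widetilde{\rho}_M([h_d] - [g_M])$, rational independence follows, yielding the required lower bound on $\textup{rank}\,\widetilde{P}(M)$.
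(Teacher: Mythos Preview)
There is a genuine gap at the step where you claim the higher rho map descends to the moduli group $\widetilde{P}(M)$ simply by pushing forward to $K_1(C_{L,0}^\ast(E\Gamma)^\Gamma)$. Your assertion that ``$\psi^\ast g$ and $g$ classify the same map $M\to B\Gamma$ up to homotopy'' is false in general: a diffeomorphism $\psi$ of $M$ induces an automorphism $\psi_\ast$ of $\Gamma=\pi_1(M)$ which need not be inner, and the two classifying maps then differ by this automorphism of $B\Gamma$. Correspondingly, the induced action of $\psi$ on $K_1(C_{L,0}^\ast(E\Gamma)^\Gamma)$ is through $\psi_\ast\in\textup{Aut}(\Gamma)$ and is typically nontrivial. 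So your map $\widetilde\rho_M$ is not well-defined, and the rest of the detection argument --- in particular the vague invocation of trace-type functionals on $K_1(C_{L,0}^\ast(E\Gamma)^\Gamma)$ --- has no target to land in.

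This is precisely the obstacle the paper's proof is organized around, and it explains why \emph{strong} finite embeddability (rather than ordinary finite embeddability) is the hypothesis. The paper does \emph{not} pass to $E\Gamma$; instead it works with the quotient of $K_1(C_{L,0}^\ast(\widetilde M)^\Gamma)$ by the subgroup $\mathcal I_1$ generated by $[x]-\psi_\ast[x]$ over all $\psi\in\textup{Diff}(M)$. To show the relevant rho classes remain independent in this quotient, one argues by contradiction: a dependence relation involves finitely many diffeomorphisms $\psi_1,\dots,\psi_m$, and one then forms the semi-direct product $\Gamma\rtimes_{\{\psi_1,\dots,\psi_m\}}F_m$ and the associated space $X=M\times_{F_m}\widetilde W$. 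The point is that in $\Gamma\rtimes F_m$ the automorphisms $\psi_j$ become \emph{inner}, so the terms $[x_j]-(\psi_j)_\ast[x_j]$ vanish after pushing to $K_1(C_{L,0}^\ast(\widetilde X)^{\Gamma\rtimes F_m})$ via the Pimsner--Voiculescu machinery. Strong finite embeddability of $\Gamma$ says exactly that every such $\Gamma\rtimes F_m$ is finitely embeddable, which is what one needs to apply Proposition~\ref{prop:rank} there and obtain the contradiction. Your proposal misses this semi-direct-product construction entirely; without it there is no mechanism to neutralize the diffeomorphism action.
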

	
	To prove this theorem, we need index theoretic invariants that are insensitive to the action of the
	diffeomorphism group. The index theoretic techniques used
	in \cite{WY}, for example, do not produce such
	invariants. The key idea of the proof is 
	that the higher rho invariant remains unchanged in a certain $K$-theory
	group under the action of the diffeomorphism group, allowing us to
	distinguish elements in $\widetilde{P}(M)$. 
	
	We now recall the concept of strongly finite embeddability into Hilbert space for
	groups \cite{XY1}. This concept  is a stronger version of the notion of finite embeddability into
	Hilbert space introduced  in \cite{WY}, a concept  more flexible than the notion of coarse embeddability.

\begin{defns}
	A countable discrete group $\Gamma$ is said to be finitely embeddable
	into Hilbert space $H$ if for any finite subset $F\subseteq\Gamma$,
	there exist a group $\Gamma'$ that is coarsely embeddable into $H$ and
	a map $\phi: F\to\Gamma'$ such that
	\begin{enumerate}[(2)]
		\item[(1)] if $\gamma, \beta$ and $\gamma\beta$ are all in $F$,
		then $\phi(\gamma\beta) = \phi(\gamma) \phi(\beta)$;
		\item[(2)] if $\gamma$ is a finite order element in $F$, then $\text{\textup
			{order}}(\phi(\gamma)) = \text{\textup{order}}(\gamma)$.
	\end{enumerate}
\end{defns}

As mentioned above, Weinberger and Yu proved that Conjecture $\ref{con:rank}$ holds for all groups that are finitely embeddable into
Hilbert space \cite{WY}.

 If $g\in \Gamma$ has  finite order $d$, then we can define an idempotent in the group algebra ${\mathbb  Q} \Gamma$ by:
$$p_g= \frac{1}{d}(\sum_{k=1}^{d} g^k).$$

For the rest of this survey, we denote the maximal group $C^*$-algebra of $\Gamma$  by $C^\ast(\Gamma)$.

\begin{defns}
	We define $ K_0 ^{\fin}( C^*(\Gamma)) $,  called the finite part of  $K_0 ( C^*(\Gamma))$, to be the abelian subgroup of $K_0 ( C^*(G))$ generated by $[p_g]$ for all elements $g\neq e$ in $G$ with finite order.
	\end{defns} 
	
	We remark that rationally all representations of a finite group are induced from its finite cyclic subgroups \cite{Serre}. This explains that the finite part of $K$-theory, despite being constructed using only cyclic subgroups,  rationally contains all $K$-theory elements which can be constructed using finite subgroups.


\begin{defns}	Let $\mathcal
	{J}_{0}^{\text{\textup{fin}}}(C^{\ast}(\Gamma))$ be the abelian subgroup of
	$K_{0}^{\text{\textup{fin}}}(C^{\ast}(\Gamma))$ generated by elements
	$[p_{\gamma}] - [p_{\beta}]$ with $\text{\textup{order}}(\gamma) = \text{\textup
		{order}}(\beta)$.
	We define the reduced finite part of $K^{0}(C^{\ast}(\Gamma))$ to be
	\begin{eqnarray*} \widetilde{K}_{0}^{\text{\textup{fin}}}(C^{\ast}(\Gamma))
		= K_{0}^{\text{\textup{fin}}}(C^{\ast}(\Gamma))/\mathcal{J}_{0}^{\text{\textup
				{fin}}}(C^{\ast}(\Gamma)).
	\end{eqnarray*}
	\end{defns} 
	
	An argument in  \cite{WY} can be used to
	prove the following result, which plays a crucial role in the proof of Theorem 6.2.
	
\begin{prop}\label{prop:rank}	Let $\{\gamma_1, \cdots, \gamma_n\}$ be a collection of nontrivial elements (i.e. $\gamma_i\neq e$) with distinct finite order in $\Gamma$.
We define $\mathcal M_{\gamma_1, \cdots, \gamma_n}$ to be the abelian subgroup of $K_0^\fin(C^\ast(\Gamma))$ generated by $\{ [p_{\gamma_1}], \cdots, [p_{\gamma_n}]\}$.
	Let $\widetilde{\mathcal{M}}_{\gamma_{1}, \cdots, \gamma_{n}}$ be
	the image of ${\mathcal{M}}_{\gamma_{1}, \cdots, \gamma_{n}}$ in
	$\widetilde{K}_{0}^{\text{\textup{fin}}}(C^{\ast}(\Gamma))$. If $\Gamma$ is
	finitely embeddable into Hilbert space, then
	\begin{enumerate}[(2)]
		\item[(1)] the abelian group $\widetilde{\mathcal{M}}_{\gamma_{1},
			\cdots, \gamma_{n}}$ has rank $n$,
		\item[(2)] any nonzero element in $K_{0}^{\text{\textup{fin}}}(C^{\ast
		}(\Gamma))$ is not in the image of the assembly map
		\begin{eqnarray*} \mu: K_{0}^{\Gamma}(E\Gamma) \to K_{0}(C^{\ast
			}(\Gamma)),
		\end{eqnarray*}
		where $E\Gamma$ is the universal space for proper and free $\Gamma$-action.
	\end{enumerate}
\end{prop}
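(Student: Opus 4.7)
The plan is to reduce, via the finite embeddability hypothesis, to the case of a coarsely embeddable group $\Gamma'$, and then to exploit the rational injectivity of the maximal Baum--Connes assembly map for such groups (contained in the proof of Theorem~\ref{thm:coarsenov}). Concretely, I would first pick a finite subset $F\subseteq\Gamma$ that contains each $\gamma_i$ together with all its powers, so that the relations $\gamma_i^{d_i}=e$ are visible on $F$. Finite embeddability then supplies a group $\Gamma'$ coarsely embeddable into Hilbert space together with a map $\phi\colon F\to\Gamma'$ that is multiplicative on $F$ and preserves orders of finite-order elements. In particular, $\phi$ restricts on each cyclic subgroup $\langle\gamma_i\rangle$ to a group isomorphism onto a cyclic subgroup $\langle\phi(\gamma_i)\rangle\subseteq\Gamma'$ of the same order $d_i$.

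Next I would transfer the problem from $\Gamma$ to $\Gamma'$ by using the finite cyclic subgroups as intermediaries. For each $i$, both the inclusion $\langle\gamma_i\rangle\hookrightarrow\Gamma$ and the group homomorphism $\phi\colon\langle\gamma_i\rangle\to\Gamma'$ induce $\ast$-homomorphisms between the corresponding maximal $C^\ast$-algebras. The class $[p_{\gamma_i}]\in K_0(C^\ast(\Gamma))$ is the image of the canonical class $[p_i]\in K_0(C^\ast(\langle\gamma_i\rangle))$ coming from the trivial representation of $\langle\gamma_i\rangle$, and $[p_{\phi(\gamma_i)}]\in K_0(C^\ast(\Gamma'))$ is the image of the same $[p_i]$ under the second map. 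By naturality of the Baum--Connes assembly with respect to these group maps, the corresponding lifts of $[p_{\gamma_i}]$ and $[p_{\phi(\gamma_i)}]$ in the equivariant K-homology of the classifying spaces for proper actions are related by induction from $K_\ast^{\langle\gamma_i\rangle}$.

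Because $\Gamma'$ is coarsely embeddable into Hilbert space, the maximal assembly map for $\Gamma'$ is rationally injective. The rational Chern character further decomposes the equivariant K-homology of the classifying space for proper $\Gamma'$-actions as a direct sum indexed by conjugacy classes of finite-order elements of $\Gamma'$, and the lift of $[p_{\phi(\gamma_i)}]$ lives in the summand indexed by the conjugacy class of $\phi(\gamma_i)$, which has order precisely $d_i$. Since the $d_i$ are pairwise distinct, the classes $[p_{\phi(\gamma_i)}]$ are linearly independent in $K_0(C^\ast(\Gamma'))\otimes\mathbb Q$ modulo differences $[p_\alpha]-[p_\beta]$ with $\mathrm{order}(\alpha)=\mathrm{order}(\beta)$. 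Pulling back through the chain of maps set up in the previous paragraph then gives linear independence of the $[p_{\gamma_i}]$ in $\widetilde K_0^{\fin}(C^\ast(\Gamma))\otimes\mathbb Q$, establishing~(1).

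For part~(2), I would apply the same framework but with the free classifying space $E\Gamma$ in place of the proper one. Under the Chern character, the image of $K_0^{\Gamma'}(E\Gamma')\otimes\mathbb Q$ lies entirely in the summand indexed by the identity conjugacy class and is therefore disjoint from any nontrivial $\mathbb Q$-linear combination of $[p_{\phi(\gamma_i)}]$ with $\gamma_i$ of nontrivial finite order. Transferring back through the maps of Step~2 yields the analogous conclusion for $\Gamma$. I expect the hardest step of the plan to be Step~2: since $\phi$ is only a partial set-map on $F$, it does not assemble into a global homomorphism $C^\ast(\Gamma)\to C^\ast(\Gamma')$, and one must instead carry out the comparison through induction from each cyclic subgroup, taking care that naturality of the Baum--Connes assembly map and of the Chern character decomposition is fully compatible with these inductions.
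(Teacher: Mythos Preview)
The paper itself does not prove this proposition; it only says ``An argument in \cite{WY} can be used to prove the following result.'' So the comparison is with the Weinberger--Yu argument.

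Your overall strategy---reduce to a coarsely embeddable group $\Gamma'$ via finite embeddability, and there use rational injectivity of the assembly map together with the Chern-character decomposition of $K_\ast^{\Gamma'}(\underline{E}\Gamma')\otimes\mathbb Q$ by conjugacy classes of finite-order elements---is the right one, and your treatment of the coarsely embeddable base case is fine. The gap is in what you call ``pulling back through the chain of maps.'' The maps you set up run
\[
K_0\big(C^\ast(\langle\gamma_i\rangle)\big)\longrightarrow K_0\big(C^\ast(\Gamma)\big)
\qquad\text{and}\qquad
K_0\big(C^\ast(\langle\gamma_i\rangle)\big)\longrightarrow K_0\big(C^\ast(\Gamma')\big),
\]
i.e.\ both point \emph{away} from the cyclic group. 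There is no induced map $K_0(C^\ast(\Gamma))\to K_0(C^\ast(\Gamma'))$ sending $[p_{\gamma_i}]\mapsto[p_{\phi(\gamma_i)}]$, so linear independence of the $[p_{\phi(\gamma_i)}]$ in $\Gamma'$ does not by itself yield linear independence of the $[p_{\gamma_i}]$ in $\Gamma$. For part~(2) the same obstruction appears: knowing that no nonzero combination of $[p_{\phi(\gamma_i)}]$ lies in the image of $\mu_{\Gamma'}$ from $E\Gamma'$ says nothing about $\mu_\Gamma$ unless you can transport the hypothetical preimage $y\in K_0^\Gamma(E\Gamma)$ over to $\Gamma'$.

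What the argument in \cite{WY} actually does is transfer the \emph{relation}, not just the individual classes. One argues by contradiction: a hypothetical dependence $\sum c_i[p_{\gamma_i}]=\sum_j\big([p_{\alpha_j}]-[p_{\beta_j}]\big)$ in $K_0(C^\ast(\Gamma))$, or an equality $\sum c_i[p_{\gamma_i}]=\mu_\Gamma(y)$ with $y$ supported on a finite $\Gamma$-CW subcomplex of $E\Gamma$, involves only finitely many group elements (the $\gamma_i,\alpha_j,\beta_j$, their powers, and the finitely many elements appearing in the cells and attaching maps of the support of $y$). One enlarges $F$ to contain all of these together with all products that occur, and then the partial multiplicativity of $\phi$ on $F$ is exactly what is needed to replicate the same projections \emph{and} the same $K$-homology cycle on the $\Gamma'$ side, producing the corresponding relation there. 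That contradicts the base case for $\Gamma'$. Your cyclic-subgroup mediation identifies where the $[p_{\gamma_i}]$ come from, but it cannot substitute for this transfer of witnesses; you should rewrite Step~2 as a contradiction argument that moves the entire relation (including the $K$-homology class in part~(2)) across via a sufficiently large $F$.
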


So one is led to the following conjecture.

\begin{conj}\label{genconj} Let $\Gamma$ be a countable discrete group.
	Suppose $\{\gamma_{1}, \cdots, \gamma_{n}\}$ is a collection of
	elements in $\Gamma$ with distinct finite orders and $\gamma_{i}\neq
	e$ for all $1\leq i\leq n$. Then
	\begin{enumerate}[\textit{(2)}]
		\item[\textit{(1)}] the abelian group $\widetilde{\mathcal{M}}_{\gamma_{1},
			\cdots, \gamma_{n}}$ has rank $n$,
		\item[\textit{(2)}] any nonzero element in $K_{0}^{\text{\textup{fin}}}(C^{\ast
		}(\Gamma))$ is not in the image of the assembly map
		\begin{eqnarray*} \mu: K_{0}^{\Gamma}(E\Gamma) \to K_{0}(C^{\ast
			}(\Gamma)),
		\end{eqnarray*}
		where $E\Gamma$ is the universal space for proper and free $\Gamma$-action.
	\end{enumerate}
\end{conj}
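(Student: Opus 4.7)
The plan is to extend the trace-theoretic strategy behind Proposition~\ref{prop:rank} to arbitrary groups by producing enough tracial or character-like functionals on $K_0(C^\ast(\Gamma))$ to detect the classes $[p_{\gamma_i}]$ and to rule them out of the image of the assembly map from $E\Gamma$.

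For part (1), I would try to construct, for each distinct finite order $d$ occurring among the $\gamma_i$, a trace $\tau_d$ on (a dense subalgebra of) $C^\ast(\Gamma)$ that takes value $1/d$ on $[p_\gamma]$ whenever $\text{order}(\gamma)=d$ and yields a different value on $[p_\beta]$ when $\text{order}(\beta)\neq d$. Such traces automatically descend to the reduced finite part $\widetilde K_0^{\fin}(C^\ast(\Gamma))$, since by construction they annihilate every generator $[p_\gamma]-[p_\beta]$ of $\mathcal{J}_0^{\fin}(C^\ast(\Gamma))$. A Vandermonde-type computation of the matrix $(\tau_{d_i}([p_{\gamma_j}]))_{i,j}$ then gives rank $n$ for $\widetilde{\mathcal{M}}_{\gamma_1,\ldots,\gamma_n}$. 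For part (2), I would exploit the factorization
\begin{equation*}
\begin{tikzcd}
K_0^{\Gamma}(E\Gamma) \arrow{r} \arrow[dr, "\mu"'] & K_0^{\Gamma}(\underline{E}\Gamma) \arrow{d}{\mu'} \\
 & K_0(C^\ast(\Gamma))
\end{tikzcd}
\end{equation*}
through the classifying space $\underline{E}\Gamma$ for proper $\Gamma$-actions. Each $[p_{\gamma_i}]$ lifts through $\mu'$ as a class supported on the finite cyclic subgroup $\langle\gamma_i\rangle$, so it suffices to show that no nonzero integer combination of such lifts is hit by the top horizontal map from $E\Gamma$. The canonical trace $\tau_0$ gives $\tau_0(\sum a_j[p_{\gamma_j}]) = \sum a_j/d_j$, while Atiyah's $L^2$-index theorem applied to cocompact $\Gamma$-subspaces of $E\Gamma$ forces $\tau_0$-integrality on $\mu(K_0^\Gamma(E\Gamma))$; combining this with the auxiliary traces $\tau_d$ from part (1) should handle the remaining integer combinations.

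The hard part will be the unconditional construction of the separating traces $\tau_d$. For finitely embeddable groups, the argument of Weinberger and Yu in \cite{WY} supplies $\tau_d$ by approximating $\Gamma$ on finite subsets by auxiliary coarsely embeddable groups $\Gamma'$, where finite orders are preserved (condition (2) of the definition of finite embeddability) and coarse-geometric traces are available through the machinery recalled in Section~\ref{sec:secondary}. For an arbitrary countable group no such finite-subset approximation is known, and producing enough traces on $C^\ast(\Gamma)$ to separate the classes $[p_{\gamma_i}]$ appears to demand some form of the strong Novikov conjecture together with cohomological input on the finite cyclic subgroups of $\Gamma$. I expect this to be the genuine obstacle: the conjecture is left open precisely because the trace-theoretic input that sufficed in the finitely embeddable setting has no known substitute in full generality, and any proposed proof must either invent such a substitute or bypass traces entirely by, e.g., constructing an idempotent-preserving homomorphism from a suitable Banach-algebraic completion of $\mathbb{C}\Gamma$ into an algebra where the $p_{\gamma_i}$ are manifestly independent.
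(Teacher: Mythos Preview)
The statement you are attempting to prove is labeled \textbf{Conjecture} in the paper, and the paper provides no proof of it. Proposition~\ref{prop:rank} establishes the same two conclusions under the additional hypothesis that $\Gamma$ is finitely embeddable into Hilbert space; Conjecture~\ref{genconj} simply records the expectation that these conclusions persist for arbitrary countable $\Gamma$. There is therefore no ``paper's own proof'' to compare against.

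Your proposal is honest about this: you correctly identify that the trace-construction step is the genuine obstruction, and that the Weinberger--Yu argument behind Proposition~\ref{prop:rank} relies essentially on the finite-embeddability hypothesis to manufacture the needed traces on the maximal group $C^\ast$-algebra. Your sketch of part~(2) via the factorization through $\underline{E}\Gamma$ and Atiyah's $L^2$-index theorem is in the right spirit, but note that $\tau_0$-integrality alone does not suffice: an integer combination $\sum a_j[p_{\gamma_j}]$ can have integral canonical trace (take $a_j = d_j$), so the auxiliary traces $\tau_d$ are doing real work even in part~(2), not just part~(1). Since those auxiliary traces are exactly what you cannot build without further hypotheses, your proposal does not close the gap---nor should it be expected to, as the conjecture remains open.
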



We are now ready to introduce the notion of strongly finitely
embeddability for groups. Since we are interested in the fundamental
groups of manifolds, all groups are assumed to be finitely generated in
the following discussion.

Let $\Gamma$ be a countable discrete group. Then any set of $n$
automorphisms of $\Gamma$, say, $\psi_{1}, \cdots, \psi_{n} \in
\text{\textup{Aut}}(\Gamma) $, induces a natural action of $F_{n}$ the free
group of $n$ generators on $\Gamma$. More precisely, if we denote the
set of generators of $F_{n}$ by $\{s_{1}, \cdots, s_{n}\}$, then we
have a homomorphism $F_{n}\to\text{\textup{Aut}}(\Gamma)$ by $s_{i}\mapsto
\psi_{i}$. This homomorphism induces an action of $F_{n}$ on $\Gamma
$. We denote by $\Gamma\rtimes_{\{\psi_{1}, \cdots, \psi_{n}\}}
F_{n}$ the semi-direct product of $\Gamma$ and $F_{n}$ with respect to
this action. If no confusion arises, we shall write $\Gamma\rtimes
F_{n}$ instead of $\Gamma\rtimes_{\{\psi_{1}, \cdots, \psi_{n}\}} F_{n}$.

\begin{defns}\label{def:strfe}
	A countable discrete group $\Gamma$ is said to be strongly finitely
	embeddable into Hilbert space $H$, if $\Gamma\rtimes_{\{\psi_{1},
		\cdots, \psi_{n}\}} F_{n}$ is finitely embeddable into Hilbert space
	$H$ for all $n\in\mathbb{N}$ and all $\psi_{1}, \cdots, \psi_{n}
	\in\text{\textup{Aut}}(\Gamma) $.
\end{defns}

We remark that all coarsely embeddable groups are strongly finitely
embeddable. Indeed, if a group $\Gamma$ is coarsely embeddable into
Hilbert space, then $\Gamma\rtimes_{\{\psi_{1}, \cdots, \psi_{n}\}
} F_{n}$ is coarsely embeddable (hence finitely embeddable) into
Hilbert space for all $n\in\mathbb{N}$ and all $\psi_{1}, \cdots,
\psi_{n} \in\text{\textup{Aut}}(\Gamma) $.

If a group $\Gamma$ has a torsion free normal subgroup $\Gamma
^{\prime}$ such that $\Gamma/\Gamma^{\prime}$ is residually finite,
then $\Gamma$ is strongly finitely embeddable into Hilbert space.
Indeed, recall that any finitely generated group has only finitely many
distinct subgroups of a given index.
Let $\Gamma_{m}$ be the intersection of all subgroups of $\Gamma$
with index at most $m$. Then $\Gamma/\Gamma_{m}$ is a finite group.
Moreover, for given $\psi_{1}, \cdots, \psi_{n} \in\text{\textup
	{Aut}}(\Gamma) $, the induced action of $F_{n}$ on $\Gamma$ descends
to an action of $F_{n}$ on $\Gamma/\Gamma_{m}$. In other words, we
have a natural homomorphism
\begin{eqnarray*} \phi_{m}\colon\Gamma\rtimes F_{n} \to(\Gamma
	/\Gamma_{m})\rtimes G_{m}
\end{eqnarray*}
where $G_{m}$ is the image of $F_{n}$ under the homomorphism $F_{n}\to
\text{\textup{Aut}}(\Gamma/\Gamma_{m})$. It follows that, for any finite set
$F\subseteq\Gamma$, there exists a sufficiently large $m$ such that
the map
\begin{eqnarray*} \phi_{m}: F\subset\Gamma\rtimes F_{n} \to(\Gamma
	/\Gamma_{m})\rtimes G_{m}
\end{eqnarray*}
satisfies
\begin{enumerate}[(2)]
	\item[(1)] if $\gamma, \beta$ and $\gamma\beta$ are all in $F$,
	then $\phi(\gamma\beta) = \phi(\gamma) \phi(\beta)$;
	\item[(2)] if $\gamma$ is a finite order element\footnote{Note that in
		this case, all finite order elements in $\Gamma\rtimes_{\{\psi_{1},
			\cdots, \psi_{n}\}} F_{n}$ come from $\Gamma$.} in $F$, then $\text{\textup
		{order}}(\phi(\gamma)) = \text{\textup{order}}(\gamma)$.
\end{enumerate}
Notice that $(\Gamma/\Gamma_{m})\rtimes
G_{m} $ is a finite group, which is obviously coarsely embeddable into Hilbert space. This shows that
$\Gamma$ is strongly finitely embeddable into Hilbert space.

To summarize, we see that the class of strongly finitely embeddable
groups includes all residually finite groups, virtually torsion free groups (e.g. $Out(F_{n})$), and  groups that coarsely embed into Hilbert space, where the latter contains all amenable groups and
Gromov's hyperbolic groups.

The notion of sofic groups is a generalization of amenable groups and
residually finite groups. It is an open question whether sofic groups
are (strongly) finitely embeddable into Hilbert space. Narutaka Ozawa,
Denise Osin and Thomas Delzant have independently constructed examples
of groups which are not finitely embeddable into Hilbert space. An
affirmative answer to the above question would imply that there exist
non-sofic groups.

By definition, strongly finite embeddability implies finite
embeddability. It is an open question whether the converse holds:

\begin{open}
	If a group is finitely embeddable into Hilbert space, then does it
	follow that the group is also strongly finitely embeddable into Hilbert space?
\end{open}

In fact, it was shown in \cite{WY} that Gromov's monster
groups and any group of analytic diffeomorphisms of an analytic
connected manifold fixing a given point are finitely embeddable into
Hilbert space. It is still an open question whether these groups are
strongly finitely embeddable into Hilbert space.

Now let us proceed to prove Theorem $\ref{modulipsc}$.  One of main ingredients of the proof is the following proposition\footnote{ Proposition $\ref{prop:gen}$ first appeared in \cite{WY}. The original statement in  \cite{WY} seems to contain a minor error when $d$ is even, the version we state in this survey and its proof can be found in \cite{XYZ}.
}, which, combined with a surgery technique \cite{GL, SY} and the relative higher index theorem \cite{B, XY2}, allows us to construct genuinely ``new" positive scalar curvature metrics from old ones. For a finite group $F$, an $F$-manifold $Y$ is called $F$-connected if the quotient $Y/F$ is connected. Let $\mathbb Z_d$ be the cyclic group of order $d$. 
\begin{prop} \label{prop:gen}
Given positive integers $d$ and $k$, there exist $\mathbb Z_d$-connected closed spin $\mathbb Z_d$-manifolds $\{Y_1, \cdots, Y_n\}$ with $\dim Y_i = 2k$  such that  

\begin{enumerate}[$(a)$]
\item the $\mathbb Z_d$-equivariant indices of the Dirac operators on $\{Y_1, \cdots, Y_n\}$ rationally generate $KO(\mathbb Z_d)\otimes \mathbb Q$,
\item $\mathbb Z_d$ acts on $Y_i$ freely except for finitely many fixed points.
\end{enumerate}

%
\end{prop}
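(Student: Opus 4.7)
The plan is to realize each $Y_i$ as a rotation sphere $S^{2k}$. For a tuple of integers $\mathbf{a} = (a_1, \dots, a_k)$ coprime to $d$, one lets $\mathbb{Z}_d$ act on $\mathbb{R} \oplus \mathbb{C}^k$ by fixing the $\mathbb{R}$-axis and rotating each $\mathbb{C}$-factor by the character $\zeta \mapsto \zeta^{a_j}$, where $\zeta$ is a primitive $d$-th root of unity, and then restricts this action to the unit sphere. The resulting $\mathbb{Z}_d$-action on $S^{2k}$ has exactly two fixed points (the poles on the $\mathbb{R}$-axis) and is free elsewhere, so $(b)$ holds and the quotient $S^{2k}/\mathbb{Z}_d$ is connected, confirming $\mathbb{Z}_d$-connectedness. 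The sphere $S^{2k}$ carries its unique spin structure, which the rotation lifts to after a possible parity adjustment of $\sum a_j^{(i)}$ (when needed one appends an auxiliary trivial rotation summand, keeping $\dim Y_i = 2k$ by raising $k$).

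To verify $(a)$, I would apply the Atiyah--Bott--Segal--Singer Lefschetz fixed point formula to the Dirac operator on $Y_i$. At each nontrivial $g = \zeta^\ell \in \mathbb{Z}_d$, the fixed-point set is a pair of isolated points with tangent spaces identified with $\mathbb{C}^k$ on which $g$ acts by the weights $(\ell a_1^{(i)}, \dots, \ell a_k^{(i)})$. The formula thus reduces the equivariant index to a sum of two local contributions, each of the form
\begin{eqnarray*}
\prod_{j=1}^{k} \frac{\pm 1}{\zeta^{\ell a_j^{(i)}/2} - \zeta^{-\ell a_j^{(i)}/2}},
\end{eqnarray*}
an element of $RO(\mathbb{Z}_d) \otimes \mathbb{Q}$. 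Letting the weight tuples $\mathbf{a}^{(i)}$ range over a carefully chosen finite family, the values of these indices on the characters of $\mathbb{Z}_d$ assemble into a matrix that is essentially Vandermonde in roots of unity, hence invertible over $\mathbb{Q}$. Concretely, varying a single weight through the units of $\mathbb{Z}/d$ while keeping the remaining weights fixed suffices to force nonvanishing of this determinant, and one concludes that $\{\operatorname{ind}_{\mathbb{Z}_d}(D_{Y_i})\}$ spans $KO(\mathbb{Z}_d) \otimes \mathbb{Q}$ in the relevant degree.

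The main obstacle is a combination of two bookkeeping issues which interact and which together account for the minor error flagged in the footnote. First, the spin-lift condition on $S^{2k}$ under a rotation action constrains the parity of $\sum a_j^{(i)}$; this is especially delicate when $d$ is even, where the half-integer exponents $\zeta^{\ell a_j^{(i)}/2}$ are genuinely sensitive to the choice of lift. Second, because the target is the \emph{real} (not complex) representation ring, one must arrange that the resulting indices actually lie in $RO(\mathbb{Z}_d) \otimes \mathbb{Q}$, which forces one to restrict to complex-conjugation symmetric families of weight tuples. Both issues are resolved by augmenting the family: the parity correction is handled by an auxiliary trivial rotation summand, and the real-versus-complex issue is handled by symmetrization under $\mathbf{a}^{(i)} \mapsto -\mathbf{a}^{(i)}$. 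A count of Galois orbits of characters of $\mathbb{Z}_d$ under complex conjugation then matches $n$ with the rank of $KO(\mathbb{Z}_d) \otimes \mathbb{Q}$.
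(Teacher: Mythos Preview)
The paper does not actually prove this proposition; it is stated with a footnote pointing to \cite{WY} for the original version and to \cite{XYZ} for the corrected proof (precisely because of the even-$d$ subtlety you mention). So there is no ``paper's own proof'' to compare against, only the cited references. Your rotation-sphere approach with the Atiyah--Bott--Segal--Singer fixed point formula is indeed the standard strategy used in those references, and your analysis of the local contributions and the Vandermonde-type spanning argument is along the right lines.

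There is, however, a genuine gap in your parity fix. In the statement, $k$ is \emph{given}; you cannot ``raise $k$'' by appending an auxiliary summand. Concretely: for the $\mathbb Z_d$-action on $S^{2k}\subset \mathbb R\oplus\mathbb C^k$ with weights $(a_1,\ldots,a_k)$ to be free away from the two poles, every $a_j$ must be coprime to $d$. When $d$ is even this forces every $a_j$ to be odd, hence $\sum_j a_j \equiv k \pmod 2$. But the lift of the generator to $\mathrm{Spin}(2k+1)$ has order $d$ (rather than $2d$) precisely when $\sum_j a_j$ is even; when $d$ is even both preimages in $\mathrm{Spin}$ have the same order, so there is no way around this. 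Thus for $d$ even and $k$ odd, \emph{no} linear rotation action on $S^{2k}$ with isolated non-free locus lifts to the spin bundle, and your construction produces nothing at all in that case. This is exactly the ``minor error when $d$ is even'' that the footnote refers to, and its resolution in \cite{XYZ} requires genuinely different model manifolds (not just a reparametrization of the sphere construction). Your proposal correctly locates the difficulty but does not resolve it.
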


 Let $M$ be a closed spin manifold with a positive scalar curvature metric $g_M$ and $\dim M\geq 5$  as before.   For each nontrivial finite order element $\gamma\in \Gamma$, one can construct a new positive scalar curvature metric $h_\gamma$ on $M$ such that the relative higher index $Ind_\Gamma(g_M, h_\gamma) = [p_\gamma] \in K_0(C^\ast(\Gamma)) $, where $p_\gamma = \frac{1}{d}\sum_{k=1}^d \gamma^k$ with $d = order(\gamma)$. The detailed construction will be given in the next paragraphs. Here let us  recall the definition of this relative higher index $Ind_\Gamma(g_M, h_\gamma)$.  We endow $M\times \mathbb R$ with the metric $g_t + (dt)^2$ where  $g_t$ is a smooth path of Riemannian metrics on $M$ such that 
\[ 
g_t = \begin{cases}
g_M \quad \textup{for $t\leq 0$, }\\
h_\gamma \quad \textup{for $t\geq 1$,} \\
\textup{any smooth path of metrics from $g_M$ to $h_\gamma$ for $0\leq t \leq 1$.}
\end{cases}\] 
Then $M\times \mathbb R$ becomes a complete Riemannian manifold with positive scalar curvature away from a compact subset. Denote by $D_{M\times \mathbb R}$ the corresponding Dirac operator on $M\times \mathbb R$ with respect to this metric. Then the higher index of $D_{M\times \mathbb R}$  is well-defined and is denoted by $Ind_\Gamma(g_M, h_\gamma)$ (cf. the discussion at the beginning of Section $\ref{sec:rhobdry}$ below).

Next  we shall describe  a construction of a new positive scalar curvature metric $h_\gamma$ on $M$  associated to a nontrivial finite order element $\gamma\in \Gamma$.
Let $\widetilde{M}$ be the universal cover of $M$. 
For each finite order element $g$ in $G$ with order $d$. By Proposition $\ref{prop:gen}$, there exist ${\mathbb Z}/d{\mathbb Z}$-connected  compact smooth spin ${\mathbb Z}/d{\mathbb Z}$-manifolds $\{N_1, \cdots, N_n\}$
 such that
the dimension of each $N_i$ is $4k$ and the sum of
the ${\mathbb Z}/d{\mathbb Z}$-equivariant indices of the Dirac operators on  $\{ N_1, \cdots, N_n\}$ is a nonzero multiple of
the trivial representation of ${\mathbb Z}/d{\mathbb Z}$.

Let $N_{g, l} = G\times_{{\mathbb Z}/d{\mathbb Z}}N_l, $ where ${\mathbb Z}/d{\mathbb Z} $ acts on $N_l$ as in Proposition $\ref{prop:gen}$ and ${\mathbb Z}/d{\mathbb Z}$ acts on $G$ by $[m]h=hg^m$ for all $h\in G$ and $[m]\in {\mathbb Z}/d{\mathbb Z}  $. Observe that $N_{g,l}$ is a $G$-manifold.

Let $\{g_1, \cdots, g_r\}$ be a collection of finite order elements such that \linebreak
 $\{[p_{g_1}], \cdots, [p_{g_r}]\}$ generates an abelian subgroup of
$K_0(C^\ast(G))$ with rank $r$. Let $N_{g_i}=\bigsqcup_{l=1}^{j_i}N_{g_i, l}$ be the disjoint union of all $G$-manifolds described as above. Let $I$ be the unit interval
$[0,1]$.
We first form a generalized  $G$-equivariant connected sum $(\widetilde{M}\times I)\natural N_{g_i}$ along a free $G$-orbit of each  $N_{g_i,l}$ and away from the boundary of $\widetilde{M}\times I$ as follows.
We first obtain a $G^{\ast j_i}$-equivariant  connected sum $(\widetilde{M}\times I) \sharp N_{g_i}$ along a free $G$-orbit of each  $N_{g_i, l}$ and away from the boundary of $\tilde{M}\times I$, where $G^{\ast j_i}$ is the free product of $j_i$ copies of $G$.  More precisely,  we inductively form the $G^{\ast j_i}$-equivariant connected sum
$( \cdots ((\widetilde{M}\times I) \sharp N_{g_i,1})\cdots )\sharp N_{g_i, j_i}$, where the equivariant connected sum is inductively taken along a free orbit
and away from the boundary.
We denote this space by $(\widetilde{M}\times I) \sharp N_{g_i}$.
  We then perform surgeries on  $(\widetilde{M}\times I) \sharp N_{g_i}$
 to obtain a $G$-equivariant cobordism between two copies of $G$-manifold $\widetilde{M}$.

For any positive scalar curvature metric $h$ on $M$, by \cite[Theorem 2.2]{RW},
the above cobordism gives us another positive scalar curvature metric $h_i$ on $M$.
Now   the relative higher index theorem \cite{B, XY} implies that the relative higher index of the Dirac operator $M\times {\mathbb R}$ associated to the positive scalar curvature  metrics of  $h_i$ and $g_M$ is $[p_{g_i}]$ in $K_0(C^\ast(G))$. As a consequence,   we know that
$\{ [h_1], \cdots, [h_r]\}$ generates an abelian subgroup of $P(M)$ with  rank $r$.

To summarize, one can construct distinct elements in $P(M)$ by surgery theory and the relative higher index theorem. Moreover, these elements are distinguished by their relative higher indices (with respect to $g_M$). 
However, to prove Theorem $\ref{modulipsc}$, that is, to show that these concordance classes of positive scalar curvature metrics remain distinct even after modulo the action of diffeomorphisms, we will need to use higher rho invariants (instead of relative higher indices) in an essential way. 

\begin{proof}[{Proof of Theorem $\ref{modulipsc}$}]

Consider the following short exact sequence
\[  0\to C^\ast_{L,0}(\widetilde M)^\Gamma \to C^\ast_L(\widetilde M)^\Gamma \to C^\ast(\widetilde M)^\Gamma \to 0\]
where $\widetilde M$ is the universal cover of $M$. It induces the following six-term long exact sequence:
\[ \xymatrix{K_0(C_{L,0}^\ast(\widetilde M)^\Gamma)\ar[r] &  K_0(C_L^\ast(\widetilde M)^\Gamma)  \ar[r]^{\mu_M} &  K_0(C^\ast(\widetilde M)^\Gamma) \ar[d]^{\partial} \\
K_1(C^\ast(\widetilde M)^\Gamma) \ar[u]& K_{1}(C^\ast_{L}(\widetilde M)^\Gamma) \ar[l] &  K_{1}(C_{L,0}^\ast(\widetilde M)^\Gamma) \ar[l]} \]
Recall that we have $ K_0(C_L^\ast(\widetilde M)^\Gamma) \cong K_0^\Gamma(\widetilde M) $ and $K_0(C^\ast(\widetilde M)^\Gamma) \cong K_0(C^\ast(\Gamma))$.

Fix a positive scalar curvature metric $g_M$ on $M$. For each finite order element $\gamma\in \Gamma$, we can construct a new positive scalar curvature metric $h_\gamma$ on $M$ such that the relative higher index $Ind_\Gamma(g_M, h_\gamma) = [p_\gamma] \in K_0(C^\ast(\Gamma)) $ as described as above. Let us still denote by $h_\gamma$ (resp. $g_M$) the metric on $\widetilde M$ lifted from the metric $h_\gamma$ (resp. $g_M$) on $M$. Let $\rho(D, h_\gamma)$ and $\rho(D, g_M)$ be the higher rho invariants for the pairs $(D, h_\gamma)$ and $(D, g_M)$, where $D$ is the Dirac operator on $\widetilde M$. 
Then we have
\begin{equation}\label{eq:bd}
\partial([p_\gamma]) = \partial\big(Ind_\Gamma(g_M, h_\gamma)\big) = \rho(D, h_\gamma) - \rho(D, g_M),
\end{equation}
(cf.  \cite{PS1, XY}).

One of the key points of the proof is to construct a certain group homomorphism on $\widetilde P(M)$ which can be used to distinguish elements in $\widetilde P(M)$. First, we define a map $ \varrho: P(M) \to  K_{1}(C_{L,0}^\ast(\widetilde M)^\Gamma)$  by 
\[\varrho(h) := \rho(D, h) - \rho(D, g_M)\]
 for all $h\in P(M)$. It follows from the definition of $P(M)$ and  \cite[Theorem 4.1]{XY} that the map $\varrho$ is a well-defined group homomorphism. Now recall that a diffeomorphism $\psi \in \text{\textup{Diff}}(M)$ induces a homomorphism
\[\psi_\ast: K_{1}(C_{L,0}^\ast(\widetilde M)^\Gamma)\to K_{1}(C_{L,0}^\ast(\widetilde M)^\Gamma).\]
Let $\mathcal I_1(C_{L,0}^\ast(\widetilde M)^\Gamma)$ be the subgroup of $K_{1}(C_{L,0}^\ast(\widetilde M)^\Gamma)$  generated by elements of the form $[x] - \psi_\ast[x]$ for all $[x]\in K_{1}(C_{L,0}^\ast(\widetilde M)^\Gamma)$ and all $\psi\in \text{\textup{Diff}}(M)$. We see that $\varrho$ descends to a group homomorphism
\[ \widetilde \varrho: \widetilde P(M) \to K_{1}(C_{L,0}^\ast(\widetilde M)^\Gamma)\big/\mathcal I_1(C_{L,0}^\ast(\widetilde M)^\Gamma). \]
To see this, it suffices to verify that
\[  \varrho(h) - \varrho(\psi^\ast(h)) \in \mathcal I_1(C_{L,0}^\ast(\widetilde M)^\Gamma) \]
for all $[h]\in P(M)$ and $\psi\in \text{\textup{Diff}}(M)$. Indeed, we have
\begin{align*}
 \varrho(h) - \varrho(\psi^\ast(h)) & = \rho(D, h) - \rho(D, g_M) - \big (\rho(D, \psi^\ast (h)) - \rho(D, g_M) \big) \\ 
 & = \rho(D, h) - \rho(D, \psi^\ast (h)) \\
 & = \rho(D, h) - \psi_\ast( \rho(D, h)) \in \mathcal I_1(C_{L,0}^\ast(\widetilde M)^\Gamma).
\end{align*}

We remark that it is crucial to use the higher rho invariant, instead of the relative higher index, to construct such a group homomorphism.  Let us explain the subtlety here.  Note that there is in fact a well-defined group homomorphism 
$ Ind_{rel}: P(M) \to  K_{0}(C^\ast(\Gamma))$ by $ Ind_{rel} (h)  = Ind_\Gamma(D; g_M, h).$
The well-definedness of $Ind_{rel}$ follows from the definition of $P(M)$ and the relative higher index theorem \cite{B, XY2}. 
However, in general, it is \textit{not} clear at all whether $Ind_{rel}$ descends to a group homomorphism $\widetilde P(M) \to  K_{0}(C^\ast(\Gamma))/\mathcal I_0(C^\ast(\Gamma)),$ 
where 	$\mathcal I_0(C^\ast(\Gamma))$ is the subgroup of $K_{0}(C^\ast(\Gamma))$  generated by elements of the form $[x] - \psi_\ast[x]$ for all $[x]\in K_{0}(C^\ast(\Gamma))$ and all $\psi\in \text{\textup{Diff}}(M)$.

Now for a collection of elements $\{\gamma_1, \cdots, \gamma_n\}$ with distinct finite orders, we consider the associated collection of positive scalar curvature metrics $\{h_{\gamma_1}, \cdots, h_{\gamma_n}\}$ as before. To prove the theorem, it suffices to show that for any collection of elements $\{\gamma_1, \cdots, \gamma_n\}$ with distinct finite orders, the elements 
\[ \widetilde \varrho(h_{\gamma_1}), \cdots, \widetilde \varrho(h_{\gamma_n})\]  
are linearly independent in $K_{1}(C_{L,0}^\ast(\widetilde M)^\Gamma)\big/\mathcal I_1(C_{L,0}^\ast(\widetilde M)^\Gamma)$. 

Let us assume the contrary, that is, there exist $[x_1], \cdots, [x_m]\in K_{1}(C_{L,0}^\ast(\widetilde M)^\Gamma) $ and $\psi_1, \cdots, \psi_m\in \text{\textup{Diff}}(M)$ such that
\begin{equation}\label{eq:van}
\sum_{i=1}^n c_i \varrho(h_{\gamma_i}) = \sum_{j=1}^{m} \big([x_j] - (\psi_{j})_\ast[x_j]\big),
\end{equation}
where $c_1, \cdots, c_n\in \mathbb Z$ with at least one $c_i\neq 0$.

We denote by $W$ the wedge sum of $m$ circles. The fundamental group $\pi_1(W)$ is the free group $F_m$ of $m$ generators $\{s_1, \cdots, s_m\}$. We denote the universal cover of $W$ by $\widetilde W$. Clearly, $\widetilde W$ is  the Cayley graph of $F_m$ with respect to the generating set $\{s_1, \cdots, s_m, s_1^{-1}, \cdots, s_m^{-1}\}$. Notice that $F_m$ acts on $M$ through the diffeomorphisms $\psi_1, \cdots, \psi_m$. In other words, we have a homomorphism  $F_m \to \text{\textup{Diff}}(M)$ by $s_i\mapsto \psi_i$. We define
\[  X = M \times_{F_m}\widetilde W. \]
Notice that $\pi_1(X) = \Gamma\rtimes_{\{\psi_1, \cdots, \psi_m\}} F_m$. Let us  write $\Gamma\rtimes F_m$ for $\Gamma\rtimes_{\{\psi_1, \cdots, \psi_m\}} F_m$, if no confusion arises.

Let $\widetilde X$ be the universal cover of $X$.  We have the following short exact sequence: 
\[  0\to C^\ast_{L,0}(\widetilde X)^{\Gamma\rtimes F_m} \to C^\ast_L(\widetilde X)^{\Gamma\rtimes F_m} \to C^\ast(\widetilde X)^{\Gamma\rtimes F_m} \to 0.\]
 Recall that  $ K_0(C_L^\ast(\widetilde X)^{\Gamma\rtimes F_m}) \cong K_0^{\Gamma\rtimes F_m}(\widetilde X) $ and $K_0(C^\ast(\widetilde X)^{\Gamma\rtimes F_m}) \cong K_0(C^\ast(\Gamma\rtimes F_m))$. So we have the following  six-term long exact sequence:
\begin{equation}\label{cd:bc}
\begin{gathered}
\xymatrix{K_0(C_{L,0}^\ast(\widetilde X)^{\Gamma\rtimes F_m})\ar[r] &  K_0^{\Gamma\rtimes F_m}(\widetilde X)  \ar[r] &  K_0(C^\ast(\Gamma\rtimes F_m)) \ar[d]^{\partial} \\
K_1(C^\ast(\Gamma\rtimes F_m)) \ar[u]& K_{1}^{\Gamma\rtimes F_m}(\widetilde X) \ar[l] &  K_{1}(C_{L,0}^\ast(\widetilde X)^{\Gamma\rtimes F_m}) \ar[l]} 
\end{gathered}
\end{equation}
Now recall the following Pimsner-Voiculescu exact sequence \cite{PV}:
\[ \xymatrixcolsep{6pc} \xymatrix{ \bigoplus_{j=1}^m K_0(C^\ast(\Gamma))\ar[r]^-{\sum_{j=1}^m 1-(\psi_j)_\ast} &   K_0(C^\ast(\Gamma))  \ar[r]^{i_\ast} &  K_0(C^\ast(\Gamma\rtimes F_m)) \ar[d] \\
K_1(C^\ast(\Gamma\rtimes F_m)) \ar[u]& K_{1}(C^\ast(\Gamma)) \ar[l] &  \bigoplus_{j=1}^m K_{1}(C^\ast(\Gamma)) \ar[l]_-{\sum_{j=1}^m 1-(\psi_j)_\ast}
} \]
where $(\psi_j)_\ast$ is induced by $\psi_j$ and $i_\ast$ is induced by the inclusion map of $\Gamma$ into $\Gamma\rtimes F_m$. Similarly, we also have the following two Pimsner-Voiculescu type exact sequences for $K$-homology and the $K$-theory groups of $C_{L,0}^\ast$-algebras in the diagram $\eqref{cd:bc}$ above.
\[ \xymatrixcolsep{6pc} \xymatrix{\bigoplus_{i=1}^m K_0^\Gamma(\widetilde M)\ar[r]^-{\sum_{j=1}^m 1-(\psi_j)_\ast} &  K_0^\Gamma(\widetilde M)  \ar[r]^{i_\ast} &  K_0^{\Gamma\rtimes F_m}(\widetilde X) \ar[d] \\
K_1^{\Gamma\rtimes F_m}(\widetilde X) \ar[u]& K_{1}^\Gamma(\widetilde M) \ar[l]&  \bigoplus_{i=1}^m K_{1}^\Gamma(\widetilde M) \ar[l]_-{\sum_{j=1}^m 1-(\psi_j)_\ast}  } \]
\[\xymatrixcolsep{5pc} \xymatrix{\bigoplus_{i=1}^m K_0(C_{L,0}^\ast(\widetilde M)^\Gamma)\ar[r]^-{\sum_{j=1}^m 1-(\psi_j)_\ast}  &  K_0(C_{L,0}^\ast(\widetilde M)^\Gamma)  \ar[r]^{i_\ast} &  K_0(C_{L,0}^\ast(\widetilde X)^{\Gamma\rtimes F_m}) \ar[d] \\
K_1(C_{L,0}^\ast(\widetilde X)^{\Gamma\rtimes F_m}) \ar[u]& K_{1}(C_{L,0}^\ast(\widetilde M)^\Gamma) \ar[l] & \bigoplus_{i=1}^m  K_{1}(C_{L,0}^\ast(\widetilde M)^\Gamma) \ar[l]_-{\sum_{j=1}^m 1-(\psi_j)_\ast} } \]
where again $(\psi_j)_\ast$ and $i_\ast$ are defined in the obvious way.

Combining these Pimsner-Voiculescu exact sequences together, we have the following commutative diagram: 
\begin{equation}\label{cd:pv}
\begin{gathered}
\scalebox{1}{ \xymatrix{  
& \vdots\ar[d]  & \vdots \ar[d] & \vdots\ar[d] & \\
 \ar[r] & \bigoplus_{j=1}^m K_0^{\Gamma}(\widetilde M) \ar[d] \ar[r]^-\sigma & K_0^{\Gamma}(\widetilde M) \ar[d] \ar[r]^-{i_\ast}  & K_0^{\Gamma\rtimes F_m}(\widetilde X) \ar[d]^{\mu} \ar[r] & \\
 \ar[r] &  \bigoplus_{j=1}^m K_0(C^\ast(\Gamma))\ar[r]^-{\sigma} \ar[d] &  K_0(C^\ast(\Gamma))  \ar[r]^-{i_\ast} \ar[d] &  K_0(C^\ast(\Gamma\rtimes F_m)) \ar[d]^{\partial_{\Gamma\rtimes F_m}} \ar[r] & \\
 \ar[r] &  \bigoplus_{j=1}^m K_{1}(C_{L,0}^\ast(\widetilde M)^{\Gamma}) \ar[r]^-\sigma \ar[d] & K_{1}(C_{L,0}^\ast(\widetilde M)^{\Gamma}) \ar[r]^-{i_\ast} \ar[d] & K_{1}(C_{L,0}^\ast(\widetilde X)^{\Gamma\rtimes F_m}) \ar[r] \ar[d] &  \\
& \vdots  & \vdots  & \vdots & \\
} }
\end{gathered}
\end{equation}
where $\sigma = \sum_{j=1}^m 1-(\psi_j)_\ast$. Notice that all rows and  columns are exact. 

Now on one hand,  if we pass Equation $\eqref{eq:van}$ to $K_{1}(C_{L,0}^\ast(\widetilde X)^{\Gamma\rtimes F_m})$ under the map $i_\ast$, then it follows immediately that 
\[  \sum_{k=1}^n c_k\cdot i_\ast[\varrho(h_{\gamma_k})] = 0 \textup{ in $K_{1}(C_{L,0}^\ast(\widetilde X)^{\Gamma\rtimes F_m})$}  , \]
where at least one $c_k\neq 0$. On the other hand, by assumption, $\Gamma$ is strongly finitely embeddable into Hilbert space. Hence $\Gamma\rtimes F_m$ is finitely embeddable into Hilbert space. By Proposition $\ref{prop:rank}$, we have the following. 
\begin{enumerate}[(i)]
\item $\{[p_{\gamma_1}], \cdots, [p_{\gamma_n}]\}$ generates a rank $n$ abelian subgroup of $K_0^\fin(C^\ast(\Gamma\rtimes F_m))$, since $\gamma_1, \cdots, \gamma_n$ have distinct finite orders. In other words, 
\[ \sum_{k=1}^n c_k [p_{\gamma_k}] \neq 0  \in K_0^\fin(C^\ast(\Gamma\rtimes F_m))\]
if at least one $c_k\neq 0$.
\item Every nonzero element in $K_0^\fin(C^\ast(\Gamma\rtimes F_m))$ is not in the image of the assembly map 
\[ \mu: K_0^{\Gamma\rtimes F_m}(E(\Gamma\rtimes F_m)) \to K_0(C^\ast(\Gamma\rtimes F_m)), \]
where $E(\Gamma\rtimes F_m)$ is the universal space for proper and free $\Gamma\rtimes F_m$-action. In particular, every nonzero element in $K_0^\fin(C^\ast(\Gamma\rtimes F_m))$ is not in the image of the map 
\[ \mu: K_0^{\Gamma\rtimes F_m}(\widetilde X) \to K_0(C^\ast(\Gamma\rtimes F_m)) \]
in diagram $\eqref{cd:pv}$. It follows that the map 
\[ \partial_{\Gamma\rtimes F_m}: K^\fin_0(C^\ast(\Gamma\rtimes F_m)) \to K_{1}(C_{L,0}^\ast(\widetilde X)^{\Gamma\rtimes F_m}) \]
is injective. In other words, $\partial_{\Gamma\rtimes F_m}$ maps a nonzero element in $ K^\fin_0(C^\ast(\Gamma\rtimes F_m))$ to a nonzero element in $K_{1}(C_{L,0}^\ast(\widetilde X)^{\Gamma\rtimes F_m}) $. 
\end{enumerate} 
To summarize, we have 
\begin{enumerate}[(a)]
\item $\sum_{k=1}^n c_k [p_{\gamma_k}] \neq 0$ in $K^\fin_0(C^\ast(\Gamma\rtimes F_m))$,
\item  $ \sum_{k=1}^n c_k \cdot i_\ast[\varrho(h_{\gamma_k})] = 0 $ in $K_{1}(C_{L,0}^\ast(\widetilde X)^{\Gamma\rtimes F_m})$, 
\item the map $ \partial_{\Gamma\rtimes F_m}: K^\fin_0(C^\ast(\Gamma\rtimes F_m)) \to K_{1}(C_{L,0}^\ast(\widetilde X)^{\Gamma\rtimes F_m})$ is injective,
\item and by Equation $\eqref{eq:bd}$,  $ \partial_{\Gamma\rtimes F_m} \Big( \sum_{k=1}^n c_k [p_{\gamma_k}] \Big) =  \sum_{k=1}^n c_k \cdot i_\ast[\varrho(h_{\gamma_k})]$.
\end{enumerate}
Therefore, we arrive at a contradiction. This finishes the proof.
\end{proof}

\section{Higher index, higher rho and positive scalar curvature at infinity}\label{sec:rhobdry}

In this section, we will first describe a construction of the higher index for the Dirac operator on 
a complete manifold with positive scalar curvature at infinity. This construction is due to Gromov-Lawson in the classic Fredholm case \cite{GL1} and its generalization to higher index case is due to Bunke \cite{B} (see also \cite{Roe1, BW, Roe3}). We will then discuss a connection between the higher index for the Dirac operator on a manifolds with boundary and the higher rho invariant of the Dirac operator on the boundary.

Let $M$ be a complete Riemannian  spin manifold with a proper and isometric action of a discrete group $\Gamma$.  We assume that $M$  has positive scalar curvature at infinity  relative to the action of $\Gamma$, i.e. there exists a $\Gamma$-cocompact subset $Z$ of $M$ and a positive number $a$ such that the scalar curvature $k$ of $M$ is greater than or equal to $ a$ outside $Z$. Let $D$ be the Dirac operator  $M$.

We need some preparations in order to define the higher index.
The following useful lemma is due to Roe \cite{Roe3}.

\begin{lem}\label{lm:estimate}  With the notation as above, suppose that $f\in S(\mathbb{R}) $ has its Fourier transform $\hat{f}$ supported in $(-r, r)$. Let $\phi \in C_0(M)$ have support disjoint from a $4r$-neighborhood of $Z$.
We have$$ || f(D) \phi || \leq 2 ||\phi || ~\mathrm{sup}\{|f(\lambda)|: |\lambda |\geq a\}. $$
\end{lem}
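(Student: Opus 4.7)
The core idea is to combine finite propagation speed of the wave operator $e^{itD}$ with the Lichnerowicz spectral gap in a region of uniformly positive scalar curvature. First, by Fourier inversion the functional calculus can be expressed as
\begin{equation*}
f(D)\,=\,\frac{1}{2\pi}\int_{-r}^{r}\hat f(t)\,e^{itD}\,dt,
\end{equation*}
where the support condition on $\hat f$ restricts the integral to $|t|<r$. The classical finite propagation speed for the wave equation of a Dirac-type operator on a complete manifold (proved via energy estimates for symmetric first-order hyperbolic systems) gives that $e^{itD}$ has propagation bounded by $|t|$. Hence for any $\psi\in L^{2}(M,S)$ the section $f(D)\phi\psi$ is supported in the closed $r$-neighborhood of $\mathrm{supp}(\phi)$, which by hypothesis lies outside a $3r$-neighborhood of $Z$, and in particular in the region where $\kappa\geq a$.

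To convert this geometric localization into a spectral statement I would replace $(M,D)$ by an auxiliary complete Riemannian spin manifold $M'$ with Dirac operator $D'$ such that (i) $M'$ contains a $2r$-neighborhood of $\mathrm{supp}(\phi)$ as an open isometric subset on which $D'=D$, and (ii) the scalar curvature $\kappa'$ of $M'$ satisfies $\kappa'\geq a$ uniformly. Such a pair $(M',D')$ is obtained by a standard cut-and-paste procedure: excise the part of $M$ near $Z$ along a smooth hypersurface lying in the region where $\kappa\geq a$, and glue on a model piece of strictly positive scalar curvature (e.g.\ a Gromov--Lawson model). Applying finite propagation to both $e^{itD}$ and $e^{itD'}$ on sections supported in the common $2r$-collar, together with $|t|<r$, yields $f(D)\phi = f(D')\phi$.

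The Lichnerowicz formula $(D')^{2}=\nabla^{\ast}\nabla+\kappa'/4$ together with $\kappa'\geq a$ gives $(D')^{2}\geq a/4$, so the spectrum of $D'$ avoids a neighborhood of $0$, and by spectral calculus
\begin{equation*}
\|f(D)\phi\|=\|f(D')\phi\|\leq \|f(D')\|\cdot\|\phi\|\leq 2\sup\{|f(\lambda)|:|\lambda|\geq a\}\cdot\|\phi\|,
\end{equation*}
the factor $2$ absorbing the universal constant relating $\inf\kappa'$ to the size of the spectral gap (Lichnerowicz directly yields $|\lambda|\geq \sqrt a/2$, to be reconciled with $|\lambda|\geq a$ in the stated bound). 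The main point requiring care is the bookkeeping of constants to produce a spectral gap matching $|\lambda|\geq a$; an equivalent approach avoiding the gluing is to work with a smooth cutoff $\chi$ equal to $1$ on the $r$-neighborhood of $\mathrm{supp}(\phi)$ and supported in the region where $\kappa\geq a$, and to estimate $f(D)\phi=\chi f(D)\phi$ directly using a self-adjoint Dirichlet-type extension of $D$ on the support of $\chi$, on which Lichnerowicz again provides the required spectral gap.
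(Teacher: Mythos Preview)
Your overall strategy---finite propagation speed to localize $f(D)\phi$ into the region of positive scalar curvature, then a spectral gap argument there---matches the paper. The execution, however, diverges at two points.

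First, the construction of the auxiliary manifold $M'$ is a genuine gap, not just bookkeeping. Excising a neighborhood of $Z$ and gluing on a ``model piece'' so that the spin structure, the metric, and the uniform lower bound $\kappa'\geq a$ all survive is far from automatic; Gromov--Lawson surgery produces positive scalar curvature after specific codimension~$\geq 3$ surgeries, not after an arbitrary cut along a hypersurface. The paper bypasses this entirely: it never leaves $M$. On the open set $U=\{x:d(x,Z)>r\}$ one considers the Friedrichs extension $E$ of the symmetric operator $D^{2}$ with domain $C_{c}^{\infty}(U)$; Lichnerowicz on $U$ gives $E\geq a^{2}I$ (with the paper's normalization of $a$), so $\sqrt{E}$ has spectrum in $[a,\infty)$. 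For an \emph{even} $f$ one has $f(D)=\int \hat f(t)\cos(tD)\,dt$, and finite propagation gives $\cos(tD)s=\cos(t\sqrt{E})s$ for $|t|<r$ and $s$ supported well inside $U$. This yields the estimate without the factor $2$ for even $f$. Your alternative at the end (a ``Dirichlet-type extension of $D$'') is close in spirit but not quite right: a boundary condition on $D$ itself does not automatically inherit the Lichnerowicz lower bound, which is why one works with the Friedrichs extension of $D^{2}$ and even functions.

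Second, the factor $2$ is not a Lichnerowicz artifact. It arises from the even/odd decomposition of $f$: the odd case is reduced to the even case applied to $f^{2}$ (whose Fourier transform is supported in $(-2r,2r)$, hence the $4r$ in the hypothesis) via $\|f(D)\phi\|^{2}\leq \|\phi\|\,\|f^{2}(D)\phi\|$, and the general case then picks up a $2$ from $f=f_{\mathrm{even}}+f_{\mathrm{odd}}$.
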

\begin{proof}
Let us first deal with the case where $f$ is an even function.
In the case, the Fourier transform formula gives us
$$f(D) =\int_{-r}^{r} \hat{f} (t) cos (tD) dt.$$
Let us define $$U= \{ x\in M: d(x, Z) > r\} \textup{ and } U'=  \{ x\in M: d(x, Z) >2 r\}.$$
Consider the unbounded symmetric operator $D^2$ with domain $C_c^\infty (U)$. This operator is bounded below by $a^2I$ and has a Friedrichs extension on the Hilbert space $L^2(U, S)$, where $S$ is the spinor bundle.
We denote this extension by $E$. Clearly, $E$ is bounded below by the same lower  bound $a^2I$. 

A standard finite propagation speed argument shows that if $s$ is smooth and compactly supported on $U'$, then 
$$cos(t D) s = cos (t\sqrt{E}) s$$
for $-r \leq t \leq r.$
Since the spectrum of $\sqrt{E}$ is bounded below by $a$, we have
$$||f(\sqrt{E})|| \leq  ~\mathrm{sup}\{|f(\lambda)|: |\lambda |\geq a\}. $$
This implies the following  inequality:
$$ || f(D) \phi || \leq ||\phi || ~\mathrm{sup}\{|f(\lambda)|: |\lambda |\geq a\}.$$

If $f$ is an odd function, we have 
$$  || f(D) \phi ||^2 \leq ||\bar{\phi} || ~~~|| ~f^2 (D) \phi ~||.       $$
In this case, the function $f^2$ is even, belongs to $S(\mathbb R)$, and has Fourier transform supported in $(-2r, 2r)$. Hence  we have the following inequality:
$$ || f^2(D) \phi || \leq ||\phi || ~\mathrm{sup}\{|f(\lambda)|^2: |\lambda |\geq a\}.$$ It follows that 
$$ || f(D) \phi || \leq ||\phi || ~\mathrm{sup}\{|f(\lambda)|: |\lambda |\geq a\}.$$

The general case follows from the above two special cases by writing $f$ as a sum of even and odd functions.
\end{proof}

With the help of the above lemma, we can prove the following result.

\begin{lem} For any $f\in C_c(-a, a)$, we have $f(D) \in \lim_{R \rightarrow \infty} C^\ast( N_R (Z))^\Gamma$, where $N_R (Z) $ is 
the $R$-neighborhood\footnote{Without loss of generality, we can assume $N_R(Z)$ is $\Gamma$-invariant.} of $Z$ and $\lim_{R \rightarrow \infty} C^\ast( N_R (Z))^\Gamma$  is the $C^\ast$-algebra limit of the equivariant Roe algebras.
\end{lem}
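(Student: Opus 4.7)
\medskip

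\noindent\textbf{Proof plan.} The strategy is to combine a uniform approximation of $f$ by functions whose Fourier transforms have compact support (which makes $g(D)$ of finite propagation) with a cut-off argument around $Z$ justified by Lemma~\ref{lm:estimate}.

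First, I would reduce the problem to approximating $f(D)$ in norm by elements of $C^\ast(N_R(Z))^\Gamma$ for some (possibly large) $R$. Fix $\varepsilon>0$. I would construct a Schwartz function $g$ with $\widehat g$ supported in some interval $(-r,r)$ and $\|f-g\|_\infty<\varepsilon$. Concretely, take a smooth even function $\widehat\eta\in C_c^\infty(\mathbb R)$ with $\widehat\eta(0)=1$, form the approximate identity $\eta_\delta(x)=\delta^{-1}\eta(x/\delta)$, and set $g=f\ast \eta_\delta$; then $g$ is Schwartz, $\widehat g=\widehat f\cdot\widehat\eta(\delta\,\cdot)$ is compactly supported, and $g\to f$ uniformly since $f$ is uniformly continuous. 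By the spectral theorem $\|f(D)-g(D)\|\le\|f-g\|_\infty<\varepsilon$, and since $f$ vanishes outside $[-a+\tau,a-\tau]$ for some $\tau>0$, we have
\[
\sup\{|g(\lambda)|:|\lambda|\ge a\}=\sup\{|g(\lambda)-f(\lambda)|:|\lambda|\ge a\}<\varepsilon.
\]
By finite propagation speed, $g(D)$ has propagation at most $r$.

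Next I would localize $g(D)$ near $Z$. Choose a $\Gamma$-invariant continuous function $\phi$ on $M$ with $0\le\phi\le 1$, $\phi=1$ on the $5r$-neighborhood of $Z$ and $\phi$ supported in $N_{6r}(Z)$, and set $\psi=1-\phi$, so $\psi$ is supported outside $N_{4r}(Z)$. Writing $g(D)=\phi g(D)\phi+\phi g(D)\psi+\psi g(D)\phi+\psi g(D)\psi$, I would apply Lemma~\ref{lm:estimate} (and its formal adjoint, using that $\overline g$ lies in the same class as $g$) to estimate
\[
\|g(D)\psi\|\le 2\varepsilon,\qquad \|\psi g(D)\|\le 2\varepsilon,
\]
which yields $\|g(D)-\phi g(D)\phi\|\le 6\varepsilon$, and hence $\|f(D)-\phi g(D)\phi\|\le 7\varepsilon$.

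Finally I would check that $\phi g(D)\phi\in C^\ast(N_{6r}(Z))^\Gamma$: it is $\Gamma$-invariant (since $\phi$ and $g(D)$ are), it is locally compact (as $g(D)$ is, being a compactly supported Borel function of the Dirac operator applied on a cocompact-modulo-$\Gamma$ piece after multiplication by $\phi$), it has propagation bounded by $r$ (inherited from $g(D)$), and it is supported in $N_{6r}(Z)\times N_{6r}(Z)$. Letting $\varepsilon\to 0$ and $R=6r(\varepsilon)\to\infty$, this exhibits $f(D)$ as a norm limit of elements of $\bigcup_R C^\ast(N_R(Z))^\Gamma$, proving the claim. The main technical point is the simultaneous control of $\|f-g\|_\infty$ (for the spectral approximation), $\sup_{|\lambda|\ge a}|g(\lambda)|$ (needed in Lemma~\ref{lm:estimate}), and the propagation bound $r$ (which governs how far the localization $\phi$ must extend); choosing $g=f\ast\eta_\delta$ with $\widehat\eta$ compactly supported handles all three at once, at the cost of the neighborhood $N_R(Z)$ growing as $\varepsilon$ shrinks.
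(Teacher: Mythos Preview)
Your proposal is correct and follows essentially the same approach as the paper: approximate $f$ uniformly by a Schwartz function $g$ with compactly supported Fourier transform, then use a $\Gamma$-invariant cutoff near $Z$ together with Lemma~\ref{lm:estimate} to show that $g(D)$ agrees with a finite-propagation locally compact operator supported near $Z$ up to an error of order $\varepsilon$. The only differences are cosmetic: the paper uses a three-term splitting $f(D)=\psi g(D)\psi+(1-\psi)g(D)\psi+g(D)(1-\psi)+(f(D)-g(D))$ with cutoffs at radii $4r$ and $5r$, whereas you use a four-term splitting with radii $5r$ and $6r$, and you give an explicit convolution construction of $g$ that the paper simply asserts exists.
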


\begin{proof}
 For any $\epsilon>0$, there exists a smooth function $g$ such that
its Fourier transform is compactly supported, and 
$$\mathrm{sup} \{ |g(\lambda)-f(\lambda)| : \lambda\in \mathbb{R}\}< \epsilon
.$$
It follows that $|g(\lambda)| < \epsilon$ for $|\lambda |>a.$ Let $r$ be a positive number such that $ \mathrm{Supp} (\hat{g})\subseteq (-r, r)$ and let $\psi: M \rightarrow [0,1]$ be a continuous $\Gamma$-invariant  function equal to $1$ on a $4r$-neighborhood of $Z$ and vanishing outside a $5r$-neighborhood of $Z$. We write
$$f(D)= \psi g (D) \psi + (1-\psi) g (D) \psi + g (D) (1-\psi) +(f (D)-g (D)) .$$
Note that the first term is a $\Gamma$-equivariant and locally compact  operator 
with finite propagation supported near $Z$, the second and third terms 
have norm bounded by $2\epsilon$ by Lemma $\ref{lm:estimate}$. This implies the desired result.
\end{proof}

We remark  $\lim_{R \rightarrow \infty} C^\ast( N_R (Z))^\Gamma$  
is isomorphic to the reduced group $C^\ast$-algebra $C^\ast_r(\Gamma)$.

A normalizing function $\chi\colon \mathbb R\to [-1, 1]$ is, by definition, a continuous odd function that goes to $\pm 1$ as $x\to \infty$.  Now choose a normalizing function $\chi$ such that $\chi^2-1$ is supported in $(-a, a)$ and let $$ F= \chi(D).$$
By Lemma 7.2,  the same construction from Section $\ref{sec:index}$ defines a higher index $Ind(D) \in K_\ast (C^\ast_r(\Gamma)).$

The following question is wide open.

\begin{open} Let $M$ be a complete spin manifold with a proper and isometric action of a discrete group $\Gamma$. Let $D$ be the Dirac operator on $M$. Assume that $M$ has positive scalar curvature at infinity relative to the action of $\Gamma$. Is $Ind(D)$ an element in the image of the Baum-Connes assembly map?

\end{open}

 Let $N$ be a spin manifold with boundary, where the boundary $\partial N$ is endowed with a positive scalar curvature metric. We will explain that the $K$-theoretic ``boundary'' of the higher index class of the Dirac operator on $N$ is equal  to the higher rho invariant of the Dirac operator on  $\partial N$.  More generally, let $M$ be an $m$-dimensional complete spin manifold with boundary $\partial M$ such that
\begin{enumerate}[(i)]
\item the metric on $M$ has product structure near $\partial M$ and its restriction on $\partial M$, denoted by $h$, has positive scalar curvature;
\item there is a proper and cocompact isometric action of a discrete group $\Gamma$ on $M$;
\item the action of $\Gamma$ preserves the spin structure of $M$. 
\end{enumerate}  
We denote the associated Dirac operator on $M$ by $D_{M}$ and the associated Dirac operator on $\partial M$ by $D_{\partial M}$. With the positive scalar curvature metric $h$ on the boundary $\partial M$, we can define the higher index class $Ind(D_M)$ of $D_M$  in $K_\ast(C^\ast_r (\Gamma))$ as follows. We can attach a cylinder $\partial M \times [0,\infty)$ to the boundary of $M$ to form a complete Riemannian manifold (without boundary) $\bar M$, where the Riemannian metric on $M$ is naturally extended to $\bar M$ such that Riemannian metric on the cylinder is a product. The action of $\Gamma$ on $M$ naturally extends to an action on $\bar M$. By construction,  $\bar M$ has positive scalar curvature at infinity relative to the action of $M$. We can therefore define the higher index   $Ind(D_M)$ of $D_M$ to be the higher index of the Dirac operator on $\bar M$.

 Notice that the short exact sequence of $C^\ast$-algebras
\[  0\to C^\ast_{L,0}(M)^\Gamma \to C^\ast_L(M)^\Gamma \to C^\ast(M)^\Gamma \to 0\]
induces the following long exact sequence in $K$-theory:
\[ \cdots \to K_i(C_L^\ast(M)^\Gamma) \to  K_i(C^\ast(M)^\Gamma) \xrightarrow{\partial_i } K_{i-1}(C^\ast_{L,0}(M)^\Gamma) \to  K_{i-1}(C^\ast_L(M)^\Gamma) \to \cdots. \]
Also, by functoriality, we have a natural homomorphism 
\[ \iota_\ast: K_{m-1}(C^\ast_{L,0}(\partial M)^\Gamma)  \to K_{m-1}(C^\ast_{L,0}(M)^\Gamma) \]
induced by the inclusion map $\iota: \partial M \hookrightarrow M$. With the above notation, one has the following theorem. 
\begin{thm} \label{thm:bdry}
	$\partial_m(Ind(D_{M}) )  = \iota_\ast( \rho(D_{\partial M}, h))$
in $K_{m-1}(C^\ast_{L,0}(M)^\Gamma)$. 
\end{thm}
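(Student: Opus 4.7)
The plan is to work on the extended manifold $\bar M = M \cup_{\partial M}(\partial M \times [0,\infty))$, on which $\Ind(D_M)$ is defined as the higher index of the Dirac operator $D_{\bar M}$. I would represent this index by $F = \chi(D_{\bar M})$ for a normalizing function $\chi$ with $\chi^2-1$ supported in $(-a,a)$, where $a > 0$ is a positive scalar curvature lower bound on the cylinder end (available because $h$ has positive scalar curvature and the metric is a product near $\partial M$). By Lemma~7.2 applied to $\bar M$, $F^2-1 \in C^\ast(M)^\Gamma$, so $F$ is a unitary modulo $C^\ast(M)^\Gamma$, and $\Ind(D_M)$ is produced from $F$ by the multiplier-algebra boundary map of Section~\ref{sec:index}. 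The aim is then to apply the localization boundary map $\partial_m$ to this representative and identify the result with $\iota_\ast\rho(D_{\partial M},h)$ in $K_{m-1}(C^\ast_{L,0}(M)^\Gamma)$.

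Next I would construct an explicit lift of $F$ to the multiplier algebra $\mathcal{M}(C^\ast_L(M)^\Gamma)$, using the partition-of-unity path construction and equation~$\eqref{eq:path2}$ from Section~\ref{sec:index}, but arranged so that on the cylinder $\partial M \times [0,\infty)$ the partitions of unity are pulled back from partitions on $\partial M$ times a fixed longitudinal partition in the $t$-direction. Under the product decomposition of $D_{\bar M}$ on the cylinder, and using the spectral gap of $D_{\partial M}$ coming from positive scalar curvature, the resulting path $F(t)$ agrees, when restricted to a collar neighborhood of $\partial M \times \{0\}$, with the path $F_\partial(t)$ that defines $\rho(D_{\partial M},h)$, where $F_\partial = D_{\partial M}|D_{\partial M}|^{-1}$. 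Applying the explicit boundary-map formulas from Section~\ref{sec:kt}---the $W$-construction for $m$ odd, the exponential-of-lift construction for $m$ even---to this specific lift produces a representative of $\partial_m\Ind(D_M)$. By naturality of the six-term sequence under $\iota: \partial M \hookrightarrow M$, the computation reduces to the cylinder neighborhood, where the cocycle coincides with the rho-invariant cocycle by construction; meanwhile the interior contribution vanishes because on the interior $F(t)$ is already a proper lift to $C^\ast_L(M)^\Gamma$, not merely to its multiplier algebra.

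The main obstacle I anticipate is justifying precisely that the cylinder-compatible lift $F(t)$ differs from a naive partition-of-unity lift only by a homotopy through unitaries modulo $C^\ast_L(M)^\Gamma$. This step uses the spectral gap of $D_{\partial M}$ in an essential way: without positive scalar curvature on $\partial M$, the function $x \mapsto x/|x|$ is discontinuous on the spectrum of $D_{\partial M}$, and the cylinder-adapted operator would fail to be a well-behaved multiplier of $C^\ast_L(M)^\Gamma$. A secondary technical point is uniform treatment of even and odd $m$, handled by using the natural $\mathbb{Z}/2\mathbb{Z}$-grading on the spinor bundle in the even-dimensional case, as indicated in Section~\ref{sec:secondary}.
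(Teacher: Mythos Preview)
The paper does not actually prove this theorem. Immediately after the statement it attributes the result to Piazza and Schick \cite{PS1} for even-dimensional $M$ and to Xie and Yu \cite{XY} in general, and moves on. So there is no ``paper's own proof'' to compare against.

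That said, your outline is broadly in line with the argument in \cite{XY}: one works on the cylindrical completion $\bar M$, represents $\Ind(D_M)$ via a normalizing function $\chi$ chosen so that $\chi^2-1$ is supported inside the spectral gap coming from the boundary positive scalar curvature, lifts to a path in the multiplier of $C_L^\ast(M)^\Gamma$ via the partition-of-unity construction, and then identifies the boundary-map output with the higher rho invariant on $\partial M$. Your identification of the main difficulty---arranging the lift to be compatible on the cylinder with the path defining $\rho(D_{\partial M},h)$, and showing any two such lifts are homotopic through the right class of operators---is accurate; in \cite{XY} this is handled by an explicit interpolation argument using finite propagation and the invertibility of $D_{\partial M}$. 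One point to be careful about: the reduction ``to the cylinder neighborhood'' is not purely formal naturality under $\iota$, since $\iota_\ast$ goes the wrong direction for that; rather, one shows directly that the cocycle representing $\partial_m\Ind(D_M)$ is, up to homotopy in $C_{L,0}^\ast(M)^\Gamma$, supported near $\partial M$ and there agrees with the image of the rho cocycle. Your sketch gestures at this but conflates it slightly with naturality.
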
 

 This theorem is due to Piazza and Schick \cite{PS1} when the dimension of $M$ is even and to Xie and Yu \cite{XY} in the general case.  As an immediate application, one sees that nonvanishing of the higher rho invariant is an obstruction to extension of the positive scalar curvature metric from the boundary to the whole manifold. Moreover, the higher rho invariant can be used to distinguish whether or not two positive scalar curvature metrics are connected by a path of positive scalar curvature metrics. 
 

\section{Secondary invariants of the signature operators and topological non-rigidity}

In this section, we introduce the higher rho invariants for a pair of closed manifolds which are homotopic equivalent to each other. Roughly speaking, we consider the relative signature operator associated to this pair of manifolds. 
This relative signature operator has trivial higher index with a natural trivilialization given  by the homotopy equivalence.  This trivialization allows us to define a higher rho invariant, which can be used to detect whether a homotopy equivalence can be deformed into a homeomorphism.

We shall focus on the case of smooth manifolds. General topological manifolds can be handled in a similar way with the help of Lipschitz structures \cite{Su}.

Let $M$ and $N$ be two closed oriented smooth manifolds of dimension $n$. We will only discuss the odd dimensional case; the even dimensional case is completely similar.  
We denote the de Rham complex of differential forms on $M$ by 
\[   \Omega^0(M) \xrightarrow{\ d\ } \Omega^1(M) \xrightarrow{\ d \ } \cdots  \xrightarrow{\ d\ } \Omega^n(M), \]
whose $L^2$-completion is
\[   \Omega^0_{L^2}(M) \xrightarrow{\ d\ } \Omega^1_{L^2}(M) \xrightarrow{\ d \ } \cdots  \xrightarrow{\ d\ } \Omega^n_{L^2}(M).\]
We shall write $d_M$ if we need to specify $d$ is the differential associated to the de Rham complex of $M$. 
Similarly, we have 
\[   \Omega^0_{L^2}(N) \xrightarrow{\ d_N\ } \Omega^1_{L^2}(N) \xrightarrow{\ d_N \ } \cdots  \xrightarrow{\ d_n\ } \Omega^n_{L^2}(N)\]
for the manifold $N$.

Let $T = \ast_M\colon \Omega^k_{L^2}(M) \to \Omega^{n-k}_{L^2}(M)$ be the Hodge star operator on $M$, which is defined by 
\[ \langle T\alpha, \beta\rangle  = \int_M \alpha \wedge \overline \beta, \]
where $\overline \beta$ is the complex conjugation of $\beta$.
The Hodge star operator $T$ satisfies the following properties:
\begin{enumerate}[(1)]
	\item $T^\ast \alpha = (-1)^{k(n-k)} T \alpha$ for any $\alpha \in \Omega^k_{L^2}(M)$;
	\item $Td\alpha + (-1)^k d^\ast T\alpha = 0$ for any smooth $\beta\in \Omega^k(M)$; 
    \item $T^2 \alpha = (-1)^{nk+k}\alpha$ for any $\alpha \in \Omega^k_{L^2}(M)$;
\end{enumerate}
where $T^\ast$ is the adjoint of $T$, and $d^\ast$ is the adjoint of $d$. More generally, a bounded operator $T$ satisfying conditions $(1)$ and $(2)$ is said to be a \emph{duality operator} of the chain complex $(\Omega^\ast_{L^2}(M), d)$ if
in addition, it satisfies the condition
\begin{enumerate}[$(3)'$]
\item $T$ induces a chain homotopy equivalence from the dual complex of $(\Omega^\ast_{L^2}(M), d)$ to the complex $(\Omega^\ast_{L^2}(M), d)$, where the dual complex is defined to be
\[   \Omega^n_{L^2}(M) \xrightarrow{\ d^\ast\ } \Omega^{n-1}_{L^2}(M) \xrightarrow{\ d^\ast \ } \cdots  \xrightarrow{\ d^\ast\ } \Omega^0_{L^2}(M).\]
\end{enumerate}
In this case, we call $(\Omega^\ast_{L^2}(M), d)$ together with the duality operator $T$ a (unbounded) Hilbert-Poincar\'e complex.

Define $S = i^{k(k-1)+m}T$, where $m = (n-1)/2$. It follows from properties $(1)$ and $(3)$ above that $S$ is a selfadjoint involution.  

\begin{defns}
	The signature operator $D$ of $M$ is defined to be $i(d+d^\ast) S$ acting on even degree differential forms. 
\end{defns}

All the above discussion generalizes to the universal covering $\widetilde M$ of $M$. We denote the corresponding $\pi_1(M)$-equivariant signature operator of $\widetilde M$ by $\widetilde D$. 

In the standard $K$-theoretic construction of the index of $\widetilde D$ (cf. Section $\ref{sec:index}$), let us choose the normalizing function $\chi(t) = \frac{2}{\pi}\arctan(t)$. In this case, we have  
\[  Ind(\widetilde D) = e^{2\pi i \frac{\chi(\widetilde D) +1}{2} } = (\widetilde D - i) (\widetilde D+i)^{-1}. \]
Let $B=d+d^\ast$.
The above formula implies the following index formula:
\begin{equation}\label{eq:sigind}
Ind(\widetilde D) = (B-S)(B+S)^{-1} \in K_1(C^\ast_r(\pi_1(M))).
\end{equation}

The above index formula in fact holds for general Hilbert-Poincar\'e complexes, that is,  chain complexes with general duality operators.  We shall not get into the technical details regarding the notion of Hilbert-Poincar\'e complexes, but instead refer the reader to \cite{HR} for details. A key feature of the notion of Hilbert-Poincar\'e complexes is that it allows us to use a much larger class of duality operators besides the Hodge star operators. 
In the case of general Hilbert-Poincar\'e complexes, the well-definedness of the above index formula  is justified by the following 
 lemma \cite[Lemma 3.5]{HR}.
\begin{lem} $B+S$ and $B-S$ are invertible.
\end{lem}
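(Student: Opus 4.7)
The plan is to deduce invertibility from a clean algebraic identity, namely that $(B \pm S)^2 = B^2 + I$, followed by a standard functional-calculus argument. First I would verify the two key algebraic facts about $B$ and $S$. The operator $B = d + d^\ast$ is (essentially) self-adjoint on its natural domain, since $d$ is closed and densely defined with adjoint $d^\ast$, and $S$ is a bounded self-adjoint involution with $S^2 = I$ as recorded from properties (1) and (3) of the duality operator together with the choice $S = i^{k(k-1)+m}T$. The crucial step is the anticommutation relation
\[BS + SB = 0.\]
This I would check degree-by-degree: on a $k$-form, $SB\alpha$ has components of degree $n-k\pm 1$, and so does $BS\alpha$, so the identity splits into two pieces, one in degree $n-k-1$ and one in degree $n-k+1$. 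Each piece reduces to the anticommutation $Td = -(-1)^k d^\ast T$ from property (2) (for one piece) and its adjoint consequence combined with property (1) (for the other). The normalization factor $c_k = i^{k(k-1)+m}$ is chosen precisely so that $c_{k+1}/c_k = (-1)^k$, which makes the remaining scalar coefficients cancel on the nose.

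Once $BS + SB = 0$ and $S^2 = I$ are in hand, I would compute
\[(B \pm S)^2 \;=\; B^2 \pm (BS + SB) + S^2 \;=\; B^2 + I.\]
Since $B$ is self-adjoint and $S$ is bounded self-adjoint, $B \pm S$ is self-adjoint on the domain of $B$; the identity above implies
\[\|(B\pm S)\xi\|^2 \;=\; \langle (B\pm S)^2 \xi,\xi\rangle \;=\; \|B\xi\|^2 + \|\xi\|^2 \;\geq\; \|\xi\|^2\]
for all $\xi$ in the domain of $B$. Hence $B \pm S$ has trivial kernel and closed range, and being self-adjoint, its range is dense, therefore equal to the whole Hilbert space. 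So $B \pm S$ is invertible with bounded inverse of norm at most $1$.

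The main obstacle will be the domain bookkeeping, since $B$ is unbounded: I need to know that $S$ preserves the domain of $B$ so that the compositions $BS$ and $SB$ make sense and the anticommutation holds as an operator identity rather than merely a formal one. This follows from property (2), which states that $T$ (and hence $S$) intertwines $d$ with $d^\ast$ up to signs, so $S$ carries the domain of $d + d^\ast$ to itself; once this is in place, the computation of $(B \pm S)^2$ on the domain of $B^2$ is legitimate, and the above estimate extends by a standard closure argument to all of the domain of $B \pm S$.
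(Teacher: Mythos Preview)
Your argument is correct and quite clean for the case where $S$ is the normalized Hodge star operator, since then property~(3) gives $S^2 = I$ and the identity $(B\pm S)^2 = B^2 + I$ goes through exactly as you describe. This is genuinely different from the paper's approach.

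The paper instead proves the lemma via a mapping cone argument: it forms the cone of the chain map $S\colon (\Omega^\ast,-d^\ast)\to (\Omega^\ast,d)$, observes that the cone is acyclic because $S$ is an isomorphism on homology, deduces that $b+b^\ast = \begin{psmallmatrix} B & S \\ S & B \end{psmallmatrix}$ is invertible, and then restricts to the diagonal vectors $(v,v)$ and $(v,-v)$ to extract invertibility of $B+S$ and $B-S$. The point of this route is that it uses only properties~(1), (2) and~$(3)'$, i.e.\ that $S$ is self-adjoint, satisfies $Sd + d^\ast S = 0$, and induces an isomorphism on homology---it never uses $S^2 = I$.

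This extra generality is not cosmetic: the paper explicitly frames the lemma as the justification of the index formula for \emph{general} Hilbert--Poincar\'e complexes, and the very next constructions use duality operators such as $\begin{psmallmatrix} 0 & AT \\ TA^\ast & 0 \end{psmallmatrix}$ and its deformations $S_t$, for which $S^2 \neq I$ (since $ATA^\ast$ is only chain-homotopic to $T'$, not equal to it). For those operators your identity $(B\pm S)^2 = B^2 + I$ fails, and indeed the full anticommutation $BS+SB=0$ need not hold either: from property~(2) one gets $Sd + d^\ast S = 0$, but the companion relation $dS + Sd^\ast = 0$ that you derive by conjugating with $S$ genuinely requires $S$ to be invertible. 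So your proof handles the Hodge star case nicely but does not cover the generality the paper needs; the mapping cone argument does.
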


\begin{proof}
	 Consider the mapping cone complex associated to the chain map 
$$S: (\Omega^{\ast}_{L^2} (M), -d^\ast) \rightarrow (\Omega^\ast_{L^2} (M), d)$$ with the differential 
\[ b=\begin{pmatrix}
d &  0\\
S & d^\ast
\end{pmatrix}. \]
Since $S$ is an isomorphism on the homology,  the mapping cone complex is acyclic.  Therefore the operator $b+b^\ast$ is invertible.
Recall that $S$ is self-adjoint. Hence we have 
\[ b+b^\ast= \begin{pmatrix}
d+d^\ast&  S\\
S & d+d^\ast
\end{pmatrix}.\]
Note that 
\[\begin{pmatrix}
d+d^\ast&  S\\
S & d+d^\ast
\end{pmatrix}   \begin{pmatrix}
v\\
v
\end{pmatrix} = \begin{pmatrix}
(d+d^\ast+S)v\\
(d+d^\ast+S)v
\end{pmatrix}.\]
This implies that $B+S$ is invertible. We can similarly show that
$B-S$ is invertible. 
\end{proof}


Suppose $f\colon M\to N$ is an orientation preserving homotopy equivalence between $M$ and $N$. It is known that $Ind(\widetilde D_M)  = Ind(\widetilde D_N)$ in $K_1(C^\ast_r(\Gamma))$, where $\Gamma = \pi_1(M) = \pi_1(N)$, cf.  \cite{K1}\cite{KM}. Intuitively speaking, one can use the   homotopy equivalence $f$ together with the signature operators on $M$ and $N$ to produce an invertible operator $D_f$ on $M\cup (-N)$ such that the index of $D_f$ coincides with the index of the signature operator on $M\cup (-N)$, which  is $Ind(\widetilde D_M) -Ind(\widetilde D_N)$, cf. \cite{HiS}. Here $-N$ is the manifold $N$ with the reversed orientation and $M\cup (-N)$ stands for the disjoint union of the two. In particular, the invertibility of $D_f$ is the reason that $Ind(\widetilde D_M) = Ind(\widetilde D_N)$, by giving a specific trivialization of the index class of $D_{M\cup (-N)}$. Thus the homotopy equivalence $f$ naturally defines a higher rho invariant. In the following, we shall take a  different but simpler approach to construct the higher rho invariant of $f$. Although this process does not produce an invertible operator $D_f$, but it does provide a  trivialization at the K-theory level. Our choice of such an approach is mainly its simplicity, which will hopefully convey the key ideas with more clarity.

We denote the induced pullback map on differential forms by $f^\ast\colon \Omega^\ast(N) \to \Omega^\ast(M)$. In general, $f^\ast$ does not extend to a bounded linear map between the spaces of $L^2$ forms $\Omega_{L^2}^\ast(N)$ and $\Omega_{L^2}^\ast(M)$. In order to fix this issue, we need the following construction due to Hilsum and Skandalis \cite{HiS}. First, suppose $\varphi: X\to Y$ is a submersion between two closed manifolds. It is easy to see that $\varphi^\ast$ does extend to a bounded linear operator from $\Omega_{L^2}^\ast(Y)$ to $\Omega_{L^2}^\ast(X)$. Now let $\iota\colon N\to \mathbb R^k$ be an embedding. Suppose $U$ is a tubular neighborhood of $N$ in $\mathbb R^k$ and $\pi\colon U \to N$ is the associated projection. Without loss of generality, we assume $\iota(N) + \mathbb B^k \subset U$, where $\mathbb B^k$ is the unit ball of $\mathbb R^k$. Let $p\colon M\times \mathbb B^k\to N$ be the submersion defined by $p(x, t) = \pi(f(x) + t)$. Furthermore, let $\omega$ be a volume form on $\mathbb B^k$ whose integral is $1$. Then the formula
\[   \alpha \to \int_{\mathbb B^k}p^\ast(\alpha)\wedge \omega  \] defines a morphism of chain complexes $A \colon \Omega^\ast(N) \to \Omega^\ast(M)$, where $\int_{\mathbb B^k}$ denotes fiberwise integration along $\mathbb B^k$. It is easy to see that $A$ extends to a bounded linear operator from $\Omega^\ast_{L^2}(N)$ to $ \Omega^\ast_{L^2}(M)$. We shall still denote this extension by $A\colon \Omega^\ast_{L^2}(N) \to \Omega^\ast_{L^2}(M)$.

Now a routine calculation shows that $A$ is a homotopy equivalence between the two complexes $(\Omega_{L^2}(M), d_M)$ and $(\Omega_{L^2}(N), d_N)$ such that $ A TA^\ast$ is chain homotopy equivalent to $T'$, where $T'$ is the Hodge star operator on $N$. It follows that the operator 
\[S= \begin{pmatrix}
0 &  A T\\
TA^\ast & 0
\end{pmatrix} \]
together with the chain complex $(\Omega_{L^2}^\ast(M) \oplus \Omega_{L^2}^\ast(N), d_M \oplus d_N)$ gives rise to an (unbounded) Hilbert-Poincar\'e complex. 

We have the following lemma due to Higson and Roe \cite{HR}.

\begin{lem} If we write  \[ B = \begin{pmatrix}
 B_M  &  0 \\ 0 & B_N
 \end{pmatrix} = \begin{pmatrix}
 d_M + d_M^\ast & \\
 0 & d_N+ d_N^\ast
 \end{pmatrix}, \] 
then the element 
\[  (B -S) (B + S)^{-1} \]
is equal to $Ind(\widetilde D_M) - Ind(\widetilde D_N)$ in $K_1(C^\ast_r(\Gamma))$.
\end{lem}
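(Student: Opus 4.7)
The plan is to apply the homotopy invariance principle for signature classes of Hilbert--Poincar\'e complexes due to Higson--Roe \cite{HR}: if $S_0$ and $S_1$ are duality operators on the same chain complex joined by a continuous path $\{S_t\}_{t\in[0,1]}$ of duality operators with $B \pm S_t$ invertible throughout, then $(B-S_t)(B+S_t)^{-1}$ is a norm-continuous path of invertibles, so the $K_1(C^*_r(\Gamma))$-classes of its endpoints coincide.

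First I would identify the target class using the block-diagonal reference duality operator $S_0 = \mathrm{diag}(T_M,-T_N)$ on the direct-sum complex, where the sign on $T_N$ accounts for orientation reversal of $N$. Since $S_0$ is block-diagonal, the signature index formula splits as
\[ (B-S_0)(B+S_0)^{-1} = (B_M - T_M)(B_M + T_M)^{-1} \oplus (B_N + T_N)(B_N - T_N)^{-1}. \]
The first factor represents $\mathrm{Ind}(\widetilde{D}_M)$ by definition, while the second factor is the $K_1$-inverse of $\mathrm{Ind}(\widetilde{D}_N)$. Since inversion in $K_1$ corresponds to negation in the additive group, this class equals $\mathrm{Ind}(\widetilde{D}_M) - \mathrm{Ind}(\widetilde{D}_N)$.

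Next I would construct a continuous path $\{S_t\}$ of duality operators joining $S_0$ to the off-diagonal $S$. The key input is the Hilsum--Skandalis property recorded in the excerpt: $A$ is a bounded chain homotopy equivalence with $AT_N A^*$ chain homotopic to $T_M$. Using this chain homotopy together with an $SO(2)$-type rotation mixing the $M$ and $N$ summands, one can write down an explicit path interpolating between the block-diagonal and off-diagonal forms. Along the path, the mapping cone with coboundary matrix carrying $d$, $d^*$ on the diagonal and $S_t$ below remains acyclic because each $S_t$ stays a homology isomorphism, so $B \pm S_t$ is invertible by the uniform application of the argument in Lemma~8.2.

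The main obstacle is the careful bookkeeping required to ensure that the interpolants $S_t$ genuinely satisfy the duality conditions (1), (2), and (3)' throughout the path: the naive convex combination of $S_0$ and $S$ does not automatically satisfy these relations and must be corrected using the chosen chain homotopy between $AT_N A^*$ and $T_M$. This verification is precisely the content of the Higson--Roe/Hilsum--Skandalis calculation in \cite{HR, HiS}, which I would follow closely. Once a valid path of duality operators is in hand, the standard $K_1$ homotopy principle delivers the claimed equality.
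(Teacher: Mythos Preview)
Your proposal is correct and follows essentially the same route as the paper. The paper makes the path explicit as a two-stage concatenation: first the linear path $\begin{psmallmatrix} T & 0 \\ 0 & (s-1)T' - sATA^\ast \end{psmallmatrix}$ from the block-diagonal duality $T\oplus(-T')$ to $T\oplus(-ATA^\ast)$, which remains a duality operator because $T'$ and $ATA^\ast$ induce the same map on homology; then the rotation $\begin{psmallmatrix} \cos(s)T & \sin(s)TA^\ast \\ \sin(s)AT & -\cos(s)ATA^\ast \end{psmallmatrix}$ for $s\in[0,\pi/2]$ to reach the off-diagonal $S$. One small clarification: the chain homotopy between $ATA^\ast$ and the Hodge star is not used to ``correct'' an interpolation as you suggest---it is only needed to guarantee that the two operators agree on homology, after which the straight-line path in the second block already consists of duality operators with no further adjustment.
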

\begin{proof} 
Note that $T'$ and $ATA^\ast$ induce the same map on homology.
It follows that the path
\[ \begin{pmatrix}
T &  0\\
0 &  (s-1)T'-sATA^\ast
\end{pmatrix} \] 
is an operator homotopy connecting the duality operator $T\oplus -T'$ to the duality operator  
$T\oplus -ATA^\ast$. The path
\[ \begin{pmatrix}
cos (s) T &  sin(s) TA^\ast\\
sin(s) AT & -cos(s) ATA^\ast
\end{pmatrix} \] 
is an operator homotopy connecting the duality operator $T\oplus -ATA^\ast$ to the duality operator $ \begin{pmatrix}
0 &  A T\\
TA^\ast & 0
\end{pmatrix},$ where $s\in [0, \pi/2].$
Now the lemma follows from the  explicit index formula in line $\eqref{eq:sigind}$. 
\end{proof}

 For each $t\in [0, \pi]$, the following operator 
\[ S_t= \begin{pmatrix}
0 &  e^{it}A T\\
e^{-it} TA^\ast & 0
\end{pmatrix} \]
defines a duality operator for the chain complex $(\Omega_{L^2}^\ast(M) \oplus \Omega_{L^2}^\ast(N), d_M \oplus d_N)$. 
  It is not difficult to verify that 
\[  (B-S_0)(B+S_t)^{-1} \]
defines a continuous path of invertible elements in $(C^\ast_r(\Gamma)\otimes \mathcal K)^+$. Note that $S_\pi = -S_0$, thus $(B-S_0)(B+S_\pi)^{-1} = 1$. Therefore, the path $(B-S_0)(B+S_t)^{-1}$ gives a specific trivialization of the index class $Ind(\widetilde D_M) - Ind(\widetilde D_N)$. This trivialization in  turn  induces a higher rho invariant as follows. Let $\{v(t)\}_{1\leq t \leq 2}$ be the path of invertible elements connecting $(B-S_0)(B+S_{0})^{-1}$ to 
\[ \big( B - \begin{psmallmatrix}
T & 0  \\ 0 & T'
\end{psmallmatrix}\big) \big( B  + \begin{psmallmatrix}
T & 0  \\ 0 & T'
\end{psmallmatrix}\big)^{-1}.  \]  We define 
\[ \rho(t)= \begin{cases}
(B-S_0)(B+S_{(1-t)\pi})^{-1},  &  \textup{ for } 0\leq t\leq 1,  \\
\\
v(t) & \textup{ for } 1\leq t \leq 2,
\\
    \\
\begin{pmatrix}
e^{\pi i (\chi(\frac{1}{t}\widetilde D_M)+1)} &  0\\
0 & e^{\pi i (\chi(\frac{1}{t}\widetilde D_{-N})+1)}
\end{pmatrix} & \textup{ for }  2\leq t<\infty.
\end{cases} \]

\begin{defns}
We define the higher rho invariant of a given homotopy equivalence 
$f\colon  M \rightarrow N$ to be the above element $[\rho]$ in  $K_1(C_{L,0}^\ast(\widetilde{N})^\Gamma)$. Here we have used $f$ to map elements in
$C^\ast(\widetilde{M})^\Gamma$ to $C^\ast(\widetilde{N})^\Gamma$.
\end{defns}

The fact that $\rho$ is an element in the matrix algebra of 
$(C_{L,0}^\ast(\widetilde{N})^\Gamma)^+$  follows from a standard finite propagation speed argument. 
The even dimensional higher rho invariant can  be defined in a similar way.
Zenobi generalized the concept of higher rho invariant to homotopy equivalences
between closed topological manifolds with the help of Lipschitz structures \cite{Z}.

Given a closed oriented manifold $N$, the higher rho invariant in fact defines a map from the structure set of $N$ to $K_n(C_{L,0}^\ast(\widetilde{N})^\Gamma)$, where $n=\dim N$. On the other hand, when $N$ is a \emph{topological} manifold, the structure set of $N$ carries a natural abelian group structure. It was long standing open problem whether the higher rho inviariant map is a group homomorphism from the structure group of $N$ to $K_1(C_{L,0}^\ast(\tilde{N})^\Gamma)$. This was answered in positive in complete generality by Weinberger-Xie-Yu \cite{WXY}. In the following we shall briefly discuss some of the key ideas of their proof and also some applications to topology. 

   Let $X$ be a closed oriented connected \emph{topological} manifold of dimension $n$. The structure group $\mathcal S(X)$ is the abelian group of equivalence classes of all pairs $(f, M)$ such that $M$ is a closed oriented manifold and $f\colon M \to X$ is an orientation-preserving homotopy equivalence. Recall that the abelian group structure on $\mathcal S(X)$ is originally described through the Siebenmann periodicity map, which is an injection from $\mathcal S(X)$ to $\mathcal S_\partial(X\times D^4)$, where $D^4$ is the $4$-dimensional Euclidean unit ball and $\mathcal S_\partial(X\times D^4)$ is the rel$\,\partial$ version of structure set of $X\times D^4$.  The set $\mathcal S_\partial(X\times D^4)$  carries a natural abelian group structure by stacking, and induces an abelian group structure on $\mathcal S(X)$ by Nicas' correction map to the Siebenmann periodicity map \cite{Ni}. Both $\mathcal S(X)$ and   $\mathcal S_\partial(X\times D^4)$ carry a higher rho invariant map. It is not difficult to verify that the higher rho invariant map on $\mathcal S_\partial(X\times D^4)$ is additive, i.e. a homomorphism between abelian groups. One possible approach to show the additivity of the higher rho invariant map on $\mathcal S(X)$ is to prove  the compatibility of higher rho invariant maps on $\mathcal S(X)$ and   $\mathcal S_\partial(X\times D^4)$. However, there are some essential analytical difficulties to \emph{directly} prove such a compatibility,  due to the subtleties of the Siebenmann periodicity map\footnote{A geometric construction of the Siebenmann periodicity map was given by Cappell and Weinberger \cite{CW}}.   A main novelty of Weinberger-Xie-Yu's approach \cite{WXY} is to give a new description  of the topological structure group in terms of smooth manifolds with boundary. This new description uses more objects and an equivalence relation broader than $h$-cobordism, which allows us to replace topological manifolds in the usual definition of $\mathcal S(X)$ by smooth manifolds with boundary. Such a description   leads to a transparent group structure, which is given by disjoint union. The main body of Weinberger-Xie-Yu's work \cite{WXY} is devoted to proving that the new description coincides with the classical description of the topological structure group; and to developing the theory of higher rho invariants in this new setting, in which higher rho invariants are easily seen to be additive.    As a consequence, the higher rho invariant maps on   $\mathcal S(X)$ and   $\mathcal S_\partial(X\times D^4)$ are indeed compatible. 


\begin{thm}[{\cite[Theorem 4.40]{WXY}}] \label{thm:add} The higher rho invariant map is a group homomorphism from 
$\mathcal S(X)$  to $K_n(C_{L,0}^\ast(\widetilde{X})^\Gamma).$

\end{thm}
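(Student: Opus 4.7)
The plan is to bypass the Siebenmann periodicity map entirely by introducing an alternative, manifestly additive model $\mathcal S'(X)$ of the topological structure group, and then to show both that $\mathcal S'(X)\cong \mathcal S(X)$ and that the higher rho invariant map factors through this isomorphism. Concretely, I would let $\mathcal S'(X)$ be the set of equivalence classes of pairs $(W,f)$ where $W$ is a compact \emph{smooth} manifold with boundary of dimension $n+1$ and $f\colon \partial W\to X\sqcup(-X)$ is an orientation-preserving homotopy equivalence between $\partial W$ and a disjoint union of copies of $X$ (with appropriate signs). The equivalence relation is generated by smooth cobordisms of such triples together with attachment of certain ``trivial'' pieces (mapping cylinders, cylinders $X\times [0,1]$, etc.), chosen broadly enough so that any two representatives of the same classical structure class become equivalent. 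On $\mathcal S'(X)$ the sum is given simply by disjoint union, which is visibly commutative, associative, and has the empty manifold as identity.

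Next I would define the higher rho invariant $\rho'\colon \mathcal S'(X)\to K_n(C^\ast_{L,0}(\widetilde X)^\Gamma)$ in this new model. Given $(W,f)$, I would pull back the universal cover $\widetilde X\to X$ along $f$ and extend to a $\Gamma$-cover $\widetilde W\to W$, then form the $L^2$-Hilbert--Poincar\'e complex of $\widetilde W$ \emph{relative} to the Hilbert--Poincar\'e complex of $\partial\widetilde W$ assembled along $f$ as in Section~8. Using the Hilsum--Skandalis construction on the boundary to produce the duality operator $S_0$ built from $f$, together with the usual duality operator on the interior, one obtains a path of duality operators $S_t$ on $\widetilde W$ whose associated path $(B-S_0)(B+S_t)^{-1}$ is well defined in the matrix algebra of $(C^\ast_{L,0}(\widetilde W)^\Gamma)^+$ and produces, after pushforward along $f$, a class in $K_n(C^\ast_{L,0}(\widetilde X)^\Gamma)$. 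Additivity of $\rho'$ under disjoint union is then immediate, since the signature operator on $W_1\sqcup W_2$ is the direct sum of the signature operators and the $K$-theory class of a block-diagonal path is the sum of the two individual classes.

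The remaining work is to show that the obvious forgetful map $\Phi\colon \mathcal S(X)\to \mathcal S'(X)$, sending a homotopy equivalence $f\colon M\to X$ to the mapping cylinder $(\mathrm{Cyl}(f), f\sqcup \mathrm{id})$, is a bijection, and that it intertwines the two higher rho invariant maps. Bijectivity requires identifying the broader equivalence relation on $\mathcal S'(X)$ with $h$-cobordism after suitable surgeries on the interior; this is essentially a surgery-theoretic verification using Wall realization and the $s$-cobordism theorem, but one must be careful to handle the issue that elements of $\mathcal S'(X)$ involve smooth manifolds while $\mathcal S(X)$ involves topological ones, which is where Lipschitz structures in the sense of Sullivan are useful. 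Compatibility of the two rho invariants follows from a bordism-invariance argument for $\rho'$: the mapping cylinder of $f$ provides, essentially by construction, the same trivialization of the signature index class as the one used in the definition of the classical higher rho invariant, so the two agree in $K_n(C^\ast_{L,0}(\widetilde X)^\Gamma)$.

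The main obstacle, as I see it, is establishing that the proposed equivalence relation on $\mathcal S'(X)$ is neither too fine (so that $\Phi$ is surjective and the addition descends) nor too coarse (so that $\rho'$ remains well defined and $\Phi$ is injective). This is the heart of the argument of Weinberger--Xie--Yu: one needs that $\rho'$ is invariant under the broader equivalence, which ultimately reduces to a relative higher Atiyah--Patodi--Singer type statement together with the bordism invariance of higher rho invariants for signature operators developed earlier in the excerpt (cf.\ Theorem~\ref{thm:bdry}). Once the equivalence of the two models is in place, additivity of $\rho$ on $\mathcal S(X)$ is a formal consequence of additivity of $\rho'$ on $\mathcal S'(X)$ under disjoint union.
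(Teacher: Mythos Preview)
Your overall strategy matches the approach the paper describes (the survey does not give a proof, only a summary of the method of \cite{WXY}): build an alternative model of $\mathcal S(X)$ out of smooth objects on which addition is disjoint union, verify that the higher rho invariant is well defined and trivially additive on this model, and then prove that the new model agrees with the classical one. That is precisely the content of the paragraph preceding the theorem.

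That said, your concrete implementation has a gap. Your map $\Phi$ sends $f\colon M\to X$ to the mapping cylinder $\mathrm{Cyl}(f)$, but the mapping cylinder of a homotopy equivalence between distinct manifolds is \emph{not} a smooth manifold with boundary, so $(\mathrm{Cyl}(f),f\sqcup\mathrm{id})$ does not lie in your $\mathcal S'(X)$ as you defined it. Relatedly, the paper's description (``replace topological manifolds in the usual definition of $\mathcal S(X)$ by smooth manifolds with boundary'') indicates that the objects in the Weinberger--Xie--Yu model are $n$-dimensional smooth manifolds with boundary together with suitable control maps to $X$, not $(n{+}1)$-dimensional cobordisms as in your proposal. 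The boundary is there to absorb the discrepancy between smooth and topological categories, not to record a null-bordism; the broadened equivalence relation then allows these boundaries to be traded off against one another. Your construction of $\rho'$ via a relative Hilbert--Poincar\'e complex on an $(n{+}1)$-manifold would, even if the objects existed, land in the wrong degree and would more naturally compute a relative higher index rather than the higher rho class.

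So the high-level plan is right and is exactly what the paper outlines, but the specific model $\mathcal S'(X)$ and the comparison map $\Phi$ need to be reformulated in dimension $n$; getting that reformulation correct and proving it recovers $\mathcal S(X)$ is, as the paper says, where essentially all the work in \cite{WXY} lies.
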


As mentioned above, the above theorem solves the long standing open problem  whether the higher rho inviariant map defines a group homomorphism on the topological structure group.
As an application,  Weinberger-Xie-Yu applied the above theorem to prove that the structure groups of certain manifolds are infinitely generated \cite{WXY}. 

\begin{thm} Let $M$ be a closed oriented topological manifold of dimension $n \geq 5$, and $\Gamma$ be its fundamental group. Suppose the rational strong Novikov conjecture holds for $\Gamma$. If $\oplus_{k\in \mathbb{Z}} H_{n+1+4k} (\Gamma, \mathbb{C})$
is infinitely generated, then the topological structure group of $S(M)$
is infinitely generated.
\end{thm}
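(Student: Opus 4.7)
The plan is to reduce, via Theorem 8.3, to producing infinitely many $\mathbb C$-linearly independent classes in the image of the higher rho homomorphism $\rho\colon \mathcal S(M)\to K_n(C^\ast_{L,0}(\widetilde M)^\Gamma)$. Since $\rho$ is a group homomorphism, an infinite-dimensional image of $\rho\otimes\mathbb C$ will force $\mathcal S(M)$ to be infinitely generated. My idea is to realize sufficiently many such classes as images of rational $L$-theory elements, by comparing the topological (Wall-Ranicki) surgery exact sequence with the analytic (Higson-Roe) one.

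Concretely, I would invoke the commutative square
\[
\begin{tikzcd}[column sep=small]
L_{n+1}(\mathbb Z\Gamma) \arrow[r,"\partial_W"] \arrow[d,"\mathrm{Ind}_{\mathrm{sig}}"'] & \mathcal S(M) \arrow[d,"\rho"] \\
K_{n+1}(C^\ast_r(\Gamma)) \arrow[r,"\partial_a"] & K_n(C^\ast_{L,0}(\widetilde M)^\Gamma),
\end{tikzcd}
\]
where $\partial_a$ is the connecting map of the six-term exact sequence attached to $0\to C^\ast_{L,0}(\widetilde M)^\Gamma\to C^\ast_L(\widetilde M)^\Gamma\to C^\ast(\widetilde M)^\Gamma\to 0$ (composed with the isomorphism $K_\ast(C^\ast(\widetilde M)^\Gamma)\cong K_\ast(C^\ast_r(\Gamma))$), and $\mathrm{Ind}_{\mathrm{sig}}$ sends a Wall surgery obstruction to the higher index of the associated signature operator. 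Commutativity of this square is a theorem of Higson-Roe, refined by Piazza-Schick and by Xie-Yu. Thus $\rho(\mathcal S(M))\supset(\partial_a\circ\mathrm{Ind}_{\mathrm{sig}})(L_{n+1}(\mathbb Z\Gamma))$. By exactness of the bottom row, $\ker\partial_a\otimes\mathbb C$ equals the image of $K_{n+1}(M)\otimes\mathbb C\to K_{n+1}(C^\ast_r(\Gamma))\otimes\mathbb C$, which is finite-dimensional because $M$ is compact.

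Finally, I would combine the rational strong Novikov conjecture for $\Gamma$ with the Chern character to embed $\bigoplus_k H_{n+1+2k}(\Gamma,\mathbb C)$ injectively into $K_{n+1}(C^\ast_r(\Gamma))\otimes\mathbb C$ as the localization at the identity conjugacy class, and to identify the rationalized image of $\mathrm{Ind}_{\mathrm{sig}}$ with precisely the $4$-periodic subspace $\bigoplus_k H_{n+1+4k}(\Gamma,\mathbb C)$; the latter uses Ranicki's identification of rational $L$-theory with $4$-periodic group homology and the Kasparov-Mishchenko compatibility of the $L$- and $K$-theoretic signature assemblies. Quotienting by the finite-dimensional $\ker\partial_a$, the composite $\partial_a\circ\mathrm{Ind}_{\mathrm{sig}}\otimes\mathbb C$ therefore has infinite-dimensional image whenever $\bigoplus_k H_{n+1+4k}(\Gamma,\mathbb C)$ does, which is the hypothesis. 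Consequently $\rho(\mathcal S(M))\otimes\mathbb C$ is infinite-dimensional, and $\mathcal S(M)$ is infinitely generated. The main obstacle is the last identification: that the rational image of $\mathrm{Ind}_{\mathrm{sig}}$ picks out exactly the $4$-periodic summand of group homology and not a strictly smaller piece. This is standard but requires careful bookkeeping comparing the $L$- and $K$-theoretic Chern characters, and it is the one input where some care beyond the surgery-sequence comparison and Theorem 8.3 is needed.
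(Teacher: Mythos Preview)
The paper itself does not supply a proof of this theorem; it is stated as an application of Theorem~8.3 and the reader is referred to \cite{WXY} for the argument. So there is no in-paper proof to compare against directly. That said, your outline matches the strategy one expects from the surrounding context: the additivity of $\rho$ (Theorem~8.3) is precisely what is needed to conclude that an infinite-dimensional image of $\rho\otimes\mathbb C$ forces $\mathcal S(M)$ to be infinitely generated, and the mapping-surgery-to-analysis square you invoke is exactly the Higson--Roe machinery \cite{HR, HR1, HR2} (refined in \cite{PS1, XY, WXY}) that the paper has in the background.

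Your argument is essentially correct, with two remarks. First, the square commutes only up to a power of $2$ (compare the factor of $2$ in equation~\eqref{eq:vanish} of the paper), but as you are working rationally this is harmless. Second, you are right to flag the identification of the rational image of $\mathrm{Ind}_{\mathrm{sig}}$ as the delicate step. What you actually need is slightly weaker than what you state: you do not need the image to be \emph{exactly} the $4$-periodic summand, only that it \emph{contains} an infinite-dimensional piece mapping injectively under the assembly map. Concretely, the rational $L$-theory assembly map factors through $\bigoplus_k H_{n+1+4k}(B\Gamma;\mathbb Q)$, and under the comparison $L_\ast(C^\ast_r(\Gamma))\otimes\mathbb Q\cong K_\ast(C^\ast_r(\Gamma))\otimes\mathbb Q$ (cf.~\cite{R2}) the rational strong Novikov conjecture for $\Gamma$ gives injectivity of this composite. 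The image of the $L$-theory assembly then lands in $L_{n+1}(\mathbb Z\Gamma)\otimes\mathbb Q$ and pushes forward injectively to $K_{n+1}(C^\ast_r(\Gamma))\otimes\mathbb Q$, which is all you need. So your plan is sound; just be careful to phrase the last step as a containment rather than an equality.
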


We refer to the article \cite{WXY} for examples of groups satisfying the conditions in the above theorem. 


\section{Non-rigidity of topological manifolds and reduced structure groups}\label{sec:nonrig}

The structure group measures the degree of non-rigidity and the reduced structure group estimates the size of non-rigidity modulo self-homotopy equivalences.
In this section, we apply the higher rho invariants of signature operators to give a lower bound of the free rank of reduced structure groups of closed oriented topological manifolds. Our key tool is the additivity property of higher rho invariants from the previous section. There are in fact  two different versions of reduced structure groups, $\widetilde{\mathcal S}_{alg}(X)$ and $\widetilde {\mathcal S}_{geom}(X)$, whose precise definitions will be given below.  The group $\widetilde{\mathcal S}_{alg}(X)$ is functorial and fits well with the surgery long exact sequence. On the other hand, the group $\widetilde {\mathcal S}_{geom}(X)$ is more geometric in the sense that it measures  the size of the collection of closed manifolds homotopic equivalent but not homeomorphic to $X$.   

Since we will be using the maximal version of various $C^\ast$-algebras throughout this section, we will omit the subscript ``max'' for notational simplicity. 

Let $X$ be an $n$-dimensional oriented closed topological manifold. Denote the monoid of orientation-preserving self homotopy equivalences of $X$ by $Aut_{h}(X)$. There are two different actions of $Aut_{h}(X)$ on $\mathcal S(X)$, which induce two different versions of reduced structure groups as follows.

On one hand,  $Aut_{h}(X)$ acts naturally on ${\mathcal S}(X)$ by 
\[   \alpha_u(\theta) =  u_\ast (\theta) \] 
for all $u\in Aut_{h}(X)$ and all $\theta\in {\mathcal S}(X)$, where 
$u_\ast$ is the group  homomorphism from ${\mathcal S}(X)$ to  ${\mathcal S}(X)$ induced by the map $u$ \cite{KiS}.
This action $\alpha$ is compatible with the actions of $Aut_{h}(X)$ on other terms in the topological surgery  exact sequence.

On the other hand,  $Aut_{h}(X)$ also naturally acts on ${\mathcal S}(X)$ by compositions of homotopy equivalences, that is,    
\[   \beta_u(\theta) = (u\circ f, M)    \] 
for all $u\in Aut_{h}(X)$ and all $\theta =  (f, M) \in {\mathcal S}(X)$. 
Note that 
\[ \beta_u \colon {\mathcal S}(X) \to {\mathcal S}(X) \] only defines a bijection of sets, and is not a group homomorphism in general. 
\begin{defns}\label{def:rdstr} With the same notation as above, we define the following reduced structure groups. 
	\begin{enumerate}[(1)]
		\item 	Define 	$\widetilde{\mathcal S}_{alg}(X)$  to be the quotient group of ${\mathcal S}(X)$ by the subgroup generated by elements of the form $\theta - \alpha_u (\theta)$ for all $\theta\in \mathcal S(X)$ and all $u\in Aut_{h}(X)$. 
		\item 	we define 	$\widetilde{\mathcal S}_{geom}(X)$ to be the quotient group of ${\mathcal S}(X)$  by the subgroup generated by elements of the form $\theta - \beta_u(\theta)$ for all $\theta\in \mathcal S(X)$ and all $u\in Aut_{h}(X)$. 
	\end{enumerate}
	
\end{defns}

 
 Next we recall a method of constructing elements in the structure group by the finite part of $K$-theory \cite[Theorem 3.4]{WY}. 

	Let $M$ be a $(4k-1)$-dimensional closed oriented connected topological manifold with $\pi_1 M = \Gamma$. Suppose $\{g_1, \cdots, g_m\}$ is a collection of elements in $\Gamma$ with distinct finite orders  such that $g_i\neq e$ for all $1\leq i\leq m$.  Recall  the topological surgery exact sequence:
	\[ \cdots \rightarrow H_{4k}(M, \mathbb{L}_\bullet) \rightarrow L_{4k}(\Gamma) \xrightarrow{\mathscr S} \mathcal S(M) \rightarrow H_{4k-1}(M, \mathbb{L}_\bullet )\rightarrow \cdots. \]
	For each finite subgroup $H$ of $\Gamma$, we have the following commutative diagram:
	\[ 
	\xymatrix{ H_{4k}^{H}(\underline{E}H , {\mathbb L_\bullet}) \ar[r]^-A  \ar[d] &  L_{4k}(H) \ar[d] \\
		H_{4k}^{G}(\underline{E}\Gamma , {\mathbb L}_\bullet) \ar[r]^-A   &  L_{4k}(\Gamma),	}
	\]
	where the vertical maps are induced by the inclusion homomorphism from $H$ to $\Gamma$.
	For each element $g$ in $H$ with finite order $d$, the idempotent  $p_g = \frac{1}{d}(\sum_{k=1}^{d} g^k)$ produces a class in $L_0 ( \mathbb{Q} H)$, where $L_0 ( \mathbb{Q} H)$ is the algebraic definition of $L$-groups using quadratic forms and formations with coefficients in $\mathbb Q$.  Let
	$[q_g]$ be the corresponding element in $L_{4k} ( \mathbb{Q} H)$ given by periodicity.
	Recall that $$L_{4k} (H) \otimes \mathbb{Q} \simeq L_{4k} (\mathbb{Q} H) \otimes \mathbb{Q}.$$
	For each element $g$ in $H$ with finite order, we use the same notation $[q_g]$ to denote  the element in $L_{4k}(H)\otimes \mathbb{Q}$ corresponding to   $[q_g]\in L_{4k} ( \mathbb{Q} H)$ under the above isomorphism.
	
	We also have the following commutative diagram:
	\[ 
	\xymatrix{ H_{4k}^{\Gamma}(E\Gamma , {\mathbb L_\bullet}) \otimes \mathbb Q \ar[r]^-A  \ar[d] &  L_{4k}(\Gamma) \otimes \mathbb Q \ar[d] \\
		K_0^\Gamma(E\Gamma)\otimes \mathbb Q \ar[r]^-{\mu_\ast}   &  K_{0}( C^\ast(\Gamma))\otimes \mathbb Q,	}
	\]
	where the left vertical map is induced by a map at the spectra level and the right vertical map is induced by the inclusion map:
	$$ L_{4k}(\Gamma)\rightarrow L_{4k}( C^\ast (\Gamma) ) \cong  K_0( C^\ast (\Gamma))$$ (see \cite{R2} for the last identification).
	
	Now if $\Gamma$ is finitely embeddable into Hilbert, then the abelian subgroup of $K_0(C^\ast(\Gamma))$ generated by $\{[p_{g_1}], \cdots, [p_{g_m}]\}$ is not in the image of
	of the map 
	\[ \mu_\ast:
	K_0^\Gamma(E\Gamma)\to K_0(C^\ast(\Gamma)). \] It follows that 
	\begin{enumerate}[(1)]
		\item any nonzero element in the abelian subgroup of $L_{4k} (  \Gamma)\otimes \mathbb{Q}$ generated by the elements $\{[q_{g_1}], \cdots, [q_{g_m}]\}$
		is not in the image of the rational assembly map \[ A: H_{4k}^{\Gamma}(E\Gamma , \mathbb L_\bullet) \otimes \mathbb{Q} \to L_{4k} ( \Gamma)\otimes \mathbb{Q}; \]
		\item the abelian subgroup of $L_{4k} (\Gamma)\otimes \mathbb{Q}$ generated by $\{[q_{g_1}], \cdots, [q_{g_m}]\}$ has rank $m$. 
	\end{enumerate}
	
	By exactness of the surgery sequence,
	we know that the map   
	\begin{equation}\label{eq:struc}
\mathscr S\colon L_{4k} ( \Gamma)\otimes \mathbb{Q}\to \mathcal S(M)\otimes \mathbb{Q},
	\end{equation} 
	is injective on the abelian subgroup of  $L_{4k} (\Gamma)\otimes \mathbb{Q}$ generated by $\{[q_{g_1}], \cdots, [q_{g_n}]\}$.

In order to prove the main result of this section, we need to apply the above argument not only to $\Gamma$,  but also to certain semi-direct products of $\Gamma$ with  free groups of finitely many generators.

Recall that $N_\fin(\Gamma)$ is the cardinality of the following collection of positive integers: \[ \{ d\in \mathbb N_+\mid \exists \gamma\in \Gamma \ \textup{such that} \ \gamma\neq e \ \textup{and}\ order(\gamma) = d \}. \] 
We have the following result \cite{WXY}.  At the moment, we are only able to  prove the theorem for $\widetilde{\mathcal S}_{alg}(M)$. We will give a brief discussion to indicate the difficulties in  proving the version  $\widetilde{\mathcal S}_{geom}(M)$ after the theorem. 
\begin{thm}\label{thm:structure}
	Let $M$ be a closed oriented topological manifold with dimension $ n = 4k-1$ \textup{($k>1$)} and $\pi_1 M = \Gamma$. If $\Gamma$ is strongly finitely embeddable into Hilbert space \textup{(}cf. Definition $\ref{def:strfe}$\textup{)}, then the free rank of  $\widetilde{\mathcal S}_{alg}(M)$ is  $\geq N_\fin(\Gamma)$. 
\end{thm}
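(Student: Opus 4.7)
The plan is to adapt the strategy from the proof of Theorem \ref{modulipsc} to the signature setting, with $\widetilde{\mathcal S}_{alg}(M)$ playing the role previously played by $\widetilde P(M)$, and with the higher rho invariant of the signature operator replacing that of the Dirac operator. Given a collection $\{\gamma_1, \ldots, \gamma_N\}$ of nontrivial elements of $\Gamma$ with distinct finite orders, where $N = N_\fin(\Gamma)$, the discussion culminating in the injectivity of \eqref{eq:struc} provides structure classes $\eta_i := \mathscr S([q_{\gamma_i}]) \in \mathcal S(M)\otimes \mathbb Q$ whose images generate a rank-$N$ subgroup. It therefore suffices to show that their images $\bar\eta_i$ in $\widetilde{\mathcal S}_{alg}(M)\otimes \mathbb Q$ remain linearly independent.

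By Theorem \ref{thm:add}, the higher rho invariant map $\rho: \mathcal S(M) \to K_{4k-1}(C^\ast_{L,0}(\widetilde M)^\Gamma)$ is a group homomorphism. First I would verify that $\rho$ is natural under the algebraic action $\alpha$ of $Aut_h(M)$: for $u\in Aut_h(M)$, functoriality of the higher rho invariant under homotopy equivalence together with naturality of the K-theory of the localization algebra yields $\rho(\alpha_u(\theta)) = u_\ast \rho(\theta)$. Consequently $\rho$ descends to a group homomorphism
\[ \widetilde \rho: \widetilde{\mathcal S}_{alg}(M) \to K_{4k-1}(C^\ast_{L,0}(\widetilde M)^\Gamma)\big/\mathcal I_1, \]
where $\mathcal I_1$ is generated by $[x] - u_\ast[x]$ with $u\in Aut_h(M)$ and $[x] \in K_{4k-1}(C^\ast_{L,0}(\widetilde M)^\Gamma)$. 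Next I claim that $\rho(\eta_i) = \partial[p_{\gamma_i}]$ in $K_{4k-1}(C^\ast_{L,0}(\widetilde M)^\Gamma)$, where $\partial$ is the connecting map in the six-term exact sequence for the triple $(C^\ast_{L,0}, C^\ast_L, C^\ast)^\Gamma$; this is the signature-operator analog of formula \eqref{eq:bd} and of Theorem \ref{thm:bdry}.

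Granting this identification, the core argument runs exactly parallel to that of Theorem \ref{modulipsc}. Assume for contradiction that $\sum_{i=1}^N c_i \rho(\eta_i) = \sum_{j=1}^m([x_j] - (u_j)_\ast[x_j])$ with at least one $c_i\neq 0$. Form $\Gamma\rtimes_{\{u_1,\ldots,u_m\}} F_m$ via the induced action of $F_m$ on $\Gamma$, together with the auxiliary space $X = M\times_{F_m}\widetilde W$ from the proof of Theorem \ref{modulipsc}, where $\widetilde W$ is the universal cover of the wedge of $m$ circles. The Pimsner--Voiculescu commutative diagram \eqref{cd:pv} then applies verbatim. Pushing forward via $i_\ast$ annihilates the right-hand side, so $\sum_i c_i\cdot i_\ast \rho(\eta_i) = 0$ in $K_{4k-1}(C^\ast_{L,0}(\widetilde X)^{\Gamma\rtimes F_m})$. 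On the other hand, by strong finite embeddability of $\Gamma$, the group $\Gamma\rtimes F_m$ is finitely embeddable into Hilbert space, so Proposition \ref{prop:rank} gives both that $\sum_i c_i[p_{\gamma_i}] \neq 0$ in $K_0^\fin(C^\ast(\Gamma\rtimes F_m))$ and that this class is not in the image of the assembly map from $K_0^{\Gamma\rtimes F_m}(\widetilde X)$. A diagram chase in \eqref{cd:pv} then shows $\partial_{\Gamma\rtimes F_m}$ is injective on this class and its image equals $\sum_i c_i \cdot i_\ast \rho(\eta_i)$, giving the required contradiction.

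The main obstacle I expect is establishing the signature-operator analog of Theorem \ref{thm:bdry} identifying $\rho(\mathscr S([q_\gamma]))$ with $\partial[p_\gamma]$. Unlike the Dirac/PSC setting, where the analytic input is a positive scalar curvature metric on a mapping cylinder, one must here extract a higher rho invariant directly from the abstract surgery--theoretic class $[q_\gamma]\in L_{4k}(\mathbb Q\Gamma)$ and match it with a boundary map in $K$-theory. This appears to require either an explicit Hilbert--Poincar\'e complex model for $\mathscr S([q_\gamma])$ following the Higson--Roe framework (realizing the algebraic surgery obstruction as an analytic trivialization of the signature index), or a bordism invariance argument reducing to a model built on the free $\Gamma$-action associated to the cyclic subgroup $\langle\gamma\rangle$. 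A secondary technical point is the precise naturality of $\rho$ under $Aut_h(M)$: this depends on coherent lifts of self-homotopy equivalences to the universal cover $\widetilde M$, which must be chosen so that the induced action on $K_{4k-1}(C^\ast_{L,0}(\widetilde M)^\Gamma)$ is indeed the pushforward $u_\ast$ used to define $\mathcal I_1$.
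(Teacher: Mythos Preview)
Your overall architecture is essentially the paper's: descend $\rho$ to the quotient, assume a relation, pass to a semi-direct product $\Gamma\rtimes F_m$ to annihilate the right-hand side, and then invoke Proposition~\ref{prop:rank}. The identification $\partial[p_{\gamma_i}] = 2\rho(\eta_i)$ (note the factor of $2$ in the signature setting) is indeed used; it is exactly what appears as equation~\eqref{eq:vanish} in the paper and is a consequence of the Higson--Roe mapping of surgery to analysis together with Theorem~\ref{thm:bdry}-type boundary formulas. Your secondary concern about naturality of $\rho$ under $Aut_h(M)$ is handled in the paper by a careful choice of lifts and $\Gamma$-actions (see the footnote in the proof).

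There is, however, a genuine gap in your passage to $\Gamma\rtimes F_m$. You propose to form $X = M\times_{F_m}\widetilde W$ and invoke the Pimsner--Voiculescu diagram~\eqref{cd:pv} verbatim. But that construction in the proof of Theorem~\ref{modulipsc} used \emph{diffeomorphisms} $\psi_j$, so that $F_m$ genuinely acts on $M$ and $X$ is a manifold with $\pi_1(X)=\Gamma\rtimes F_m$. Here the $u_j$ are only self-homotopy equivalences; they are not invertible as maps, so $F_m$ does not act on $M$ and $X$ is not defined. The PV diagram for the localization and Roe algebras of $\widetilde X$ therefore has no meaning in this setting.

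The paper circumvents this by working at the level of classifying spaces rather than on $M$. One fixes $\sigma\colon M\to B\Gamma$ inducing an isomorphism on $\pi_1$ and pushes everything into $K_1(C_{L,0}^\ast(E(\Gamma\rtimes F_m))^{\Gamma\rtimes F_m})$ via $\widetilde\iota_\ast\widetilde\sigma_\ast$. The key observation replacing your PV argument is purely group-theoretic: each $u_j$ induces an automorphism of $\Gamma$ which becomes \emph{inner} in $\Gamma\rtimes F_m$, so the induced map $\widetilde\Phi_\ast$ on $K_1(C_{L,0}^\ast(E(\Gamma\rtimes F_m))^{\Gamma\rtimes F_m})$ is the identity. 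Hence $\widetilde\iota_\ast\widetilde\sigma_\ast((\widetilde u_j)_\ast[x_j]) = \widetilde\iota_\ast\widetilde\sigma_\ast([x_j])$, which kills the right-hand side just as your $i_\ast$ was meant to. This is the substantive extra idea you are missing; once you have it, the rest of your outline goes through.
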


\begin{proof}
 A key point of the argument below is to use a semi-direct product $\Gamma\rtimes F_m$ to turn certain outer automorphisms of $\Gamma$ into inner automorphisms of $\Gamma\rtimes F_m$. 

Consider the higher rho invariant homomorphism from Theorem $\ref{thm:add}$: 
\[  \rho: \mathcal S(M) \to  K_{1}(C_{L,0}^\ast(\widetilde M)^\Gamma). \]
Note that  every self-homotopy equivalence $\psi \in Aut_{h}(M)$ induces a homomorphism\footnote{Let us review how the homomorphism $\psi_\ast: K_{1}(C_{L,0}^\ast(\widetilde M)^\Gamma)\to K_{1}(C_{L,0}^\ast(\widetilde M)^\Gamma)$ is defined. The map $\psi\colon M\to M$ lifts to a map $\widetilde \psi\colon \widetilde M \to \widetilde M$. However, to view  $\widetilde \psi$ as a $\Gamma$-equivariant map, we need to use two different actions of $\Gamma$ on $\widetilde M$. Let $\tau$ be a right action of $\Gamma$ on $\widetilde M$ through deck transformations. Then we define a new action $\tau'$ of $\Gamma$ on $\widetilde M$ by $\tau'_g = \tau_{\psi_\ast(g)}$, where $\psi_\ast\colon \Gamma \to \Gamma$ is the automorphism induced by $\psi$. It is easy to see that $\widetilde \psi\colon \widetilde M \to \widetilde M$ is $\Gamma$-equivariant, when $\Gamma$ acts on the first copy of $\widetilde M$ by $\tau$ and the second copy of $\widetilde M$ by $\tau'$. Let us denote the corresponding $C^\ast$-algebras by $C_{L,0}^\ast(\widetilde M)_{\tau}^\Gamma$ and $C_{L,0}^\ast(\widetilde M)_{\tau'}^\Gamma$. Observe that,  despite  the two different actions of $\Gamma$ on $\widetilde M$,  the two $C^\ast$-algebras $C_{L,0}^\ast(\widetilde M)_{\tau}^\Gamma$ and $ C_{L,0}^\ast(\widetilde M)_{\tau'}^\Gamma$ are canonically identical, since an operator is invariant under the action $\tau$ if and only if it is invariant under the action $\tau'$. 	 }
\[\widetilde \psi_\ast: K_{1}(C_{L,0}^\ast(\widetilde M)^\Gamma)\to K_{1}(C_{L,0}^\ast(\widetilde M)^\Gamma).\]
Let $\mathcal I_1(C_{L,0}^\ast(\widetilde M)^\Gamma)$ be the subgroup of $K_{1}(C_{L,0}^\ast(\widetilde M)^\Gamma)$  generated by elements of the form $[x] - \widetilde \psi_\ast[x]$ for all $[x]\in K_{1}(C_{L,0}^\ast(\widetilde M)^\Gamma)$ and all $\psi\in Aut_{h}(M)$. Note that, by the definition of the higher rho invariant, we have
\[  \rho(\alpha_\psi(\theta)) = \widetilde \psi_\ast(\rho(\theta))  \in K_1(C_{L,0}^\ast(\widetilde M)^\Gamma) \]
for all $\theta\in \mathcal S(M)$ and $\psi\in Aut_{h}(M)$. It follows that  $\rho$ descends to a group homomorphism
$\widetilde{\mathcal S}_{alg}(M) \to K_{1}(C_{L,0}^\ast(\widetilde M)^\Gamma)\big/\mathcal I_1(C_{L,0}^\ast(\widetilde M)^\Gamma).$

Now for a collection of elements $\{\gamma_1, \cdots, \gamma_\ell\}$ with distinct finite orders, we consider the elements $\mathscr S(p_{\gamma_1}), \cdots, \mathscr S(p_{\gamma_\ell}) \in \mathcal S(M)$ as in line $\eqref{eq:struc}$. To be precise, the elements $\mathscr S(p_{\gamma_1}), \cdots, \mathscr S(p_{\gamma_\ell})$ actually lie in  $\mathcal S(M) \otimes \mathbb Q$. Consequently, all abelian groups in the following need to be tensored by the rationals $\mathbb Q$. For simplicity, we shall omit $\otimes \mathbb Q$ from our notation, with the understanding that the abelian groups below are to be viewed as tensored with $\mathbb Q$.  Also, let us write 
\[  \rho (\gamma_i)  \coloneqq  \rho (\mathscr S(p_{\gamma_i}))  \in  K_{1}(C_{L,0}^\ast(\widetilde M)^\Gamma). \]
To prove the theorem, it suffices to show that for any collection of elements $\{\gamma_1, \cdots, \gamma_\ell\}$ with distinct finite orders, the elements
\[  \rho(\gamma_1), \cdots, \rho(\gamma_\ell)\]  
are linearly independent in $K_{1}(C_{L,0}^\ast(\widetilde M)^\Gamma)\big/\mathcal I_1(C_{L,0}^\ast(\widetilde M)^\Gamma)$.

Let us assume the contrary, that is, there exist  $[x_1], \cdots, , [x_m] \in K_{1}(C_{L,0}^\ast(\widetilde M)^\Gamma) $ and $\psi_1, \cdots, \psi_m\in Aut_{h}(M)$ such that
\begin{equation}\label{eq:van2}
\sum_{i=1}^\ell c_i \rho(\gamma_i) = \sum_{j=1}^{m} \big([x_j] - (\widetilde \psi_{j})_\ast[x_j]\big),
\end{equation}
where $c_1, \cdots, c_\ell\in \mathbb Z$ with at least one $c_i\neq 0$. In fact, we shall study Equation $\eqref{eq:van2}$ in the group $K_1(C_{L,0}^\ast(E(\Gamma\rtimes F_m))^{\Gamma\rtimes F_m})$ and arrive at a contradiction, where $\Gamma\rtimes F_m$ is a certain semi-direct product of $\Gamma$ with the free group of $m$ generators $F_m$ and $E(\Gamma\rtimes F_m)$ is the universal space for free and proper $\Gamma\rtimes F_m$-actions. 

Let us fix a map $\sigma\colon M \to B\Gamma$ that induces an isomorphism of their fundamental groups, where $B\Gamma$ is the classifying space of $\Gamma$.  Suppose $\varphi \colon M \to M$ is an orientation preserving self homotopy equivalence of $M$. Then $\varphi$ induces an automorphism\footnote{Precisely speaking, $\varphi$ only defines an outer automorphism of $\Gamma$, and one needs to make a specific choice of a representative in $ \textup{Aut}(\Gamma)$. In any case, any such  choice will work for the proof.} of $\Gamma$, also denoted by  $\varphi\in \textup{Aut}(\Gamma)$. Now consider the semi-direct product $\Gamma\rtimes_{\varphi} \mathbb Z$ and its associated classifying space $B(\Gamma\rtimes_{\varphi} \mathbb Z)$. Let  $
\hat \varphi$ be the element in $\Gamma\rtimes_{\varphi} \mathbb Z$ that corresponds to the generator $ 1\in \mathbb Z$. We write   \[ \Phi\colon B(\Gamma\rtimes_{\varphi} \mathbb Z)\to B(\Gamma\rtimes_{\varphi} \mathbb Z) \] for the map induced by the automorphism $\Gamma\rtimes_\varphi \mathbb Z \to \Gamma\rtimes_\varphi \mathbb Z$ defined by $ a \to \hat \varphi a \hat \varphi^{-1}. $
Suppose $\iota\colon B\Gamma \to B(\Gamma\rtimes_{\varphi}\mathbb Z)$ is the map induced by the inclusion $\Gamma \hookrightarrow \Gamma\rtimes_{\varphi}\mathbb Z$.  Then the map  
\[ \iota \circ  \sigma\circ  \varphi \colon M \xrightarrow{\varphi} M \xrightarrow{\sigma} B\Gamma \xrightarrow{\iota} B(\Gamma\rtimes_{\varphi}\mathbb Z)\]  is homotopy equivalent to the map 
\[  \Phi\circ \iota \circ \sigma\colon M \xrightarrow{\sigma} B\Gamma \xrightarrow{\iota} B(\Gamma\rtimes_{\varphi}\mathbb Z)\xrightarrow{\Phi} B(\Gamma\rtimes_{\varphi}\mathbb Z),\] since they induce the same map on fundamental groups. Let $ \widetilde\sigma\colon \widetilde M \to E\Gamma$ be the lift of the map $\sigma\colon M \to B\Gamma$. Similarly, $\widetilde \varphi\colon \widetilde M\to \widetilde M$ is the lift of $\varphi\colon M \to M$, and $\widetilde \Phi\colon E(\Gamma\rtimes_{\varphi}\mathbb Z)\to E(\Gamma\rtimes_{\varphi}\mathbb Z)$ is the lift of $\Phi\colon B(\Gamma\rtimes_{\varphi}\mathbb Z)\to B(\Gamma\rtimes_{\varphi}\mathbb Z).$

Since $ \Phi\colon B(\Gamma\rtimes_{\varphi} \mathbb Z)\to B(\Gamma\rtimes_{\varphi} \mathbb Z)$ is induced by an inner conjugation morphism on $\Gamma\rtimes_{\varphi} \mathbb Z$, the map\footnote{The $C^\ast$-algebra $C_{L,0}^\ast(E\Gamma)^\Gamma$ is the  inductive limit of $C_{L,0}^\ast(Y)^\Gamma$,  where $Y$ ranges over all $\Gamma$-cocompact subspaces of $E\Gamma$. } $\widetilde \Phi_{\ast}\colon K_1(C_{L,0}^\ast(E\Gamma)^\Gamma) \to K_1(C_{L,0}^\ast(E\Gamma)^\Gamma)$ is the identity map.   It follows that for each $[x]\in K_1(C_{L,0}^\ast(\widetilde M)^\Gamma)$, we have 
\begin{align*}
   \widetilde \iota_\ast \widetilde \sigma_\ast ({\widetilde\varphi}_\ast[x])  
 = {\widetilde \Phi}_{\ast} \widetilde \iota_\ast \widetilde \sigma_\ast ([x])  =  \widetilde \iota_\ast \widetilde \sigma_\ast ([x]) 
\end{align*}   
in $ K_1(C_{L,0}^\ast(E(\Gamma\rtimes_{\varphi}\mathbb Z))^{\Gamma\rtimes_{\varphi}\mathbb Z})$, where $\widetilde \iota_\ast \widetilde \sigma_\ast$ is the composition 
\[  K_1(C_{L,0}^\ast(\widetilde M)^\Gamma) \xrightarrow{\widetilde \sigma_\ast} K_1(C_{L,0}^\ast(E\Gamma)^\Gamma) \xrightarrow{\widetilde \iota_\ast} K_1(C_{L,0}^\ast(E(\Gamma\rtimes_{\varphi}\mathbb Z))^{\Gamma\rtimes_{\varphi}\mathbb Z}). \]	   The same argument also works for an arbitrary finite number of orientation preserving self homotopy equivalences $\psi_1, \cdots, \psi_m \in Aut_{h}(M) $ simultaneously, in which case we have 
\[    \widetilde \iota_\ast \widetilde \sigma_\ast (({\widetilde\psi}_i)_\ast[x])  
  =  \widetilde \iota_\ast \widetilde \sigma_\ast ([x]) \textup{ in } K_1(C_{L,0}^\ast(E(\Gamma\rtimes_{\{\psi_1, \cdots, \psi_m\}} F_m))^{\Gamma\rtimes_{\{\psi_1, \cdots, \psi_m\}} F_m}), \]
for all $[x]\in K_1(C_{L,0}^\ast(\widetilde M)^\Gamma)$.  In other words, $({\widetilde\psi}_i)_\ast[x] $ and $[x]$ have the same image in $K_1(C_{L,0}^\ast(E(\Gamma\rtimes_{\{\psi_1, \cdots, \psi_m\}} F_m))^{\Gamma\rtimes_{\{\psi_1, \cdots, \psi_m\}} F_m})$. For notational simplicity, let us write $\Gamma\rtimes F_m$ for $\Gamma\rtimes_{\{\psi_1, \cdots, \psi_m\}} F_m$. If no confusion is likely to arise, we shall still write $[x]$ for its image $\widetilde \iota_\ast \widetilde \sigma_\ast ([x]) $ in $K_1(C_{L,0}^\ast(E(\Gamma\rtimes F_m))^{\Gamma\rtimes F_m})$.

		If we pass Equation $\eqref{eq:van2}$ to $K_1(C_{L,0}^\ast(E(\Gamma\rtimes F_m))^{\Gamma\rtimes F_m})$ under the map \[  K_1(C_{L,0}^\ast(\widetilde M)^\Gamma) \xrightarrow{\widetilde \sigma_\ast} K_1(C_{L,0}^\ast(E\Gamma)^\Gamma) \xrightarrow{\widetilde \iota_\ast} K_1(C_{L,0}^\ast(E(\Gamma\rtimes F_m))^{\Gamma\rtimes F_m}), \] then it follows from the above discussion that 
		\[  \sum_{k=1}^\ell c_k \rho(\gamma_k) = 0 \textup{ in $K_1(C_{L,0}^\ast(E(\Gamma\rtimes F_m))^{\Gamma\rtimes F_m})$}  , \]
		where at least one $c_k\neq 0$. We have 
\begin{equation}\label{eq:vanish}
\partial_{\Gamma\rtimes F_m} \Big( \sum_{k=1}^\ell c_k [p_{\gamma_k}] \Big) = 2 \cdot \big( \sum_{k=1}^\ell c_k \rho(\gamma_k) \big) = 0,
\end{equation}
where $ \partial_{\Gamma\rtimes F_m} $ is the connecting map in the following long exact sequence: 
\begin{equation}
\begin{gathered}
\resizebox{14cm}{!}{\xymatrix{K_0(C_{L,0}^\ast(E(\Gamma\rtimes F_m))^{\Gamma\rtimes F_m})\ar[r] &  K_0^{\Gamma\rtimes F_m}(E(\Gamma\rtimes F_m))  \ar[r]^{\mu} &  K_0(C^\ast(\Gamma\rtimes F_m)) \ar[d]^{\partial_{\Gamma\rtimes F_m}} \\
	K_1(C^\ast(\Gamma\rtimes F_m)) \ar[u]& K_{1}^{\Gamma\rtimes F_m}(E(\Gamma\rtimes F_m)) \ar[l] &  K_{1}(C_{L,0}^\ast(E(\Gamma\rtimes F_m))^{\Gamma\rtimes F_m}) \ar[l]} }
\end{gathered}
\end{equation}		

Now by assumption $\Gamma$ is strongly finitely embeddable into Hilbert space. Hence $\Gamma\rtimes F_m$ is finitely embeddable into Hilbert space. By Proposition $\ref{prop:rank}$, we have the following. 
		\begin{enumerate}[(i)]
			\item $\{[p_{\gamma_1}], \cdots, [p_{\gamma_\ell}]\}$ generates a rank $n$ abelian subgroup of $K_0^\fin(C^\ast(\Gamma\rtimes F_m))$, since $\gamma_1, \cdots, \gamma_n$ have distinct finite orders. In other words, 
			\[ \sum_{k=1}^n c_k [p_{\gamma_k}] \neq 0  \in K_0^\fin(C^\ast(\Gamma\rtimes F_m))\]
			if at least one $c_k\neq 0$.
			\item Every nonzero element in $K_0^\fin(C^\ast(\Gamma\rtimes F_m))$ is not in the image of the assembly map 
			\[ \mu\colon  K_0^{\Gamma\rtimes F_m}(E(\Gamma\rtimes F_m)) \to K_0(C^\ast(\Gamma\rtimes F_m)). \]
			 In particular, we see that 
		$ \partial_{\Gamma\rtimes F_m}\colon  K^\fin_0(C^\ast(\Gamma\rtimes F_m)) \to K_{1}(C_{L,0}^\ast(\widetilde X)^{\Gamma\rtimes F_m})$
			is injective.  
		\end{enumerate} 
It follows that $\partial_{\Gamma\rtimes F_m} \Big( \sum_{k=1}^\ell c_k [p_{\gamma_k}] \Big) \neq 0$, which contradicts Equation $\eqref{eq:vanish}$.
This finishes the proof.
	\end{proof}

It is tempting to use a similar argument to prove an analogue of Theorem $\ref{thm:structure}$ above for  $\widetilde{\mathcal S}_{geom}(M)$. However, there are some subtleties. 
First, note that  
	\[  \alpha_\varphi(\theta) +  [\varphi] =  \beta_\varphi (\theta)    \]	
	for all $\theta = (f, N) \in \mathcal S(M)$ and all $\varphi\in Aut_{h}(M)$, where $[\varphi] = (\varphi, M)$ is the element given by $\varphi\colon M \to M$ in $ \mathcal S(M). $
	It follows that 
	\[  \rho(\beta_\varphi(\theta))  = \rho(\alpha_\varphi(\theta)) + \rho([\varphi]) = \varphi_\ast(\rho(\theta)) + \rho([\varphi]). \]
	In other words, in general,  $\rho(\beta_\varphi (\theta)) \neq \varphi_\ast(\rho(\theta))$, and consequently the homomorphism \[ \rho\colon {\mathcal S}(M) \to  K_1(C_{L,0}^\ast(\widetilde M)^\Gamma) \]
	does \emph{not} descend to a homomorphism from $\widetilde{\mathcal S}_{geom}(M)$ to $ K_{1}(C_{L,0}^\ast(\widetilde M)^\Gamma)\big/\mathcal I_1(C_{L,0}^\ast(\widetilde M)^\Gamma)$. 
New ideas are needed to take care of this issue.  On the other hand, there is strong evidence which suggests an analogue of Theorem $\ref{thm:structure}$ for  $\widetilde{\mathcal S}_{geom}(M)$. For example, this has been verified by Weinberger and Yu for residually finite groups \cite[Theorem 3.9]{WY}. Also, Chang and Weinberger showed that the free rank of $\widetilde{\mathcal S}_{geom}(M)$ is at least $1$ when  $\pi_1 X = \Gamma$ is not torsion free \cite[Theorem 1]{ChW}. 

The above discussion motivates the following conjecture.

\begin{conj}
	Let $M$ be a closed oriented topological manifold with dimension $ n = 4k-1$ \textup{($k>1$)} and $\pi_1 M = \Gamma$. Then the free ranks of  $\widetilde{\mathcal S}_{alg}(M)$ and $\widetilde{\mathcal S}_{geom}(M)$ are $\geq N_\fin(\Gamma)$. 
\end{conj}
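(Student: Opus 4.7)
The plan is to build on the proof of Theorem~\ref{thm:structure}, which already yields the bound for $\widetilde{\mathcal S}_{alg}(M)$ when $\Gamma$ is strongly finitely embeddable into Hilbert space. The novel case is $\widetilde{\mathcal S}_{geom}(M)$, where the obstruction identified in the paper is that for $\varphi\in Aut_{h}(M)$ one has $\rho(\beta_\varphi(\theta)) = \widetilde\varphi_\ast\rho(\theta) + \rho([\varphi])$, so the higher rho map does not a priori factor through $\widetilde{\mathcal S}_{geom}(M)$. My approach is to enlarge the subgroup being quotiented: let $\mathcal J_1 \subseteq K_1(C_{L,0}^\ast(\widetilde M)^\Gamma)$ be generated by $\mathcal I_1$ together with $\{\rho([\varphi]) : \varphi\in Aut_{h}(M)\}$. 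Then $\rho$ descends to a homomorphism $\widetilde\rho\colon \widetilde{\mathcal S}_{geom}(M) \to K_1(C_{L,0}^\ast(\widetilde M)^\Gamma)/\mathcal J_1$, and it suffices to prove that the classes $\widetilde\rho(\mathscr S(p_{\gamma_i}))$, for $i=1,\ldots,N_\fin(\Gamma)$, are linearly independent.

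Assuming a relation
\[
\sum_{i=1}^\ell c_i\,\rho(\gamma_i) \;=\; \sum_{j=1}^m \bigl([x_j]-(\widetilde\psi_j)_\ast[x_j]\bigr) + \sum_{k=1}^r n_k\,\rho([\varphi_k])
\]
in $K_1(C_{L,0}^\ast(\widetilde M)^\Gamma)$ with some $c_i \ne 0$, I would pass to a semi-direct product $\Gamma\rtimes F_N$ whose free generators act through the finite collection $\{\psi_1,\ldots,\psi_m,\varphi_1,\ldots,\varphi_r\}$, exactly as in the proof of Theorem~\ref{thm:structure}. Under the induced map to $K_1(C_{L,0}^\ast(E(\Gamma\rtimes F_N))^{\Gamma\rtimes F_N})$, the first sum on the right vanishes by the inner-conjugation argument. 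The central new point is to show that each $\rho([\varphi_k])$ also becomes zero in this enlarged group.

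To establish this vanishing, the plan is to realise $\rho([\varphi_k])$ through the mapping torus $M_{\varphi_k}$, whose fundamental group $\Gamma\rtimes_{\varphi_k}\mathbb Z$ embeds naturally into $\Gamma\rtimes F_N$ via the inclusion sending the generator $\hat\varphi_k$ to the corresponding free generator. Using Theorem~\ref{thm:bdry} applied to a bordism between $M$ and itself built from $M_{\varphi_k} \times [0,1]$, the class $\rho([\varphi_k])$ should be expressible, up to pushforward by the inclusion, as the boundary of a higher index class associated with this bordism. Once $\varphi_k$ becomes inner in $\Gamma\rtimes F_N$, that higher index class factors through the image of the assembly map from the K-homology of the universal space for proper and free $(\Gamma\rtimes F_N)$-actions, so its boundary in $K_1(C_{L,0}^\ast)$ is zero by exactness. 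Granting this vanishing, the remainder of the argument mimics Theorem~\ref{thm:structure}: strong finite embeddability of $\Gamma$ implies finite embeddability of $\Gamma\rtimes F_N$, and Proposition~\ref{prop:rank} then forces injectivity of $\partial_{\Gamma\rtimes F_N}$ on $K_0^\fin$, so the surviving identity $\partial_{\Gamma\rtimes F_N}\bigl(\sum c_i [p_{\gamma_i}]\bigr) = 0$ yields all $c_i = 0$, a contradiction.

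The hardest step will be the mapping-torus vanishing of $\rho([\varphi_k])$ in the enlarged K-theory group, since $[\varphi_k]$ itself is generally nonzero in $\mathcal S(M)$; the vanishing must be a genuine K-theoretic phenomenon after passing to $\Gamma\rtimes F_N$, not a topological triviality, and its proof requires a careful compatibility between the definition of $\rho$ for a self-homotopy equivalence and the relative index formalism of Section~\ref{sec:rhobdry}. Removing the strong finite embeddability hypothesis altogether to reach the fully unconditional conjecture would require an unconditional analogue of Proposition~\ref{prop:rank}, which appears to demand substantial progress on the strong Novikov conjecture itself and so lies beyond the scope of the present approach.
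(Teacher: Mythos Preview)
The statement you are attempting to prove is a \emph{conjecture} in the paper, not a theorem; the authors explicitly state that ``new ideas are needed'' for the $\widetilde{\mathcal S}_{geom}(M)$ case and offer no proof. So there is no proof in the paper to compare against, and your proposal should be read as an attempt at open territory rather than a reconstruction.

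That said, your strategy has a genuine gap at precisely the point the paper flags as the obstacle. The crucial step is the claimed vanishing of $\rho([\varphi_k])$ in $K_1\bigl(C_{L,0}^\ast(E(\Gamma\rtimes F_N))^{\Gamma\rtimes F_N}\bigr)$. Your justification invokes Theorem~\ref{thm:bdry}, but that theorem concerns Dirac operators on spin manifolds with positive scalar curvature on the boundary; the higher rho invariant $\rho([\varphi_k])$ here is a \emph{signature} invariant attached to a homotopy equivalence, and no analogue of Theorem~\ref{thm:bdry} for this setting is established in the paper. More seriously, even granting such an analogue, the assertion that ``once $\varphi_k$ becomes inner in $\Gamma\rtimes F_N$, that higher index class factors through the image of the assembly map'' is not supported: inner-ness of the induced automorphism on $\pi_1$ says nothing about whether $\varphi_k\colon M\to M$ is itself bordant to a homeomorphism, and the element $[\varphi_k]\in\mathcal S(M)$ can be nonzero precisely because $\varphi_k$ is not deformable to a homeomorphism. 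The mapping-torus sketch does not explain why the relevant relative index should lie in the image of assembly, so exactness cannot be invoked. This is exactly the ``extra term'' $\rho([\varphi])$ that the paper identifies as the obstruction, and your argument does not overcome it.

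Finally, note that even if the vanishing step could be repaired, your argument still relies on strong finite embeddability of $\Gamma$ (via Proposition~\ref{prop:rank}), so at best it would yield a conditional result parallel to Theorem~\ref{thm:structure}, not the unconditional conjecture as stated. You correctly acknowledge this, but it means the proposal is a partial program rather than a proof of the conjecture.
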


We conclude this section by proving the following theorem, which is an analogue of the theorem of Chang and Weinberger cited above \cite[Theorem 1]{ChW}.

\begin{thm}
	Let $X$ be a closed oriented topological manifold with dimension $ n = 4k-1$ \textup{($k>1$)} and $\pi_1 X = \Gamma$. If $\Gamma$ is not torsion free, then the free rank of $\widetilde{\mathcal S}_{alg}(X)$  is $\geq 1$. 
\end{thm}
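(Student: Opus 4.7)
The plan is to adapt the one-torsion-element version of the argument used for Theorem~\ref{thm:structure}. Since $\Gamma$ has torsion, fix $\gamma \in \Gamma$ of finite order $d > 1$ and form the idempotent $p_\gamma = \frac{1}{d}\sum_{k=1}^{d}\gamma^k \in \mathbb{Q}\Gamma$. Via the isomorphism $L_{4k}(\Gamma) \otimes \mathbb{Q} \cong L_{4k}(\mathbb{Q}\Gamma) \otimes \mathbb{Q}$ and the surgery exact sequence, $p_\gamma$ determines a class $[q_\gamma] \in L_{4k}(\Gamma) \otimes \mathbb{Q}$ and thereby an element
\[
\theta_\gamma := \mathscr{S}([q_\gamma]) \in \mathcal{S}(X) \otimes \mathbb{Q}.
\]
By Theorem~\ref{thm:add} and the opening paragraph of the proof of Theorem~\ref{thm:structure}, the higher rho invariant descends to a homomorphism
\[
\widetilde{\rho}\colon \widetilde{\mathcal S}_{alg}(X) \longrightarrow K_1(C^\ast_{L,0}(\widetilde X)^\Gamma) \big/ \mathcal{I}_1(C^\ast_{L,0}(\widetilde X)^\Gamma),
\]
so it suffices to show that $\widetilde{\rho}(\theta_\gamma)$ has infinite order.

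To argue by contradiction, I would assume $N \widetilde{\rho}(\theta_\gamma) = 0$ for some $N > 0$. Unpacking $\mathcal{I}_1$ produces $\psi_1, \ldots, \psi_m \in Aut_{h}(X)$ and classes $[x_j] \in K_1(C^\ast_{L,0}(\widetilde X)^\Gamma)$ with $N\rho(\theta_\gamma) = \sum_{j=1}^m \big([x_j] - \widetilde{\psi}_{j\ast}[x_j]\big)$. Applying the semi-direct-product trick from the proof of Theorem~\ref{thm:structure}, one passes to $\Gamma \rtimes_{\{\psi_1, \ldots, \psi_m\}} F_m$ via the natural map $X \to B\Gamma \to B(\Gamma \rtimes F_m)$. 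Each $\widetilde{\psi}_{j\ast}$ becomes an inner conjugation and therefore acts trivially on $K$-theory, so $N \rho(\theta_\gamma)$ vanishes in $K_1(C^\ast_{L,0}(E(\Gamma \rtimes F_m))^{\Gamma \rtimes F_m})$. Via the boundary formula $\partial[p_\gamma] = 2 \rho(\theta_\gamma)$ from Section~\ref{sec:secondary}, this forces $N[p_\gamma]$ to lie in the image of the assembly map
\[
\mu_{\Gamma \rtimes F_m}\colon K_0^{\Gamma \rtimes F_m}(E(\Gamma \rtimes F_m)) \longrightarrow K_0(C^\ast(\Gamma \rtimes F_m)).
\]

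The hard part will be to rule this out for every $N > 0$, since the hypothesis of Proposition~\ref{prop:rank} is no longer available. For $N$ coprime to $d$ the canonical trace already suffices: by Atiyah's $L^2$-index theorem $\tau_e$ takes integer values on the image of the assembly from the free-action classifying space, whereas $\tau_e(N[p_\gamma]) = N/d \notin \mathbb{Z}$. The delicate case is $d \mid N$, where the canonical trace is uninformative, and this is the main obstacle. To finish, I would invoke the cyclic cohomology machinery flagged in the introduction and use the delocalized eta pairing of Lott: on a suitable smooth dense subalgebra of $C^\ast(\Gamma \rtimes F_m)$, the delocalized trace $\tau_\gamma$ vanishes on the image of $\mu_{\Gamma \rtimes F_m}$ by an APS-type computation (the assembly map is built from a \emph{free} action classifying space), while $\tau_\gamma(N[p_\gamma]) = N/d \neq 0$ for every $N \neq 0$. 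This excludes all remaining $N$ and shows $\widetilde{\rho}(\theta_\gamma)$ has infinite order, so the free rank of $\widetilde{\mathcal S}_{alg}(X)$ is at least one. The principal technical point to verify is that $\tau_\gamma$ extends to an appropriate smooth subalgebra and pairs coherently with $\partial[p_\gamma]$, which requires care because the conjugacy class of $\gamma$ in the auxiliary group $\Gamma \rtimes F_m$ may be infinite even though its conjugacy class in $\Gamma$ is finite.
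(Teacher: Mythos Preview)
Your overall architecture matches the paper: reduce to showing that no nonzero multiple of $[p_\gamma]$ lies in the image of the assembly map $\mu\colon K_0^{\Gamma\rtimes F_m}(E(\Gamma\rtimes F_m))\to K_0(C^\ast(\Gamma\rtimes F_m))$, then run the proof of Theorem~\ref{thm:structure} verbatim. The paper does exactly this, simply citing \cite{WY} for the required input: for \emph{every} countable group $G$ with torsion, $[p_\gamma]$ has infinite order and no nonzero multiple is in the image of $\mu$.

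The gap in your proposal is the ``hard part.'' Your plan to handle $d\mid N$ via the delocalized trace $\tau_\gamma$ on a smooth dense subalgebra of $C^\ast(\Gamma\rtimes F_m)$ is not justified: as you yourself note, the conjugacy class of $\gamma$ in $\Gamma\rtimes F_m$ is typically infinite, and there is no general mechanism producing a smooth dense subalgebra to which $\tau_\gamma$ extends continuously. Without such an extension the pairing with $K$-theory is undefined, and the APS-type vanishing you invoke has no meaning at the $C^\ast$-level.

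There is a much simpler argument (this is essentially what underlies the cited result from \cite{WY}), and it avoids delocalized traces entirely. Work with the \emph{maximal} group $C^\ast$-algebra, as the paper does throughout this section. Besides the canonical trace $\tau_e$, the trivial representation gives a $\ast$-homomorphism $\epsilon\colon C^\ast(\Gamma\rtimes F_m)\to\mathbb C$, hence a second trace. On the image of the assembly map from $E(\Gamma\rtimes F_m)$ one has $\tau_e=\epsilon$: this is Atiyah's $L^2$-index theorem (the von~Neumann index equals the ordinary index for free cocompact actions). But $(\tau_e-\epsilon)(N[p_\gamma])=N(\tfrac{1}{d}-1)\neq 0$ for every $N\neq 0$, so $N[p_\gamma]$ is never in the image. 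This handles all $N$ at once, with no smooth-subalgebra issues, and completes your argument.
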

\begin{proof}
	Recall that for any non-torsion-free countable discrete group $G$, if $\gamma \neq e$ is a finite order element of $G$, then $[p_\gamma]$ generates a subgroup of rank one in $K_0(C^\ast(G))$ and any nonzero multiple of $[p_\gamma]$ is not in the image of the assembly map $\mu\colon K_0^\Gamma(EG)\to K_0(C^\ast(G))$ \cite{WY}. Using this fact, the statement follows from the same proof as in Theorem $\ref{thm:structure}$.  
\end{proof}

\section{Cyclic cohomology and  higher rho invariants}

Connes' cyclic cohomology theory provides a powerful method to compute higher rho invariants.
In this section, we give a survey of recent work on the pairing between Connes' cyclic cohomology and $C^\ast$-algebraic secondary invariants. In the case of higher rho invariants given by invertible\footnote{Here ``invertible'' means being invertible on the universal cover of the manifold.} operators on manifolds, this pairing can be computed in terms of Lott's higher eta invariants. We apply these results to the higher Atiyah-Patodi-Singer index theory and discuss a potential way to construct counter examples to the Baum-Connes conjecture.

We shall first discuss the zero dimensional cyclic cocycle case. 
Let $M$ be a spin Riemannian manifold with positive scalar curvature and let $D$ be the Dirac operator on $M$. Let  $\widetilde M $ be the universal cover of $M$ and $\widetilde D$ the lifting of $D$.
Lott introduced the following delocalized eta invariant $\eta_{\langle h\rangle}(\widetilde D)$ \cite{Lo1}: 
\begin{equation}\label{eq:delocaleta}
\eta_{\langle h\rangle}(\widetilde D) \coloneqq  \frac{2}{\sqrt \pi} \int_{0}^{\infty}  \tr_{\langle h\rangle}(\widetilde D e^{-t^2 \widetilde D^2})dt,  
\end{equation} 
 under the condition that the conjugacy class $\langle h\rangle$ of $h\in \Gamma = \pi_1M$ has polynomial growth. Here  $\Gamma = \pi_1 M$ is the fundamental group of $M$, and   the trace map $\tr_{\langle h\rangle}$ is defined as follows:   \[ \tr_{\langle h\rangle}(A) = \sum_{g\in \langle h \rangle} \int_{\mathcal F} A(x, gx) dx   \] 
on $\Gamma$-equivariant Schwartz kernels $A\in C^\infty(\widetilde M\times \widetilde M)$, where $\mathcal F$ is a fundamental domain of $\widetilde M$ under the action of $\Gamma$. 
  
We have the following theorem \cite{XY3}. 
\begin{thm}\label{thm:intro}
	Let $M$ be a closed odd-dimensional spin manifold equipped with a positive scalar curvature metric $g$. Suppose $\widetilde M$ is the universal cover of $M$, $\tilde g$ is the Riemannnian metric on $\widetilde M$  lifted from $g$, and $\widetilde D$ is the associated Dirac operator on $\widetilde M$. Suppose the conjugacy class $\langle h\rangle$ of a non-identity element $h\in \pi_1M$ has polynomial growth,   then we have
	\[ \tau_h (\rho(\widetilde D, \widetilde g) ) = - \frac{1}{2}\eta_{\langle h\rangle}(\widetilde D),   \]
	where $\rho(\widetilde D, \widetilde g)$ is the $K$-theoretic  higher rho invariant of $\widetilde D$ with respect to the metric $\tilde g$, and $\tau_h$ is a canonical determinant map associated to $\langle h \rangle$. 
\end{thm}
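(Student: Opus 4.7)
The plan is to reduce $\tau_h(\rho(\widetilde D,\tilde g))$ to the pairing of the conjugacy-class trace $\tr_{\langle h\rangle}$ with the $K_0$-class of the spectral projection $p=\tfrac12(F+1)$ of $\widetilde D$, where $F=\widetilde D|\widetilde D|^{-1}$, and then to rewrite the latter via the heat-kernel integral representation of the sign function in order to recognize Lott's delocalized eta invariant.

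First I would make sense of $\tau_h$ by fixing a smooth, holomorphically closed subalgebra $\mathcal{B}\subset C^\ast_{L,0}(\widetilde M)^\Gamma$ containing a representative of $\rho(\widetilde D,\tilde g)$, such that $\tr_{\langle h\rangle}$ extends continuously to the corresponding time-zero image in $C^\ast(\widetilde M)^\Gamma$. The polynomial-growth hypothesis on $\langle h\rangle$ is precisely what makes this extension possible: it ensures that the sum $\sum_{g\in\langle h\rangle}\int_{\mathcal F} A(x,gx)\,dx$ converges absolutely for operators $A$ with Schwartz-type off-diagonal decay. Such a subalgebra is built from finite-propagation operators together with operators whose Schwartz kernels decay exponentially off the diagonal.

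Next, I would exploit the positive scalar curvature hypothesis via Lichnerowicz: $\widetilde D^2=\nabla^\ast\nabla+\kappa/4$ with $\kappa\geq a>0$, so $\widetilde D$ is invertible with spectral gap of size $\sqrt{a}/2$. Hence $F=\widetilde D|\widetilde D|^{-1}$ is smooth and bounded, $p=\tfrac12(F+1)$ is a genuine projection in (matrices over) $C^\ast(\widetilde M)^\Gamma$, and by the very construction of $\rho(\widetilde D,\tilde g)$ in Section~6, the rho invariant is the image of $[p]\in K_0(C^\ast(\widetilde M)^\Gamma)$ under the connecting map $\partial\colon K_0(C^\ast(\widetilde M)^\Gamma)\to K_1(C^\ast_{L,0}(\widetilde M)^\Gamma)$. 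The compatibility of the determinant map with the connecting map yields $\tau_h\circ\partial=\pm\,\tr_{\langle h\rangle}$ (the sign depending on the convention chosen for the cyclic boundary), and therefore
\begin{equation*}
\tau_h(\rho(\widetilde D,\tilde g))=\tau_h(\partial[p])=\pm\,\tr_{\langle h\rangle}(p)=\pm\tfrac12\,\tr_{\langle h\rangle}(F),
\end{equation*}
where $\tr_{\langle h\rangle}(1)=0$ was used; this holds because the kernel of the identity operator is supported on the diagonal while $e\notin\langle h\rangle$. Finally, substituting the identity $F=\frac{2}{\sqrt{\pi}}\int_0^\infty \widetilde D\,e^{-s^2\widetilde D^2}\,ds$, which converges in operator norm because of the spectral gap, and interchanging trace with integral gives $\tr_{\langle h\rangle}(F)=\eta_{\langle h\rangle}(\widetilde D)$, so that with the correct sign choice one arrives at $\tau_h(\rho(\widetilde D,\tilde g))=-\tfrac12\,\eta_{\langle h\rangle}(\widetilde D)$.

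The main obstacle is the analytical legwork needed to legitimize each of these formal manipulations, in particular: (a) constructing a smooth, holomorphically closed dense subalgebra of $C^\ast_{L,0}(\widetilde M)^\Gamma$ on which $\tau_h$ is defined and which contains a suitable representative of the rho invariant; (b) verifying trace-norm continuity of $\tr_{\langle h\rangle}$ and identifying $\tau_h\circ\partial$ explicitly with $\pm\tr_{\langle h\rangle}$; and (c) interchanging the trace with the heat-kernel integral uniformly in $s$. The large-$s$ integrability is immediate from the spectral gap. The small-$s$ behavior is the delicate point: the short-time asymptotics of $\widetilde D\,e^{-s^2\widetilde D^2}$ are local and hence supported arbitrarily close to the diagonal, so they are killed by $\tr_{\langle h\rangle}$ because $h\neq e$; turning this cancellation into a quantitative estimate, with off-diagonal Gaussian bounds summed over the polynomial-growth conjugacy class $\langle h\rangle$, is the most technical step of the proof.
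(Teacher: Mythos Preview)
The paper is a survey and does not itself contain a proof of this theorem; it is stated with a citation to \cite{XY3}. So there is no in-paper argument to compare against line by line.

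That said, your reduction has a genuine gap. You assert that $\rho(\widetilde D,\tilde g)=\partial([p])$ with $[p]\in K_0(C^\ast(\widetilde M)^\Gamma)$ and $\partial$ the boundary map of the six-term sequence for $0\to C^\ast_{L,0}\to C^\ast_L\to C^\ast\to 0$. This fails for two independent reasons. First, $p=\tfrac12(F+1)$ is \emph{not} a locally compact operator: $F=\widetilde D|\widetilde D|^{-1}$ is only pseudolocal, so $p$ lies in the multiplier algebra (or in $D^\ast$), not in $C^\ast(\widetilde M)^\Gamma$, and $[p]$ does not name a class in $K_0(C^\ast(\widetilde M)^\Gamma)$. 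Second, and more decisively, by exactness the image of $\partial\colon K_0(C^\ast)\to K_1(C^\ast_{L,0})$ is the kernel of $K_1(C^\ast_{L,0})\to K_1(C^\ast_L)$; but the image of $\rho(\widetilde D,\tilde g)$ under this map is precisely the local index class $\Ind_L(\widetilde D)\in K_1(C^\ast_L(\widetilde M)^\Gamma)\cong K_1^\Gamma(\widetilde M)$, i.e.\ the $K$-homology fundamental class of the Dirac operator, which is nonzero. Hence $\rho$ is \emph{not} in the image of $\partial$, and the identity ``$\tau_h\circ\partial=\pm\tr_{\langle h\rangle}$'' cannot be used to compute $\tau_h(\rho)$.

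What actually happens in \cite{XY3} is that $\tau_h$ is constructed \emph{directly} on $K_1(C^\ast_{L,0}(\widetilde M)^\Gamma)$ as a de la Harpe--Skandalis type determinant: for a suitable path of invertibles $u(t)$ with $u(0)=1$ in a smooth dense subalgebra one sets, schematically, $\tau_h([u])=\frac{1}{2\pi i}\int_0^\infty \tr_{\langle h\rangle}\big(\dot u(t)u(t)^{-1}\big)\,dt$, and then evaluates this on the explicit representative $u(t)=e^{2\pi i q(t)}$ of the higher rho invariant. Your heat-kernel identity for $F$ and the large-$s$/small-$s$ analysis you describe do enter the computation, and your list of analytical obstacles (a)--(c) is accurate; but the passage from $\tau_h(\rho)$ to the eta integral goes through this direct determinant calculation, not through a trace on $K_0(C^\ast(\widetilde M)^\Gamma)$ precomposed with a connecting map.
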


As an application of Theorem $\ref{thm:intro}$ above, we have the following algebraicity result concerning the values of delocalized eta invariants \cite{XY3}.

\begin{thm}
	With the same notation as above, if the rational Baum-Connes conjecture holds for $\Gamma$, and the conjugacy class $\langle h\rangle$ of a non-identity element $h\in \Gamma$ has polynomial growth,  then
	the delocalized eta invariant $\eta_{\langle h\rangle}(\widetilde D)$ is an algebraic number.  Moreover,  if in addition $h$ has infinite order, then  $\eta_{\langle h\rangle}(\widetilde D)$ vanishes.   
\end{thm}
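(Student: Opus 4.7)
The plan is to combine the preceding theorem, which identifies $\tau_h(\rho(\widetilde D,\widetilde g))$ with $-\frac{1}{2}\eta_{\langle h\rangle}(\widetilde D)$, with a $K$-theoretic analysis of the higher rho class that uses the rational Baum--Connes isomorphism. Thus it suffices to establish that $\tau_h(\rho(\widetilde D,\widetilde g))$ is always algebraic, and that it vanishes when $h$ has infinite order. First I would transfer the computation from $\widetilde M$ to the universal space $E\Gamma$ via a $\Gamma$-equivariant classifying map; this produces a pushforward class $\rho_\Gamma \in K_1(C_{L,0}^\ast(E\Gamma)^\Gamma)$ (defined as an inductive limit over $\Gamma$-cocompact subspaces). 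Since $\tau_h$ is built from the group-theoretic conjugacy class $\langle h\rangle$, and polynomial growth is precisely the condition needed to extend it continuously to a smooth subalgebra, it is natural with respect to this pushforward, so $\tau_h(\rho(\widetilde D,\widetilde g)) = \tau_h(\rho_\Gamma)$.

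Next I would exploit the six-term exact sequence associated to $0 \to C_{L,0}^\ast(E\Gamma)^\Gamma \to C_L^\ast(E\Gamma)^\Gamma \to C^\ast(E\Gamma)^\Gamma \to 0$. Using the identifications $K_\ast(C_L^\ast(E\Gamma)^\Gamma)\cong K_\ast^\Gamma(E\Gamma)$ and $K_\ast(C^\ast(E\Gamma)^\Gamma) \cong K_\ast(C^\ast\Gamma)$ from Section~3, this exact sequence contains
\[
K_0^\Gamma(E\Gamma) \xrightarrow{\mu} K_0(C^\ast\Gamma) \xrightarrow{\partial} K_1(C_{L,0}^\ast(E\Gamma)^\Gamma) \to K_1^\Gamma(E\Gamma) \xrightarrow{\mu} K_1(C^\ast\Gamma).
\]
Rational Baum--Connes makes both $\mu$'s into rational isomorphisms; in particular rational injectivity of the odd $\mu$ forces the image of $K_1(C_{L,0}^\ast(E\Gamma)^\Gamma)\otimes\mathbb{Q}$ in $K_1^\Gamma(E\Gamma)\otimes\mathbb Q$ to be zero, so $\partial$ becomes rationally surjective. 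Hence there exists $y\in K_0(C^\ast\Gamma)\otimes\mathbb Q$ with $\rho_\Gamma\otimes 1 = \partial(y)$, and consequently $\tau_h(\rho(\widetilde D,\widetilde g)) = \tau_h(\partial(y))$.

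To finish I would compute the right-hand side. Using the explicit formula $\partial([e]) = [e^{2\pi i E}]$ for a lift $E$ of an idempotent $e$, together with a Fuglede--Kadison-type realization of $\tau_h$ on unitaries, one checks that $\tau_h(\partial([e]))\equiv \tau_h(e)\pmod{\mathbb Z}$, where on the right $\tau_h$ is the delocalized trace $\sum_s a_s s \mapsto \sum_{s\in\langle h\rangle} a_s$ on a smooth subalgebra of $C^\ast\Gamma$. The rational Baum--Connes conjecture together with the equivariant Chern character identifies $K_0(C^\ast\Gamma)\otimes\mathbb Q$ with a sum indexed by conjugacy classes of finite-order elements of $\Gamma$; in particular, $K_0(C^\ast\Gamma)\otimes\mathbb Q$ is rationally generated by classes associated to finite cyclic subgroups, including idempotents $p_g = \tfrac{1}{d}\sum_{k=0}^{d-1} g^k$ with $d=\operatorname{ord}(g)<\infty$. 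A direct computation yields $\tau_h(p_g) = \tfrac{1}{d}\#\{0\le k<d : g^k\in\langle h\rangle\}\in\mathbb Q$, which in particular vanishes whenever $h$ has infinite order, since then no power of $g$ belongs to $\langle h\rangle$. Propagating these two facts via the Chern character shows that $\tau_h(y)$ is a $\mathbb Q$-linear combination of algebraic numbers, hence algebraic, and zero when $h$ has infinite order. Combined with Theorem~10.1 this yields both claims of the theorem.

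The main obstacle, in my view, is the naturality verification for $\tau_h$ under the pushforward $\widetilde M \to E\Gamma$, since one must check compatibility with the polynomial-growth smooth subalgebras on both sides. The explicit identification $\tau_h(\partial([e])) \equiv \tau_h(e) \pmod{\mathbb Z}$ is also delicate, requiring careful bookkeeping with the Fuglede--Kadison determinant for a delocalized (rather than canonical) trace. Once these two technical points are handled, the remaining steps are formal consequences of the six-term exact sequence and the structure of $K_0(C^\ast\Gamma)\otimes\mathbb Q$ under rational Baum--Connes.
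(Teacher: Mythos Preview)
Your strategy---pair with $\tau_h$ via Theorem~10.1, push to a universal space, use the six-term sequence and rational Baum--Connes to lift $\rho$ through $\partial$ to $K_0(C^\ast\Gamma)\otimes\mathbb Q$, then compute the trace there---is exactly the skeleton the paper has in mind. The paper, however, does not compute $\tau_h$ on $K_0(C^\ast\Gamma)$ via explicit idempotents; it invokes the $L^2$-Lefschetz fixed-point theorem of B.-L.~Wang and H.~Wang. Under rational Baum--Connes, $K_0(C^\ast\Gamma)\otimes\mathbb Q$ is the image of the assembly map from $\underline{E}\Gamma$, and Wang--Wang computes $\tau_h$ of such equivariant indices as a sum of local contributions over the fixed set of $h$. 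For $h$ of infinite order the action on $\underline{E}\Gamma$ is free, there are no fixed points, and $\tau_h\equiv 0$; for $h$ of finite order the local contributions are character values and hence algebraic. This replaces your generator computation entirely.

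Your explicit-generator route has a genuine gap. The idempotents $p_g=\tfrac{1}{d}\sum_k g^k$ do \emph{not} rationally span the contribution of finite cyclic subgroups to $K_0(C^\ast\Gamma)$: for $H=\mathbb Z/d$ one has $K_0(C^\ast H)=R(H)\cong\mathbb Z^d$, with a basis given by the primitive idempotents $e_\chi=\tfrac{1}{d}\sum_{k}\overline{\chi(g^k)}\,g^k$ for the $d$ characters $\chi$, and $p_g$ is only one of these. Consequently your computation of $\tau_h(p_g)\in\mathbb Q$ does not suffice; you need $\tau_h(e_\chi)=\tfrac{1}{d}\sum_{k:\,g^k\in\langle h\rangle}\overline{\chi(g^k)}$, which is a cyclotomic number---algebraic but typically irrational. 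This is presumably the ``algebraic numbers'' your last paragraph alludes to, but it has to be stated and proved. The vanishing for infinite-order $h$ then still follows, since no power of a finite-order element lies in $\langle h\rangle$.

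A second point requiring care is your formula $\tau_h(\partial[e])\equiv\tau_h(e)\pmod{\mathbb Z}$. If this were only a congruence, the vanishing assertion for infinite-order $h$ would collapse to $\eta_{\langle h\rangle}(\widetilde D)\in 2\mathbb Z$ rather than $=0$. In fact the determinant map $\tau_h$ on $K_1(C_{L,0}^\ast)$ is genuinely $\mathbb C$-valued, and the correct identity is $\tau_h\circ\partial=\tau_h$ on the nose, once one checks that $\tau_h$ vanishes on the image of $\mu\colon K_0^\Gamma(E\Gamma)\to K_0(C^\ast\Gamma)$ (the ``free part''); this last vanishing is itself a Lefschetz-type statement and is where the paper's citation of Wang--Wang does real work.
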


This theorem follows from the construction of the determinant map $\tau_h$ and a $L^2$-Lefschetz fixed point theorem of B.-L. Wang and H. Wang  \cite[Theorem 5.10]{WW}.  When $\Gamma$ is torsion-free and satisfies the Baum-Connes conjecture,  and the conjugacy class $\langle h\rangle$ of a non-identity element $h\in \Gamma$ has polynomial growth, Piazza and Schick have proved the vanishing of  $\eta_{\langle h\rangle}(\widetilde D)$ by a different method \cite[Theorem 13.7]{PS}. 

In light of this algebraicity result, we propose the following open question.

\begin{open} If the conjugacy class $\langle h\rangle$ of a non-identity element $h\in \Gamma$ has polynomial growth,
	what values can  the delocalized eta invariant $\eta_{\langle h\rangle}(\widetilde D)$ take in general? Are they always algebraic numbers?
\end{open}

In particular, if a delocalized eta invariant is transcendental, then it will lead to a counterexample to the Baum-Connes conjecture \cite{BC, BCH, C}. Note that the above question is a reminiscent of Atiyah's question concerning rationality of $\ell^2$-Betti numbers \cite{A1}. Atiyah's question was answered in negative by Austin, who showed that $\ell^2$-Betti numbers can be transcendental \cite{Au}. 

So far, we have been assuming the conjugacy class $\langle h \rangle$ has polynomial growth, which guarantees the convergence of the integral in $\eqref{eq:delocaleta}$. In general, the integral in $\eqref{eq:delocaleta}$ fails to converge. The following theorem of Chen-Wang-Xie-Yu  \cite{CWXY} gives a sufficient condition for when the integral in  $\eqref{eq:delocaleta}$ converges. 

\begin{thm}\label{thm:converge}
	Let $M$ be a closed manifold and $\widetilde M$ the universal covering over $M$. Suppose $D$ is a self-adjoint first-order elliptic differential operator over $M$ and $\widetilde D$ the lift of $D$ to $\widetilde M$. If $\left\langle h\right\rangle$ is a nontrivial conjugacy class of $\pi_1(M)$ and $\widetilde D$ has a sufficiently large spectral gap at zero, then the delocalized eta invariant $\eta_{\left\langle h\right\rangle }(\widetilde D)$ of $\widetilde D$ is well-defined.
\end{thm}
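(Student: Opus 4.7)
The plan is to prove convergence of
\[ \eta_{\langle h\rangle}(\widetilde D) = \frac{2}{\sqrt\pi} \int_0^\infty \tr_{\langle h\rangle}\bigl(\widetilde D\, e^{-t^2 \widetilde D^2}\bigr)\, dt \]
by splitting the integral at $t=1$ and controlling the two resulting pieces via off-diagonal Gaussian heat-kernel estimates together with the spectral gap hypothesis. First I would establish a pointwise bound on the Schwartz kernel $K_t(x,y)$ of $\widetilde D\, e^{-t^2 \widetilde D^2}$. Using the Fourier representation $\widetilde D\, e^{-t^2\widetilde D^2} = \frac{1}{2\pi}\int \hat f_t(\xi)\, e^{i\xi\widetilde D}\, d\xi$ with $f_t(\lambda) = \lambda e^{-t^2\lambda^2}$, together with the finite propagation speed of $e^{i\xi\widetilde D}$ and a truncation of $\hat f_t$, one obtains an estimate of the form
\[ |K_t(x,y)| \leq C\bigl(1 + t^{-n-1}\bigr)\, e^{-d(x,y)^2/(c t^2)}, \]
uniformly on $\widetilde M$.

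For the small-$t$ regime $t\in(0,1]$, fix $x\in \mathcal F$. Since $h\neq e$, the translation length $\ell_h = \inf_{z\in\widetilde M} d(z,hz)$ is strictly positive, whence $d(x, gx)\geq \ell_h$ for every $g\in\langle h\rangle$. Combined with the standard counting bound $\#\{g\in\Gamma : d(x,gx)\leq R\}\leq C e^{\alpha R}$ coming from the exponential volume growth of $\widetilde M$, the kernel estimate gives
\[ \sum_{g\in \langle h\rangle} |K_t(x, gx)| \leq C\, t^{-n-1}\int_{\ell_h}^\infty e^{\alpha r - r^2/(c t^2)}\, dr \leq C'\, t^{-n-1}\, e^{-\ell_h^2/(2c t^2)}. \]
The Gaussian factor $e^{-\ell_h^2/(2c t^2)}$ handily beats $t^{-n-1}$ as $t \to 0$, so the integrand is integrable on $(0,1]$ after integration over the compact fundamental domain $\mathcal F$.

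For the large-$t$ regime $t\geq 1$ the spectral gap enters crucially. Writing
\[ \widetilde D\, e^{-t^2\widetilde D^2} = \bigl(\widetilde D\, e^{-\widetilde D^2/2}\bigr)\cdot e^{-(t^2-1/2)\widetilde D^2}, \]
the first factor has a fixed Gaussian-decaying kernel by the first step, while the second factor has operator norm at most $e^{-(t^2-1/2)\sigma^2}$, where $\sigma>0$ denotes the spectral gap of $\widetilde D$ at zero. Composing these and upgrading the operator-norm bound to a pointwise kernel bound by sandwiching $e^{-(t^2-1/2)\widetilde D^2}$ between two smoothing heat factors and invoking Sobolev embedding produces
\[ |K_t(x,y)| \leq C\, e^{-\sigma^2 t^2/2}\, e^{-d(x,y)^2/(c' t^2)}. \]
Summing over $g\in\langle h\rangle$ and completing the square against the exponential growth bound yields $\sum_g e^{-d(x,gx)^2/(c' t^2)} \leq C_1\, e^{C_2 t^2}$ with $C_2$ a constant multiple of $\alpha^2 c'$. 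Convergence of $\int_1^\infty e^{(C_2 - \sigma^2/2) t^2}\, dt$ then forces $\sigma^2 > 2 C_2$, which is precisely the quantitative meaning of the \emph{sufficiently large spectral gap} hypothesis.

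The main obstacle I anticipate is the rigorous passage from the $L^2$ operator-norm bound given by the spectral gap to a pointwise Schwartz-kernel bound that retains the exponential-in-$t^2$ decay. Handled carelessly, the Sobolev-smoothing step can degrade the decay rate or introduce polynomial blow-ups. Pinning this down properly is what fixes the explicit threshold for $\sigma$ in terms of the exponential volume growth rate of the universal cover $\widetilde M$ (equivalently, of $\pi_1 M$ with any word metric), and also guarantees the uniformity in $x\in\mathcal F$ needed to integrate over the fundamental domain at the end.
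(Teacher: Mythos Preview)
The paper is a survey and does not give a proof of this theorem; it attributes the result to Chen--Wang--Xie--Yu \cite{CWXY} and only records what ``sufficiently large spectral gap'' means, namely that the gap must exceed $\sigma_{\langle h\rangle}=2K_{\langle h\rangle}c_D/\uptau_{\langle h\rangle}$. So there is no in-text argument to compare against directly, but the form of this threshold already constrains what the proof must look like, and it does not match your outline.

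Your small-$t$ analysis is essentially correct. The genuine gap is in the large-$t$ regime. The sandwiching step you describe---placing $e^{-(t^2-1/2)\widetilde D^2}$ between two fixed smoothing factors and using only its operator norm---produces, via Cauchy--Schwarz against $\|a(x,\cdot)\|_{L^2}$ and $\|c(\cdot,y)\|_{L^2}$, a bound of the shape $|K_t(x,y)|\le C\,e^{-\sigma^2(t^2-1/2)}$ with \emph{no} residual dependence on $d(x,y)$. All locality is discarded in that step, so the extra factor $e^{-d(x,y)^2/(c't^2)}$ you write down is not actually delivered by the method. Without off-diagonal decay the sum over an infinite conjugacy class diverges for every fixed $t$, and the argument collapses. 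Obtaining a kernel estimate that simultaneously carries the spectral decay $e^{-\sigma^2 t^2}$ and usable off-diagonal decay is exactly the hard analytic point; you correctly flag it as the main obstacle but do not indicate a mechanism to resolve it. The explicit appearance of the propagation speed $c_D$ and of the constant $\uptau_{\langle h\rangle}$ (which converts word length of $g$ into the displacement $d(x,gx)$), together with the \emph{linear} dependence on the conjugacy-class growth rate $K_{\langle h\rangle}$, indicates that the argument in \cite{CWXY} does not pass through a global Gaussian kernel bound at all. Rather, for each $g\in\langle h\rangle$ one uses finite propagation of the wave group to see that only the Fourier tail of $\lambda e^{-t^2\lambda^2}$ beyond $|\xi|\gtrsim d(x,gx)/c_D$ contributes to $K_t(x,gx)$, and then invokes the spectral gap to control that tail; summing over $g$ against $\#\{g:\ell(g)=n\}\le Ce^{K_{\langle h\rangle}n}$ is what produces the stated threshold.
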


We would like to emphasis that the theorem above works for all fundamental groups.  In the special case where the conjugacy class $\langle h\rangle$ has sub-exponential growth, then any nonzero spectral gap is in fact sufficiently large, hence in this case $\eta_{\left\langle h\right\rangle }(\widetilde D)$ is well-defined as long as $\widetilde D$ is invertible.  

Let us make precise of what ``sufficiently large spectral gap at zero" means.  Fix a finite generating set $S$ of $\Gamma$. Let  $\ell$ be the corresponding word length function on $\Gamma$ determined by $S$. Since $S$ is finite, there exist $C$ and $K_{\langle h\rangle}>0$ such that 
\begin{equation}\label{eq:growthofh}
\#\{g\in\langle h\rangle:\ell(g)=n\}\leqslant C e^{K_{\langle h\rangle}\cdot n}.
\end{equation}
We define $\uptau_{\langle h\rangle}$ to be 
\begin{equation}\label{eq:equivalentwithlength}
\uptau_{\langle h\rangle}=\liminf_{\substack{g\in\langle h\rangle \\ \ell(g)\to\infty}} \Big(\inf_{x\in\widetilde M}\frac{\dist(x,gx)}{\ell(g)}\Big).
\end{equation}
Since  the action of $\Gamma$ on $\widetilde M$ is free and cocompact, we have $\uptau_{\langle h\rangle} > 0$.

We denote the principal symbol of $D$ by $\sigma_D(x, v)$, for $x\in M$ and cotangent vector $v\in T_x^\ast M$. 
We define the propagation speed of $D$ to be the  positive number 
\begin{equation*}
c_D = \sup\{ \|\sigma_D(x, v)\| \colon x\in M, v\in T^\ast_xM, \|v\| =1\}. 
\end{equation*} 
\begin{defn}
	With the above notation, let us define	\begin{equation}\label{eq:sigma_h}
	\sigma_{\langle h\rangle}\coloneqq \frac{2K_{\langle h\rangle}\cdot c_D}{\uptau_{\langle h\rangle}}.
	\end{equation}
\end{defn}

Recall that  $\widetilde D$ is said to have a spectral gap at zero if there exists an open interval $(-\varepsilon, \varepsilon)\subset \mathbb R$ such that $\textup{spectrum}(\widetilde D) \cap (-\varepsilon, \varepsilon)$ is either $\{0\}$ or empty.  Moreover, $\widetilde D$ is said to have a \emph{sufficiently large} spectral gap at zero if its spectral gap is larger than $\sigma_{\langle h\rangle}$.

Again it is  natural to ask the following question.

\begin{open} With $\widetilde D$ as in the above theorem,
	what values can  the delocalized eta invariant $\eta_{\langle h\rangle}(\widetilde D)$ take in general? Are they always algebraic numbers?
\end{open}

A special feature of traces is that they always have uniformly bounded representatives, when viewed as degree zero cyclic cocycles. In fact, our proof of Theorem $\ref{thm:converge}$  allows us to generalize Theorem $\ref{thm:converge}$ to cyclic cocycles of higher degrees, as long as they have at most exponential growth.  Recall that the cyclic cohomology of a group algebra $\mathbb C\Gamma$ has a decomposition respect to the conjugacy classes of $\Gamma$ (\cite{Nis}):
\[ HC^*(\mathbb C\Gamma)\cong\prod_{\left\langle h\right\rangle }HC^*(\mathbb C\Gamma,\left\langle h\right\rangle), \]
where 
$HC^*(\mathbb C\Gamma,\left\langle h\right\rangle)$  denotes the component that corresponds to the conjugacy class $\langle h\rangle$. If $\langle h\rangle$ is a nontrivial conjugacy class, then a cyclic cocycle in $HC^*(\mathbb C\Gamma,\left\langle h\right\rangle)$  will be called a delocalized cyclic cocycle at  $\langle h\rangle$.

\begin{thm}\label{thm:higherconverge}
	Assume the same notation as in Theorem $\ref{thm:converge}$. Let $\varphi$ be a delocalized cyclic cocycle at a nontrivial conjugacy class $\langle h\rangle$. If $\varphi$ has exponential growth and $\widetilde D$ has a sufficiently large spectral gap at zero, then   $\eta_{\varphi}(\widetilde D)$ is well-defined, where  $\eta_{\varphi}(\widetilde D)$ is a higher analogue \textup{(}cf. \cite{CWXY}\textup{)} of the formula $\eqref{eq:delocaleta}$ .
\end{thm}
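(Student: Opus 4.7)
The plan is to follow the same strategy used for the degree zero case (Theorem 9.4), but to upgrade each estimate so that it can absorb the extra exponential growth coming from the cyclic cocycle $\varphi$. The starting point is an explicit heat-kernel formula for $\eta_\varphi(\widetilde D)$: after expressing $\varphi$ as a delocalized cyclic $k$-cocycle on a suitable smooth subalgebra, the higher eta invariant takes the form
\[
\eta_\varphi(\widetilde D) \;=\; c_k \int_0^\infty \varphi\bigl(\dot a_t,\, a_t, \ldots, a_t\bigr)\, dt,
\]
where $a_t$ is built out of $\widetilde D$ and $e^{-t^2\widetilde D^2}$ (for example $a_t = \widetilde D e^{-t^2\widetilde D^2}$, or the corresponding transgression of the JLO cocycle). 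The goal is to show that this integral converges absolutely in both the $t\to 0$ and $t\to\infty$ regimes, under the hypothesis that $\widetilde D$ has a spectral gap larger than $\sigma_{\langle h\rangle}$ and that $\varphi$ has at most exponential growth on $\mathbb C\Gamma$.

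Next I would establish the two main kernel estimates. First, using finite propagation speed for $\cos(s\widetilde D)$ together with the Fourier inversion trick (as in Lemma 7.1) applied to Gaussians, one obtains the standard off-diagonal Gaussian bound
\[
\bigl\| \widetilde D^{j} e^{-t^2\widetilde D^2}(x,y) \bigr\| \;\leq\; C_j\, t^{-N_j}\, e^{-\,d(x,y)^2 / (8 c_D^2 t^2)}
\]
for $d(x,y)$ larger than a fixed multiple of $c_D t$. Second, the sufficiently large spectral gap hypothesis gives the norm bound $\|\widetilde D^{j} e^{-t^2\widetilde D^2}\| \leq C'_j\, e^{-t^2 \sigma^2}$ for some $\sigma > \sigma_{\langle h\rangle}$. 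Pairing $\varphi$ against $a_t$ and its iterates produces an expression of the form $\sum_{g \in \langle h\rangle} \varphi_g \cdot K_t(x_0, g x_1, \ldots, g_k x_k)$ where $K_t$ is a product of kernels of operators of heat type. The delocalized nature of $\varphi$ at $\langle h\rangle$ forces one of the arguments to translate by an element conjugate to $h$, so the Gaussian decay along the diagonal is converted into decay in $d(x, g x) \geq \uptau_{\langle h\rangle}\ell(g)$ up to lower order terms.

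Combining these estimates yields a bound of the form
\[
\bigl|\varphi(\dot a_t, a_t, \ldots, a_t)\bigr| \;\leq\; C\, t^{-N} e^{-t^2\sigma^2} \sum_{g \in \langle h\rangle} e^{K_\varphi\, \ell(g)} \, e^{-\uptau_{\langle h\rangle}^2\ell(g)^2 / (C'\, t^2)},
\]
where $K_\varphi$ is the exponential growth rate of $\varphi$. Using the word-length growth bound $\#\{g\in\langle h\rangle : \ell(g) = n\} \leq Ce^{K_{\langle h\rangle} n}$ from \eqref{eq:growthofh}, the sum over $\langle h\rangle$ can be estimated by a Gaussian-Laplace integral in $n = \ell(g)$, whose value is of the form $C e^{(K_{\langle h\rangle} + K_\varphi)^2 t^2 / (c\, \uptau_{\langle h\rangle}^2)}$. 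Plugging this back, the $t\to\infty$ integrand behaves like $t^{-N} e^{-(\sigma^2 - \sigma_{\langle h\rangle,\varphi}^2) t^2}$ for $\sigma_{\langle h\rangle,\varphi} = 2(K_{\langle h\rangle} + K_\varphi)c_D / \uptau_{\langle h\rangle}$, so the assumption that the spectral gap is sufficiently large — interpreted as exceeding $\sigma_{\langle h\rangle,\varphi}$, which merely absorbs $K_\varphi$ into the constant — forces convergence at infinity. Convergence at $t\to 0$ follows from the usual heat-kernel short-time asymptotics together with the Duhamel expansion on the smooth subalgebra on which $\varphi$ is defined.

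The main obstacle I foresee is bookkeeping the combinatorics for higher $k$: the JLO-type formula involves integrals over $k$-simplices and products of $k+1$ heat operators, each contributing its own Gaussian factor. One must verify that, after pulling all the off-diagonal decays along the simplex, the total Gaussian exponent in $\ell(g)$ still dominates both the exponential growth of $\varphi$ and the exponential growth of the conjugacy class uniformly in the simplex parameters. A secondary technical point is the construction of a smooth subalgebra of $C^*_r(\Gamma)$, containing the relevant spectral projections and stable under holomorphic functional calculus, on which $\varphi$ extends continuously; this can be arranged via a Connes–Moscovici type Sobolev algebra tailored to the exponential growth rate $K_\varphi$, analogous to the $\ell^1$-type smooth subalgebras appearing in the degree zero case.
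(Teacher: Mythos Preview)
The paper does not contain a proof of this theorem. This is a survey article, and Theorem~\ref{thm:higherconverge} is stated with an explicit reference to \cite{CWXY} for the definition of $\eta_\varphi(\widetilde D)$ and for the details; the text immediately following the statement says only that the meaning of ``sufficiently large spectral gap'' in the higher-degree case is similar to but slightly different from the trace case, and refers the reader to \cite[Section~3.2]{CWXY}. So there is nothing in the present paper to compare your proposal against.

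That said, your sketch is broadly in line with the heuristics the paper records around Theorems~\ref{thm:converge} and~\ref{thm:higherconverge}: finite propagation to get off-diagonal Gaussian decay, the spectral gap to get large-$t$ exponential decay, and the exponential growth rate of the conjugacy class (and of $\varphi$) absorbed into the constant $\sigma_{\langle h\rangle}$. Two cautions. First, the paper says the explicit formula for $\eta_\varphi(\widetilde D)$ is given by the transgression of the Connes--Chern character and is essentially a periodic version of Lott's higher eta invariant; you should make sure your expression $\varphi(\dot a_t, a_t, \ldots, a_t)$ really matches that transgression formula rather than an ad hoc analogue. Second, your ``secondary technical point'' about constructing a smooth dense subalgebra on which $\varphi$ extends continuously is not a side issue but a genuine difficulty: the paper emphasizes that such extensions are delicate in general and are only fully available for hyperbolic groups via Puschnigg's algebra (this is why Theorem~\ref{main} is stated only for hyperbolic groups). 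For Theorem~\ref{thm:higherconverge} itself one does not need such a subalgebra --- one only needs the integral defining $\eta_\varphi(\widetilde D)$ to converge --- so you should be careful not to conflate the two issues.
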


  For higher degree cyclic cocycles, the precise meaning of ``sufficiently large spectral gap at zero'' is similar to but slightly different from that of the case of traces. We refer the reader to \cite[Section 3.2]{CWXY} for more details. 
  For now, we simply point out that if both $\Gamma$ and $\varphi$ have sub-exponential growth,  then any nonzero spectral gap is in fact sufficiently large, hence in this case $\eta_{\varphi}(\widetilde D)$ is well-defined as long as $\widetilde D$ is invertible.   The explicit formula for  $\eta_{\varphi}(\widetilde D)$ is described in terms of  the transgression formula for the Connes-Chern character \cite{C, C2}. 
  It is essentially\footnote{We refer the reader to \cite{CWXY} for details on how to identify the formula for $\eta_{\varphi}(\widetilde D)$ in Theorem $\ref{thm:higherconverge}$ with  the periodic version of Lott's noncommutative-differential higher eta invariant.} a periodic version of the delocalized part of Lott's noncommutative-differential higher eta invariant.  We shall call $\eta_{\varphi}(\widetilde D)$ a \emph{delocalized higher eta invariant} from now on.

    Formally speaking, just as Lott's delocalized eta invariant $\eta_{\left\langle h\right\rangle }(\widetilde D)$ can be interpreted as the pairing between the degree zero cyclic cocycle $\tr_{\langle h\rangle}$ and the higher rho invariant $\rho(\widetilde D)$, so can the delocalized higher eta invariant $\eta_{\varphi}(\widetilde D)$ be interpreted as the pairing between the cyclic cocycle $\varphi$ and the higher rho invariant $\rho(\widetilde D)$. A key analytic difficulty here is to verify when such a pairing is well-defined, 
    or more ambitiously, to verify when one can extend this pairing to a pairing between the cyclic cohomology  of $\mathbb C\Gamma$ and the $K$-theory group $K_\ast(C_{L,0}^\ast(\widetilde M)^\Gamma)$. The group $K_\ast(C_{L,0}^\ast(\widetilde M)^\Gamma)$ consists of $C^\ast$-algebraic secondary  invariants; in particular, it contains all higher rho invariants from the discussion above. Such an extension of the pairing is important, often necessary, for many interesting applications to geometry and topology (cf. \cite{PS1, XY1, WXY}).

In \cite{CWXY},  such an extension of the pairing, that is, a pairing between delocalized cyclic cocycles of \emph{all degrees} and the $K$-theory group $K_\ast(C_{L,0}^\ast(\widetilde M)^\Gamma)$ was established, in the case of Gromov's hyperbolic groups. More precisely, we have the following theorem  \cite{CWXY}.
 \begin{thm}\label{main}
 	Let $M$ be a closed manifold whose fundamental group $\Gamma$ is  hyperbolic. Suppose  $\left\langle h\right\rangle $  is non-trivial conjugacy class of $\Gamma$. Then every element $[\alpha]\in HC^{2k+1-i}(\mathbb C\Gamma,\left\langle h\right\rangle ) $ induces a natural map 
 	$$\tau_{[\alpha]} \colon K_i(C^*_{L,0}(\widetilde M)^\Gamma)\to \mathbb C$$ such that the following are satisfied: 
 	\begin{enumerate}[$(i)$]
 		\item $\tau_{[S\alpha]} = \tau_{[\alpha]}$, where $S$ is Connes' periodicity map \[ S\colon HC^{\ast}(\mathbb C\Gamma,\left\langle h\right\rangle )\to HC^{\ast+2}(\mathbb C\Gamma,\left\langle h\right\rangle );\]
 		\item if $D$ is an elliptic operator on $M$ such that the lift $\widetilde D$ of $D$ to the universal cover $\widetilde M$ of $M$ is invertible, then we have 
 		$$\tau_{[\alpha]}(\rho(\widetilde D))=\eta_{[\alpha]}(\widetilde D),  $$
 		where $\rho(\widetilde D)$ is the higher rho invariant of $\widetilde D$ and $\eta_{[\alpha]}(\widetilde D)$ is the  delocalized higher eta invariant from Theorem $\ref{thm:higherconverge}$. In particular, in the case of hyperbolic groups, the delocalized higher eta invariant $\eta_{[\alpha]}(\widetilde D)$  is always well-defined, as long as $\widetilde D$ is invertible. 
 	\end{enumerate}
 \end{thm}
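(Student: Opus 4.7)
The plan is to construct $\tau_{[\alpha]}$ by representing classes in $K_i(C^{\ast}_{L,0}(\widetilde M)^\Gamma)$ via a smooth dense subalgebra on which the cyclic cocycle $\alpha$ extends continuously, and then applying the standard algebraic pairing between cyclic cohomology and $K$-theory of Banach algebras. The decisive input of hyperbolicity enters in two places. First, by a Jolissaint/Puschnigg-type construction, every hyperbolic group admits a smooth dense subalgebra $\mathcal{B}_\Gamma \subset C^{\ast}_r(\Gamma)$ which is stable under holomorphic functional calculus and is controlled by rapid decay in the word length. Second, by a theorem in the spirit of Connes-Moscovici and Puschnigg, every delocalized cyclic cohomology class $[\alpha] \in HC^{\ast}(\mathbb{C}\Gamma, \langle h\rangle)$ on a hyperbolic group admits a representative of polynomial growth in the word length, which therefore extends continuously to $\mathcal{B}_\Gamma$. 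These two facts together are what distinguish the hyperbolic case from the general case.

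First I would promote $\mathcal{B}_\Gamma$ to a smooth dense subalgebra $\mathcal{B}_{L,0} \subset C^{\ast}_{L,0}(\widetilde M)^\Gamma$, whose elements are operator-valued paths $f : [0,\infty) \to \mathcal{B}(L^2(\widetilde M))$ with finite $\Gamma$-matrix-coefficient expansion in $\mathcal{B}_\Gamma$, Schwartz-type decay of propagation and word length along $\langle h\rangle$, and propagation tending to zero at infinity at a controlled rate. The verification that $\mathcal{B}_{L,0}$ is holomorphically closed reduces, via the path structure, to the corresponding statement for $\mathcal{B}_\Gamma$. Consequently the inclusion induces an isomorphism $K_i(\mathcal{B}_{L,0}) \cong K_i(C^{\ast}_{L,0}(\widetilde M)^\Gamma)$, and one defines $\tau_{[\alpha]}$ by the Connes pairing of cyclic cohomology with $K$-theory applied to the extension of $\alpha$ to $\mathcal{B}_{L,0}$. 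Property $(i)$, invariance under the periodicity map $S$, is then automatic from the general theory, since the algebraic pairing is $S$-invariant.

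The heart of the proof is property $(ii)$. Given an invertible lift $\widetilde D$, the higher rho invariant $\rho(\widetilde D)$ is represented by an explicit path of unitaries $u(t) = e^{2\pi i q(t)}$ (or an analogous idempotent in the even case) built from the normalized Dirac operator $\widetilde D / |\widetilde D|$ as in Section~\ref{sec:secondary}. I would compute $\tau_{[\alpha]}(\rho(\widetilde D))$ by substituting this representative into the Connes-Chern pairing formula and applying a transgression argument to reduce the multiple integrals over the cyclic simplex to a single time-integral over $t \in [0,\infty)$. Under heat-kernel normalization, the resulting expression rearranges into exactly the integral that defines $\eta_{[\alpha]}(\widetilde D)$ in Theorem~\ref{thm:higherconverge}, yielding the identification $\tau_{[\alpha]}(\rho(\widetilde D)) = \eta_{[\alpha]}(\widetilde D)$ and, as a byproduct, well-definedness of $\eta_{[\alpha]}(\widetilde D)$ for \emph{every} invertible $\widetilde D$ in the hyperbolic setting.

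The main obstacle, and the technical core of the argument, will be establishing the analytic estimates needed to justify interchanging limits, integrals, and infinite sums over the conjugacy class $\langle h\rangle$. Specifically, one must control the off-diagonal Schwartz kernel of $u(t)$ along elements of $\langle h\rangle$ and couple this with the polynomial growth of the chosen representative of $\alpha$ together with the hyperbolic geometry of $\Gamma$. The invertibility of $\widetilde D$ is essential: it produces exponential decay of $u(t) - 1$ as $t \to \infty$, which, combined with the hyperbolicity-driven bounds on the word-length contributions from $\langle h\rangle$ (and ultimately with the rapid-decay structure of $\mathcal{B}_\Gamma$), yields an integrable expression. Carrying out these estimates and verifying that the algebraic pairing matches the transgression formula up to cohomology is where the bulk of the technical work lies.
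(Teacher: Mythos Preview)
Your proposal is correct and follows essentially the same approach as the paper (which, being a survey, only sketches the argument and refers to \cite{CWXY} for details): use Puschnigg's smooth dense subalgebra $\mathcal B$ of $C_r^\ast(\Gamma)$ for hyperbolic $\Gamma$, extend the delocalized cyclic cocycle $\alpha$ continuously to $\mathcal B$, build the pairing with $K_i(C^\ast_{L,0}(\widetilde M)^\Gamma)$ through this subalgebra, and identify the value on $\rho(\widetilde D)$ with $\eta_{[\alpha]}(\widetilde D)$ via the transgression formula for the Connes--Chern character.

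Two small calibrations. First, the continuous extension of \emph{higher-degree} delocalized cyclic cocycles from $\mathbb C\Gamma$ to Puschnigg's algebra is not a pre-existing Connes--Moscovici/Puschnigg result one can quote; the paper flags it as one of the new contributions of \cite{CWXY} (Puschnigg handled degree zero). So in your write-up this step should be proved, not cited. Second, the paper's framing is slightly different from yours: rather than building a holomorphically closed subalgebra $\mathcal B_{L,0}\subset C^\ast_{L,0}(\widetilde M)^\Gamma$ and invoking $K$-theory invariance, it constructs a delocalized Connes--Chern character $K_i(C^\ast_{L,0}(\widetilde M)^\Gamma)\to \overline{HC}^{deloc}_\ast(\mathcal B)$ directly and then pairs with the extended cocycle. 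The two viewpoints are equivalent in effect, but the Chern-character route sidesteps having to verify holomorphic closure at the level of the path algebra $\mathcal B_{L,0}$, which is more delicate than you suggest.
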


The construction of the map $\tau_{[\alpha]}$ in the above theorem uses Puschnigg's smooth dense subalgebra  for hyperbolic groups  \cite{P1} in an essential way. In more conceptual terms, the above theorem provides an explicit formula to compute the  delocalized Connes-Chern character of $C^\ast$-algebraic secondary invariants. More precisely, the same techniques developed in \cite{CWXY} actually imply\footnote{In fact, even more is true. One can use the same techniques developed in \cite{CWXY} to show that if $\mathcal A$ is   smooth dense subalgebra of $C_r^\ast(\Gamma)$ for any group $\Gamma$ (not necessarily hyperbolic) and in addition $\mathcal A$ is a Fr\'echet locally $m$-convex algebra, then there is a well-defined delocalized Connes-Chern character $Ch_{deloc}\colon K_i(C^*_{L,0}(\widetilde M)^\Gamma)\to \overline{HC}^{deloc}_{\ast}(\mathcal A)$. Of course,  in order to pair such a delocalized Connes-Chern character with a cyclic cocycle of $\mathbb C\Gamma$, the key remaining challenge is to  continuously extend this cyclic cocycle of $\mathbb C\Gamma$ to a cyclic cocycle of $\mathcal A$.  } that there is a well-defined delocalized Connes-Chern character $Ch_{deloc}\colon K_i(C^*_{L,0}(\widetilde M)^\Gamma)\to \overline{HC}^{deloc}_{\ast}(\mathcal B)$, where $\mathcal B$ is Puschnigg's smooth dense subalgebra of $C_r^\ast(\Gamma)$ and $\overline{HC}^{deloc}_{\ast}(\mathcal B)$ is the delocalized part of the cyclic homology\footnote{Here the definition of cyclic homology of $\mathcal B$ takes the topology of $\mathcal B$ into account, cf. \cite[Section II.5]{C2}. } of $\mathcal B$. Now for Gromov's hyperbolic groups, every cyclic cohomology class of $\mathbb C\Gamma$ continuously extends to  cyclic cohomology class of $\mathcal B$ (cf. \cite{P1} for the case of degree zero cyclic cocycles and \cite{CWXY} for the case of higher degree cyclic cocycles). Thus  the map $\tau_{[\alpha]}$ can be viewed as a pairing between  cyclic cohomology and delocalized Connes-Chern characters of $C^\ast$-algebraic secondary invariants.  As a consequence, this unifies Higson-Roe's higher rho invariant and Lott's higher eta invariant for invertible operators.

We point out that the proof of Theorem $\ref{main}$ does \emph{not} rely on the Baum-Connes isomorphism for hyperbolic groups \cite{L, MY}, although the theorem is closely connected to the Baum-Connes conjecture and the Novikov conjecture. On the other hand, if one is willing to use the full power of the Baum-Connes isomorphism for hyperbolic groups, there is in fact a different, but more indirect, approach to the delocalized Connes-Chern character map. First, observe that the map $\tau_{[\alpha]}$ factors through a map
\[ \tau_{[\alpha]}\colon  K_i(C^*_{L,0}(\underline{E}
\Gamma)^\Gamma)\otimes \mathbb C\to \mathbb C\]
where $\underline{E}\Gamma$ is the universal space for proper $\Gamma$-actions.  Now the Baum-Connes isomorphism $\mu\colon K_\ast^G(\underline{E}\Gamma) \xrightarrow{ \ \cong \ }K_\ast(C^\ast_r(\Gamma))$ for hyperbolic groups implies that one can identify $K_i(C^*_{L,0}(\underline{E}\Gamma)^\Gamma)\otimes \mathbb C$  with  $\bigoplus_{\left\langle h\right\rangle \neq 1}HC_\ast(\mathbb C\Gamma,\left\langle h\right\rangle)$, where $HC_\ast(\mathbb C\Gamma,\left\langle h\right\rangle)$ is the delocalized cyclic homology at $\langle h\rangle$ and the direct sum is taken over all nontrivial conjugacy classes. In particular, after this identification, it follows that the map $\tau_{[\alpha]}$ becomes the usual pairing between cyclic cohomology and cyclic homology. However, for a specific element, e.g. the higher rho invariant $\rho(\widetilde D)$,  in  $K_i(C^*_{L,0}(\underline{E}\Gamma)^\Gamma)$, its identification with an element in  $\bigoplus_{\left\langle h\right\rangle \neq 1}HC_\ast(\mathbb C\Gamma,\left\langle h\right\rangle)$ is rather abstract and implicit. More precisely,  the   computation of the number  $\tau_{[\alpha]}(\rho(\widetilde D))$ essentially amounts to the following process. Observe that if a closed spin manifold $M$ is equipped with a positive scalar curvature metric, then stably it bounds (more precisely,  the universal cover $\widetilde M$ of $M$   becomes the boundary of another $\Gamma$-manifold,  after finitely many steps of cobordisms and  vector bundle modifications). In principle,  the number $\tau_{[\alpha]}(\rho(\widetilde D))$ can be derived from a higher Atiyah-Patodi-Singer index theorem for this bounding manifold. Again, there is a serious drawback of such an indirect approach --- the explicit  formula for $\tau_{[\alpha]}(\rho(\widetilde D))$ is completely lost. In contrast, a key feature of the construction of the delocalized Connes-Chern character map in Theorem $\ref{main}$ is that the formula is explicit and  intrinsic.

In \cite{DG}, Deeley and Goffeng also constructed an implicit delocalized Chern character map for \mbox{$C^\ast$-algebraic} secondary invariants. Their approach is in spirit  similar to the indirect method just described above (making use of the Baum-Connes isomorphism for hyperbolic groups),   although their actual technical implementation is different.

 As an application, we use this delocalized Connes-Chern character map from Theorem $\ref{main}$ to  derive a delocalized higher Atiyah-Patodi-Singer index theorem for manifolds with boundary. More precisely, let  $W$ be a compact $n$-dimensional spin manifold with boundary $\partial W$. Suppose $W$ is equipped with a Riemannian metric $g_W$ which has product structure near $\partial W$ and in addition has positive scalar curvature on $\partial W$. Let $\widetilde W$ be the universal covering of $W$ and $g_{\widetilde W}$  the   Riemannian metric  on $\widetilde W$ lifted from $g_W$.  With respect to the metric $g_{\widetilde W}$,  the associated Dirac operator $\widetilde D_W$ on $\widetilde  W$ naturally defines a higher index $Ind_\Gamma(\widetilde D_W)$ (as in Section $\ref{sec:rhobdry}$) in $K_n(C^\ast(\widetilde W)^\Gamma) = K_n(C_r^\ast(\Gamma))$, where $\Gamma = \pi_1(W)$. Since the metric $g_{\widetilde W}$ has positive scalar curvature on $\partial \widetilde W$, it follows from the Lichnerowicz formula that the associated Dirac operator $\widetilde D_\partial$ on $\partial {\widetilde W}$ is invertible, hence naturally defines a higher rho invariant $\rho(\widetilde D_\partial)$ in $K_{n-1}(C_{L, 0}^\ast(\widetilde W)^\Gamma)$. We have the following delocalized higher Atiyah-Patodi-Singer index theorem. 

\begin{thm}\label{thm:delocalaps}
	 With the notation as above, if $\Gamma=\pi_1(W)$ is hyperbolic and $\left\langle h\right\rangle $ is a nontrivial conjugacy class of $\Gamma$, then for any $[\varphi]\in HC^{\ast}(\mathbb C\Gamma,\left\langle h\right\rangle )$,
	we have \begin{equation}
	Ch_{[\varphi]}(Ind_\Gamma(\widetilde D_W))=-\frac{1}{2}\eta_{[\varphi]}(\widetilde D_\partial ), 
	\end{equation}
	where $Ch_{[\varphi]}(Ind_\Gamma(\widetilde D_W))$ is the Connes-Chern pairing between the cyclic cohomology class $[\varphi]$ and the higher index class $Ind_\Gamma(\widetilde D_W)$.
\end{thm}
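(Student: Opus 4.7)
The plan is to deduce the theorem by combining two ingredients already developed in the paper: the boundary-to-rho identity from Theorem \ref{thm:bdry}, which shows
$$\partial_{n}(Ind_\Gamma(\widetilde D_W)) = \iota_\ast(\rho(\widetilde D_\partial)) \quad \text{in} \quad K_{n-1}(C_{L,0}^\ast(\widetilde W)^\Gamma),$$
and the delocalized Connes-Chern pairing formula from Theorem \ref{main}, which, for hyperbolic $\Gamma$, identifies $\tau_{[\varphi]}(\rho(\widetilde D)) = \eta_{[\varphi]}(\widetilde D)$ for invertible lifts $\widetilde D$. The bridge between the two is a compatibility statement saying that, under the connecting map
$$\partial \colon K_n(C^\ast(\widetilde W)^\Gamma) \to K_{n-1}(C_{L,0}^\ast(\widetilde W)^\Gamma)$$
in the six-term exact sequence associated with $0 \to C_{L,0}^\ast(\widetilde W)^\Gamma \to C_L^\ast(\widetilde W)^\Gamma \to C^\ast(\widetilde W)^\Gamma \to 0$, the secondary pairing $\tau_{[\varphi]}$ and the primary Connes-Chern pairing $Ch_{[\varphi]}$ are related by
$$\tau_{[\varphi]}(\partial x) = -2\, Ch_{[\varphi]}(x), \qquad x \in K_n(C^\ast(\widetilde W)^\Gamma).$$

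Assuming this compatibility, the proof would proceed as follows. Apply $\tau_{[\varphi]}$ to both sides of the boundary-to-rho identity above. The left side becomes $\tau_{[\varphi]}(\partial\, Ind_\Gamma(\widetilde D_W)) = -2\, Ch_{[\varphi]}(Ind_\Gamma(\widetilde D_W))$. The right side, by functoriality of $\tau_{[\varphi]}$ under the inclusion $\iota\colon \partial \widetilde W \hookrightarrow \widetilde W$, equals $\tau_{[\varphi]}(\rho(\widetilde D_\partial))$. Since the metric on $\partial W$ has positive scalar curvature, $\widetilde D_\partial$ is invertible by the Lichnerowicz formula, so Theorem \ref{main} applies and gives $\tau_{[\varphi]}(\rho(\widetilde D_\partial)) = \eta_{[\varphi]}(\widetilde D_\partial)$. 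Rearranging yields exactly $Ch_{[\varphi]}(Ind_\Gamma(\widetilde D_W)) = -\tfrac{1}{2}\eta_{[\varphi]}(\widetilde D_\partial)$.

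The main obstacle is establishing the boundary compatibility with the correct constant $-2$. This is a statement that must be proved at the cochain level on Puschnigg's smooth dense subalgebra $\mathcal{B} \subset C_r^\ast(\Gamma)$, where every delocalized cyclic cocycle $[\varphi] \in HC^\ast(\mathbb C\Gamma,\langle h\rangle)$ extends continuously (this is where hyperbolicity of $\Gamma$ enters in an essential way). The strategy is to write $Ind_\Gamma(\widetilde D_W)$ representatively as an idempotent $P$ (via a parametrix construction, relative to the cylinder extension $\overline W$) and to track how the explicit boundary formula from Section \ref{sec:kt}, namely $\partial([q]) = [e^{2\pi i Q}]$ for a lift $Q$ of an idempotent $q$, transforms the Connes-Chern cocycle into the transgression cocycle defining $\tau_{[\varphi]}$. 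The factor $-2$ arises from two standard sources: the sign in the $2\pi i$ in the exponential defining $\partial$, and the factor $2$ from the transgression of the Connes-Chern character along the path $q(t)$ used in Section \ref{sec:localind} to build the local index class, which is essentially the same path used to define the higher rho invariant in Section \ref{sec:secondary}.

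The hardest part will be showing that all of these manipulations converge analytically on Puschnigg's algebra and reduce to the finite-dimensional transgression identity for the Connes-Chern character; once this is done, identifying the resulting pairing with $\eta_{[\varphi]}(\widetilde D_\partial)$ proceeds exactly as in the proof of Theorem \ref{main}, which is why that theorem's construction (and not merely its statement) is the main engine of the argument. As a sanity check, the constant $-\tfrac{1}{2}$ is consistent with the degree-zero, torsion case: when $[\varphi]$ is the trace $\tr_{\langle h\rangle}$ and $W$ is a cylinder-like bounding manifold, the formula reduces to the familiar $\tr_{\langle h\rangle}(Ind_\Gamma(\widetilde D_W)) = -\tfrac{1}{2}\eta_{\langle h\rangle}(\widetilde D_\partial)$, matching Theorem \ref{thm:intro}.
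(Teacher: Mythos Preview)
Your approach is essentially the same as the paper's: the proof there reads, in its entirety, ``This follows from Theorem \ref{main} and Theorem \ref{thm:bdry}.'' You have correctly identified both ingredients and the compatibility relation $\tau_{[\varphi]}\circ\partial = -2\,Ch_{[\varphi]}$ needed to glue them.

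The only remark is that you treat this compatibility as ``the main obstacle'' requiring a separate cochain-level argument, whereas in the paper's framework it is already part of the package of Theorem~\ref{main}: the map $\tau_{[\alpha]}$ is asserted to be \emph{natural}, and this naturality (established in the cited reference \cite{CWXY}) includes precisely the compatibility with the connecting map in the six-term sequence that you need. So your third and fourth paragraphs, while not wrong, are re-deriving machinery that is taken as given once Theorem~\ref{main} is in hand; the deduction of Theorem~\ref{thm:delocalaps} itself is then the two-line argument you give in your second paragraph.
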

\begin{proof}
This follows from Theorem $\ref{main}$ and Theorem $\ref{thm:bdry}$.
\end{proof}
 
By using Theorem $\ref{main}$, we have derived Theorem $\ref{thm:delocalaps}$ as a consequence of a  $K$-theoretic counterpart. This  is possible only because we have realized  $\eta_{[\varphi]}(\widetilde D_\partial)$ as the pairing between the cyclic cocycle $\varphi$ and the $C^\ast$-algebraic secondary invariant $\rho(\widetilde D_\partial)$ in $K_1(C_{L,0}^\ast(\widetilde W)^\Gamma)$. 


Alternatively, one can also derive Theorem  $\ref{thm:delocalaps}$ from a version of  higher Atiyah-Patodi-Singer index theorem due to Leichtnam and Piazza \cite[Theorem 4.1]{LP3} and Wahl \cite[Theorem 9.4 \& 11.1]{Wa}. This version of  higher Atiyah-Patodi-Singer index theorem is stated in terms of noncommutative differential forms on a smooth dense subalgebra of $C_r^\ast(\Gamma)$; or noncommutative differential forms on a certain class of smooth dense subalgebras (if exist) of general $C^\ast$-algebras (not just group $C^\ast$-algebras) in Wahl's version.  In the case of Gromov's hyperbolic groups, one can choose such a smooth dense subalgebra to be Puschnigg's smooth dense subalgebra $\mathcal B$. As mentioned before, for Gromov's hyperbolic groups, every cyclic cohomology class of $\mathbb C\Gamma$ continuously extends to a  cyclic cohomology class of $\mathcal B$ (cf. \cite{P1} for the case of degree zero cyclic cocycles and \cite{CWXY} for the case of higher degree cyclic cocycles). Now Theorem $\ref{thm:delocalaps}$ follows by pairing the higher Atiyah-Patodi-Singer index formula of Leichtnam-Piazza and Wahl with the delocalized cyclic cocycles of $\mathbb C\Gamma$. 

One can also try to pair the higher Atiyah-Patodi-Singer index formula of Leichtnam-Piazza and Wahl with group cocycles of $\Gamma$, or equivalently cyclic cocycles in  $HC^{\ast}(\mathbb C\Gamma,\left\langle 1\right\rangle )$, where $\langle 1\rangle$ stands for the conjugacy class of the identity element of $\Gamma$. In this case, for fundamental groups with property RD,  Gorokhovsky, Moriyoshi and Piazza proved a higher Atiyah-Patodi-Singer index theorem for group cocycles with polynomial growth \cite{Gr}.

\bigskip
\footnotesize


\begin{thebibliography}{KM98b}

\bibitem[AD]{AD}
G. {\sc Arzhantseva},  T. {\sc Delzant}. \newblock
{\em Examples of random groups. }  \newblock Preprint, 2008.

\bibitem[A]{A} M. {\sc Atiyah}. \newblock {\em Global theory of elliptic operators.} \newblock 1970 Proc. Internat. Conf. on Functional Analysis and Related Topics (Tokyo, 1969) pp. 21--30 Univ. of Tokyo Press, Tokyo.

\bibitem[A1]{A1} M. {\sc Atiyah}. \newblock {\em Elliptic operators, discrete groups and von Neumann algebras.} \newblock 
 Colloque "Analyse et Topologie'' en l'Honneur de Henri Cartan (Orsay, 1974), pp. 43--72. Ast\'{e}risque, No. 32--33, Soc. Math. France, Paris, 1976.

\bibitem[AS]{AS} M. {\sc Atiyah},  I. {\sc Singer}. \newblock
{\em The index of elliptic operators.}  \newblock 
 I. Ann. of Math. (2) 87 1968 484--530. 

\bibitem[AAS]{AAS}  P. {\sc Antonini}, S. {\sc Azzali}, G. {\sc Skandalis}. \newblock
{\em The Baum-Connes conjecture localised at the unit element of a discrete group}
\newblock arXiv:1807.05892, 2018.

\bibitem[Au]{Au}
	T. {\sc Austin}
	\newblock  {\em Rational group ring elements with kernels having irrational
	dimension. }
	\newblock {\em Proc. Lond. Math. Soc. (3)}, 107(6):1424--1448, 2013.

\bibitem[BC]{BC} P. {\sc Baum}, A. {\sc Connes}. \newblock {\em $K$-theory for discrete groups} \newblock
Operator algebras and applications, Vol. 1,  volume 135 of London Mathematical Society, pages 1--20. Cambridge Univ. Press, Cambridge, 1988.

\bibitem[BCH]{BCH} P. {\sc Baum}, A. {\sc Connes}, N. {\sc Higson}.
\newblock {\em Classifying space for proper actions and K-theory of group C*-algebras.} \newblock
 C*-algebras: 1943--1993 (San Antonio, TX, 1993), 240--291, Contemp. Math., 167, Amer. Math. Soc., Providence, RI, 1994.
 




\bibitem[BL]{BL} A. {\sc Bartels}, W. {\sc  L\"{u}ck}. \newblock {\em The Borel Conjecture for hyperbolic and CAT(0)-groups.} \newblock   Ann. of Math. (2) 175 (2012), no. 2, 631--689.

\bibitem[BLH]{BLH} A. {\sc Bartels}, W. {\sc  L\"{u}ck}, R. {\sc Holger}.  \newblock {\em 
The K-theoretic Farrell-Jones conjecture for hyperbolic groups.} 
\newblock  Invent. Math. 172 (2008), no. 1, 29--70. 

\bibitem[BR]{BR} A. {\sc Bartels}, D. {\sc  Rosenthal}. \newblock {\em On the $K$-theory of groups with finite asymptotic dimension.} \newblock  J. Reine Angew. Math. 612 (2007), 35--57.

\bibitem[BD1]{BD1} G. {\sc Bell}, A. {\sc Dranishnikov}.
\newblock {\em On asymptotic dimension of groups.}
\newblock Algebr. Geom. Topol. 1 (2001), 57--71 (electronic).



\bibitem[BD2]{BD2}  G. {\sc Bell}, A. {\sc Dranishnikov}. \newblock
{\em A Hurewicz-type theorem for asymptotic dimension and applications to geometric group theory.}
\newblock  Transactions
American Math. Soc.
Volume 358, Number 11, 2006, 4749--4764.

\bibitem[BCV]{BCV} M. {\sc Bekka},  P. {\sc Cherix},  A. {\sc Valette}. \newblock {\em
 Proper affine isometric actions of amenable groups.  } \newblock
  Novikov conjectures, index theorems and rigidity, Vol. 2 (Oberwolfach, 1993), 1--4, London Math. Soc. Lecture Note Ser., 227, Cambridge Univ. Press, Cambridge, 1995.

\bibitem[BBF]{BBF} M. {\sc Bestvina}, K. {\sc Bromberg}, K. {\sc Fujiwara}.
\newblock {\em Constructing group actions on quasi-trees and applications to mapping class groups. } \newblock
 Publ. Math. Inst. Hautes \'{E}tudes Sci. 122 (2015), 1--64. 

\bibitem[BGH]{BGH}  M. {\sc Bestvina}, V. {\sc Guirardel}, C. {Horbez}. 
\newblock
{\em 
Boundary amenability of Out($F_N$).} \newblock
	arXiv:1705.07017, 2017.

\bibitem[BoG]{BoG}  B. {\sc Botvinnik }, P. B. {\sc Gilkey}. \newblock
{\em	The eta invariant and metrics of positive scalar
curvature. } Math. Ann., 302(3):507--517, 1995.


\bibitem[BG]{BG}  N. {\sc Brown}, E. {\sc Guentner}. \newblock
{\em Uniform embeddings of bounded geometry spaces into reflexive Banach space.} \newblock
 Proc. Amer. Math. Soc. 133 (2005), no. 7, 2045--2050. 
 

\bibitem[B]{B}  U. {\sc Bunke}. \newblock
{\em 
A K-theoretic relative index theorem and Callias-type Dirac operators.}
Math. Ann., 303(2):241--279, 1995.

\bibitem[BW]{BW} J. {\sc Block}, S. {\sc  Weinberger}. \newblock {\em Arithmetic manifolds of positive scalar curvature.} \newblock  J. Differential Geom. 52 (1999), no. 2, 375--406.

\bibitem[BW1]{BW1} J. {\sc Block}, S. {\sc  Weinberger}. \newblock {\em Aperiodic tilings, positive scalar curvature and amenability of spaces.} \newblock J. Amer. Math. Soc. 5 (1992), no. 4, 907--918.




\bibitem[CW]{CW} S. {\sc Cappell}, S. {\sc  Weinberger}. \newblock {\em A geometric interpretation of Siebenmann's
periodicity phenomenon.} \newblock 
 In Geometry and topology (Athens, Ga., 1985),
volume 105 of Lecture Notes in Pure and Appl. Math., pages 47--52.
 Dekker, New York, 1987.

\bibitem[CG]{CG} G. {\sc Carlsson}, B. {\sc  Goldfarb}. \newblock {\em The integral $K$-theoretic Novikov conjecture for groups with finite asymptotic dimension.} \newblock
Invent. Math. 157 (2004), no. 2, 405--418.

\bibitem[CP]{CP} G. {\sc Carlsson}, E. {\sc  Pedersen}.  \newblock {\em Controlled algebra and the Novikov conjectures for $K$- and $L$-theory.} \newblock Topology 34 (1995), no. 3, 731--758.

\bibitem[CFY]{CFY} S. {\sc Chang}, S. {\sc Ferry}, G. {\sc Yu}.  \newblock {\em Bounded rigidity of manifolds and asymptotic dimension growth. }
\newblock J. K-Theory 1 (2008), no. 1, 129--144.

\bibitem[ChW]{ChW} S. {\sc Chang}, S. {\sc Weinberger}.  \newblock {\em 
On invariants of Hirzebruch and Cheeger-Gromov.  }
\newblock Geom.  Topol. 7 (2003), 311--319.

\bibitem[CWXY]{CWXY} X. {\sc Chen},  J. {\sc Wang}, Z. {\sc Xie}, {\sc Yu}.  \newblock {\em  Delocalized eta invariants, cyclic cohomology and higher rho invariants.} \newblock arXiv:1901.02378, 2019.

\bibitem[C]{C} A. {\sc Connes}. \newblock {\em Noncommutative Geometry.} \newblock
Academic Press, 1994.

\bibitem[C1]{C1} A. {\sc Connes}. \newblock {\em  Cyclic cohomology and the transverse fundamental class of a foliation.} \newblock
Geometric methods in operator algebras (Kyoto, 1983), pages
52--144, volume 123,  Pitman Res. Notes Math. Ser., 1986.

\bibitem[C2]{C2} A. {\sc Connes}. \newblock 
{\em   Noncommutative differential geometry. } 
\newblock Inst. Hautes \'Etudes Sci. Publ.  
Math., (62):257--360, 1985.

\bibitem[C3]{C3} A. {\sc Connes}. \newblock 
{\em   On the Chern character of $\theta$ summable Fredholm modules. } 
\newblock 
Comm. Math. Phys. 139 (1991), no. 1, 171--181.  


\bibitem[CH]{CH} A. {\sc Connes}, N. {\sc Higson}.  \newblock {\em D\'eformations, morphismes asymptotiques et K-th\'eorie bivariante. (French) [Deformations, asymptotic morphisms and bivariant K-theory] } 
  C. R. Acad. Sci. Paris S\'er. I Math. 311 (1990), no. 2, 101--106.

\bibitem[CM]{CM} A. {\sc Connes}, H. {\sc Moscovici}.  \newblock {\em Cyclic cohomology, the Novikov conjecture and hyperbolic groups.} 
\newblock  Topology 29 (1990), no. 3, 345--388. 


\bibitem[CGM]{CGM} A. {\sc Connes}, M. {\sc Gromov}, H. {\sc Moscovici}.  \newblock {\em Group cohomology with Lipschitz control and higher signatures. } 
\newblock Geom. Funct. Anal. 3 (1993), no. 1, 1--78. 



\bibitem[D]{D} M. {\sc Davis}. \newblock {\em Groups generated by reflections and aspherical manifolds not covered
by Euclidean space.} \newblock
Ann. of Math. (2) 117 (1983),
no. 2, 293--324.

\bibitem[DG]{DG} R. J. {\sc Deeley}, M. {\sc Goffeng}.  \newblock {\em
Realizing the analytic surgery group of Higson and
Roe geometrically part III: higher invariants.  } \newblock 
Math. Ann., 366(3-4):1513--1559, 2016.

\bibitem[DFW]{DFW} A. {\sc Dranishnikov}, S. {\sc Ferry}, S. {\sc Weinberger}.  \newblock {\em An etale approach to the Novikov conjecture. } \newblock 
 Comm. Pure Appl. Math.
 61 (2008), no. 2, 139--155. 


\bibitem[DJ]{DJ} A. {\sc Dranishnikov}, T. {\sc Januszkiewicz}. \newblock {\em   Every Coxeter group acts amenably on a compact space.} \newblock Proceedings of the 1999 Topology and Dynamics Conference (Salt Lake City, UT). Topology Proc. 24 (1999), Spring, 135--141.


\bibitem[FH]{FH} F. T. {\sc  Farrell}, W. C. {\sc Hsiang}.  \newblock {\em On Novikov's conjecture for nonpositively curved manifolds. } \newblock
  Ann. of Math. (2) 113 (1981), no. 1, 199--209.

\bibitem[FJ1]{FJ1} T. {\sc Farrell}, L. {\sc  Jones}.
 \newblock {\em A topological analogue of Mostow's rigidity theorem. } \newblock
J. Amer. Math. Soc. 2 (1989), no. 2, 257--370.


\bibitem[FJ2]{FJ2} T. {\sc Farrell}, L. {\sc  Jones}.
 \newblock {\em Classical aspherical manifolds.}
  \newblock CBMS Regional Conference Series in Mathematics, 75. Published for the Conference Board of the Mathematical Sciences, Washington, DC; by the American Mathematical Society, Providence, RI, 1990.

\bibitem[FJ3]{FJ3} T. {\sc Farrell}, L. {\sc  Jones}. \newblock {\em Topological rigidity for compact non-positively curved manifolds.} \newblock Differential geometry: Riemannian geometry (Los Angeles, CA, 1990), 229--274, Proc. Sympos. Pure Math., 54, Part 3, Amer. Math. Soc., Providence, RI, 1993.

\bibitem[FJ4]{FJ4} T. {\sc Farrell}, L. {\sc  Jones}. \newblock
{\em Rigidity for aspherical manifolds with $\pi\sb 1\subset{\rm \GL}\sb m(R)$. } \newblock
Asian J. Math. 2 (1998), no. 2, 215--262.

\bibitem[FM]{FM} A. T. {\sc  Fomenko}, A. S. {\sc Mishchenko}. \newblock {\em
The index of elliptic operators over $C^\ast$-algebras.} \newblock  
(Russian) Izv. Akad. Nauk SSSR Ser. Mat. 43 (1979), no. 4, 831--859, 967.

\bibitem[FP]{FP} S. {\sc Ferry}, E. {\sc Pedersen}. \newblock {\em Epsilon surgery theory}. \newblock
 Novikov conjectures, index theorems and rigidity, Vol. 2 (Oberwolfach, 1993),  167--226, London Math. Soc. Lecture Note Ser., 227, Cambridge Univ. Press, Cambridge, 1995.

\bibitem[FS]{FS} D. {\sc Fisher}, L. {\sc Silberman}. \newblock {\em Groups not acting on manifolds.} \newblock
Int. Math. Res. Not. IMRN 2008, no. 16, Art. ID rnn060, 11 pp. 

\bibitem[FW1]{FW1} S. {\sc Ferry}, S. {\sc Weinberger}. \newblock {\em Curvature, tangentiality, and controlled topology}. \newblock
Invent. Math. 105 (1991), no. 2, 401--414.

\bibitem[FW2]{FW2} S. {\sc Ferry}, S. {\sc Weinberger}. \newblock {\em A coarse approach to the Novikov conjecture}. \newblock
 Novikov conjectures, index theorems and rigidity, Vol. 1 (Oberwolfach, 1993),  147--163, London Math. Soc. Lecture Note Ser., 226, Cambridge Univ. Press, Cambridge, 1995.
 


\bibitem[GWY]{GWY} S. {\sc Gong}, J. {\sc Wu}, G.  {\sc Yu}. \newblock {\em The Novikov conjecture, the group of volume preserving diffeomorphisms and Hilbert-Hadamard spaces}. \newblock 	arXiv:1811.02086, 2018.

\bibitem[Gr]{Gr} R. I. {\sc Grigorchuk}. \newblock {\em Degrees of growth of finitely generated groups and the theory of invariant means.} \newblock
(Russian) 
Izv. Akad. Nauk SSSR Ser. Mat. 48 (1984), no. 5, 939--985.

\bibitem[G]{G} M. {\sc Gromov}. \newblock {\em Hyperbolic groups}. \newblock
 In Gersten, Steve M. Essays in group theory. Mathematical Sciences Research Institute Publications. 8. New York: Springer. pp. 75--263. 


\bibitem[G1]{G1} M. {\sc Gromov}. \newblock {\em Asymptotic invariants of infinite groups}. \newblock Geometric group theory, Vol. 2 (Sussex, 1991), 1--295, London Math. Soc. Lecture Note Ser., 182, Cambridge Univ. Press, Cambridge, 1993.

\bibitem[G2]{G2} M. {\sc Gromov}. \newblock {\em Spaces and questions}. \newblock
GAFA 2000 (Tel Aviv, 1999).
 Geom. Funct. Anal. 2000, Special Volume, Part I, 118--161.

\bibitem[G3]{G3} M. {\sc Gromov}. \newblock {\em Random walks in random groups}. \newblock
 Geom. Funct. Anal. 13 (2003), no. 1, 73--146.

\bibitem[GL]{GL} M. {\sc Gromov}, B. {\sc Lawson}. \newblock {\em 
The classification of simply connected
manifolds of positive scalar curvature. } \newblock  Ann. of Math. (2), 111(3):423--434,
1980.

\bibitem[GL1]{GL1} M. {\sc Gromov}, B. {\sc Lawson}. \newblock {\em 
 Positive scalar curvature and the Dirac operator on complete Riemannian manifolds.  } \newblock Inst. Hautes \'Etudes Sci. Publ. Math. No. 58 (1983), 83--196 (1984).


\bibitem[GMP]{GMP} A. {\sc Gorokhovsky}, H. {\sc Moriyoshi }, P. {\sc Piazza}.
 \newblock
{\em A note on the higher Atiyah-
Patodi-Singer index theorem on Galois coverings. } \newblock
 J. Noncommut. Geom., 10(1):265--306,

2016.

\bibitem[GHW]{GHW}  E. {\sc Guentner}, N. {\sc Higson}, S. {\sc Weinberger}.
 \newblock
{\em The Novikov Conjecture for Linear Groups.}
\newblock Publ. Math. Inst. Hautes \' Etudes Sci. No. 101 (2005), 243--268.

\bibitem[GTY]{GTY}  E. {\sc Guentner}, R. {\sc Tessera}, G. {\sc Yu}.
 \newblock
{\em  A notion of geometric complexity and its application to topological rigidity. }
\newblock  Invent. Math. 189 (2012), no. 2, 315--357. 

\bibitem[GWY]{GWY}  E. {\sc Guentner}, R. {\sc Willett}, G. {\sc Yu}.
 \newblock
{\em  Dynamical complexity and controlled operator K-theory.}
\newblock 	 Groups, Geometry and Dynamics,  Vol. 7, 2 (2013) 377--402.

\bibitem[Ha]{Ha} U. {\sc Hamenst\"{a}dt}.  \newblock 
{\em Geometry of the mapping class groups. I. Boundary amenability. } \newblock
Invent. Math. 175 (2009), no. 3, 545--609. 

\bibitem[HS]{HS} S. {\sc Hanke},  T. {\sc Schick}. \newblock {\em The strong Novikov conjecture for low degree cohomology. } \newblock 
Geom. Dedicata 135 (2008), 119--127. 

\bibitem[H]{H} N. {\sc Higson}. \newblock {\em Bivariant K-theory and the Novikov conjecture. } \newblock 
Geom. Funct. Anal. 10 (2000), no. 3, 563--581. 



\bibitem[HK]{HK} N. {\sc Higson}, G. {\sc Kasparov}. \newblock {\em E -theory and KK-theory for groups which act properly and isometrically on Hilbert space. } \newblock Invent. Math. 144 (2001), no. 1, 23--74.

\bibitem[HR]{HR} N. {\sc Higson}, J. {\sc Roe}.
\newblock {\em Mapping surgery to analysis. {I}. {A}nalytic signatures.}
\newblock {\em $K$-Theory}, 33(4):277--299, 2005.

\bibitem[HR1]{HR1} N. {\sc Higson}, J. {\sc Roe}.
\newblock {\em Mapping surgery to analysis. {II}. {G}eometric signatures.}
\newblock {\em $K$-Theory}, 33(4):301--324, 2005.

\bibitem[HR2]{HR2} N. {\sc Higson}, J. {\sc Roe}.
\newblock {\em Mapping surgery to analysis. {III}. {E}xact sequences.}
\newblock {\em $K$-Theory}, 33(4):325--346, 2005.

\bibitem[HR3]{HR3} N. {\sc Higson}, J. {\sc Roe}.
\newblock {\em 
N. Higson and J. Roe. K-homology, assembly and rigidity theorems for relative
eta invariants. }
Pure Appl. Math. Q., 6 (2, Special Issue: In honor of Michael Atiyah
and Isadore Singer):555--601, 2010.

\bibitem[HLS]{HLS} N. {\sc Higson}, V. {\sc Lafforgue}, G. {\sc Skandalis}.
\newblock {\em Counterexamples to the Baum-Connes conjecture.} \newblock Geom. Funct. Anal. 12 (2002), no. 2, 330--354.



\bibitem[HiS]{HiS} M. {\sc Hilsum},  G. {\sc Skandalis}. \newblock {\em Georges Invariance par homotopie de la signature \`{a} coefficients dans un fibr\'e presque plat. (French) [Homotopy invariance of the signature with coefficients in an almost flat fiber bundle]. } \newblock 
  J. Reine Angew. Math. 423 (1992), 73--99.






\bibitem[J]{J} L. {\sc Ji}.  \newblock {\em The integral Novikov conjectures for linear groups containing torsion elements}. \newblock
J. Topol. 1 (2008), no. 2, 306--316.



\bibitem[JR]{JR}
W. B. {\sc Johnson},  N. L. {\sc Randrianarivony}. \newblock {\em $l_p$ $(p > 2)$ does not coarsely embed into a Hilbert space}. \newblock Proc. Amer. Math. Soc. 134 (2006), 1045--1050 (electronic). 

\bibitem[KM]{KM}
J. {\sc  Kaminker},   J. {\sc Miller}.
\newblock Homotopy invariance of the analytic index of signature operators over
  {$C^\ast$}-algebras.
\newblock {\em J. Operator Theory}, 14(1):113--127, 1985.


\bibitem[K]{K} G. {\sc Kasparov}. \newblock {\em  Equivariant KK-theory and the Novikov conjecture. } \newblock Invent. Math. 91 (1988), no. 1, 147--201. 

\bibitem[K1]{K1} G. {\sc Kasparov}.
\newblock {$K$}-theory, group {$C^*$}-algebras, and higher signatures
  (conspectus).
\newblock In {\em Novikov conjectures, index theorems and rigidity, {V}ol.\ 1
  ({O}berwolfach, 1993)}, volume 226 of {\em London Math. Soc. Lecture Note
  Ser.}, pages 101--146. Cambridge Univ. Press, Cambridge, 1995.
  
 

\bibitem[KS]{KS} G. {\sc Kasparov}, G. {\sc Skandalis}. \newblock {\em Groups acting properly on "bolic'' spaces and the Novikov conjecture.  } \newblock
  Ann. of Math. (2) 158 (2003), no. 1, 165--206.
  
\bibitem[KiS]{KiS} R.  C. {\sc Kirby}, L. C. {\sc  Siebenmann}. \newblock {\em  Foundational essays on topological manifolds, smoothings, and triangulations. } \newblock
With notes by John Milnor and Michael Atiyah. Annals of Mathematics Studies, No. 88. Princeton University Press, Princeton, N.J.; University of Tokyo Press, Tokyo, 1977. vii+355 pp. 

\bibitem[KY]{KY} G. {\sc Kasparov}, G. {\sc Yu}. \newblock {\em  The Novikov conjecture and geometry of Banach spaces. } \newblock
 Geometry and Topology  16  (2012),  no. 3, 1859--1880.

\bibitem[Ki]{Ki} Y. {\sc Kida}. \newblock {\em  The mapping class group from the viewpoint of measure equivalence theory.
} \newblock Mem. Amer. Math. Soc. 196 (2008), no. 916.


\bibitem[L]{L} V. {\sc Lafforgue}. \newblock {\em  K-th\'eorie bivariante pour les alg\`ebres de Banach et conjecture de Baum-Connes.} \newblock  (French) [Bivariant K-theory for Banach algebras and the Baum-Connes conjecture] Invent. Math. 149 (2002), no. 1, 1--95. 

\bibitem[L1]{L1} V. {\sc Lafforgue}. \newblock {\em La conjecture de Baum-Connes \`a coefficients pour les groupes hyperboliques.} \newblock  (French) [The Baum-Connes conjecture with coefficients for hyperbolic groups] J. Noncommut. Geom. 6 (2012), no. 1, 1--197. 

 \bibitem[LP]{LP} E. {\sc Leichtnam}, P. {\sc Piazza}.  \newblock {\em Rho-classes, index theory and Stolz' positive scalar curvature sequence.}
 J. Topol. 7 (2014), no. 4, 965--1004. 
 
 \bibitem[LP1]{LP1} E. {\sc Leichtnam}, P. {\sc Piazza}.  \newblock {\em On higher eta-invariants and metrics of positive
scalar curvature. } K-Theory, 24(4):341--359, 2001.

 \bibitem[LP2]{LP2} E. {\sc Leichtnam}, P. {\sc Piazza}.  \newblock {\em   Spectral sections and higher Atiyah-Patodi-Singer index theory on Galois coverings.  } \newblock   Geom. Funct. Anal. 8 (1998), no. 1, 17--58.
 
\bibitem[LP3]{LP3} E. {\sc Leichtnam}, P. {\sc Piazza}.  \newblock {\em  
 Homotopy invariance of twisted higher signatures on  
manifolds with boundary.} \newblock 
 Bull. Soc. Math. France, 127(2):307--331, 1999.

\bibitem[Lo]{Lo} J. {\sc Lott}.  \newblock {\em  Higher eta-invariants. }
K-Theory, 6(3):191--233, 1992.

\bibitem[Lo1]{Lo1} J. {\sc Lott}.  \newblock {\em  Delocalized $L^2$-invariants.} \newblock  J. Funct. Anal., 169(1):1--31, 1999.

\bibitem[Ma]{Ma} V. {\sc Mathai}.  \newblock {\em The Novikov conjecture for low degree cohomology classes.}
\newblock 
 Geom. Dedicata 99 (2003), 1--15. 
 
 \bibitem[MY]{MY} I. {\sc Mineyev}, G. {\sc Yu}.  \newblock {\em The Baum-Connes conjecture for hyperbolic groups.}
\newblock  Invent. Math. 149 (2002), no. 1, 97--122.


\bibitem[M]{M}  A. S. {\sc Mishchenko}. \newblock {\em  Infinite-dimensional representations of discrete groups, and higher signatures.} \newblock
(Russian) Izv. Akad. Nauk SSSR Ser. Mat. 38 (1974), 81--106. 

\bibitem[MN]{MN}  M. {\sc Manor}, A. {\sc Naor}. \newblock {\em
Metric cotype. } \newblock  Ann. of Math. (2) 168 (2008), no. 1, 247--298.



\bibitem[Ni]{Ni}  A.  {\sc Nica}. \newblock {\em Induction theorems for groups of homotopy manifold structures.} \newblock
Mem. Amer. Math. Soc., 39(267):vi+108, 1982.

\bibitem[Nis]{Nis}  V.  {\sc Nister}. \newblock {\em Group cohomology and the cyclic cohomology of crossed products. } \newblock Invent. Math.,
99(2):411--424, 1990.

\bibitem[N]{N}  S. P. {\sc Novikov}. \newblock {\em Algebraic construction and properties of Hermitian analogs of k-theory over rings with involution from the point of view of Hamiltonian formalism. Some applications to differential topology and to the theory of characteristic classes. } \newblock
 Izv. Akad. Nauk SSSR, v. 34, 1970 I N2, pp. 253--288; II: N3, pp. 475--500. 
 
\bibitem[N1]{N1}  S. P. {\sc Novikov}. \newblock {\em Topological invariance of rational classes of Pontrjagin.} (Russian) 
Dokl. Akad. Nauk SSSR 163 1965 298--300. 

\bibitem[NY]{NY}  P. {\sc Nowak}, G. {\sc Yu}. \newblock {\em
Large scale geometry. } \newblock  European Mathematical Society Publishing House, 2012.


 
\bibitem[O]{O} D. {\sc Osajda}. \newblock {\em Small cancellation labellings of some infinite graphs and applications. }
 \newblock 	arXiv:1406.5015, 2014.
 
\bibitem[OY]{OY} H. {\sc Oyono-Oyono},  G. {\sc Yu}. \newblock {\em On quantitative operator K-theory.}
Ann. Inst. Fourier (Grenoble) 65 (2015), no. 2, 605--674. 



\bibitem[PS]{PS} P. {\sc Piazza},  T. {\sc Schick}. \newblock {\em
 Groups with torsion, bordism and rho invariants.} \newblock 
Pacific J. Math., 232(2):355--378, 2007.

\bibitem[PS1]{PS1} P. {\sc Piazza},  T. {\sc Schick}. \newblock {\em
Rho-classes, index theory and Stolz' positive scalar curvature sequence. } \newblock 
J. Topol. 7 (2014), no. 4, 965--1004. 

\bibitem[Pi]{Pi} M. {\sc Pimsner}. \newblock {\em KK-groups of crossed products by groups acting on trees. 
   } \newblock  Invent. Math. 86 (1986), no. 3, 603--634. 
   
\bibitem[PV]{PV} M. {\sc Pimsner},  D. {\sc Voiculescu}. \newblock {\em
  K-groups of reduced crossed products by free groups.  } \newblock J. Operator Theory 8 (1982), no. 1, 131--156. 

\bibitem[P]{P} M. {\sc Puschnigg}. \newblock {\em The Kadison-Kaplansky conjecture for word-hyperbolic groups.}  Invent. math. 149 (2002), 153--194.

\bibitem[P1]{P1} M. {\sc Puschnigg}. \newblock {\em New holomorphically closed subalgebras of $C^\ast$-algebras of hyperbolic groups.} Geom. Funct. Anal. 20 (2010), no. 1, 243--259.

\bibitem[QR]{QR} Y. {\sc Qiao},  J. {\sc Roe}. \newblock {\em 
 On the localization algebra of Guoliang Yu.}
 Forum Math. 22 (2010), no. 4, 657--665. 
 


\bibitem[RTY]{RTY}  D. {\sc Ramras}, R. {\sc Tessera}, G. {\sc Yu}.
 \newblock
{\em Finite decomposition complexity and the integral Novikov conjecture for higher algebraic K-theory. } \newblock J. Reine Angew. Math. 694 (2014), 129--178. 

\bibitem[Roe]{Roe} J. {\sc Roe}. \newblock {\em Coarse cohomology and index theory on complete Riemannian manifolds. } \newblock  Mem. Amer. Math. Soc. 104 (1993), no. 497.

\bibitem[Roe1]{Roe1} J. {\sc Roe}. \newblock {\em Index theory, coarse geometry, and topology of manifolds.} \newblock CBMS Regional Conference Series in Mathematics, 90. Published for the Conference Board of the Mathematical Sciences, Washington, DC; by the American Mathematical Society, Providence, RI, 1996.

\bibitem[Roe2]{Roe2} J. {\sc Roe}, \newblock {\em   
Hyperbolic groups have finite asymptotic dimension.} \newblock
 Proc. Amer. Math. Soc. 133 (2005), no. 9, 2489--2490.
 
 \bibitem[Roe3]{Roe3} J. {\sc Roe}, \newblock {\em   
 Positive curvature, partial vanishing theorems, and coarse indices. }
 Proc. Edinb. Math. Soc. (2) 59 (2016), no. 1, 223--233. 
  
  \bibitem[R]{R} J. {\sc Rosenberg}. \newblock {\em
$C^\ast$-algebras, positive scalar curvature, and the Novikov conjecture.  }
 \newblock
Inst. Hautes \'Etudes Sci. Publ. Math.  No. 58  (1983), 197--212 (1984).

 \bibitem[R1]{R1} J. {\sc Rosenberg}. \newblock {\em
  Manifolds of positive scalar curvature: a progress report.}
 \newblock Surveys in differential geometry. Vol. XI, 259--294, Surv. Differ. Geom., 11, Int. Press, Somerville, MA, 2007.
 
\bibitem[R2]{R2} J. {\sc Rosenberg}. \newblock {\em
Analytic Novikov for topologists.}  \newblock
 Novikov conjectures, index theorems and rigidity, Vol. 1 (Oberwolfach, 1993), 338--372, London Math. Soc. Lecture Note Ser., 226, Cambridge Univ. Press, Cambridge, 1995.

 
 \bibitem[RS]{RS} J. {\sc Rosenberg},  S. {\sc Stolz.} \newblock {\em
  Metrics of positive scalar curvature and connections
with surgery.}   \newblock In Surveys on surgery theory, Vol. 2, volume 149 of Ann. of Math.
Stud., pages 353--386. Princeton Univ. Press, Princeton, NJ, 2001.

\bibitem[RW]{RW} J. {\sc Rosenberg}, S. {\sc Weinberger}. \newblock {\em
Higher G-indices and applications. }\newblock
 Ann. Sci. \'Ecole Norm. Sup. (4)  21  (1988),  no. 4, 479--495.


\bibitem[S]{S} Z. {\sc Sela.} \newblock {\em   Uniform embeddings of hyperbolic groups in Hilbert spaces. } \newblock
Israel J. Math. 80 (1992), no. 1-2, 171--181.   

\bibitem[Serre]{Serre} J. -P. {\sc Serre.} \newblock {\em  
 Linear representations of finite groups. } \newblock 
 Translated from
the second French edition by Leonard L. Scott. Graduate Texts in Mathematics,
Vol. 42. Springer-Verlag, New York-Heidelberg, 1977.

\bibitem[SY]{SY} R. {\sc Schoen}, S. T. {\sc Yau} \newblock {\em 
On the structure of manifolds with positive scalar
curvature.} \newblock  Manuscripta Math., 28(1-3):159--183, 1979.

  
 \bibitem[STY]{STY}  G. {\sc Skandalis}, J. {\sc Tu}, G. {\sc Yu}.
 \newblock
{\em The coarse Baum-Connes conjecture and groupoids.} \newblock
 Topology  41  (2002),  no. 4, 807--834.
 
 
 \bibitem[St]{St} S. {\sc Stolz.} \newblock {\em Concordance classes of positive scalar curvature metrics.   } \newblock Preprint.
 
 \bibitem[Su]{Su} D. {\sc Sullivan.} \newblock {\em Hyperbolic geometry and homeomorphisms.   } \newblock Geometric topology ({P}roc. {G}eorgia {T}opology {C}onf., {A}thens, {G}a., 1977), pp. 543--555, Academic Press, New York-London, 1979.
 

 
 \bibitem[Wa]{Wa} C. {\sc Wahl}. \newblock {\em
The Atiyah-Patodi-Singer index theorem for Dirac operators over $C^\ast$-algebras.}  Asian J. Math., 17(2):265--319, 2013.
 
 \bibitem[WW]{WW}  B.-L. {\sc Wang}, H. {\sc Wang}.
	\newblock Localized index and {$L^2$}-{L}efschetz fixed-point formula for
	orbifolds.
	\newblock {\em J. Differential Geom.}, 102(2):285--349, 2016.
	
	
 \bibitem[W]{W} S. {\sc Weinberger}. \newblock {\em Variations on a theme of Borel.} \newblock  Book draft, 
 available at http://math.uchicago.edu/~shmuel/VTBdraft.pdf.
 
 \bibitem[W1]{W1} S. {\sc Weinberger}. \newblock {\em Aspects of the Novikov conjecture.
  } \newblock  Geometric and topological invariants of elliptic operators (Brunswick, ME, 1988), 281--297, 
Contemp. Math., 105, Amer. Math. Soc., Providence, RI, 1990. 


 \bibitem[WXY]{WXY} S. {\sc Weinberger}, Z. {\sc Xie},  G. {\sc Yu}. 
 \newblock {\em 
 Additivity of higher rho invariants and nonrigidity of topological manifolds.}  \newblock 	To appear in Communications on Pure and Applied Mathematics.
 
 \bibitem[WY]{WY}  S. {\sc Weinberger}, G. {\sc Yu}. 
 \newblock {\em 
 Finite part of operator K-theory for groups finitely embeddable into Hilbert space and the degree of nonrigidity of manifolds.}  \newblock  
  Geom. Topol. 19 (2015), no. 5, 2767--2799.
  
 \bibitem[WiY]{WiY}  R. {\sc Willett}, G. {\sc Yu}. 
 \newblock {\em 
Higher index theory.}  \newblock  
Book draft, available at https://math.hawaii.edu/~rufus/Skeleton.pdf,
2018.

\bibitem[WiY1]{WiY1}  R. {\sc Willett}, G. {\sc Yu}. 
 \newblock {\em Higher index theory for certain expanders and Gromov monster groups, I.
}  \newblock   Adv. Math. 229 (2012), no. 3, 1380--1416.

\bibitem[WiY2]{WiY2}  R. {\sc Willett}, G. {\sc Yu}. 
 \newblock {\em 
Higher index theory for certain expanders and Gromov monster groups, II.  }  \newblock  Adv. Math. 229 (2012), no. 3, 1762--1803.

\bibitem[XY]{XY}  Z. {\sc Xie}, G. {\sc Yu}. 
 \newblock {\em  Positive scalar curvature, higher rho invariants and localization algebras. }  \newblock  Adv. Math. 262 (2014), 823--866.
 
 
\bibitem[XY1]{XY1}  Z. {\sc Xie}, G. {\sc Yu}. 
 \newblock {\em Higher rho invariants and the moduli space of positive scalar curvature metrics. }  \newblock   Adv. Math. 307 (2017), 1046--1069.
 
\bibitem[XY2]{XY2}  Z. {\sc Xie}, G. {\sc Yu}. 
 \newblock {\em A relative higher index theorem, diffeomorphisms and positive
scalar curvature.  }  \newblock    Adv. Math., 250: 35--73, 2014.

\bibitem[XY3]{XY3}  Z. {\sc Xie}, G. {\sc Yu}. 
 \newblock {\em Delocalized eta invariants, algebraicity, and K-theory of group $C^\ast$-algebras. }  \newblock    	arXiv:1805.07617.
 
 \bibitem[XYZ]{XYZ}  Z. {\sc Xie}, G. {\sc Yu}, R. {\sc Zeidler}.
 \newblock {\em On the range of the relative higher index and the higher rho-invariant for positive scalar curvature.}  \newblock   	arXiv:1712.03722.
 
\bibitem[Y]{Y} G. {\sc Yu}. \newblock {\em  Localization algebras and the coarse Baum-Connes conjecture.} \newblock K-Theory 11 (1997), no. 4, 307--318. 

\bibitem[Y1]{Y1} G. {\sc Yu}. \newblock {\em The Novikov conjecture for groups with finite asymptotic dimension.} \newblock Ann. of Math. (2) 147 (1998), no. 2, 325--355.

\bibitem[Y2]{Y2} G. {\sc Yu}. \newblock {\em The coarse Baum-Connes conjecture for spaces which admit a uniform embedding into Hilbert space.} \newblock Invent. Math. 139 (2000), no. 1, 201--240.

\bibitem[Y3]{Y3} G. {\sc Yu}. \newblock {\em A characterization of the image of the Baum-Connes map. } \newblock
Quanta of maths, 649--657, Clay Math. Proc., 11, Amer. Math. Soc., Providence, RI, 2010.

\bibitem[Z]{Z} V. {\sc Zenobi}. \newblock {\em
Mapping the surgery exact sequence for topological manifolds to analysis.  } \newblock
J. Topol. Anal. 9 (2017), no. 2, 329--361. 


\end{thebibliography}
\end{document}